\newtheorem{expl}{Example}[section]
\newtheorem{remark}{Remark}[section]
\newtheorem{thm}{Theorem}[section]
\newtheorem{lem}[thm]{Lemma}
\newtheorem{rem}[thm]{Remark}
\newcommand{\beq}[1]{\begin{equation} \label{#1}}
\newcommand{\eeq}{\end{equation}}
\newcommand{\bed}{\begin{displaymath}}
\newcommand{\eed}{\end{displaymath}}
\newcommand{\bea}{\bed\begin{array}{rl}}
\newcommand{\eea}{\end{array}\eed}
\newcommand{\barray}{\begin{array}{ll}}
\newcommand{\earray}{\end{array}}
\newcommand{\Spvek}[2][r]{%
  \gdef\@VORNE{1}
  \left(\hskip-\arraycolsep%
    \begin{array}{#1}\vekSp@lten{#2}\end{array}%
  \hskip-\arraycolsep\right)}
\def\vekSp@lten#1{\xvekSp@lten#1;vekL@stLine;}
\def\vekL@stLine{vekL@stLine}
\def\xvekSp@lten#1;{\def\temp{#1}%
  \ifx\temp\vekL@stLine
  \else
    \ifnum\@VORNE=1\gdef\@VORNE{0}
    \else\@arraycr\fi%
    #1%
    \expandafter\xvekSp@lten
  \fi}
  \newcommand\figcaption{\def\@captype{figure}\caption}
  \newcommand\tabcaption{\def\@captype{table}\caption}
\def\sqr#1#2{{\vcenter{\vbox{\hrule height.#2pt
\hbox{\vrule width.#2pt height#1pt \kern#1pt
\vrule width.#2pt} \hrule height.#2pt}}}}
\newcommand{\E}{\mathbb{E}}
\newcommand{\RR}{\mathbb{R}}
\def\X{\Xi}
\def\<{\langle} \def\>{\rangle}
\def\1{\oslash} \def\2{\oplus} \def\3{\otimes} \def\4{\ominus}
\def\5{\circ} \def\6{\odot} \def\7{\backslash} \def\8{\infty}
\def\9{\bigcap} \def\0{\bigcup} \def\+{\pm} \def\-{\mp}
\def\[{\langle} \def\]{\rangle}
\def\nn{\nonumber}
\def\bc{\begin{center}}       \def\ec{\end{center}}
\def\ba{\begin{array}}        \def\ea{\end{array}}
\def\be{\begin{equation}}     \def\ee{\end{equation}}
\def\bea{\begin{eqnarray}}    \def\eea{\end{eqnarray}}
\def\beaa{\begin{eqnarray*}}  \def\eeaa{\end{eqnarray*}}
\begin{document}
\title{Explicit multiscale  numerical method for super-linear slow-fast stochastic differential equations}

\author{Yuanping Cui  \thanks{School of Mathematical Sciences, Tiangong University, Tianjin, 300387, China. Research
		of this author  was supported by the National Natural Science Foundation of China (No. 12401216). }
\and Xiaoyue Li \thanks{School of Mathematical Sciences, Tiangong University, Tianjin, 300387, China.
Research
of this author  was supported by the National Natural Science Foundation of China (No. 12371402 , 11971096
), the Tianjin Natural Science Foundation (24JCZDJC00830), the National Key R$\&$D Program of China (2020YFA0714102) and  the Natural Science Foundation of Jilin Province (No. YDZJ202101ZYTS154).}
\and Xuerong Mao
\thanks{Department of Mathematics and Statistics, University of Strathclyde, Glasgow G1 1XH, UK. Research of this author was supported by
the Royal Society (No. WM160014, Royal Society Wolfson Research Merit Award),
the Royal Society of Edinburgh (No. RSE1832).  }
}

\date{}

\maketitle

\begin{abstract}
This manuscript is dedicated to the numerical approximation of super-linear slow-fast stochastic differential equations (SFSDEs). Borrowing the heterogeneous multiscale idea, we propose an explicit multiscale Euler-Maruyama scheme suitable for SFSDEs with locally Lipschitz coefficients using an appropriate truncation technique. By the averaging principle, we establish the strong convergence of the numerical solutions to the exact solutions in the pth moment. Additionally, under lenient conditions on the coefficients, we also furnish a strong error estimate. In conclusion, we give two illustrative examples and accompanying numerical simulations to affirm the theoretical outcomes.
\end{abstract}

 \vspace{3mm}
 \noindent {\bf Keywords.}
 Slow-fast stochastic differential equations; Super-linearity; Explicit multiscale scheme; $p$th moment; Strong convergence.

\section{Introduction}\label{Tntr}
Stochastic modelling plays an essential role in many branches of science and industry. Especially, super-linear stochastic differential equations (SDEs) are usually used to describe  real-world systems  in various applications, for examples, the stochastic Lotka-Volterra  model in biology  for   the population growth (see e.g. \cite{MR2380366}), the elasticity of volatility model  arising  in finance   for the  asset price  (see e.g. \cite{MR1742310}) and the  stochastic Ginzburg-Landau equation stemming from statistical physics
in the study of phase transitions  (see e.g. \cite{Kloeden}). In many fields, various factors  change at different rates: some vary rapidly whereas others evolve slowly. As a result the  separation of fast and slow time scales arises in chemistry, fluid dynamics, biology, physics, finance and other fields (see e.g. \cite{MR2830582,Glimm, Harvey, MR3309627}). Stochastic systems with this characteristic are  studied extensively (see e.g. \cite{MR4286678, MR2409418, Grig, Fuke2016}) and are often modeled by the  slow-fast SDEs (SFSDEs)
\begin{equation}\label{e1}
	\begin{cases}
		\mathrm{d}x^{\varepsilon}(t)= b(x^{\varepsilon}(t),y^{\varepsilon}(t))\mathrm{d}t +\sigma(x^{\varepsilon}(t))\mathrm{d}W^1(t),
		\\ \mathrm{d}y^{\varepsilon}(t)=\displaystyle \frac{1}{\varepsilon}f(x^{\varepsilon}(t), y^{\varepsilon}(t))\mathrm{d}t +\frac{1}{\sqrt{\varepsilon}}g(x^{\varepsilon}(t),y^{\varepsilon}(t))\mathrm{d}W^{2}
		(t)
	\end{cases}
\end{equation}
with initial value $(x^{\varepsilon}(0),y^{\varepsilon}(0))=(x_{0},y_{0})\in \mathbb{R}^{n_1}\times \mathbb{R}^{n_2}$.
Here,   coefficients
\begin{align*}
	\displaystyle &b: \mathbb{R}^{n_{1}} \times \mathbb{R}^{n_{2}} \rightarrow\mathbb{R}^{n_{1}},~~
	\sigma: \mathbb{R}^{n_{1}} \rightarrow \mathbb{R}^{n_{1}\times d_{1}},~~f:\mathbb{R}^{n_{1}}\times \mathbb{R}^{n_{2}}\rightarrow \mathbb{R}^{n_{2}},~~
	g: \mathbb{R}^{n_{1}}\times \mathbb{R}^{n_{2}}\rightarrow\mathbb{ R}^{n_{2}\times d_{2}} 
\end{align*}
are  continuous, while $\{W^1(t)\}_{t\geq 0}$ and $\{W^2(t)\}_{t\geq 0}$ represent mutually independent  $d_{1}$-dimensional and $d_{2}$-dimensional Brownian motions, respectively. The  parameter $\varepsilon>0$ represents the ratio of nature time scales between   $x^{\varepsilon}(t)$ and $y^{\varepsilon}(t)$.  Especially,  as $\varepsilon\ll 1$, $x^{\varepsilon}(t)$ and $y^{\varepsilon}(t)$ are  called the slow component and fast component, respectively.
{ In various applications the time evolution of the slow component $x^{\varepsilon}(t)$ is under the spotlight. 
	Hence, our main aim is  to construct an appropriate  numerical scheme to approximate the slow component of \eqref{e1} with superlinear coefficients.}

{ The averaging principle provides
	a substantial simplification of the original system.   It essentially describes} the asymptotic behavior of the slow component as $\varepsilon\rightarrow 0$. Precisely,  the slow component $x^{\varepsilon}(t)$  converges to $\bar{x}(t)$ in strong or weak sense, which is the solution of
\begin{equation}\label{eq2.4}
	\mathrm{d}\bar{x}(t)=\bar{b}(\bar{x}(t))\mathrm{d}t+\sigma(\bar{x}(t))\mathrm{d}W^{1}(t),~~
	\bar{x}(0)=x_{0},
\end{equation}
where 
\begin{equation}\label{f11}
	\bar{b}(x)=\int_{\RR^{n_2}}b(x,y)  \mu^{x}(\mathrm{d}y).\end{equation}
Here
$\mu^{x}(\cdot)$ denotes
the unique invariant probability measure independent of $y_0$  of the transition semigroup of $y^{x, y_0 }(t)$  
satisfying equation  with  frozen slow component
\begin{align}\label{eq2.5}
	\mathrm{d}y^{x, y_0 }(t)=f(x, y^{x, y_0 }(t))\mathrm{d}t+g(x,y^{x, y_0 }(t))\mathrm{d}W^{2}(t)
\end{align}
with initial value $y^{x, y_0}(0)=y_0$, where $x$ is regarded as a parameter,  under assumptions ensuring its existence.  This paper emphasizes the strong averaging principle, namely, the convergence of  $x^{\varepsilon}(t)$  to $\bar{x}(t)$ is in the $p$th moment for some $p>0$.

 The averaging principle was originally developed by Khasminskii \cite{MR260052}. {Subsequently,  fruitful results on the averaging principle have been developed for SFSDEs with the linear growth coefficients (see e.g. \cite{MR1046602, A.Y., MR1328391,MR2264816, MR2165382,MR1050462,MR2744917}. Recently, growing interests have been drawn to the study of  the averaging principle for SFSDEs with super-linear growth coefficients.  Liu et al. \cite{MR4047972} proved the strong convergence of the averaging principle as the drift coefficients are locally Lipschitz continuous with respect to the slow and fast variables. Hong et al. \cite{MR4374850} gave the 1/6-order strong convergence rate  for a class of nonlinear stochastic partial differential equations (SPDEs).  Shi et al. \cite{20232011} obtained  the optimal convergence rate for SFSDEs driven by L\'evy processes, which slow drift coefficient satisfies the monotonicity condition and grows  polynomially. Furthermore, the strong averaging principles have  been developed for various kinds of  slow-fast stochastic systems, such as jump-diffusion processes (see e.g.  \cite{MR2338495, MR2831776})  and SPDEs (see e.g.  \cite{MR3040961, MR2822851, MR3679916}).

\par
The averaging principle is one of the key techniques in the theoretical analysis of SFSDEs. 
However, it is almost impossible  to  get the explicit  form  of the invariant measure $\mu^{x}$ in the averaged equation \eqref{eq2.4} due to the complicated
dynamics of  the frozen equation \eqref{eq2.5}. Thus, the form of the averaged equation  \eqref{eq2.4} is almost unknown, which becomes a major obstacle to solving or approximating it directly.  
Fortunately, the  heterogeneous multiscale  method (see e.g. \cite{MR1979846,MR2314852}) (HMM) was proposed   to approximate the  averaged equation numerically. This facilitated the development of  the numerical approximation theory for the SFSDEs. In 2003,  Vanden-Eijnden
\cite{MR1980483} proposed a numerical scheme for the deterministic multi-scale  system without rigorous  analysis. E et al. \cite{MR2165382} provided a thorough analysis of the convergence and efficiency of the HMM scheme for SFSDEs without slow diffusion term, where the slow drift and fast diffusion coefficients are bounded and the fast drift coefficient is a smooth function with bounded derivatives of any order. In 2006, Givon et al. \cite{MR2264816} developed the projective integration schemes for SFSDEs in which the slow drift and diffusion coefficients satisfy the Lipschitz condition and the fast drift and diffusion coefficients are bounded. In 2008, Givon et al. \cite{MR2443000} went a further step to extend  the projective integration schemes for  jump-diffusion systems. In 2010, Liu 
\cite{MR2644318} 
established the HMM numerical theory for the fully coupled SFSDEs, where the slow drift and diffusion coefficients are bounded, all  coefficients are smooth and have bounded derivatives with any order. Br\'ehier   developed the HMM scheme for the slow-fast parabolic stochastic partial differential equations (see e.g. \cite{MR3040961, MR4092407}).

All of the above studies  were carried out {under the linear growth condition}, so the Euler-Maruyama (EM) scheme is used as a macro solver to simulate the evolution of the slow component owing
to its simple algebraic structure and the cheap computational cost.
{The  super-linear growth coefficients of the slow-fast stochastic systems  bring   the  super-linear structure  to the averaged equation.} {For an example, consider a  SFSDE with a super-linear slow drift
	\begin{equation}\label{ex1}
		\begin{cases}
			\mathrm{d}x^{\varepsilon}(t)=\big(-(x^{\varepsilon}(t))^3-y^{\varepsilon}(t)
			\big)\mathrm{d}t+x^{\varepsilon}(t)\mathrm{d}W^{1}(t),\\
			\mathrm{d}y^{\varepsilon}(t)=\displaystyle \frac{1}{\varepsilon}\big(x^{\varepsilon}(t)
			-y^{\varepsilon}(t)\big)\mathrm{d}t+\frac{1}{\sqrt{\varepsilon}}\mathrm{d}W^2(t)
		\end{cases}
	\end{equation}
	with $(x^{\varepsilon}(0),y^{\varepsilon}(0))=(x_0,y_0).$ The corresponding equation with frozen slow component is described by
	\begin{align}\label{f14}
		\mathrm{d}y^{x, y_0}(s)=(x-y^{x, y_0}(s))\mathrm{d}s+\mathrm{d}W^{2}(s)
	\end{align}
	with initial value $y^{x, y_0}(0)=y_0$.
	By solving the Fokker-Planck equation,  the  invariant probability density of \eqref{f14} is $ \mu^{x}(dy)= \frac{e^{-(y-x)^2}}{\sqrt{\pi}}\mathrm{d}y.$
	Then the averaged equation is described by
	\begin{equation}\label{ex2}
		\mathrm{d}\bar{x}(t)=\big(-\bar{x}^{3}(t)-\bar{x}(t)\big)\mathrm{d}t+\bar{x}(t)\mathrm{d}W^{1}(t) 
	\end{equation}
	with $\bar{x}(0)=x_0$.
	As  pointed out by \cite{MR2795791}   the  EM approximation error of  \eqref{ex2} diverges to infinity  in $p$th moment for any $p\geq1$.  {In fact, the numerical solutions generated by the Projective Integration (PI) scheme with the EM scheme as the macro-solver,  as detailed in {{\cite[(4.1)-(4.4)]{MR2264816}}}, to predict the averaged equation \eqref{ex2}  blows up quickly, see Figure \ref{figN1}. So the dynamics of the numerical solutions by the PI scheme is completely different from those of the underlying exact ones.} Therefore,  using the  EM scheme as the macro solver to simulate the averaged equation of SFSDE with super-linear coefficients may lead to divergence. On the other hand,
	\cite{MR2165382} pointed out that although implicit numerical methods are feasible as the macro solver for the super-linear averaged equation,  the algorithm and implementation requires more computation costs. As a consequence,  to  construct an appropriate explicit numerical scheme for super-linear SFSDEs to overcome the numerical stiffness becomes an urgent target. 
	\begin{figure}[htp]
  \begin{center}
\includegraphics[width=10cm,height=6cm]{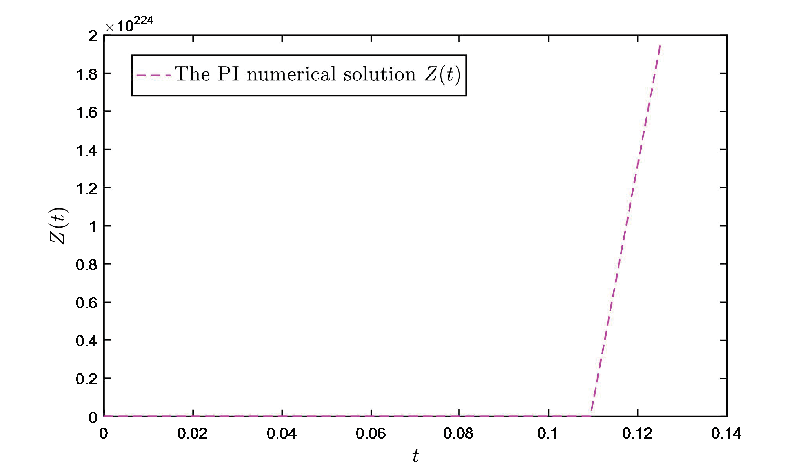}
   \end{center}
   \caption{The sample paths  of the PI numerical solution $Z(t)$  on $t\in [0,3]$ with $\Delta_1=2^{-6}$, $\Delta_2=2^{-6}$ and $M=2^{18}$. }
		\label{figN1}
\end{figure}}
\par Significant advancements have been made in the field of explicit numerical methods for super-linear SDEs, such as the development of the tamed Euler-Maruyama (EM) scheme (see e.g. \cite{MR3364862,MR2985171,MR3070913,MR3543890}), the tamed Milstein scheme (see e.g. \cite{MR3037286}), the stopped EM scheme (see e.g. \cite{MR3116272}), the truncated EM scheme (see e.g. \cite{MR4305371,MR3941887,MR3370415}) and references therein. So far the ability of these modified EM methods to approximate the solutions of super-linear diffusion systems has  been displayed comprehensively. 

Inspired by the references mentioned above, we construct an explicit multiscale numerical method to approximate super-linear SFSDEs and  obtain its strong convergence. We overcome two major obstacles:  the unknown  form  and  super-linear structure of  $\bar{b}(\cdot)$. Borrowing the HMM idea, our  multiscale  scheme  involves three subroutines as follows:
\begin{itemize}
	\item[1.]  Macro solver.  To avoid the excessive deviation caused by  the averaged coefficient  $\bar{b}(\cdot)$, a truncation mapping $T_{\Delta_1}: \mathbb{R}^{n_1}\rightarrow \mathbb{R}^{n_1}$ is proposed in \eqref{eqN3.2} to modify  $\bar{b}(\cdot)$ as  $\bar{b}(T_{\Delta_1}(\cdot))$.  Due to the unknown form of   $\bar{b}(\cdot)$, we  then propose an estimator $\tilde{b}(\cdot)$ to approximate it. Then  we  use the EM scheme to evolve  the  modified averaged equation described by
\begin{align*}
X_{n+1}=X_{n}+\tilde{b}(T_{\Delta_1}(X_{n}))\Delta_1+\sigma(X_{n})\Delta W^{1}_{n},
\end{align*}
where $\Delta_1$ is the macro time step size and $\Delta W^{1}_{n}=W^{1}((n+1)\Delta_1)-W^1(n\Delta_1)$. 
	\item[2.]  Micro solver. To produce the data for generating  $\tilde{b}(T_{\Delta_1}(X_{n}))$, we use the EM method to solve the equation \eqref{eq2.5} with frozen parameter $x=T_{\Delta_1}(X_{n})$ described by
\begin{equation*}
\begin{cases}
\!\!Y^{T_{\Delta_1}(X_{n}),y_0}_{m+1}\!=\!Y^{T_{\Delta_1}(X_{n}),y_0}_{m} \!+\!f\big(T_{\Delta_1}(X_n)\!,\!Y^{T_{\Delta_1}(X_n),y_0}_{m}\big)\Delta_2\!+\!g\big(T_{\Delta_1}(X_n)\!,\!Y^{T_{\Delta_1}(X_n),y_0}_{m}\big)\Delta W^{2}_{n,m},\\
\!\!Y^{T_{\Delta_1}(X_{n}),y_0}_{0}=y_0, ~~m=0,1,\cdot\cdot\cdot,
\end{cases}
\end{equation*}where $\{W^{2}_{n}(\cdot)\}_{n\geq 0}$ is a mutually independent Brownian motion sequence and also independent of $W^{1}(t)$, and $\Delta W^{2}_{n,m}=W^{2}_n((m+1)\Delta_{2})-W^{2}_n(m\Delta_{2})$.
	\item[3.]   Estimator. The  approximated coefficient $\tilde{b}(T_{\Delta_1}(X_n))$ can be given by the   time averaging 
\begin{align}\label{eqN1.8}
\tilde{b}(T_{\Delta_1}(X_n))=\frac{1}{M}\sum_{m=1}^{M}b\big(T_{\Delta_1}(X_{n}),Y^{T_{\Delta_1}(X_{n}),y_0}_{m}\big),
\end{align}
 where $M$ denotes the number of  micro time steps used for this approximation. 
\end{itemize}
Following this line,  an easily implementable multiscale truncated EM (MTEM) scheme described  by \eqref{e5} for a class of super-linear SFSDEs is established. By constructing the truncation mapping,  super-linear growth coefficients of the slow drift are modified.
 The corresponding estimator is obtained from the data generated by the  micro solver with the macro grid points as frozen parameters, which approximates to the underlying superlinear $\bar{b}(\cdot)$ in an efficient manner as $\Delta_1\rightarrow 0$.  Thus, this scheme avoids possible large excursion from the super-linearity of $\bar{b}(\cdot)$.

The main contribution of our paper is the construction of an explicit multiscale numerical scheme for a broad class of super-linear SFSDEs, along with a rigorous proof of its strong convergence. Leveraging the ergodicity theory of the exact  and numerical solutions of the  equation \eqref{eq2.5} with frozen slow component, we initially estimated the error between the averaged coefficient $\bar{b}(\cdot)$ and its estimator (see Lemma \ref{Lb.7}).  Employing this result, and incorporating the analysis technique of the stopping time, we demonstrate the strong convergence and its strong convergence rate between the exact solution of the averaged equation and the numerical solution generated by the MTEM scheme (see Theorems \ref{Lb.14} and \ref{L5.1}). Furthermore, by drawing on the conclusions of the averaging principle, which establishes the strong convergence  between the exact solutions of the slow component and averaged equation, the strong convergence  between the exact solution of the slow component and the numerical solution generated by MTEM scheme also can  be derived (see Theorem \ref{th2}).

The rest of this paper is organized as follows. Section \ref{s-c2} gives some notations, hypotheses and preliminaries. Section \ref{s-c3}  proposes an explicit multiscale numerical method. Section \ref{s-c4} provides some important pre-estimates. Section \ref{s-c5} yields the  strong convergence of MTEM scheme.  Section \ref{s-c6} focuses on the error analysis of the explicit MTEM scheme and presents an important example. Section \ref{s-c7} gives several numerical examples to validate our theoretical findings. Section \ref{s-c8} concludes this paper.

\section{Preliminary results}\label{s-c2}
Throughout this paper,  we  use the following  notation. Let $( \Omega,\cal{F},\mathbb{P})$ be a complete  probability space with a natural filtration $\{\mathcal{F}_{t}\}_{t\geq 0}$ satisfying the usual conditions (i.e. it is right continuous and increasing while $\mathcal{F}_0$ contains all $\mathbb{P}$-null sets), and $\mathbb{E}$ be the expectation corresponding to $\mathbb{P}$. Let $|\cdot|$ denote the Euclidean norm in $\RR^n$ and the trace norm in $\RR^{n\times d}.$
If $A$ is a vector or matrix, we denote its transpose by $A^T$.
For a set $\mathbb{D}$, let $I_{\mathbb{D}}(x)=1$ if $x\in \mathbb{D}$ and $0$ otherwise. We set $\inf\emptyset=\infty$, where $\emptyset$ is empty set. Moreover, for any $a,b\in \mathbb{R}$, we define $a\vee b=\max\{a,b\}$ and $a\wedge b=\min\{a,b\}$. We  use $C$  and $C_{l}$ to denote the generic positive constants,  which may take different values at different appearances, where the subscript  $l$  in $C_{l}$  is used to
 to highlight that this constant  depends on the parameter $l$. In addition, $C, C_{l}$ are independent of parameters $\Delta_1$, $\Delta_2$, $n$ and $M$ that
occur in the next section.  In particular, $C_{R}$ usually denotes some positive  function increasing  with respect to $R$.

 Let  $\mathcal{P}(\mathbb{R}^{n_2})$ denote the family  of  all probability measures on $\mathbb{R}^{n_2}$.
For any $p\geq1$, let $\mathcal{P}_{p}(\mathbb{R}^{n_2})$ be the set in  $\mathcal{P}(\mathbb{R}^{n_2})$  with
finite $p$-th moment, i.e.,
\begin{align*}
\mathcal{P}_{p}(\mathbb{R}^{n_2}):=\Big\{\mu\in \mathcal{P}(\mathbb{R}^{n_2}): \int_{\mathbb{R}^{n_2}}|y|^{p}\mu(\mathrm{d}y)<\infty\Big\},
\end{align*}
which is a Polish space under the Wasserstein distance
\begin{align*}
\mathbb{W}_{p}(\mu_1,\mu_2)=\inf_{\pi\in \mathcal{C}(\mu_1,\mu_2)}\Big(\int_{\mathbb{R}^{n_2}\times \mathbb{R}^{n_2}}|y_1-y_2|^{p}\pi(\mathrm{d}y_1,\mathrm{d}y_2)\Big)^{\frac{1}{p}},
\end{align*}
where $\mathcal{C}(\mu_1,\mu_2)$ stands for the set of all probability measures on $\mathbb{R}^{n_2}\times \mathbb{R}^{n_2}$ with marginals $\mu_1$ and
$\mu_2$, respectively.
\par To state the main  results, we first impose some hypotheses on the coefficients $b$ and  $\sigma$ of slow equation and  the coefficients $f$ and $g$  of the fast equation, denoted by $({\bf{S\cdot}})$ and $({\bf{F\cdot}})$, respectively.
\begin{itemize}
\item[({\bf{S1}})]There exists a  constant  $\theta_{1}\geq1$  such that for any $R>0$,  $x_{1}, x_2\in \mathbb{R}^{n_{1}}$ with $|x_{1}|\vee|x_2|\leq R$ { and $y\in \RR^{n_2}$,}
\begin{align}
|b(x_{1},y)-b(x_{2},y)|+|\sigma(x_{1})-\sigma(x_{2})|\leq L_{R}|x_{1}-x_{2}| (1+|y|^{\theta_{1}}),\nn\
\end{align}
where $L_{R}$ is a positive constant dependent on $R$.
\item[({\bf{S2}})]There exist constants {$\theta_2>0$} and $K_{1}> 0$ such that for any $x\in \mathbb{R}^{n_1}$ and $y_1,y_2\in \mathbb{R}^{n_2}$.
 \begin{align*}
|b(x,y_{1})-b(x,y_{2})|\leq K_{1}|y_{1}-y_{2}|\big(1+|x|^{\theta_2}
+|y_{1}|^{\theta_2}+|y_{2}|^{\theta_2}\big). 
\end{align*}
\item[({\bf{S3}})]There exists a constant $K_2>0$ such that for any $x\in \mathbb{R}^{n_1}$,
\begin{align*}
|\sigma(x)|\leq K_{2}(1+|x|).
\end{align*}
\item[({\bf{S4}})] There exist constants $\theta_3, \theta_4\geq1$ and $K_{3}>0$ such that for any $ x\in \mathbb{R}^{n_{1}}, y\in\mathbb{R}^{n_{2}}$,
\begin{align*}
&|b(x,y)|\leq K_{3}(1+|x|^{\theta_3}+|y|^{\theta_4}).
\end{align*}
\item[({\bf{S5}})] There exist constants $K_4>0$ and $\lambda>0$ such that for any $x\in \mathbb{R}^{n_1}, y\in \mathbb{R}^{n_2}$,
\begin{align}
x^{T}b(x,y)\leq K_4(1+|x|^2)+\lambda|y|^2.\nn\
\end{align}
\end{itemize}
\begin{itemize}
\item[({\bf F1})]The functions $f$ and $g$ are globally Lipschitz continuous, namely, for any $x_1,x_2\in \mathbb{R}^{n_1}$ and $y_1,y_2 \in \mathbb{R}^{n_2}$, there exists a  positive constant $L$ such that
\begin{align*}
|f(x_{1},y_{1})-f(x_{2},y_{2})|\vee|g(x_{1},y_{1})-g(x_{2},y_{2})|\leq L(|x_{1}-x_{2}|+|y_{1}-y_{2}|).
\end{align*}
\item[({\bf F2})] There exists a constant $\beta>0$ such that for any $ x\in \mathbb{R}^{n_1}$ and $y_{1}, y_{2}\in \mathbb{R}^{n_{2}} $,
\begin{align*}
2(y_{1}-y_{2})^{T}\big(f(x,y_{1})-f(x,y_{2})\big) +|g(x,y_{1})-g(x,y_{2})|^{2}
 \leq -\beta|y_{1}-y_{2}|^{2}.
\end{align*}
\item[$({\bf F3 })$] There exist constants $k\geq 2$, $\alpha_{k}>0$ and $L_k>0$ such that for any  $x\in \mathbb{R}^{n_1}$, $y\in \mathbb{R}^{n_2}$,
\begin{align*}
y^{T}f(x,y)+\frac{k-1}{2}|g(x,y)|^2\leq-\alpha_k|y|^{2}+L_k(1+|x|^2).
\end{align*}
\end{itemize}
\begin{remark}
 {\rm Referring to \cite[Theorem 2.2]{MR4047972}, system (\ref{e1}) admits a unique global solution $(x^{\varepsilon}(t),y^{\varepsilon}(t))$ under $({\bf S1})$-$({\bf S5})$ and $({\bf F1})$-$({\bf F3})$. Obviously, $({\bf{F1}})$ guarantees
  that the frozen equation \eqref{eq2.5}} has a unique global solution $y^{x,y_0}(s)$, which is a time homogeneous Markov process.
\end{remark}
\begin{lem}[{{\cite[Lemma 3.6]{MR4047972}}}]\label{LN3.6}
{\rm If $({\bf F3})$ hold, then for any $x\in \RR^{n_1}$, $y_0\in \RR^{n_2}$}, there exist  positive constants $\tilde{\alpha}_k$ and $C_k$ such that
\begin{align*}
\sup_{t\geq 0}\mathbb{E}|y^{x,y_0}(t)|^{k}\leq e^{-\tilde{\alpha}_{k}t}|y_0|^{k}+C_{k}(1+|x|^{k}).
\end{align*} 
\end{lem}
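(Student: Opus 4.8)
The plan is to apply It\^o's formula to the Lyapunov function $V(y)=|y|^{k}$ along the frozen diffusion $y^{x,y_0}(t)$ solving \eqref{eq2.5}, turn the result into a linear differential inequality for $\mathbb{E}|y^{x,y_0}(t)|^{k}$ via the dissipativity condition $({\bf F3})$, and then integrate. Writing $y(t)=y^{x,y_0}(t)$ and computing $\nabla V=k|y|^{k-2}y$ together with $\nabla^{2}V=k|y|^{k-2}I+k(k-2)|y|^{k-4}yy^{T}$, the infinitesimal generator $\mathcal{L}$ of \eqref{eq2.5} acts on $V$ as
\[
\mathcal{L}V(y)=k|y|^{k-2}y^{T}f(x,y)+\tfrac{1}{2}k|y|^{k-2}|g(x,y)|^{2}+\tfrac{1}{2}k(k-2)|y|^{k-4}|g(x,y)^{T}y|^{2}.
\]
Since $|g(x,y)^{T}y|^{2}\leq|y|^{2}|g(x,y)|^{2}$, the two diffusion terms are bounded above by $\tfrac{k(k-1)}{2}|y|^{k-2}|g(x,y)|^{2}$, so that
\[
\mathcal{L}V(y)\leq k|y|^{k-2}\Big(y^{T}f(x,y)+\tfrac{k-1}{2}|g(x,y)|^{2}\Big).
\]

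Next I would insert $({\bf F3})$ to get $\mathcal{L}V(y)\leq-k\alpha_{k}|y|^{k}+kL_{k}|y|^{k-2}(1+|x|^{2})$, and control the cross term by Young's inequality: for any $\eta>0$,
\[
|y|^{k-2}(1+|x|^{2})\leq\eta|y|^{k}+C_{\eta}(1+|x|^{2})^{k/2}\leq\eta|y|^{k}+C_{\eta}(1+|x|^{k}),
\]
the last bound using $k\geq 2$. Taking $\eta=\alpha_{k}/(2L_{k})$ absorbs half of the dissipative term and yields $\mathcal{L}V(y)\leq-\tilde{\alpha}_{k}|y|^{k}+C_{k}(1+|x|^{k})$ with $\tilde{\alpha}_{k}:=k\alpha_{k}/2$.

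The one point requiring care is the passage to expectations, since a priori neither the finiteness of $\mathbb{E}|y(t)|^{k}$ nor the martingale property of the stochastic integral is known. I would resolve this by the standard localization: with $\tau_{R}=\inf\{t\geq0:|y(t)|\geq R\}$, apply It\^o's formula to $e^{\tilde{\alpha}_{k}t}|y(t\wedge\tau_{R})|^{k}$, whose drift is $e^{\tilde{\alpha}_{k}t}(\tilde{\alpha}_{k}|y|^{k}+\mathcal{L}V)\leq C_{k}(1+|x|^{k})e^{\tilde{\alpha}_{k}t}$; the stopped integrand is bounded, so taking expectations kills the martingale term and gives
\[
\mathbb{E}\big(e^{\tilde{\alpha}_{k}(t\wedge\tau_{R})}|y(t\wedge\tau_{R})|^{k}\big)\leq|y_{0}|^{k}+C_{k}(1+|x|^{k})\,\mathbb{E}\!\int_{0}^{t\wedge\tau_{R}}\!e^{\tilde{\alpha}_{k}s}\,\mathrm{d}s\leq|y_{0}|^{k}+\tfrac{C_{k}}{\tilde{\alpha}_{k}}(1+|x|^{k})e^{\tilde{\alpha}_{k}t}.
\]
Letting $R\to\infty$ by Fatou's lemma and dividing by $e^{\tilde{\alpha}_{k}t}$ produces $\mathbb{E}|y(t)|^{k}\leq e^{-\tilde{\alpha}_{k}t}|y_{0}|^{k}+\tfrac{C_{k}}{\tilde{\alpha}_{k}}(1+|x|^{k})$; renaming $C_{k}/\tilde{\alpha}_{k}$ as $C_{k}$ and taking the supremum over $t\geq0$ gives the assertion. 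I expect the It\^o computation and the Young splitting to be routine, with the localization argument that legitimizes dropping the martingale term and interchanging limit with expectation being the only genuinely technical step.
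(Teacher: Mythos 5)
Your proof is correct, but note that the paper does not prove this lemma at all: it is quoted verbatim from \cite[Lemma 3.6]{MR4047972}, so there is no in-paper argument to compare against line by line. Your route --- It\^o's formula for $V(y)=|y|^{k}$, the bound $\mathcal{L}V(y)\leq k|y|^{k-2}\big(y^{T}f(x,y)+\tfrac{k-1}{2}|g(x,y)|^{2}\big)$ via $|g^{T}y|^{2}\leq|y|^{2}|g|^{2}$, insertion of $({\bf F3})$, Young's inequality to absorb $|y|^{k-2}(1+|x|^{2})$, and integration of the exponentially weighted process --- is exactly the standard Lyapunov argument used in the cited reference, and it mirrors the paper's own proof of the discrete-time analogue (Lemma~\ref{Lb.3.10}), where the same exponential weight $e^{k\alpha_k t/8}$, $({\bf F3})$, and Young splitting appear, complicated only by the EM interpolation error terms that are absent in your continuous-time setting. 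Two small remarks: your localization step (stopping at $\tau_R$ and invoking Fatou) is a genuine point of added rigor --- it needs local boundedness of $g$, which holds since the coefficients are continuous (indeed Lipschitz under $({\bf F1})$), and it is exactly what justifies discarding the stochastic-integral term, something the paper's analogous proofs pass over silently; and your argument actually establishes the pointwise-in-$t$ bound $\mathbb{E}|y^{x,y_0}(t)|^{k}\leq e^{-\tilde{\alpha}_{k}t}|y_{0}|^{k}+C_{k}(1+|x|^{k})$, which is the meaningful reading of the statement (the $\sup_{t\geq0}$ on the left of the lemma with a $t$-dependent right-hand side is evidently a typographical slip carried over from the citation).
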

\begin{lem}[{{\cite[Lemma 3.7]{MR4047972}}}]\label{LN3.7}
{\rm If $({\bf F2})$ hold, then for any $x\in \RR^{n_1}$, $y_1, y_2\in \RR^{n_2}$}, 
\begin{align*}
\sup_{t\geq 0}\mathbb{E}|y^{x,y_1}(t)-y^{x,y_2}(t)|^{2}\leq e^{-\beta t}|y_1-y_2|^2.
\end{align*} 
\end{lem}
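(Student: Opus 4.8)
The plan is to study the difference of the two solutions directly and exploit the one-sided dissipativity encoded in $({\bf F2})$. Fix the parameter $x\in\RR^{n_1}$, abbreviate $y_i(t):=y^{x,y_i}(t)$ for $i=1,2$, and set $Z(t):=y_1(t)-y_2(t)$. Subtracting the two copies of the frozen equation \eqref{eq2.5}, the difference solves
\begin{align*}
\mathrm{d}Z(t)=\big(f(x,y_1(t))-f(x,y_2(t))\big)\mathrm{d}t+\big(g(x,y_1(t))-g(x,y_2(t))\big)\mathrm{d}W^2(t),
\end{align*}
with $Z(0)=y_1-y_2$. Since $({\bf F1})$ secures global existence, uniqueness, and finite moments of each $y_i(\cdot)$ on every finite horizon, the difference process is well defined and all the manipulations below are legitimate.

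Next I would apply It\^o's formula to the weighted functional $e^{\beta t}|Z(t)|^2$, which is the standard device for converting a dissipativity estimate into exponential decay without invoking Gr\"onwall. This yields
\begin{align*}
\mathrm{d}\big(e^{\beta t}|Z(t)|^2\big)=e^{\beta t}\Big[\beta|Z(t)|^2+2Z(t)^T\big(f(x,y_1(t))-f(x,y_2(t))\big)+\big|g(x,y_1(t))-g(x,y_2(t))\big|^2\Big]\mathrm{d}t+\mathrm{d}M(t),
\end{align*}
where $M(t):=\int_0^t 2e^{\beta s}Z(s)^T\big(g(x,y_1(s))-g(x,y_2(s))\big)\mathrm{d}W^2(s)$ is the martingale part. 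The crucial observation is that $({\bf F2})$ forces the bracketed drift to be nonpositive: adding $\beta|Z|^2$ to the left-hand side of $({\bf F2})$ leaves the bound $\beta|Z|^2-\beta|Z|^2=0$, so the finite-variation part of $e^{\beta t}|Z(t)|^2$ is nonincreasing.

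To conclude, I would take expectations after localizing the martingale. Introducing the stopping times $\tau_R:=\inf\{t\geq0:|Z(t)|\geq R\}$, optional stopping removes $\mathbb{E}M(t\wedge\tau_R)$, and the drift sign gives $e^{\beta (t\wedge\tau_R)}\mathbb{E}|Z(t\wedge\tau_R)|^2\leq \mathbb{E}|Z(0)|^2=|y_1-y_2|^2$. Letting $R\to\infty$ and using Fatou's lemma together with continuity of $t\mapsto Z(t)$ then yields $e^{\beta t}\mathbb{E}|Z(t)|^2\leq|y_1-y_2|^2$, that is $\mathbb{E}|Z(t)|^2\leq e^{-\beta t}|y_1-y_2|^2$ for every $t\geq0$, which is precisely the asserted estimate.

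The only genuine obstacle is the rigorous treatment of the stochastic integral: because $g$ is merely globally Lipschitz under $({\bf F1})$, the integrand $2e^{\beta s}Z(s)^T\big(g(x,y_1(s))-g(x,y_2(s))\big)$ is controlled in norm only by $Ce^{\beta s}|Z(s)|^2$, so $M$ is a priori a local martingale and one must justify that its expectation vanishes. The localization above circumvents any need for a higher-moment estimate; alternatively one could verify $\mathbb{E}\int_0^t|Z(s)|^4\,\mathrm{d}s<\infty$ on finite intervals from the moment bounds implied by $({\bf F1})$ and deduce that $M$ is a true martingale outright. Everything else reduces to a routine application of It\^o's formula combined with the sign condition $({\bf F2})$.
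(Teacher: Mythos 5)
Your proposal is correct. Note, however, that the paper does not actually prove this lemma: it is quoted verbatim from \cite[Lemma 3.7]{MR4047972}, so the "paper's proof" is a citation, and your argument supplies the missing self-contained derivation. Your route — apply It\^o's formula to the exponentially weighted functional $e^{\beta t}|Z(t)|^{2}$, observe that $({\bf F2})$ makes the resulting drift nonpositive, localize, and pass to the limit by Fatou — is precisely the technique this paper itself uses for the neighbouring results it does prove (the weight $e^{\beta t/4}$ in Lemma \ref{Lb.6}, the weight $e^{k\alpha_k t/8}$ in Lemma \ref{Lb.3.10}, and the Gr\"onwall variant in Lemma \ref{LN2.3}), so it is fully consistent with the source. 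Two small remarks. First, in the optional-stopping step you wrote $e^{\beta(t\wedge\tau_R)}\mathbb{E}|Z(t\wedge\tau_R)|^{2}\leq|y_1-y_2|^{2}$; since $\tau_R$ is random the exponential cannot be pulled outside the expectation — the correct assertion is $\mathbb{E}\big[e^{\beta(t\wedge\tau_R)}|Z(t\wedge\tau_R)|^{2}\big]\leq|y_1-y_2|^{2}$, and your subsequent Fatou argument goes through unchanged from this form, so the slip is purely notational. Second, what you prove is the pointwise-in-$t$ bound $\mathbb{E}|Z(t)|^{2}\leq e^{-\beta t}|y_1-y_2|^{2}$ for every $t\geq0$; the $\sup_{t\geq0}$ appearing on the left of the lemma as stated (with $t$ still free on the right) is a typographical artifact inherited from the source, and the pointwise bound is indeed what is meant and what is used later in the paper (e.g.\ in the proof of Lemma \ref{Lcyp2.1}).
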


By a  argument similar to that of \cite[Lemma 3.10]{MR4047972},  the upper bound of $\mathbb{E}|y^{x_1,y_0}-y^{x_2,y_0}|^2$ for any $x_1,x_2\in \RR^{n_1}$ is given as follows.
\begin{lem}\label{LN2.3}
{\rm If $({\bf F1})$-$({\bf F3})$ hold, then for any $x_1, x_2 \in \RR^{n_1}$}, $y_0\in \RR^{n_2}$,
\begin{align*}
\sup_{t\geq 0}\mathbb{E}|y^{x_1,y_0}(t)-y^{x_2,y_0}(t)|^2\leq C|x_1-x_2|^2.
\end{align*}
\end{lem}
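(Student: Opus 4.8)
The plan is to study the difference process and reduce the claim to a linear differential inequality for its second moment. Write $Z(t) := y^{x_1,y_0}(t) - y^{x_2,y_0}(t)$, which starts at $Z(0)=0$ and is driven by the \emph{same} Brownian motion $W^2$ as both frozen equations. Applying It\^o's formula to $|Z(t)|^2$ and taking expectations (the stochastic integral having zero mean by the moment bounds that $({\bf F1})$ and $({\bf F3})$ provide via Lemma \ref{LN3.6}), the martingale part drops out and one is left with
\begin{align*}
\frac{\mathrm{d}}{\mathrm{d}t}\mathbb{E}|Z(t)|^2 = \mathbb{E}\Big[2Z(t)^{T}\big(f(x_1,y^{x_1,y_0}(t)) - f(x_2,y^{x_2,y_0}(t))\big) + \big|g(x_1,y^{x_1,y_0}(t)) - g(x_2,y^{x_2,y_0}(t))\big|^2\Big].
\end{align*}
Thus everything reduces to bounding the integrand pointwise.

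The crucial device is to insert the intermediate arguments $f(x_1,y^{x_2,y_0})$ and $g(x_1,y^{x_2,y_0})$, so as to separate a frozen-parameter-matched contribution, to which the dissipativity assumption $({\bf F2})$ applies, from a parameter-mismatch contribution, which the global Lipschitz assumption $({\bf F1})$ controls. Concretely, set $A := g(x_1,y^{x_1,y_0}) - g(x_1,y^{x_2,y_0})$ and $B := g(x_1,y^{x_2,y_0}) - g(x_2,y^{x_2,y_0})$. I would group
\begin{align*}
2Z^{T}\big(f(x_1,y^{x_1,y_0}) - f(x_1,y^{x_2,y_0})\big) + |A|^2 \leq -\beta|Z|^2,
\end{align*}
which is exactly $({\bf F2})$ with common parameter $x_1$ and the pair $y^{x_1,y_0}, y^{x_2,y_0}$ (recall $Z = y^{x_1,y_0} - y^{x_2,y_0}$). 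The remaining pieces $2Z^{T}(f(x_1,y^{x_2,y_0}) - f(x_2,y^{x_2,y_0}))$, the cross term $2A^{T}B$, and $|B|^2$ are then handled by $({\bf F1})$, which gives $|f(x_1,\cdot) - f(x_2,\cdot)| \vee |B| \leq L|x_1-x_2|$ and $|A| \leq L|Z|$.

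The delicate point, and what I expect to be the main obstacle, is that the full diffusion contribution is $|A+B|^2 = |A|^2 + 2A^{T}B + |B|^2$, so the cross term $2A^{T}B$ mixes a factor $|Z|$ with a factor $|x_1-x_2|$; the drift mismatch term mixes the same two factors. Each such product must be split by Young's inequality with a sufficiently small weight so that the spawned $|Z|^2$ terms are absorbed into the $-\beta|Z|^2$ coming from $({\bf F2})$, leaving a strictly negative coefficient. Choosing the weight so the net $|Z|^2$ coefficient is $-\beta/2$ yields
\begin{align*}
\frac{\mathrm{d}}{\mathrm{d}t}\mathbb{E}|Z(t)|^2 \leq -\frac{\beta}{2}\mathbb{E}|Z(t)|^2 + C|x_1-x_2|^2,
\end{align*}
with $C$ depending only on $L$ and $\beta$.

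Finally, since $\mathbb{E}|Z(0)|^2 = 0$, the comparison (Gronwall) principle gives $\mathbb{E}|Z(t)|^2 \leq (2C/\beta)|x_1-x_2|^2\,(1 - e^{-\beta t/2})$, and taking the supremum over $t\geq 0$ produces the asserted bound with $C' = 2C/\beta$, uniformly in time. Here $({\bf F1})$ and $({\bf F2})$ carry the estimate, while $({\bf F3})$ enters only to secure the well-posedness and moment bounds of the frozen solutions used implicitly at the It\^o step.
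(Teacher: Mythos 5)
Your proposal is correct and takes essentially the same approach as the paper: the paper likewise applies It\^o's formula to $|y^{x_1,y_0}(t)-y^{x_2,y_0}(t)|^2$, inserts the intermediate arguments $f(x_1,y^{x_2,y_0})$ and $g(x_1,y^{x_2,y_0})$ so that $({\bf F2})$ (with frozen parameter $x_1$) handles the matched part and $({\bf F1})$ plus Young's inequality absorbs the drift-mismatch, cross, and $|B|^2$ terms into $-\tfrac{\beta}{2}\mathbb{E}|Z|^2 + C|x_1-x_2|^2$, and concludes by the same comparison argument. The only cosmetic difference is that the paper works with the integral (rather than differential) form of the second-moment inequality.
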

\begin{proof}\noindent\textbf {Proof.}
Note that for any $t\geq 0$,
\begin{align*}
y^{x_1,y_0}(t)-y^{x_2,y_0}(t)&=\int_{0}^{t}\big(f(x_1,y^{x_1,y_0}(s))-f(x_2,y^{x_2,y_0}(s))\big)\mathrm{d}s\nn\
\\&~~~+\int_{0}^{t}\big(g(x_1,y^{x_1,y_0}(s))-g(x_2,y^{x_2,y_0}(s))\big)\mathrm{d}W^{2}(s).
\end{align*} 
Then using the It\^o formula and {\bf (F2)} one derives that
\begin{align}\label{NN2.1}
\mathbb{E}|y^{x_1,y_0}(t)-y^{x_2,y_0}(t)|^2&= \mathbb{E}\int_{0}^{t}2(y^{x_1,y_0}(s)-y^{x_2,y_0}(s))^{T}\big(f(x_1,y^{x_1,y_0}(s))-f(x_2,y^{x_2,y_0}(s))\big)\nn\
\\&~~~+\big|g(x_1,y^{x_1,y_0}(s))-g(x_2,y^{x_2,y_0}(s))\big|^2\mathrm{d}s\nn\
\\&= \mathbb{E}\int_{0}^{t}2(y^{x_1,y_0}(s)-y^{x_2,y_0}(s))^{T}\big(f(x_1,y^{x_1,y_0}(s))-f(x_1,y^{x_2,y_0}(s))\big)\nn\
\\&~~~+\big|g(x_1,y^{x_1,y_0}(s))-g(x_1,y^{x_2,y_0}(s))\big|^2\mathrm{d}s+\mathcal{C}\nn\
\\&\leq -\beta\int_{0}^{t}\mathbb{E}|y^{x_1,y_0}(s)-y^{x_2,y_0}(s)|^2+\mathcal{C},
\end{align}
where \begin{align*}
\mathcal{C}&=2\mathbb{E}\int_{0}^{t}(y^{x_1,y_0}(s)-y^{x_2,y_0}(s))^{T}\big(f(x_1,y^{x_2,y_0}(s))-f(x_2,y^{x_2,y_0}(s))\big)\mathrm{d}s\nn\
\\&~~~+\mathbb{E}\int_{0}^{t}\big|g(x_1,y^{x_2,y_0}(s))-g(x_2,y^{x_2,y_0}(s))\big|^2\mathrm{d}s\nn\
\\&~~~+ \mathbb{E}\int_{0}^{t}2\big|g(x_1,y^{x_1,y_0}(s))-g(x_1,y^{x_2,y_0}(s))\big|\big|g(x_1,y^{x_2,y_0}(s))-g(x_2,y^{x_2,y_0}(s))\big|\mathrm{d}s.
\end{align*}
Then using {\bf (F1)} implies that
\begin{align*}
\mathcal{C}\leq (2L+2L^2)\mathbb{E}\int_ {0}^{t}|y^{x_1,y_0}(s)-y^{x_2,y_0}(s)||x_1-x_2|\mathrm{d}s+L^2|x_1-x_2|^2t.
\end{align*}
Furthermore, employing the Young inequality one obtains that 
\begin{align*}
\mathcal{C}\leq \frac{\beta}{2}\int_{0}^{t}\mathbb{E}|y^{x_1,y_0}(s)-y^{x_2,y_0}(s)|^2\mathrm{d}s+C|x_1-x_2|^2t.
\end{align*}
Inserting the above inequality into \eqref{NN2.1} shows that
\begin{align*}
\mathbb{E}|y^{x_1,y_0}(t)-y^{x_2,y_0}(t)|^2&\leq -\frac{\beta}{2}\int_{0}^{t}\mathbb{E}|y^{x_1,y_0}(s)-y^{x_2,y_0}(s)|^2\mathrm{d}s+Ct|x_1-x_2|^2,
\end{align*}
which implies the desired result.
\end{proof}

Building on Lemmas \ref{LN3.6}-\ref{LN2.3}, using the method of synchronous coupling we establish the existence and uniqueness of the invariant probability measure $\mu^{x}$ for equation \eqref{eq2.5}. Furthermore, we  estimate the deviation of  invariant probability  measures $\mu^{x_1}$ and $\mu^{x_2}$ for equation \eqref{eq2.5} with different frozen parameters $x_1,x_2 \in \RR^{n_1}$. For the completeness of this paper, we present the conclusions and the corresponding proofs needed in our paper.
\begin{lem}\label{Lcyp2.1}
{\rm If $({\bf F1})$-$({\bf F3})$ hold  {with some $k\geq2$}, then for any fixed $x\in\RR^{n_1}$,  the  transition {semigroup $\{\mathbb{P}^{x}_{t}\}_{t\geq0}$} of equation \eqref{eq2.5} has a unique invariant probability measure $\mu^{x}\in \mathcal{P}_{k}(\mathbb{R}^{n_2})$, {which satisfies that}
\begin{align}\label{cyp2.4}
\int_{\mathbb{R}^{n_2}}|y|^{k}\mu^{x}(\mathrm{d}y)\leq C(1+|x|^{k}).
\end{align} Furthermore, for any $x_1,x_2\in \mathbb{R}^{n_1}$,
\begin{align}\label{cyp2.5}
\mathbb{W}_{2}(\mu^{x_1},\mu^{x_2})\leq C|x_1-x_2|.
\end{align}}
\end{lem}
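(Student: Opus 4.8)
The plan is to exploit the three synchronous-coupling estimates in \lemref{LN3.6}--\lemref{LN2.3}: \lemref{LN3.7} supplies the $L^2$-contractivity of the frozen dynamics in the initial datum, \lemref{LN3.6} supplies uniform-in-time $k$th moment bounds, and \lemref{LN2.3} supplies Lipschitz dependence on the frozen parameter. I would first establish existence and uniqueness of $\mu^x$ from the contraction, then read off the moment bound \eqref{cyp2.4} from \lemref{LN3.6} by lower semicontinuity, and finally obtain \eqref{cyp2.5} by transporting the estimate of \lemref{LN2.3} to the invariant measures.

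For existence and uniqueness, fix $x$ and write $\delta_{y_0}\mathbb{P}^x_t$ for the law of $y^{x,y_0}(t)$. The key point is that the contraction in \lemref{LN3.7}, stated for two point-mass initial data driven by one Brownian motion, upgrades to a genuine contraction on $\mathcal{P}_2(\RR^{n_2})$: given $\mu_1,\mu_2$ one couples them optimally for $\mathbb{W}_2$, runs both copies with the same $W^2$, and integrates the bound of \lemref{LN3.7} against the optimal coupling to get $\mathbb{W}_2(\mu_1\mathbb{P}^x_t,\mu_2\mathbb{P}^x_t)\leq e^{-\beta t/2}\mathbb{W}_2(\mu_1,\mu_2)$. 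With this, I would show $\{\delta_{y_0}\mathbb{P}^x_t\}_{t\geq0}$ is $\mathbb{W}_2$-Cauchy as $t\to\infty$: using the semigroup property, $\mathbb{W}_2(\delta_{y_0}\mathbb{P}^x_{t+s},\delta_{y_0}\mathbb{P}^x_t)\leq e^{-\beta t/2}\mathbb{W}_2(\delta_{y_0}\mathbb{P}^x_s,\delta_{y_0})$, and the right-hand factor stays bounded uniformly in $s$ thanks to the uniform moment bound of \lemref{LN3.6} (with, say, $k=2$). Completeness of the Polish space $\mathcal{P}_2(\RR^{n_2})$ then yields a limit $\mu^x$; since the contraction makes $\mu\mapsto\mu\mathbb{P}^x_r$ $\mathbb{W}_2$-continuous, one has $\mu^x\mathbb{P}^x_r=\lim_t\delta_{y_0}\mathbb{P}^x_{t+r}=\mu^x$, so $\mu^x$ is invariant; uniqueness is immediate since any two invariant measures satisfy $\mathbb{W}_2(\mu,\nu)=\mathbb{W}_2(\mu\mathbb{P}^x_t,\nu\mathbb{P}^x_t)\leq e^{-\beta t/2}\mathbb{W}_2(\mu,\nu)\to0$.

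The moment bound \eqref{cyp2.4} follows by taking $y_0=0$ in \lemref{LN3.6}, which gives $\sup_{t\geq0}\E|y^{x,0}(t)|^k\leq C(1+|x|^k)$; since $\delta_0\mathbb{P}^x_t\to\mu^x$ in $\mathbb{W}_2$ (hence weakly) and these $k$th moments are uniformly bounded, lower semicontinuity of $\mu\mapsto\int|y|^k\mu(\mathrm{d}y)$ under weak convergence (a Fatou argument) gives $\int|y|^k\mu^x(\mathrm{d}y)\leq\liminf_t\E|y^{x,0}(t)|^k\leq C(1+|x|^k)$, confirming $\mu^x\in\mathcal{P}_k(\RR^{n_2})$. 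For \eqref{cyp2.5} I would couple $\mu^{x_1}$ and $\mu^{x_2}$ through the common initial point $y_0$ and the common Brownian motion, so that $\mathbb{W}_2(\mu^{x_1},\mu^{x_2})=\lim_{t\to\infty}\mathbb{W}_2(\delta_{y_0}\mathbb{P}^{x_1}_t,\delta_{y_0}\mathbb{P}^{x_2}_t)\leq\limsup_{t\to\infty}\big(\E|y^{x_1,y_0}(t)-y^{x_2,y_0}(t)|^2\big)^{1/2}\leq C|x_1-x_2|$ by \lemref{LN2.3}.

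I expect the main obstacle to be the careful upgrade of the pointwise contraction of \lemref{LN3.7} to a contraction on all of $\mathcal{P}_2(\RR^{n_2})$, together with the attendant Cauchy and convergence bookkeeping: in particular, verifying that one may drive both coupled copies by the same Brownian motion so that \lemref{LN3.7} applies pathwise, and that the $\mathbb{W}_2$-limit is legitimately invariant. Once these are in place, the two quantitative estimates \eqref{cyp2.4} and \eqref{cyp2.5} are essentially a matter of passing the given uniform-in-time bounds to the limit.
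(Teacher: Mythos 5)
Your proposal is correct and follows essentially the same route as the paper: synchronous coupling upgrades \lemref{LN3.7} to a $\mathbb{W}_2$-contraction of the semigroup, which yields both the Cauchy property of $\{\delta_{y_0}\mathbb{P}^x_t\}$ (hence existence via completeness of $\mathcal{P}_2(\RR^{n_2})$ and invariance via $\mathbb{W}_2$-continuity) and uniqueness, while \eqref{cyp2.5} is obtained exactly as in the paper by passing the bound of \lemref{LN2.3} to the limit. The only minor difference is the moment bound \eqref{cyp2.4}: you invoke lower semicontinuity of $\mu\mapsto\int|y|^k\mu(\mathrm{d}y)$ under weak convergence starting from $y_0=0$, whereas the paper uses the invariance identity applied to $|y|^k\wedge N$ together with Jensen and dominated/monotone convergence; both arguments are valid.
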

\begin{proof}\textbf {Proof.}
For any fixed $x\in \mathbb{R}^{n_1}$ and {$y_0\in \mathbb{R}^{n_2}$}, under $({\bf F3})$,  it follows from the  result of Lemma \ref{LN3.6} that
\begin{align*}
\sup_{t\geq 0}\mathbb{E}|y^{x,y_0}(t)|^{k}\leq C_{y_0}(1+|x|^{k}),
 \end{align*}
 which implies that $\delta_{y_0}\mathbb{P}^{x}_{t}\in \mathcal{P}_{k}(\mathbb{R}^{n_2})\subset  \mathcal{P}_{2}(\mathbb{R}^{n_2})$. It is well known  that for any $\mu_1,\mu_2\in \mathcal{P}_{2}(\mathbb{R}^{n_2})$
\begin{align*}
\mathbb{W}_{2}(\mu_1\mathbb{P}^{x}_{t},\mu_2\mathbb{P}^{x}_{t})&\leq\int_{\mathbb{R}^{n_2}\times \mathbb{R}^{n_2}}\mathbb{W}_{2}(\delta_{y_1}\mathbb{P}^{x}_{t},\delta_{y_2}\mathbb{P}^{x}_{t})\pi(\mathrm{d}y_1,\mathrm{d}y_2)
\\& \leq \int_{\mathbb{R}^{n_2}\times \mathbb{R}^{n_2}} \big(\mathbb{E}|y^{x,y_1}(t)-y^{x,y_2}(t)|^2\big)^{\frac{1}{2}}\pi(\mathrm{d}y_1,\mathrm{d}y_2),
\end{align*}
where $\pi\in \mathcal{C}(\mu_1,\mu_2)$, and $\delta_{y_0}$ is the Dirac measure with mass at point $y_0\in \RR^{n_2}$. Then under ({\bf F2}), using Lemma \ref{LN3.7}, one deduces that
\begin{align*}
\mathbb{W}_{2}(\mu_1\mathbb{P}^{x}_{t},\mu_2\mathbb{P}^{x}_{t})&\leq Ce^{-\frac{\beta t}{2}}\int_{\mathbb{R}^{n_2}\times \mathbb{R}^{n_2}}|y_1-y_2|\pi(\mathrm{d}y_1,\mathrm{d}y_2)\nn\
\\&\leq Ce^{-\frac{\beta t}{2} }\Big(\int_{\mathbb{R}^{n_2}\times \mathbb{R}^{n_2}}|y_1-y_2|^2\pi(\mathrm{d}y_1,\mathrm{d}y_2)\Big)^{\frac{1}{2}}.\nn\
\end{align*}
Since $\pi$ is arbitrary, we have
\begin{align*}
\mathbb{W}_{2}(\mu_1\mathbb{P}^{x}_{t},\mu_2\mathbb{P}^{x}_{t})\leq Ce^{-\frac{\beta t}{2} }\mathbb{W}_{2}(\mu_1,\mu_2),
\end{align*}
which yields the uniqueness of invariant probability measure if it exists. Next  we shall prove the existence of invariant probability measure. In fact, it is sufficient to prove that for any fixed $x\in \mathbb{R}^{n_1}$ and $y_0\in \mathbb{R}^{n_2}$, $\{\delta_{y_0} \mathbb{P}^{x}_{t}\}_{t\geq 0}$ is a $\mathbb{W}_2$-Cauchy family due to the completeness of $\mathcal{P}_{2}(\mathbb{R}^{n_2})$ space. Using the Kolmogorov-Chapman equation, Lemmas \ref{LN3.6} and \ref{LN3.7}, one derives that for any $t, s>0$,
\begin{align*}
\mathbb{W}_{2}(\delta_{y_0}\mathbb{P}^{x}_{t},\delta_{y_0}\mathbb{P}^{x}_{t+s})&= \mathbb{W}_{2}(\delta_{y_0}\mathbb{P}^{x}_{t},\delta_{y_0} \mathbb{P}^{x}_{s}\mathbb{P}^{x}_{t})
\leq Ce^{-\frac{\beta t}{2} }\mathbb{W}_{2}(\delta_{y_0},\delta_{y_0}\mathbb{P}^{x}_{s})\nn\
\\&\leq Ce^{-\frac{\beta t}{2} }\big(|y_0|^{2}+\mathbb{E}|y^{x,y_0}(s)|^{2}\big)^{\frac{1}{2}}
\leq Ce^{-\frac{\beta s}{2} }(1+|x|+|y_0|),
\end{align*}
which implies that as $t\rightarrow \infty$, $\{\delta_{y_0} \mathbb{P}^{x}_{t}\}_{t\geq 0}$ is a $\mathbb{W}_2$-Cauchy family whose limit is denoted by $\mu^{x}$. Furthermore, in view of the continuity of $\mathbb{W}_{2}$-distance (see, \cite[Corollary 6.1]{Villani}) we derive for any $t>0$
\begin{align*}
\mathbb{W}_{2}(\mu^{x}\mathbb{P}^{x}_{t},\mu^{x})=\lim_{s\rightarrow\infty}\mathbb{W}_{2}(\delta_{y_0}\mathbb{P}^{x}_{s+t},\delta_{y_0}\mathbb{P}^{x}_{s})=0,
\end{align*}
which implies that $\mu^{x}$ is indeed an invariant probability measure of $y^{x}(s)$. Furthermore, applying the invariance  of $\mu^{x}$ and Lemma \ref{LN3.6} yields that
		\begin{align*}
			\int_{\mathbb{R}^{n_2}}(|y|^{k}\wedge N)\mu^{x,\Delta_2}(\mathrm{d}y)&=\int_{\mathbb{R}^{n_2}}\mathbb{E}(|Y^{x,y}_{n,m}|^{k}\wedge N)\mu^{x,\Delta_2}(\mathrm{d}y)\notag
			\\&\leq \int_{\mathbb{R}^{n_2}}(\mathbb{E}|Y^{x,y}_{n,m}|^{k}\wedge N)\mu^{x,\Delta_2}(\mathrm{d}y)\notag
			\\&\leq\int_{\mathbb{R}^{n_2}}\Big(|y|^{k} e^{-\tilde{\alpha}_{k}t}\wedge N\Big)\mu^{x,\Delta_2}(\mathrm{d}y)+C(1+|x|^{k}),
		\end{align*}
		where  the first inequality holds  by Jensen's inequality since $x\mapsto N\wedge x,x\in \mathbb{R}$ is a convex  function. Then, taking $t\rightarrow\infty$ and using the dominated convergence theorem imply that
		\begin{align*}
			\int_{\mathbb{R}^{n_2}}(|y|^{k}\wedge N)\mu^{x,\Delta_2}(\mathrm{d}y)\leq C(1+|x|^k).
		\end{align*}
		Furthermore, letting $N\rightarrow\infty$  and applying the monotone convergence theorem, one gets
		\begin{align*}
			\int_{\mathbb{R}^{n_2}}|y|^{k}\mu^{x,\Delta_2}(\mathrm{d}y)\leq C(1+|x|^k).
		\end{align*}
In addition, using the continuity of $\mathbb{W}_{2}$ again yields that
\begin{align*}
\mathbb{W}^2_{2}(\mu^{x_1},\mu^{x_2})&=\lim_{t\rightarrow\infty}\mathbb{W}^2_{2}(\delta_{y_0}\mathbb{P}^{x_1}_{t},\delta_{y_0}\mathbb{P}^{x_2}_{t})\nn\
\\&\leq \lim_{t\rightarrow\infty}\mathbb{E}|y^{x_1,y_0}(t)-y^{x_2,y_0}(t)|^{2}\leq C|x_1-x_2|^2,
\end{align*}
where the last step follows from the  result of  Lemma \ref{LN2.3}. The proof is complete.
\end{proof}
\par  The averaging principle substantially reduces the complexity of the original system \eqref{e1}, which becomes our theoretic base to develop the explicit multiscale numerical method. To facilitate this, we cite some known results on the averaging principle  and the averaged equation.
\begin{lem}[{{\cite[Theorem 2.3]{MR4047972}}}]\label{L1}
{\rm If $({\bf S1})$-$({\bf S5})$ and $({\bf F1})$-$({\bf F3})$ hold with $k> 4\theta_{1}\vee2(\theta_2+1)\vee2\theta_3\vee2\theta_4$, then for any $0<p<k$ and $T>0$,
\begin{align*}
\lim\limits_{\varepsilon\rightarrow 0}\mathbb{E}\Big(\sup\limits_{t\in[0, T]}|x^{\varepsilon}(t)-\bar{x}(t)|^{p}\Big)=0,
\end{align*}
where $x^{\varepsilon}(t)$ and $\bar{x}(t)$ are the solutions of \eqref{e1} and \eqref{eq2.4}, respectively.}
\end{lem}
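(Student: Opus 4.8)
The plan is to run Khasminskii's time-discretization, which reduces this strong averaging principle to a quantitative ergodic estimate for the frozen equation \eqref{eq2.5}. Before anything else I would secure the two a priori ingredients the whole argument rests on: (i) moment bounds uniform in $\varepsilon$, namely $\sup_{0<\varepsilon\le1}\mathbb{E}\big(\sup_{t\in[0,T]}|x^{\varepsilon}(t)|^{q}\big)<\infty$ and $\sup_{0<\varepsilon\le1}\sup_{t\in[0,T]}\mathbb{E}|y^{\varepsilon}(t)|^{q}<\infty$ for $q$ up to $k$, obtained from the dissipativity conditions $({\bf S5})$ and $({\bf F3})$ via It\^o's formula; and (ii) the short-time regularity $\mathbb{E}|x^{\varepsilon}(t)-x^{\varepsilon}(s)|^{2}\le C|t-s|$ of the slow component (using $({\bf S3})$, $({\bf S4})$ and the bounds in (i)), together with $\sup_{t\in[0,T]}\mathbb{E}|\bar{x}(t)|^{q}<\infty$ for the averaged solution. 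These bounds are exactly what force the threshold $k>4\theta_{1}\vee2(\theta_2+1)\vee2\theta_3\vee2\theta_4$, since the polynomial factors $1+|y|^{\theta_{1}}$ and $1+|x|^{\theta_2}+\cdots$ appearing in $({\bf S1})$, $({\bf S2})$, $({\bf S4})$ must later be absorbed by H\"older's inequality against precisely these moments.

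Next I would introduce the frozen auxiliary fast process. Partition $[0,T]$ by $t_{\ell}=\ell\delta$ with a mesh $\delta=\delta(\varepsilon)$ to be chosen, write $s_{\delta}=\lfloor s/\delta\rfloor\delta$, and define $\hat{y}^{\varepsilon}$ on each cell $[t_{\ell},t_{\ell+1})$ by freezing the slow input at $x^{\varepsilon}(t_{\ell})$,
\begin{align*}
\mathrm{d}\hat{y}^{\varepsilon}(t)=\frac{1}{\varepsilon}f\big(x^{\varepsilon}(s_{\delta}),\hat{y}^{\varepsilon}(t)\big)\mathrm{d}t+\frac{1}{\sqrt{\varepsilon}}g\big(x^{\varepsilon}(s_{\delta}),\hat{y}^{\varepsilon}(t)\big)\mathrm{d}W^{2}(t),\qquad \hat{y}^{\varepsilon}(t_{\ell})=y^{\varepsilon}(t_{\ell}).
\end{align*}
Using $({\bf F2})$ and the slow regularity (ii), an It\^o estimate on each cell (the dissipative drift contracts at rate $\beta/\varepsilon$ while the forcing is driven by $\mathbb{E}|x^{\varepsilon}(s)-x^{\varepsilon}(s_\delta)|^2\le C\delta$) gives the comparison bound $\sup_{t\in[0,T]}\mathbb{E}|y^{\varepsilon}(t)-\hat{y}^{\varepsilon}(t)|^{2}\le C\delta$. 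Then, writing $x^{\varepsilon}-\bar{x}$ through It\^o's formula for \eqref{e1} and \eqref{eq2.4}, I would split the drift difference $b(x^{\varepsilon}(s),y^{\varepsilon}(s))-\bar{b}(\bar{x}(s))$ into the four pieces
\begin{align*}
\big[b(x^{\varepsilon}(s),y^{\varepsilon}(s))-b(x^{\varepsilon}(s),\hat{y}^{\varepsilon}(s))\big]+\big[b(x^{\varepsilon}(s),\hat{y}^{\varepsilon}(s))-\bar{b}(x^{\varepsilon}(s_{\delta}))\big]+\big[\bar{b}(x^{\varepsilon}(s_{\delta}))-\bar{b}(x^{\varepsilon}(s))\big]+\big[\bar{b}(x^{\varepsilon}(s))-\bar{b}(\bar{x}(s))\big].
\end{align*}
The first piece is handled by $({\bf S2})$ and the $y^{\varepsilon}-\hat{y}^{\varepsilon}$ bound; the third by the local Lipschitzness of $\bar{b}$ (inherited from $({\bf S1})$ and $({\bf S2})$ through \eqref{f11}, the moment bound \eqref{cyp2.4} and the measure-stability \eqref{cyp2.5} of Lemma \ref{Lcyp2.1}) combined with the $\sqrt{\delta}$ regularity of $x^\varepsilon$; the fourth feeds the Gronwall term; and the $\sigma$-stochastic integral is controlled by Burkholder--Davis--Gundy and the local Lipschitzness of $\sigma$ in $({\bf S1})$.

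The heart of the proof is the second, \emph{averaging}, piece. Conditionally on $\mathcal{F}_{t_{\ell}}$, the process $\hat{y}^{\varepsilon}$ on $[t_{\ell},t_{\ell+1})$ is a copy of the frozen solution $y^{x^{\varepsilon}(t_{\ell}),\,y^{\varepsilon}(t_{\ell})}$ run on the fast clock $t/\varepsilon$, so I would estimate
\begin{align*}
\mathbb{E}\Big|\int_{t_{\ell}}^{t_{\ell+1}}\big(b(x^{\varepsilon}(t_{\ell}),\hat{y}^{\varepsilon}(s))-\bar{b}(x^{\varepsilon}(t_{\ell}))\big)\mathrm{d}s\Big|^{2}
\end{align*}
by expanding the square into a double time integral, conditioning on the earlier time point, and invoking the Markov property together with the exponential contraction of Lemma \ref{LN3.7}, the stability of $\mu^{x}$ in $x$ (Lemma \ref{LN2.3} and \eqref{cyp2.5}) and the characterisation $\bar{b}(x)=\int_{\mathbb{R}^{n_2}}b(x,y)\mu^{x}(\mathrm{d}y)$ with its moment bound \eqref{cyp2.4}. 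After the rescaling $s\mapsto(s-t_\ell)/\varepsilon$ this is a time average over a window of length $\delta/\varepsilon\to\infty$, and the exponentially decaying autocorrelation yields a per-cell bound of order $\varepsilon\delta$; summing over the $\sim T/\delta$ cells via Cauchy--Schwarz gives a total contribution of order $\varepsilon/\delta$. Balancing this $O(\varepsilon/\delta)$ against the $O(\sqrt{\delta})$ errors of the other pieces, any mesh with $\delta\to0$ and $\varepsilon/\delta\to0$ (e.g. $\delta=\sqrt{\varepsilon}$) drives every contribution to zero as $\varepsilon\to0$.

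Finally I would close the estimate. Because $b$, $\sigma$ and $\bar{b}$ are only locally Lipschitz with super-linear growth, a direct Gronwall inequality is unavailable; instead I would carry the whole decomposition up to the stopping time $\tau_{R}=\inf\{t\ge0:|x^{\varepsilon}(t)|\vee|\bar{x}(t)|\ge R\}$, on which all Lipschitz constants reduce to the fixed $L_{R}$, apply Gronwall there to obtain $\mathbb{E}\big(\sup_{t\le T\wedge\tau_{R}}|x^{\varepsilon}(t)-\bar{x}(t)|^{p}\big)\to0$ for the fixed $p\in(0,k)$ (using BDG in the $p$th moment), and then remove the localisation by splitting on $\{\tau_{R}\le T\}$, bounding $\mathbb{P}(\tau_{R}\le T)$ uniformly in $\varepsilon$ through Chebyshev and the moment bounds of step (i), and sending $R\to\infty$; the remaining range of $p$ is covered by uniform integrability from the higher moments. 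The main obstacle is precisely the interplay between super-linearity and ergodic averaging: the polynomial factors in $({\bf S1})$--$({\bf S4})$ must be absorbed by moments that are themselves uniform in $\varepsilon$, which is what dictates the sharp threshold on $k$ and makes the stopping-time bookkeeping inside the averaging term delicate.
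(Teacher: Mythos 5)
The paper itself offers no proof of this lemma---it is imported verbatim as \cite[Theorem 2.3]{MR4047972}---so the only meaningful comparison is with that reference's argument, which is exactly the classical Khasminskii scheme. Your sketch reconstructs that route faithfully and correctly: uniform-in-$\varepsilon$ moment bounds from $({\bf S5})$/$({\bf F3})$, the $\sqrt{\delta}$ time-regularity of the slow component, the cell-wise frozen auxiliary fast process with $\sup_t\mathbb{E}|y^{\varepsilon}(t)-\hat{y}^{\varepsilon}(t)|^{2}\leq C\delta$, the four-term drift decomposition whose averaging piece is controlled by exponential mixing (Lemmas \ref{LN3.7}, \ref{LN2.3} and \eqref{cyp2.4}--\eqref{cyp2.5}) at total order $\varepsilon/\delta$, and finally Gronwall up to a stopping time $\tau_{R}$ with de-localisation via Chebyshev and the uniform moments---so your proposal is correct and takes essentially the same approach as the cited source.
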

\par By virtue of Lemma \ref{Lcyp2.1}, we show that  the drift term $\bar{b}(\cdot)$ of the averaged equation \eqref{eq2.4} inherits the locally Lipschitz continuity like ({\bf{S1}}).
\begin{lem}\label{L3.3}
	{\rm Under $({\bf S1})$, $({\bf S2})$, $({\bf S4})$ and $({\bf F1})$-$({\bf F3})$ with $k\geq{2}\vee\theta_1\vee2\theta_2\vee\theta_4$,
		for any $R>0$, there exists a constant $\bar{L}_{R}$ such that for any $x_1, x_2\in \mathbb{R}^{n_1}$ with $|x_1|\vee|x_2|\leq R$,
		\begin{align*}
			|\bar{b}(x_1)-\bar{b}(x_2)|\leq \bar{L}_{R}|x_1-x_2|.
	\end{align*}}
\end{lem}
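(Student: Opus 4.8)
The plan is to represent the difference $\bar b(x_1)-\bar b(x_2)$ through a coupling of the two invariant measures and split off the dependence on the slow and fast variables separately. Fix $R>0$ and $x_1,x_2\in\RR^{n_1}$ with $|x_1|\vee|x_2|\le R$, and let $\pi\in\mathcal{C}(\mu^{x_1},\mu^{x_2})$ be an arbitrary coupling. Since the marginals of $\pi$ are $\mu^{x_1}$ and $\mu^{x_2}$,
\begin{align*}
\bar b(x_1)-\bar b(x_2)=\int_{\RR^{n_2}\times\RR^{n_2}}\big(b(x_1,y_1)-b(x_2,y_2)\big)\pi(\mathrm{d}y_1,\mathrm{d}y_2),
\end{align*}
and I would insert the intermediate term $b(x_2,y_1)$ to split the integrand as $[b(x_1,y_1)-b(x_2,y_1)]+[b(x_2,y_1)-b(x_2,y_2)]$.

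For the first piece I would apply $({\bf S1})$ to obtain $|b(x_1,y_1)-b(x_2,y_1)|\le L_R|x_1-x_2|(1+|y_1|^{\theta_1})$ and integrate against the first marginal $\mu^{x_1}$. Jensen's inequality combined with the moment estimate \eqref{cyp2.4} (valid since $k\ge\theta_1$) yields $\int_{\RR^{n_2}}|y_1|^{\theta_1}\mu^{x_1}(\mathrm{d}y_1)\le C(1+|x_1|^{\theta_1})$, which is bounded by a constant depending only on $R$; thus this piece contributes at most $C_R|x_1-x_2|$, uniformly in the choice of $\pi$.

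For the second piece I would use $({\bf S2})$, namely $|b(x_2,y_1)-b(x_2,y_2)|\le K_1|y_1-y_2|(1+|x_2|^{\theta_2}+|y_1|^{\theta_2}+|y_2|^{\theta_2})$, followed by the Cauchy--Schwarz inequality with respect to $\pi$. The factor $\big(\int_{\RR^{n_2}\times\RR^{n_2}}|y_1-y_2|^2\pi(\mathrm{d}y_1,\mathrm{d}y_2)\big)^{1/2}$ is controlled, after taking the infimum over $\pi$, by $\mathbb{W}_2(\mu^{x_1},\mu^{x_2})\le C|x_1-x_2|$ from \eqref{cyp2.5}; the complementary factor involving $(1+|x_2|^{\theta_2}+|y_1|^{\theta_2}+|y_2|^{\theta_2})^2$ is controlled by the $2\theta_2$-th moments of $\mu^{x_1}$ and $\mu^{x_2}$, which are finite and $R$-bounded thanks to \eqref{cyp2.4} and $k\ge2\theta_2$. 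Since the first piece is already independent of $\pi$, passing to the infimum over $\pi$ in the combined estimate delivers the local Lipschitz bound with $\bar L_R=C_R$.

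I expect no serious obstacle here: the essential inputs are the Wasserstein bound \eqref{cyp2.5} from Lemma \ref{Lcyp2.1} together with the moment estimate \eqref{cyp2.4}, and the remainder is bookkeeping of the $R$-dependence of the constants. The role of the hypothesis $k\ge 2\vee\theta_1\vee2\theta_2\vee\theta_4$ is transparent: $\theta_4$ (through $({\bf S4})$) makes $\bar b$ finite, $\theta_1$ and $2\theta_2$ guarantee that the two moment integrals converge, and the ``$2$'' places $\mu^{x_1},\mu^{x_2}$ in $\mathcal{P}_2$ so that $\mathbb{W}_2$ is meaningful. The only mild care needed is that the constant arising from the first piece does not depend on $\pi$, which is precisely what permits passing to the infimum without disturbing it.
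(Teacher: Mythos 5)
Your proposal is correct and follows essentially the same route as the paper's proof: the identical coupling representation, the same insertion of $b(x_2,y_1)$ to split the difference, $({\bf S1})$ plus the moment bound \eqref{cyp2.4} for the slow-variable piece, and $({\bf S2})$ with Cauchy--Schwarz and the Wasserstein estimate \eqref{cyp2.5} from Lemma~\ref{Lcyp2.1} for the fast-variable piece, concluding by arbitrariness of the coupling. The only difference is expository: you make explicit the bookkeeping about which factors are $\pi$-independent before passing to the infimum, which the paper handles implicitly with ``since $\pi$ is arbitrary.''
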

\begin{proof}\noindent\textbf {Proof.}
Under $({\bf S4})$ and  $({\bf F1})$-$({\bf F3})$ with $k\geq \theta_4$, it follows from \eqref{f11}  that 
\begin{align}\label{eqN2.4}
|\bar{b}(x)|\leq\int_{\RR^{n_2}} |b(x,y)|\mu^{x}(\mathrm{d}y)\leq K_3\int_{\RR^{n_2}}(1+|x|^{\theta_3}+|y|^{\theta_4})\mu^{x}(\mathrm{d}y)<\infty,~~~\forall x\in \RR^{n_1}.
\end{align}
	For any $x_1,x_2\in \mathbb{R}^{n_1}$, according to \eqref{f11} we have
	\begin{align*}
		|\bar{b}(x_1)-\bar{b}(x_2)|&=\Big|\int_{\mathbb{R}^{n_1}\times\mathbb{R}^{n_2}}(b(x_1,y_1)-b(x_2,y_2))\pi(\mathrm{d}y_1,\mathrm{d}y_2)\Big|\notag
		\\&\leq\int_{\mathbb{R}^{n_1}\times\mathbb{R}^{n_2}}\big|b(x_1,y_1)-b(x_2,y_2)|\pi(\mathrm{d}y_1,\mathrm{d}y_2)\notag
		\\&\leq \int_{\mathbb{R}^{n_1}\times\mathbb{R}^{n_2}}\big|b(x_1,y_1)-b(x_2,y_1)|\pi(\mathrm{d}y_1,\mathrm{d}y_2)\notag
		\\&~~~+\int_{\mathbb{R}^{n_1}\times\mathbb{R}^{n_2}}\big|b(x_2,y_1)-b(x_2,y_2)|\pi(\mathrm{d}y_1,\mathrm{d}y_2),
	\end{align*}
	{where} $\pi \in C(\mu^{x_1},\mu^{x_2})$ is arbitrary. Then for any $R>0$  and  $x_1,x_2\in \mathbb{R}^{n_1}$ with $|x_1|\vee|x_2|\leq R$, by the H\"older inequality it follows from $({\bf S1})$ and $({\bf S2})$ that
	\begin{align*}
		|\bar{b}(x_1)-\bar{b}(x_2)|&\leq L_{R}|x_{1}-x_{2}|\int_{\mathbb{R}^{n_2}} (1+|y_1|^{\theta_{1}})\mu^{x_1}(\mathrm{d}y_1)\notag
		\\&~~~+K_{1}\int_{\mathbb{R}^{n_1}\times \mathbb{R}^{n_2}}|y_{1}-y_{2}|\big(1+|x_2|^{\theta_2}
		+|y_{1}|^{\theta_2}+|y_{2}|^{\theta_2}\big)\pi(\mathrm{d}y_1,\mathrm{d}y_2)\notag
		\\& \leq L_{R}|x_{1}-x_{2}|\int_{\mathbb{R}^{n_2}} (1+|y_1|^{\theta_{1}})\mu^{x_1}(\mathrm{d}y_1)\notag
		\\&~~~+K_{1}\Big(\int_{\mathbb{R}^{n_1}\times \mathbb{R}^{n_2}}|y_{1}-y_{2}|^2\pi(\mathrm{d}y_1,\mathrm{d}y_2)\Big)^{\frac{1}{2}}\notag
		\\&~~~~~~~~~\times\Big(\int_{\mathbb{R}^{n_1}\times \mathbb{R}^{n_2}}\big(1+|x_2|^{2\theta_2}
		+|y_{1}|^{2\theta_2}+|y_{2}|^{2\theta_2}\big)\pi(\mathrm{d}y_1,\mathrm{d}y_2)\Big)^{\frac{1}{2}}.
	\end{align*}
	Since $\pi$ is arbitrary, under  $({\bf F1})$-$({\bf F3})$ with $k\geq\theta_1\vee2\theta_2$,   applying Lemma \ref{Lcyp2.1} yields that
	\begin{align*}
		|\bar{b}(x_1)-\bar{b}(x_2)|&\leq C_{R}|x_{1}-x_{2}|(1+|x_1|^{\theta_1})+C\mathbb{W}_{2}(\mu^{x_1},\mu^{x_2})(1+|x_1|^{\theta_2}+|x_2|^{\theta_2})\notag
		\\& \leq C_{R}|x_{1}-x_{2}|+C_{R}\mathbb{W}_{2}(\mu^{x_1},\mu^{x_2})\leq C_{R}|x_{1}-x_{2}|
	\end{align*}
for any $x_1,x_2\in \mathbb{R}^{n_1}$ with $|x_1|\vee|x_2|\leq R$, which implies the desired result.
\end{proof}
\par   By virtue of Lemma 2.1 we yield that the averaged coefficient $\bar{b}$  satisfies the Khasminskii-like condition, similarly to  ({\bf{S5}}),
which implies that  the solution $\bar{x}(t)$ of the averaged equation  {\color{red}has bounded  moments}. 
To avoid duplication we omit the proof. 
\begin{lem}\label{le3.1}
	{\rm If $({\bf S4})$, $({\bf S5})$ and $({\bf F1})$-$({\bf F3})$ hold with $k\geq{2}\vee\theta_4$, then 
		\begin{align*}
			x^{T}\bar{b}(x)\leq C(1+|x|^{2}),~~~~x\in \mathbb{R}^{n_1}.
	\end{align*}}
\end{lem}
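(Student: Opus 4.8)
The plan is to push the Khasminskii-type bound $({\bf S5})$ through the averaging integral \eqref{f11} and then absorb the resulting second moment of $\mu^{x}$ using the moment estimate \eqref{cyp2.4} from Lemma \ref{Lcyp2.1}. In other words, the averaged drift inherits $({\bf S5})$ simply because $\mu^{x}$ is a probability measure with second moment controlled linearly in $|x|^2$.

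First I would check that $\bar b(x)$ is well defined and that the scalar factor $x^{T}$ can be taken inside the integral. By $({\bf S4})$ one has $|b(x,y)|\le K_3(1+|x|^{\theta_3}+|y|^{\theta_4})$, and since $k\ge\theta_4$ the bound \eqref{cyp2.4} ensures $\int_{\RR^{n_2}}|y|^{\theta_4}\mu^{x}(\mathrm{d}y)<\infty$; hence $\int_{\RR^{n_2}}|b(x,y)|\mu^{x}(\mathrm{d}y)<\infty$ and, by linearity of the integral,
\[
x^{T}\bar b(x)=\int_{\RR^{n_2}}x^{T}b(x,y)\,\mu^{x}(\mathrm{d}y).
\]

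Next I would apply $({\bf S5})$ pointwise in $y$ under the integral sign and use that $\mu^{x}$ is a probability measure to obtain
\[
x^{T}\bar b(x)\le\int_{\RR^{n_2}}\big(K_4(1+|x|^2)+\lambda|y|^2\big)\mu^{x}(\mathrm{d}y)=K_4(1+|x|^2)+\lambda\int_{\RR^{n_2}}|y|^2\mu^{x}(\mathrm{d}y).
\]
It then remains to control the second moment of $\mu^{x}$. Since $k\ge2$, Jensen's inequality applied to the convex map $t\mapsto t^{k/2}$ together with \eqref{cyp2.4} gives
\[
\int_{\RR^{n_2}}|y|^2\mu^{x}(\mathrm{d}y)\le\Big(\int_{\RR^{n_2}}|y|^{k}\mu^{x}(\mathrm{d}y)\Big)^{2/k}\le\big(C(1+|x|^{k})\big)^{2/k}\le C(1+|x|^2),
\]
where the last step uses subadditivity of $t\mapsto t^{2/k}$ for $2/k\le1$. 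Substituting this back yields $x^{T}\bar b(x)\le C(1+|x|^2)$, which is precisely the claim.

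The argument is essentially routine, so there is no serious obstacle; the only point requiring a little care is extracting exactly the quadratic power $|x|^2$ (and not a higher power) in the moment bound, which is where both the hypothesis $k\ge2$ and the linear-in-$|x|^{k}$ form of \eqref{cyp2.4} are used. One could alternatively route the second-moment control through Lemma \ref{LN3.6} directly, but passing through \eqref{cyp2.4} and Jensen keeps the dependence on $x$ transparent and avoids duplicating the ergodicity estimates already established.
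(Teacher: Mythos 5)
Your proposal is correct and follows essentially the same route as the paper: well-definedness of $\bar b(x)$ from $({\bf S4})$ together with the moment bound \eqref{cyp2.4}, then pushing $({\bf S5})$ through the averaging integral, and finally controlling $\int_{\RR^{n_2}}|y|^{2}\mu^{x}(\mathrm{d}y)$ via Lemma \ref{Lcyp2.1}. The only difference is cosmetic: the paper invokes Lemma \ref{Lcyp2.1} tersely at the last step, whereas you spell out the Jensen/H\"older interpolation from the $k$th moment down to the second moment, which is exactly the detail the paper leaves implicit.
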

{\color{red}
\begin{proof}\noindent\textbf {Proof.}
Under $({\bf S4})$ and  $({\bf F1})$-$({\bf F3})$ with $k\geq \theta_4$,
it follows from  \eqref{eqN2.4}  that $\bar{b}(x)$ is well defined for any $x\in \mathbb{R}^{n_{1}}$. Then by Assumption ({\bf{S5}}) one obtains that
	\begin{align*}
		x^{T}\bar{b}(x)&= x^{T}\int_{\mathbb{R}^{n_{2}}} b(x,y)\mu^{x}(\mathrm{d}y)\notag
		\leq K_4(1+|x|^{2})+ \lambda\int_{\mathbb{R}^{n_{2}}}|y|^{2}\mu^{x}(\mathrm{d}y).
	\end{align*}
	Thus, applying  Lemma \ref{Lcyp2.1} yields
	\begin{align*}
		x^{T}\bar{b}(x)\leq C(1+|x|^{2}),~~~~x\in \mathbb{R}^{n_1}. 
	\end{align*}
The proof is complete.
\end{proof}}
\begin{lem}[{{\cite[Lemma 3.11]{MR4047972}}}]\label{L2}
{\rm If $({\bf S1})$-$({\bf S5})$ and $({\bf F1})$-$({\bf F3})$ hold with $k\geq {2}\vee\theta_{1}\vee2\theta_2\vee\theta_4$,
then for any $x_0\in \RR^{n_1}$, the averaged equation (\ref{eq2.4}) has a unique global solution $\bar{x}(t)$ satisfying
\begin{align*}
\mathbb{E}\Big(\sup\limits_{0\leq t\leq T}|\bar{x}(t)|^{p}\Big)\leq {C_{x_0,T,p}},~~~~\forall ~p>0,~T>0.
\end{align*}}
\end{lem}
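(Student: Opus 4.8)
The plan is to treat the averaged equation \eqref{eq2.4} as a standard It\^o SDE whose coefficients are locally Lipschitz and obey a Khasminskii-type monotonicity condition, and then to run the classical Khasminskii non-explosion and moment argument.

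First I would record the regularity of the two coefficients. Lemma \ref{L3.3} already shows that $\bar b(\cdot)$ is locally Lipschitz on every ball $\{|x|\le R\}$. For $\sigma$, putting $y=0$ in $({\bf S1})$ gives $|\sigma(x_1)-\sigma(x_2)|\le L_R|x_1-x_2|$ whenever $|x_1|\vee|x_2|\le R$, so $\sigma$ is locally Lipschitz as well. Standard SDE theory then yields a unique maximal local solution $\bar x(t)$ on a random interval $[0,\tau_e)$, where $\tau_e$ is the explosion time; it remains to prove $\tau_e=\infty$ a.s.\ and to bound the moments.

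The heart of the argument is an energy estimate. For $p\ge 2$ I would take the Lyapunov function $V(x)=(1+|x|^2)^{p/2}$ and compute its infinitesimal generator along \eqref{eq2.4}. The drift contribution $p(1+|x|^2)^{p/2-1}x^{T}\bar b(x)$ is controlled by the Khasminskii-type bound of Lemma \ref{le3.1}, namely $x^{T}\bar b(x)\le C(1+|x|^2)$, while the diffusion contribution is controlled by the linear growth $({\bf S3})$, $|\sigma(x)|\le K_2(1+|x|)$. Together these give $\mathcal L V(x)\le C_p V(x)$. Introducing the stopping times $\tau_R=\inf\{t\ge 0:|\bar x(t)|\ge R\}$, applying It\^o's formula up to $t\wedge\tau_R$ and taking expectations (the stochastic integral being a genuine martingale on $[0,t\wedge\tau_R]$ since $|\bar x|\le R$ there), Gronwall's inequality yields $\E V(\bar x(t\wedge\tau_R))\le V(x_0)e^{C_p t}$. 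Letting $R\to\infty$, the bound $(1+R^2)^{p/2}\,\PP(\tau_R\le T)\le V(x_0)e^{C_pT}$ forces $\PP(\tau_R\le T)\to 0$, hence $\tau_e=\infty$ a.s., and Fatou's lemma gives finiteness of the $p$th moment at each fixed time.

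To upgrade to the supremum bound I would, before taking expectations, apply the Burkholder--Davis--Gundy inequality to the martingale term; using $|\nabla V(x)|^2|\sigma(x)|^2\le C_p V(x)^2$ together with Young's inequality absorbs a factor $\tfrac12\E\sup_{s\le t\wedge\tau_R}V(\bar x(s))$ into the left-hand side, and a final Gronwall step gives $\E\sup_{0\le t\le T\wedge\tau_R}V(\bar x(t))\le C_{x_0,T,p}$ uniformly in $R$. Sending $R\to\infty$ by Fatou and noting $|x|^p\le V(x)$ delivers the claim for $p\ge2$, and the range $0<p<2$ follows at once from Jensen's inequality. The only genuine obstacle is the super-linearity of $\bar b$, which rules out any global Lipschitz shortcut; the key point is that Lemma \ref{le3.1} packages precisely the monotonicity needed to close the generator estimate, so once that lemma is available the remainder is routine.
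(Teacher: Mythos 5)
Your proposal is correct, but there is nothing internal to compare it against: the paper offers no proof of Lemma~\ref{L2} at all, quoting it directly from \cite[Lemma 3.11]{MR4047972}. Your argument is therefore a self-contained reconstruction inside the paper's own framework, and it goes through: local Lipschitz continuity of the coefficients (Lemma~\ref{L3.3} for $\bar b$, and $({\bf S1})$ with $y$ frozen for $\sigma$) gives a unique maximal local solution; the Khasminskii-type bound $x^{T}\bar b(x)\le C(1+|x|^{2})$ from the paper's Section~2 lemma, combined with the linear growth $({\bf S3})$ of $\sigma$, yields $\mathcal{L}V\le C_{p}V$ for $V(x)=(1+|x|^{2})^{p/2}$, hence non-explosion and fixed-time moments via stopping times and Gronwall; the Burkholder--Davis--Gundy/Young absorption upgrades this to the supremum bound (your estimate $|\nabla V(x)|^{2}|\sigma(x)|^{2}\le C_{p}V(x)^{2}$ is valid since $|\nabla V(x)|\le p(1+|x|^{2})^{(p-1)/2}$), and Jensen covers $0<p<2$. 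The hypotheses you invoke are all available: Lemma~\ref{L3.3} needs exactly $k\ge 2\vee\theta_{1}\vee 2\theta_{2}\vee\theta_{4}$, and the Khasminskii lemma only $k\ge 2\vee\theta_{4}$, both contained in the statement's assumption. What your route buys is self-containedness and transparency about which hypotheses enter where, whereas the paper's citation defers to a reference proved in a somewhat different (time-dependent locally Lipschitz) setting. One small caveat: the label \texttt{le3.1} is defined twice in the source (once in Section~2 for $\bar b$ itself, once in Section~5 for the truncated $\bar b(T_{\Delta_{1}}(\cdot))$); the version your generator estimate requires is the Section~2 one, so a reference by number alone may resolve to the wrong lemma.
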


\section{The construction of  explicit multiscale scheme}\label{s-c3}

With the help of the strong averaging principle,  this section is devoted to constructing an easily implementable multiscale numerical scheme for the slow component of original SFSDE \eqref{e1}.  One notices from $({\bf S4})$ that for any $y\in \mathbb{R}^{n_2}$,
\begin{align*}
	|b(x,y)|&\leq K_3(1+|x|)(1+|x|^{\theta_{3}-1})+K_3|y|^{\theta_4}.
\end{align*}
Then for any $u\geq1$ and $x\in \RR^{n_1}$ with $|x|\leq u$, one has
\begin{align}\label{a15}
	|b(x,y)|\leq K_3\varphi(u)(1+|x|)+K_3|y|^{\theta_4},~~\forall y\in \RR^{n_2},
\end{align}
where $\varphi(u)=1+u^{(\theta_3\vee\theta_4-1)}$, and $\theta_3,\theta_4$ are given in $({\bf S4})$. Thus, $\varphi^{-1}(u)=(u-1)^{\frac{1}{\theta_3\vee\theta_4-1}}, u\geq 1$.
Then for any step size $\Delta_{1}\in(0,1]$,  define 
\begin{align}\label{eqN3.2}
	T_{\Delta_1}(x):=\Big(|x|\wedge \varphi^{-1}\big(K\Delta_1^{-\frac{1}{2}}\big)\Big)\frac{x}{|x|},~~~~~~x\in\mathbb{R}^{n_1},
\end{align}
where $x/|x|={\bf 0}\in \mathbb{R}^{n_1}$ if $x={\bf 0}$, and $K$
is a constant satisfying $K\geq 1+{\varphi(|x_0|)}$.  
Combining \eqref{a15} and \eqref{eqN3.2} implies that for any $x\in \mathbb{R}^{n_1}$,
\begin{align}\label{cyp3.3}
	|b(T_{\Delta_1}(x),y)| \leq C\Delta_1^{-\frac{1}{2}}(1+|T_{\Delta_1}(x)|)+K_3|y|^{\theta_4}.
\end{align}
Moreover, under ({\bf S4}), ({\bf F1})-({\bf F3}) with $k\geq{2\vee\theta_4}$ by the definition \eqref{f11} we derive from the above inequality and \eqref{cyp2.4} that
\begin{align}\label{c3.12} |\bar{b}(T_{\Delta_1}(x))|&=\Big|\int_{\mathbb{R}^{n_{2}}}b(T_{\Delta_1}(x),y)
	\mu^{T_{\Delta_1}(x)}(\mathrm{d}y)\Big|\leq  \int_{\mathbb{R}^{n_{2}}}|b(T_{\Delta_1}(x),y)|
	\mu^{T_{\Delta_1}(x)}(\mathrm{d}y)\notag
	\\&\leq C\Delta_1^{-\frac{1}{2}}(1+|T_{\Delta_1}(x)|)+K_3\int_{\mathbb{R}^{n_2}}|y|^{\theta_4}\mu^{T_{\Delta_1}(x)}(\mathrm{d}y)\notag
	\\&\leq C\Delta_1^{-\frac{1}{2}}(1+|T_{\Delta_1}(x)|)+C(1+|T_{\Delta_1}(x)|^{\theta_4-1})(1+|T_{\Delta_1}(x)|)\notag
	\\&\leq C\Delta_1^{-\frac{1}{2}}(1+|T_{\Delta_1}(x)|) ,~~~~x\in \RR^{n_1},
\end{align}
where $\mu^{T_{\Delta_1}(x)}$ is the unique invariant probability measure of the equation \eqref{eq2.5} with frozen parameter $T_{\Delta_1}(x)$, and the last step follows from $\varphi$ being increasing.
{\rm \begin{rem}
Obviously,  the truncation mapping $T_{\Delta_1}$ depends not only on $\Delta_1$  but also  $K$ and  initial value $(x_0,y_0)$. But for short, we omit  $K$ and $(x_0,y_0)$  from the notation $T_{\Delta_1}$. 
\end{rem} }
\par Because the close form of $\bar{b}(x)$ is not known in general, we need construct an estimator $\tilde{b}(x)$
to approximate it. Using the ergodicity of the equation \eqref{eq2.5}, we  construct estimator  $\tilde{b}(x)$ by the time average of $b(x,\cdot)$ with respect to the numerical solution of the  equation \eqref{eq2.5} with the frozen parameter $x$. For convenience, for an integer $M>0$,  define
\begin{align}\label{3.9}
	B_{M}(x,h)=\frac{1}{M}\sum_{m=1}^{M}b(x,h_m), ~~~\forall x\in \RR^{n_1},~
\end{align}
where $h=\{h_m\}_{m=1}^{\infty}$ is an $\RR^{n_2}$-valued sequence.
Within the framework of HMM, we design an easily implementable  multiscale numerical scheme involving a macro solver and  a micro solver as well as  an estimator. For clarity, we describe it as follows.
Let $\Delta_1$ and $\Delta_2$ denote macro time step size and micro time  step size, respectively.
\begin{itemize}
	\item[(1)]
	Macro solver:
	For the known $X_{n}$,  since the drift coefficient $\bar{b}(x)$ of the averaged equation may be super-linear, the truncated EM scheme \cite{MR3370415}  is selected as macro solver to evolve the averaged equation \eqref{eq2.4} as follows:
	\begin{equation*}
		X_{n+1}=X_{n}+\tilde{b}(T_{\Delta_1}(X_{n}))\Delta_1+\sigma(X_{n})\Delta W^{1}_{n},
	\end{equation*}
	where $\tilde{b}(T_{\Delta_1}(X_{n}))$  given in  \eqref{NNq3.6} is an  estimator of the truncated coefficient $\bar{b}(T_{\Delta_1}(X_{n}))$ 
	and $\Delta W^{1}_{n}=W^{1}((n+1)\Delta_1)-W^{1}(n\Delta_1)$.
	\item[(2)] Micro solver: To obtain the  approximation data  of constructing estimator $\tilde{b}(T_{\Delta_1}(X_{n}))$ at each macro time step, for the known $X_{n}\in \RR^{n_1}$, use the EM method to solve the equation \eqref{eq2.5} with frozen parameter $x=T_{\Delta_1}(X_{n})$.  Therefore,  the micro solver is given by
\begin{equation*}
\begin{cases}
\!\!Y^{T_{\Delta_1}(X_{n}),y_0}_{m+1}\!=\!Y^{T_{\Delta_1}(X_{n}),y_0}_{m} \!+\!f\big(T_{\Delta_1}(X_n)\!,\!Y^{T_{\Delta_1}(X_n),y_0}_{m}\big)\Delta_2\!+\!g\big(T_{\Delta_1}(X_n)\!,\!Y^{T_{\Delta_1}(X_n),y_0}_{m}\big)\Delta W^{2}_{n,m},\\
\!\!Y^{T_{\Delta_1}(X_{n}),y_0}_{0}=y_0, ~~m=0,1,\cdot\cdot\cdot,
\end{cases}
\end{equation*}
	where $\{W^{2}_{n}(\cdot)\}_{n\geq 0}$ is a mutually independent Brownian motion sequence and also independent of $W^{1}(t)$, and $\Delta W^{2}_{n,m}=W^{2}_n((m+1)\Delta_{2})-W^{2}_n(m\Delta_{2})$.
	
	\item[(3)] Estimator: For the known $X_{n}$ and $Y^{T_{\Delta_1}(X_{n}),y_0}:= \big\{ Y^{T_{\Delta_1}(X_n),y_0}_{m}\big\}_{m=1}^{\infty}$, define  	
\begin{align}\label{NNq3.6}
\tilde{b}(T_{\Delta_1}(X_{n}))=B_{M}(T_{\Delta_1}(X_{n}),Y^{T_{\Delta_1}(X_{n}),y_0})
\end{align} as the {\color{red}estimator} of $\bar{b}(T_{\Delta_1}(X_{n}))$, where $B_{M}(\cdot,\cdot)$ is defined by \eqref{3.9} and $M$ denotes the number of  micro time steps used for this approximation.
\end{itemize}
\begin{rem}
In describing the construction process of the multiscale scheme, we introduce the symbol $\tilde{b}$ to represent the estimator of $\bar{b}$. Furthermore, we define the specific form of the estimator of $\bar{b}(T_{\Delta_1}(X_n))$ as $\tilde{b}(T_{\Delta_1}(X_{n}))=B_{M}(T_{\Delta_1}(X_{n}),Y^{T_{\Delta_1}(X_{n}),y_0})$.
 Since symbol $B_{M}(T_{\Delta_1}(X_{n}),Y^{T_{\Delta_1}(X_{n}),y_0})$ contains more comprehensive estimation information, we directly use it
to denote the estimator of $\bar{b}(T_{\Delta_1}(X_{n}))$ from this point onward.
  \end{rem}
  
Overall, for any given $\Delta_1, \Delta_2\in (0,1]$ and integer $M\geq1$,  define multiscale TEM (MTEM) scheme as follows: for any $n\geq0$,
\begin{subequations}\label{e5}
\begin{numcases}{}
X_{0}=x_{0},~T_{\Delta_1}(X_{n})=\Big(|X_{n}|\wedge \varphi^{-1}\big(K\Delta_1^{-\frac{1}{2}}\big)\Big)\frac{X_{n}}{|X_{n}|},~Y^{T_{\Delta_1}(X_{n}),y_0}_{0}=y_{0},\\
Y^{T_{\Delta_1}(X_{n}),y_0}_{m+1}\!=\!Y^{T_{\Delta_1}(X_{n}),y_0}_{m}+f\big(T_{\Delta_1}(X_{n}),Y^{T_{\Delta_1}(X_{n}),y_0}_{m}\big)\Delta_{2}\nn\
\\~~~~~~~~~~~~~~~~~+g\big(T_{\Delta_1}(X_{n}),Y^{T_{\Delta_1}(X_{n}),y_0}_{m}\big)\Delta W^{2}_{n,m},~~~m=0,1,\cdot\cdot \cdot,M-1,\label{e6}\\
X_{n+1}=X_{n}+ B_{M}(T_{\Delta_1}(X_{n}),Y^{T_{\Delta_1}(X_{n}),y_0})\Delta_{1}+\sigma(X_{n})\Delta W^{1}_{n}. \label{e8}
\end{numcases}
\end{subequations}
By this scheme we define the  continuous-time approximation processes:
\begin{align*}
	X(t)&=X_{n},~~~~~~~ t\in [n\Delta_1,(n+1)\Delta_1),
\end{align*}
\begin{align}\label{a22} \bar{X}(t)&=x_{0}+\int_{0}^{t} B_{M}(T_{\Delta_1}(X(s)),Y^{T_{\Delta_1}(X(s)),y_0})\mathrm{d}s
	+\int_{0}^{t}\sigma(X(s))\mathrm{d}W^{1}(s).
\end{align}
Note that $\bar{X}(n\Delta_1)= {X}(n\Delta_1)=X_{n}$,
that is, $\bar{X}(t)$ and $X(t)$ coincide with the discrete
solution at the grid points, respectively.
\section{Some preliminary estimates}\label{s-c4}

In order to prove  the strong convergence of the MTEM scheme, we need to give some properties for the estimator $B_{M}(x,Y^{x,y_0}_{n})$ (where $Y^{x,y_0}_{n}$ defined in \eqref{ee13} later) of the averaged coefficient $\bar{b}(x)$. This section pays attention to  some pre-estimates for $B_{M}(x,Y^{x,y_0}_{n})$.

\par{For any fixed $x\in \mathbb{R}^{n_1}$, $y_0\in \RR^{n_2}$,  and integer $n\geq 0$, define an auxiliary process $y^{x,y_0}_{n}(t)$ described by
	\begin{align}\label{e16}
		\mathrm{d}y^{x,y_0}_{n}(t)=f(x,y^{x,y_0}_{n}(t))
		\mathrm{d}t+g(x,y^{x,y_0}_{n}(t))\mathrm{d}W^{2}_{n}(t)
	\end{align}
	on $t\geq0$ with initial value $y^{x,y_0}_{n}(0)=y_0$. } Thanks to the weak uniqueness of the solution  of the equation \eqref{eq2.5},    for any $t\geq 0$,  the distribution of $y^{x,y_0}_{n}(t)$ coincides with that of $y^{x,y_0}(t)$ for any $n\geq0$. Consequently, according to Lemma \ref{Lcyp2.1}, $\mu^{x}$ is also the unique invariant probability measure of transition semigroup of $y^{x,y_0}_{n}(t)$ for any $ n\geq0$.
Then use the  EM scheme for \eqref{e16}
\begin{equation}\label{ee13}
	\begin{cases}
		Y^{x,y_0}_{n,0}=y_0,~~~~~~~~ \\
		Y^{x,y_0}_{n,m+1}=Y^{x,y_0}_{n,m}+
		f(x,Y^{x,y_0}_{n,m})\Delta_2+g(x,Y^{x,y_0}_{n,m})\Delta W^{2}_{n,m}, ~m=0, 1,\cdot\cdot\cdot.\\
	\end{cases}
\end{equation}
Furthermore, define
\begin{align}\label{a3.30}
	Y^{x,y_0}_{n}(t) &=Y^{x,y_0}_{n,m},~~~~~~~~t\in[m\Delta_2,(m+1)\Delta_2),\notag\\
	\bar{Y}^{x,y_0}_{n}(t)&=y_{0}+\int_{0}^{t}f(x,Y^{x,y_0}_{n}(s))\mathrm{d}s+\int_{0}^{t}
	g(x,Y^{x,y_0}_{n}(s))\mathrm{d}W^{2}_{n}(s).
\end{align}
Let  $Y^{x,y_0}_{n}$  denote the discrete EM solution  sequence generated by \eqref{ee13}.  Then,  one observes that $Y^{T_{\Delta_1}(X_{n}),y_0}=Y^{T_{\Delta_1}(X_{n}),y_0}_{n}~\mathrm{a.s.}$ Thus,
{\begin{align}\label{cyp4.7}
		B_{M}\big(T_{\Delta_1}(X_{n}),Y^{T_{\Delta_1}(X_{n}),y_0}\big)=B_{M}\big(T_{\Delta_1}(X_{n}),Y^{T_{\Delta_1}(X_{n}),y_0}_{n}\big)~~\mathrm{a.s.}
\end{align}}

\par Next, we  give several properties of  $Y^{x,y_0}_{n}$ in order for the estimation of   $B_{M}(x,Y^{x,y_0}_{n})$. The uniform moment bound  result of  $Y^{x,y_0}_{n}$ can be obtained  from \cite[Lemmas 3.7]{MR2102646}, but it is not accurate to prove the desired properties of the estimator  $B_{M}(x,Y^{x,y_0}_{n})$. For the sake of completeness, we would like to provide more details here.
\begin{lem}\label{Lb.3.10}
	{\rm If $({\bf F1})$ and $({\bf F3})$  hold {with  $k\geq2$}, then there exists a $\hat\Delta_2\in (0,1]$ such that for any $x\in \RR^{n_1}$, {$y_0\in \RR^{n_2}$},  integer $n\geq 0$ and $\Delta_2\in (0,\hat\Delta_2]$,
		\begin{align*}
			&\sup_{m\geq0}\mathbb{E}|Y^{x,y_0}_{n,m}|^{k}\leq C(1+|y_0|^{k}+|x|^{k}),
		\end{align*}
		and
		\begin{align*}
			\sup_{t\geq 0} \mathbb{E}|\bar{Y}^{x,y_0}_{n}(t)-{Y}^{x,y_0}_{n}(t)|^{k}
			\leq C(1+|y_0|^{k}+|x|^{k})\Delta_2^{\frac{k}{2}}.
	\end{align*}}
\end{lem}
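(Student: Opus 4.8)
The plan is to establish the two estimates in turn, with the uniform $k$th-moment bound carrying essentially all the difficulty and the interpolation estimate following almost mechanically once it is in hand. Throughout I abbreviate $Y_m:=Y^{x,y_0}_{n,m}$, $f_m:=f(x,Y_m)$, $g_m:=g(x,Y_m)$ and $\xi_m:=f_m\Delta_2+g_m\Delta W^{2}_{n,m}$, so that the micro scheme \eqref{ee13} reads $Y_{m+1}=Y_m+\xi_m$, and I write $\mathcal{F}_{n,m}$ for the $\sigma$-algebra generated by the data up to micro step $m$. The goal of the first part is a one-step dissipative recursion
\[
\mathbb{E}|Y_{m+1}|^{k}\leq(1-c\Delta_2)\,\mathbb{E}|Y_m|^{k}+C(1+|x|^{k})\Delta_2,
\]
valid for some $c>0$ and all $\Delta_2\in(0,\hat\Delta_2]$; iterating it from $Y_0=y_0$ and summing the resulting geometric series at once yields $\sup_{m\geq0}\mathbb{E}|Y_m|^{k}\leq|y_0|^{k}+C(1+|x|^{k})/c$, which is the asserted bound.

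To derive the recursion I would begin from $|Y_{m+1}|^{2}=|Y_m|^{2}+2\langle Y_m,\xi_m\rangle+|\xi_m|^{2}$ and, on $\{Y_m\neq0\}$, write $|Y_{m+1}|^{k}=|Y_m|^{k}(1+u_m)^{k/2}$ with $u_m:=(2\langle Y_m,\xi_m\rangle+|\xi_m|^{2})/|Y_m|^{2}\in[-1,\infty)$; expanding $(1+u_m)^{k/2}$ to second order with a controlled remainder and taking $\mathbb{E}[\,\cdot\,|\mathcal{F}_{n,m}]$, the $O(\Delta_2)$ contribution assembles from two sources that are the discrete surrogates of the It\^o expansion: the first-order term $\tfrac{k}{2}|Y_m|^{k-2}\mathbb{E}[2\langle Y_m,\xi_m\rangle+|\xi_m|^{2}\,|\,\mathcal{F}_{n,m}]$ produces $k|Y_m|^{k-2}(\langle Y_m,f_m\rangle+\tfrac12|g_m|^{2})\Delta_2$, while the dominant $O(\Delta_2)$ part of the second-order term contributes $\tfrac{k(k-2)}{2}|Y_m|^{k-2}|g_m|^{2}\Delta_2$ through $\mathbb{E}[\langle Y_m,g_m\Delta W^{2}_{n,m}\rangle^{2}|\mathcal{F}_{n,m}]=|g_m^{T}Y_m|^{2}\Delta_2\leq|Y_m|^{2}|g_m|^{2}\Delta_2$. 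Summing the two reproduces exactly $k|Y_m|^{k-2}\big(\langle Y_m,f_m\rangle+\tfrac{k-1}{2}|g_m|^{2}\big)\Delta_2$, whereupon assumption $({\bf F3})$---whose coefficient $\tfrac{k-1}{2}$ is tuned precisely for this purpose---bounds it above by $k|Y_m|^{k-2}(-\alpha_k|Y_m|^{2}+L_k(1+|x|^{2}))\Delta_2$ and supplies the crucial negative drift $-k\alpha_k|Y_m|^{k}\Delta_2$. The main obstacle, and the reason a threshold $\hat\Delta_2$ is forced, is the bookkeeping of all remaining terms for non-even $k$: using the global Lipschitz property $({\bf F1})$ (hence the linear growth $|f_m|\vee|g_m|\leq C(1+|x|+|Y_m|)$) one must verify that every leftover---the $\Delta_2^{2}$ drift part, the off-diagonal pieces of $u_m^{2}$, and the remainder whose expectation is $O(\Delta_2^{k/2})$---carries a power of $\Delta_2$ strictly larger than one and is therefore of the form $C(1+|x|^{k}+|Y_m|^{k})\Delta_2^{1+\gamma}$ for some $\gamma>0$; for $\Delta_2$ small these are dominated by $\tfrac12 k\alpha_k|Y_m|^{k}\Delta_2$, leaving the net coefficient $1-c\Delta_2<1$ uniformly in $m$. (The case $k=2$ is cleaner, since $(1+u_m)^{1}$ is exact and only the genuinely $O(\Delta_2^{2})$ term $|f_m|^{2}\Delta_2^{2}$ survives.)

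For the second estimate, the definitions in \eqref{a3.30} give, for $t\in[m\Delta_2,(m+1)\Delta_2)$,
\[
\bar{Y}^{x,y_0}_{n}(t)-Y^{x,y_0}_{n}(t)=f_m\,(t-m\Delta_2)+g_m\big(W^{2}_{n}(t)-W^{2}_{n}(m\Delta_2)\big),
\]
since the step process $Y^{x,y_0}_n(\cdot)$ is frozen at $Y_m$ on this interval. Raising to the $k$th power, using $|a+b|^{k}\leq2^{k-1}(|a|^{k}+|b|^{k})$ and conditioning on $\mathcal{F}_{n,m}$ (so that $g_m$ is fixed and the Brownian increment yields $\mathbb{E}|W^{2}_{n}(t)-W^{2}_{n}(m\Delta_2)|^{k}\leq C\Delta_2^{k/2}$), I obtain $\mathbb{E}|\bar{Y}^{x,y_0}_{n}(t)-Y^{x,y_0}_{n}(t)|^{k}\leq C\Delta_2^{k}\,\mathbb{E}|f_m|^{k}+C\Delta_2^{k/2}\,\mathbb{E}|g_m|^{k}$. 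The linear growth from $({\bf F1})$ turns $\mathbb{E}|f_m|^{k}\vee\mathbb{E}|g_m|^{k}$ into $C(1+|x|^{k}+\mathbb{E}|Y_m|^{k})$; inserting the uniform bound from the first part and using $\Delta_2^{k}\leq\Delta_2^{k/2}$ (as $\Delta_2\leq1$) gives $C(1+|y_0|^{k}+|x|^{k})\Delta_2^{k/2}$, a bound independent of $t$ and $m$, so the supremum over $t\geq0$ is immediate. This half is entirely routine once the moment bound is secured.
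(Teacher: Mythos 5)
Your route is genuinely different from the paper's. The paper never writes a discrete one-step recursion: it applies the It\^o formula to $e^{k\alpha_k t/8}|\bar{Y}^{x,y_0}_{n}(t)|^{k}$ for the continuous interpolant \eqref{a3.30}, uses $({\bf F1})$ to shift the arguments of $f,g$ from the step process to $\bar{Y}^{x,y_0}_{n}$ so that $({\bf F3})$ can be invoked, controls the resulting perturbation through the one-step estimate \eqref{c3.28}, which is first absorbed into itself and then into the negative drift by two smallness conditions on $\Delta_2$, and finally reads off the discrete bound at grid points (the second assertion then falls out of \eqref{c3.64}). You instead work entirely at the discrete level: a dissipative one-step recursion plus a geometric series, which is the classical strategy of the Mao--Yuan--Yin reference that the authors themselves cite just before the lemma. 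Your identification of the exact cancellation $\tfrac12+\tfrac{k-2}{2}=\tfrac{k-1}{2}$ is precisely the crux of such a proof: $({\bf F3})$ leaves no room to spare, so a crude constant in front of the $u_m^{2}$-term would destroy the argument, and you correctly insist on the sharp second-order Taylor coefficient. Your second half coincides with the paper's computation in \eqref{c3.28}.

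There is, however, a concrete gap in your remainder bookkeeping. The claim that every leftover is ``of the form $C(1+|x|^{k}+|Y_m|^{k})\Delta_2^{1+\gamma}$'' cannot hold pointwise: since $u_m$ carries $|Y_m|^{2}$ in its denominator, a power $|u_m|^{q}$ with $2q>k$, multiplied by $|Y_m|^{k}$, produces terms such as $|Y_m|^{k-2q}|\xi_m|^{2q}$, which are singular as $|Y_m|\to0$ and are not dominated by $1+|x|^{k}+|Y_m|^{k}$; in particular the cubic remainder $|u_m|^{3}$ is of this singular type whenever $k<6$, which includes the minimal case $k\geq2$ in which the lemma is stated and later used. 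Moreover, for non-even $k/2$ the Lagrange remainder of $(1+u)^{k/2}$ carries the factor $(1+\theta u)^{k/2-3}$, which blows up as $u_m\to-1$, i.e.\ exactly when $Y_{m+1}$ is near the origin, so ``controlled remainder'' is not automatic. Both defects are repaired by a standard localization that your outline omits: on the event $A_m=\{|\xi_m|\le\tfrac12|Y_m|\}$ one has $1+u_m=|Y_{m+1}|^{2}/|Y_m|^{2}\ge\tfrac14$ and $|u_m|\le\tfrac52|\xi_m|/|Y_m|\le\tfrac54$, so the Taylor remainder is bounded by $C_k\bigl(|\xi_m|/|Y_m|\bigr)^{3\wedge k}$ and its product with $|Y_m|^{k}$ has conditional expectation at most $C(1+|x|^{k}+|Y_m|^{k})\Delta_2^{(3\wedge k)/2}$, which is $o(\Delta_2)$ for $k>2$; on the complement one bounds $|Y_{m+1}|^{k}\le3^{k}|\xi_m|^{k}$ directly, again giving order $\Delta_2^{k/2}$ (and the case $k=2$ needs no remainder at all, as you note). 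With this split your recursion, and hence the lemma, goes through; without it, the step ``every leftover is therefore of the form $C(1+|x|^{k}+|Y_m|^{k})\Delta_2^{1+\gamma}$'' is not justified as written.
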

\begin{proof} {\bf Proof}.
For  any $t>0$, using the $\mathrm{It\hat{o}}$ formula,  we derive from \eqref{a3.30} that
\begin{align}\label{a3.31}
\mathbb{E}\Big(e^{\frac{k\alpha_k t}{8} }|\bar{Y}^{x,y_0}_{n}(t)|^{k}\Big)&\leq |y_0|^{k}+\mathbb{E}\int_{0}^{t}e^{\frac{k\alpha_k s}{8} }|\bar{Y}^{x,y_0}_{n}(s)|^{k-2}\Big[\frac{k\alpha_k}{8}|\bar{Y}^{x,y_0}_{n}(s)|^{2}
\nn\
\\&~~+k(\bar{Y}^{x,y_0}_{n}(s))^{T}f(x,Y^{x,y_0}_{n}(s))+\frac{k(k-1)}{2}|g(x,Y^{x,y_0}_{n}(s))|^{2}\Big]\mathrm{d}s.
\end{align}
 Invoking the Young inequality, ({\bf F1}) and  ({\bf F3}) implies that
\begin{align*}
 &k(\bar{Y}^{x,y_0}_{n}(s))^{T}f(x,Y^{x,y_0}_{n}(s))+\frac{k(k-1)}{2}|g(x,Y^{x,y_0}_{n}(s))|^{2}\nn\
\\ \leq& k\Big[\big(\bar{Y}^{x,y_0}_{n}(s)\big)^{T}f(x,\bar{Y}^{x,y_0}_{n}(s))
+\frac{(k-1)}{2}|g(x,\bar{Y}^{x,y_0}_{n}(s))|^{2}\Big]\nn\
\\&~~~
+k\big(\bar{Y}^{x,y_0}_{n}(s)\big)^{T}\big(f(x,Y^{x,y_0}_{n}(s))
-f(x,\bar{Y}^{x,y_0}_{n}(s))\big)\nn\
\\&~~~+\frac{k(k-1)}{2}|g(x,Y^{x,y_0}_{n}(s))
-g(x,\bar{Y}^{x,y_0}_{n}(s))|^2\nn\
\\&~~~
+k(k-1)|g(x,\bar{Y}^{x,y_0}_{n}(s))||g(x,Y^{x,y_0}_{n}(s))-g(x,\bar{Y}^{x,y_0}_{n}(s))|\nn\
\\ \leq& C(1+|x|^2)-\frac{k\alpha_k}{2}|\bar{Y}^{x,y_0}_{n}(s)|^2\nn\
+C|Y^{x,y_0}_{n}(s)-\bar{Y}^{x,y_0}_{n}(s)|^{2}.
\end{align*}
Substituting the above inequality into (\ref{a3.31}) and using the Young inequality, we get
\begin{align}\label{e3.31}
\mathbb{E}\Big(e^{\frac{k\alpha_k t}{8} }|\bar{Y}^{x,y_0}_{n}(t)|^{k}\Big)&\leq |y_0|^{k}+C(1+|x|^{2})^{\frac{k}{2}} e^{\frac{k\alpha_k t}{8}  } { -\frac{k\alpha_k}{8}}\int_{0}^{t}e^{\frac{k\alpha_k s}{8} }
\mathbb{E}|\bar{Y}^{x,y_0}_{n}(s)|^{k}\mathrm{d}s
\nn\\&~~~+C\int_{0}^{t} e^{\frac{k\alpha_k s }{8} }\mathbb{E}|Y^{x,y_0}_{n}(s )-\bar{Y}^{x,y_0}_{n}(s)|^{k}\mathrm{d}s.
\end{align}
Moreover, it follows from $({\bf F1})$ and (\ref{a3.30}) that
\begin{equation}\label{c3.28}
\begin{aligned}
 \mathbb{E}|{Y}^{x,y_0}_{n}(s)-\bar{Y}^{x,y_0}_{n}(s)|^{k} \leq & 2^{k-1}\Delta_{2}^{k}\mathbb{E}|f(x,Y^{x,y_0}_{n}(s))|^{k}
 +2^{k-1}\Delta_{2}^{\frac{k}{2}}\mathbb{E}|g(x,Y^{x,y_0}_{n}(s))|^{k}\nn\
\\ \leq&C\Delta_{2}^{\frac{k}{2}}\mathbb{E}\big(1+|x|^{k} +|Y^{x,y_0}_{n}(s)|^{k}\big)\nn\
\\ \leq & C\Delta_{2}^{\frac{k}{2}}\Big(1+|x|^{k}+\mathbb{E}|\bar{Y}^{x,y_0}_{n}(s)|^{k}
+\mathbb{E}|Y^{x,y_0}_{n}(s)- \bar{Y}^{x,y_0}_{n}(s) |^{k} \Big).
\end{aligned}
\end{equation}
Choose a constant $ {\Delta}'_{2} \in (0, 1]$ small enough such that
 $ C( { {\Delta}}_{2}')^{\frac{k}{2}}\leq 1/2$. Then, for any $\Delta_2\in (0, {\Delta}_2']$,
\begin{align}\label{c3.64}
\mathbb{E}|Y^{x,y_0}_{n}(s)-\bar{Y}^{x,y_0}_{n}(s)|^{k}\leq C\Delta_2^{\frac{k}{2}}(1+|x|^{k}+\mathbb{E}|\bar{Y}^{x,y_0}_{n}(s)|^k).
\end{align}
Inserting (\ref{c3.64}) into (\ref{e3.31}) leads to
\begin{align*}
\mathbb{E}\Big(e^{\frac{k\alpha_k  t}{8}  }|\bar{Y}^{x,y_0}_{n}(t)|^{k}\Big)\nn\
 \leq   |y_{0}|^{k} +C(1+|x|^{k})   e^{\frac{k\alpha_k t}{8}}-\Big(\frac{k\alpha_{k}}{8}-C\Delta_2^{\frac{k}{2}}\Big)
  \int_{0}^{t}e^{\frac{k\alpha_k s}{8} }\mathbb{E}|\bar{Y}^{x,y_0}_{n}(s)|^{k}\mathrm{d}s.
\end{align*}
Furthermore, choose $ \hat\Delta_2 \in (0,  {\Delta}_{2}']$ small enough such that  $C\big( \hat\Delta_2 \big)^{\frac{k}{2}}\leq q\alpha_{k}/8 $.  Then, for any $\Delta_{2}\in(0,\hat\Delta_2 ]$,  we derive that
\begin{align*}
\mathbb{E}\Big(e^{\frac{k\alpha_k t}{8} }|\bar{Y}^{x,y_0}_{n}(t )|^{k}\Big)&\leq |y_0|^{k}+ C(1+|x|^{k}) e^{\frac{k\alpha_k t}{8}  } .
\end{align*}
Then a direct computation gives that
\begin{align} \label{f17}
\mathbb{E}|\bar{Y}^{x,y_0}_{n}(t)|^{k}\leq |y_0|^{k} e^{-\frac{k\alpha_{k}t}{8}}+C(1+|x|^{k}).
\end{align}
Thus, one obtains
\begin{align*}
\sup_{t\geq 0}\mathbb{E}|\bar{Y}^{x,y_0}_{n}(t)|^{k}\leq C(1+|y_0|^{k}+|x|^{k}),
\end{align*}
which implies the first desired result. Then  substituting the above inequality into \eqref{c3.64} gives the another desired result.
The proof is complete.
\end{proof}
\begin{lem}\label{CL3.9}
	{\rm Under $({\bf F1})$ and $({\bf F2})$,  there exists a constant $\bar\Delta_2\in (0,\hat{\Delta}_2]$ such that for any $\Delta_2\in (0,\bar\Delta_2)$, {$y, z\in \mathbb{R}^{n_2}$}, $x\in \mathbb{R}^{n_1}$, integers $n\geq 0$ and $m\geq0$,
		\begin{align*}
			\mathbb{E}|Y^{x,y}_{n,m}-Y^{x,z}_{n,m}|^2\leq |y-z|^2e^{\frac{-\beta m \Delta_2}{2}}.
	\end{align*}}
\end{lem}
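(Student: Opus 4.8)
The plan is to reproduce, at the discrete Euler--Maruyama level, the contraction estimate of \lemref{LN3.7}, the only genuinely new feature being the quadratic-in-$\Delta_2$ remainder generated by the discretization. First I would fix $x\in\RR^{n_1}$, $y,z\in\RR^{n_2}$ and $n\geq0$, and set $e_m:=Y^{x,y}_{n,m}-Y^{x,z}_{n,m}$ together with the one-step increments $\Delta f_m:=f(x,Y^{x,y}_{n,m})-f(x,Y^{x,z}_{n,m})$ and $\Delta g_m:=g(x,Y^{x,y}_{n,m})-g(x,Y^{x,z}_{n,m})$. Subtracting the two copies of the recursion \eqref{ee13} driven by the same noise gives the linearized relation $e_{m+1}=e_m+\Delta f_m\Delta_2+\Delta g_m\Delta W^2_{n,m}$, with $e_0=y-z$.

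Next I would square this identity and take the conditional expectation given the $\sigma$-field $\mathcal{F}_{m\Delta_2}$ up to micro-step $m$. Since $e_m,\Delta f_m,\Delta g_m$ are $\mathcal{F}_{m\Delta_2}$-measurable while $\Delta W^2_{n,m}$ is independent with mean zero and covariance $\Delta_2 I$, both cross terms carrying $\Delta W^2_{n,m}$ vanish and the diffusion term contributes $|\Delta g_m|^2\Delta_2$ (trace norm), so that
\begin{align*}
\mathbb{E}\big[|e_{m+1}|^2\,\big|\,\mathcal{F}_{m\Delta_2}\big]
=|e_m|^2+\Delta_2\big(2e_m^{T}\Delta f_m+|\Delta g_m|^2\big)+\Delta_2^2|\Delta f_m|^2.
\end{align*}
The dissipativity hypothesis $({\bf F2})$ bounds the bracket by $-\beta|e_m|^2$, while the global Lipschitz hypothesis $({\bf F1})$ gives $|\Delta f_m|^2\leq L^2|e_m|^2$; combining these and taking total expectations I obtain the one-step contraction
\begin{align*}
\mathbb{E}|e_{m+1}|^2\leq\big(1-\beta\Delta_2+L^2\Delta_2^2\big)\,\mathbb{E}|e_m|^2.
\end{align*}

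The key point is to convert this geometric factor into the claimed exponential rate $e^{-\beta m\Delta_2/2}$. Applying $1+u\leq e^{u}$ with $u=-\beta\Delta_2+L^2\Delta_2^2$, it suffices that $-\beta\Delta_2+L^2\Delta_2^2\leq-\tfrac{\beta}{2}\Delta_2$, i.e. $L^2\Delta_2\leq\tfrac{\beta}{2}$; hence I set $\bar\Delta_2:=\hat\Delta_2\wedge\frac{\beta}{2L^2}$, which indeed lies in $(0,\hat\Delta_2]$, so that for every $\Delta_2\in(0,\bar\Delta_2)$ the factor satisfies $0\leq 1-\beta\Delta_2+L^2\Delta_2^2\leq e^{-\beta\Delta_2/2}$. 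Its nonnegativity is automatic because $({\bf F1})$--$({\bf F2})$ force $\beta\leq 2L$ (drop the nonnegative $|g|$-term in $({\bf F2})$ and apply Cauchy--Schwarz together with $({\bf F1})$), whence the quadratic $1-\beta\Delta_2+L^2\Delta_2^2$ stays in $[0,1)$ on the relevant range. Iterating the one-step bound from $e_0=y-z$ then yields $\mathbb{E}|e_m|^2\leq(1-\beta\Delta_2+L^2\Delta_2^2)^m|y-z|^2\leq e^{-\beta m\Delta_2/2}|y-z|^2$, which is the assertion. The only delicate step is absorbing the discretization remainder $\Delta_2^2|\Delta f_m|^2$, absent in the continuous estimate; this is exactly what dictates the step-size restriction and the sacrifice of a factor $2$ in the decay rate relative to \lemref{LN3.7}.
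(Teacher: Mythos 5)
Your proposal is correct and follows essentially the same route as the paper's proof: subtract the two coupled recursions, square, use the independence of $\Delta W^{2}_{n,m}$ to kill the cross terms, apply $({\bf F1})$--$({\bf F2})$ to get the one-step factor $1-\beta\Delta_2+L^2\Delta_2^2$, and impose $L^2\Delta_2\leq \beta/2$ together with $1+u\leq e^{u}$ to iterate. The only (harmless) difference is how nonnegativity of the contraction factor is secured: the paper adds the extra constraint $\Delta_2\leq 2/\beta$ to its choice of $\bar\Delta_2$, whereas you derive $\beta\leq 2L$ from $({\bf F1})$--$({\bf F2})$, which is a slightly more careful touch.
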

\begin{proof}{\bf Proof.}
For notation brevity, set $v^{x}_{n,m}:=Y^{x,y}_{n,m}-Y^{x,z}_{n,m}$,
$$F(x,Y^{x,y}_{n,m},Y^{x,z}_{n,m}):=f(x,Y^{x,y}_{n,m})-f(x,Y^{x,z}_{n,m}),$$
$$G(x,Y^{x,y}_{n,m},Y^{x,z}_{n,m}):=g(x,Y^{x,y}_{n,m})-g(x,Y^{x,z}_{n,m}).$$
In view of \eqref{ee13}, for any integer $m\geq0$ we have
\begin{align*}
v^{x}_{n,m+1}=v^{x}_{n,m}+F(x,Y^{x,y}_{n,m},Y^{x,z}_{n,m})\Delta_2+G(x,Y^{x,y}_{n,m},Y^{x,z}_{n,m})\Delta W^{2}_{n,m}.
\end{align*}
Then we derive that
\begin{align}\label{c3.35}
|v^{x}_{n,m+1}|^2&=|v^{x}_{n,m}|^2+2(v^{x}_{n,m})^{T}F(x,Y^{x,y}_{n,m},Y^{x,z}_{n,m})\Delta_2+|G(x,Y^{x,y}_{n,m},Y^{x,z}_{n,m})\Delta W^{2}_{n,m}|^2\nn\
\\&~~~+|F(x,Y^{x,y}_{n,m},Y^{x,z}_{n,m})|^2\Delta_2^2+2(v^{x}_{n,m})^{T}G(x,Y^{x,y}_{n,m},Y^{x,z}_{n,m})\Delta W^{2}_{n,m}\nn\
\\&~~~+2F^{T}(x,Y^{x,y}_{n,m},Y^{x,z}_{n,m})G(x,Y^{x,y}_{n,m},Y^{x,z}_{n,m})\Delta W^{2}_{n,m}\Delta_2.
\end{align}
For any integers $n\geq 1$, $l\geq 1$, denote by $\mathcal{F}^2_{n,m}$ the $\sigma$-algebra generated  by $$\Big\{ W^{2}_n(s): 0\leq s\leq m\Delta_{2}\Big\}.$$
The fact that
$\Delta W^2_{n,m}$ is independent of $\mathcal{F}^2_{n,m}$ implies that
\begin{align}\label{cyp4.44}
\mathbb{E}\big(A\Delta W^{2}_{n,m}|\mathcal{F}^2_{n,m}\big)=0,~\mathbb{E}\big(|A\Delta W^{2}_{n,m}|^{2}|\mathcal{F}^{2}_{n,m}\big)=C\Delta_2,~\forall A\in \mathbb{R}^{n_2\times d_2}.
\end{align}
Then taking  expectation on both sides for \eqref{c3.35} and using ({\bf F1}), ({\bf F2}) and \eqref{cyp4.44} imply that
\begin{align*}
\mathbb{E}|v^{x}_{n,m+1}|^2=\mathbb{E}|v^{x}_{n,m}|^2-\beta \Delta_2 \mathbb{E}|v^{x}_{n,m}|^2+L^2\Delta_2^2\mathbb{E}|v^{x}_{n,m}|^2.
\end{align*}
Choosing $\bar\Delta_2\leq \hat\Delta_2 \wedge (\beta/2L^2)\wedge (2/\beta)$, for any $\Delta_2\in (0,\bar\Delta_2)$, one obtains   that
\begin{align*}
L^2\Delta_2^2\leq \frac{\beta \Delta_2}{2}, ~~~\frac{\beta\Delta_2}{2}\leq 1.
\end{align*}
Thus, one derives that
\begin{align*}
\mathbb{E}|v^{x}_{n,m+1}|^2&\leq \Big(1-\frac{\beta \Delta_2}{2}\Big)\mathbb{E}|v^{x}_{n,m}|^2\nn\
\\&\leq \cdots\leq |y-z|^2 \Big(1-\frac{\beta \Delta_2}{2}\Big)^{m+1}
\leq |y-z|^2e^{-\frac{\beta m \Delta_2}{2}},
\end{align*}
 where the last step used the inequality that $1-\beta \Delta_2/2\leq e^{-\frac{\beta \Delta_2}{2}}$, which implies the desired result. 
\end{proof}
	\begin{lem}\label{CL3.7}
		{\rm If $({\bf F1})$-$({\bf F3})$ hold {with some $k\geq2$}, then for any fixed $x\in \mathbb{R}^{n_1}$, {$y_0\in \RR^{n_2}$,}  integer $n\geq 0$ and $\Delta_2\in (0,\bar\Delta_2]$, {$Y^{x,y_0}_{n}$ determined by \eqref{ee13} admits a unique invariant measure  $\mu^{x,\Delta_2}\in \mathcal{P}_{k}(\mathbb{R}^{n_2})$,  which is independent of $y_0$ and $n$,} and  satisfies
			\begin{align*}
				\int_{\mathbb{R}^{n_2}}|y|^{k}\mu^{x,\Delta_2}(\mathrm{d}y)\leq C(1+|x|^{k}).
		\end{align*}}
	\end{lem}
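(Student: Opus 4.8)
The plan is to mirror the argument for the exact frozen equation in Lemma~\ref{Lcyp2.1}, now applied to the time-homogeneous Markov chain $\{Y^{x,y_0}_{n,m}\}_{m\geq0}$ generated by the EM recursion \eqref{ee13}. Write $\{\mathbb{P}^{x,\Delta_2}_{m}\}_{m\geq0}$ for its $m$-step transition semigroup. Because the driving increments $\Delta W^{2}_{n,m}$ are identically distributed and independent across $n$, the law of the chain does not depend on $n$, so the invariant measure we construct will automatically be independent of $n$; its independence of $y_0$ will come for free from uniqueness. First I would record that, by the uniform moment bound in Lemma~\ref{Lb.3.10}, $\delta_{y_0}\mathbb{P}^{x,\Delta_2}_{m}\in\mathcal{P}_k(\mathbb{R}^{n_2})\subset\mathcal{P}_2(\mathbb{R}^{n_2})$ for every $m$, so the whole analysis can be carried out in the Polish space $\mathcal{P}_2(\mathbb{R}^{n_2})$.

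For uniqueness I would use Lemma~\ref{CL3.9} together with a synchronous coupling: for arbitrary $\mu_1,\mu_2\in\mathcal{P}_2(\mathbb{R}^{n_2})$,
\begin{align*}
\mathbb{W}_2(\mu_1\mathbb{P}^{x,\Delta_2}_{m},\mu_2\mathbb{P}^{x,\Delta_2}_{m})
\leq\inf_{\pi\in\mathcal{C}(\mu_1,\mu_2)}\Big(\int_{\mathbb{R}^{n_2}\times\mathbb{R}^{n_2}}\mathbb{E}|Y^{x,y_1}_{n,m}-Y^{x,y_2}_{n,m}|^2\,\pi(\mathrm{d}y_1,\mathrm{d}y_2)\Big)^{\frac12}
\leq e^{-\frac{\beta m\Delta_2}{4}}\mathbb{W}_2(\mu_1,\mu_2),
\end{align*}
where the last step uses Lemma~\ref{CL3.9} and Cauchy--Schwarz; this is a strict contraction for large $m\Delta_2$ and hence forces uniqueness. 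Existence I would obtain by showing $\{\delta_{y_0}\mathbb{P}^{x,\Delta_2}_{m}\}_{m\geq0}$ is a $\mathbb{W}_2$-Cauchy family: by the Chapman--Kolmogorov identity $\mathbb{P}^{x,\Delta_2}_{m+l}=\mathbb{P}^{x,\Delta_2}_{l}\mathbb{P}^{x,\Delta_2}_{m}$, the contraction above, and Lemma~\ref{Lb.3.10},
\begin{align*}
\mathbb{W}_2(\delta_{y_0}\mathbb{P}^{x,\Delta_2}_{m},\delta_{y_0}\mathbb{P}^{x,\Delta_2}_{m+l})
\leq e^{-\frac{\beta m\Delta_2}{4}}\mathbb{W}_2(\delta_{y_0},\delta_{y_0}\mathbb{P}^{x,\Delta_2}_{l})
\leq Ce^{-\frac{\beta m\Delta_2}{4}}(1+|x|+|y_0|)\to0
\end{align*}
as $m\to\infty$, uniformly in $l$. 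Completeness of $\mathcal{P}_2(\mathbb{R}^{n_2})$ yields a limit $\mu^{x,\Delta_2}$, and the continuity of the $\mathbb{W}_2$-distance gives $\mathbb{W}_2(\mu^{x,\Delta_2}\mathbb{P}^{x,\Delta_2}_{1},\mu^{x,\Delta_2})=0$, i.e.\ invariance.

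It remains to establish the moment bound $\int_{\mathbb{R}^{n_2}}|y|^k\mu^{x,\Delta_2}(\mathrm{d}y)\leq C(1+|x|^k)$, which I regard as the main obstacle. The uniform bound of Lemma~\ref{Lb.3.10} is not by itself sufficient, since passing it to the (a priori only $\mathcal{P}_2$) limit measure requires care about integrability and about the dependence on the starting point. The clean route is to use invariance in truncated form: for every $N>0$ and $m\geq0$,
\begin{align*}
\int_{\mathbb{R}^{n_2}}(|y|^k\wedge N)\mu^{x,\Delta_2}(\mathrm{d}y)
=\int_{\mathbb{R}^{n_2}}\mathbb{E}\big(|Y^{x,y}_{n,m}|^k\wedge N\big)\mu^{x,\Delta_2}(\mathrm{d}y)
\leq\int_{\mathbb{R}^{n_2}}\big(\mathbb{E}|Y^{x,y}_{n,m}|^k\wedge N\big)\mu^{x,\Delta_2}(\mathrm{d}y),
\end{align*}
the inequality being Jensen's applied to the concave map $t\mapsto t\wedge N$. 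The key input here is the exponentially decaying estimate $\mathbb{E}|Y^{x,y}_{n,m}|^k\leq|y|^k e^{-\tilde\alpha_k m\Delta_2}+C(1+|x|^k)$, which I would recover from \eqref{f17} evaluated at the grid point $t=m\Delta_2$, where the interpolation $\bar Y^{x,y}_{n}$ and the chain coincide. Substituting it and letting $m\to\infty$ by dominated convergence (the truncation by $N$ furnishes a $\mu^{x,\Delta_2}$-integrable dominating bound) kills the decaying term and gives $\int(|y|^k\wedge N)\mu^{x,\Delta_2}(\mathrm{d}y)\leq C(1+|x|^k)$; finally letting $N\to\infty$ by monotone convergence yields the claim. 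The genuinely delicate points are confirming that the decay estimate survives the EM discretization uniformly in $\Delta_2\in(0,\bar\Delta_2]$ and that the truncation/limit interchange is legitimate in $\mathcal{P}_2$ rather than $\mathcal{P}_k$.
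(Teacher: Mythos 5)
Your proposal is correct and takes essentially the same route as the paper: the moment bound is obtained by exactly the same chain of steps (truncated invariance, Jensen's inequality for the concave map $t\mapsto t\wedge N$, the exponential decay estimate \eqref{f17} evaluated at grid points, dominated convergence in $m$, then monotone convergence in $N$). The only difference is that for existence and uniqueness the paper simply invokes Lemmas \ref{Lb.3.10} and \ref{CL3.9} and cites a known Wasserstein-contraction argument, whereas you write that contraction/Cauchy-family argument out explicitly in the style of Lemma \ref{Lcyp2.1} --- which is precisely the argument the paper's citation refers to.
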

\begin{proof}\textbf {Proof.}
		Since the EM  numerical solutions~$Y^{x,y_0}_{n},~n=1,\cdots,\infty$ ~are i.i.d and have Markov property,
		for~any $\Delta_2\in(0,1)]$,  we use $\mathbb{P}^{x,\Delta_2}_{m\Delta_2}$ to denote the same discrete Markov semigroup of $Y^{x,y_0}_{n}$. Under $({\bf F1})$-$({\bf F3})$, {with the help of Lemmas \ref{Lb.3.10}-\ref{CL3.9}, proceeding a similar argument to \cite[Theorem 3,1]{Bao} we derive that for any $y_0\in \RR^{n_2}$ and integer $n\geq 0$,  $Y^{x,y_0}_{n}$ has a unique invariant measure  denoted by $\mu^{x,\Delta_2}$, which is independent of $y_0$ and $n$.}
		Furthermore, applying \eqref{f17} yields that
		\begin{align*}
			\int_{\mathbb{R}^{n_2}}(|y|^{k}\wedge N)\mu^{x,\Delta_2}(\mathrm{d}y)&=\int_{\mathbb{R}^{n_2}}\mathbb{E}(|Y^{x,y}_{n,m}|^{k}\wedge N)\mu^{x,\Delta_2}(\mathrm{d}y)\notag
			\\&\leq \int_{\mathbb{R}^{n_2}}(\mathbb{E}|Y^{x,y}_{n,m}|^{k}\wedge N)\mu^{x,\Delta_2}(\mathrm{d}y)\notag
			\\&\leq\int_{\mathbb{R}^{n_2}}\Big(|y|^{k} e^{-\frac{q\alpha_{k}m\Delta_2}{8}}\wedge N\Big)\mu^{x,\Delta_2}(\mathrm{d}y)+C(1+|x|^{k}),
		\end{align*}
		where the identity is due to the invariance of invariant measure $\mu^{x,\Delta_2}$ and the first inequality holds  by Jensen's inequality since $x\mapsto N\wedge x,x\in \mathbb{R}$ is a convex  function. Then, taking $m\rightarrow\infty$ and using the dominated convergence theorem, we deduce that
		\begin{align*}
			\int_{\mathbb{R}^{n_2}}(|y|^{k}\wedge N)\mu^{x,\Delta_2}(\mathrm{d}y)\leq C(1+|x|^k).
		\end{align*}
		Letting $N\rightarrow\infty$  and applying the monotone convergence theorem, we get
		\begin{align*}
			\int_{\mathbb{R}^{n_2}}|y|^{k}\mu^{x,\Delta_2}(\mathrm{d}y)\leq C(1+|x|^k).
		\end{align*}
		The proof is complete.
	\end{proof}
	\begin{lem}\label{Lb.6}
		{\rm If $({\bf F1})$-$({\bf F3})$ hold {with some $k\geq2$}, then for any fixed $x\in \RR^{n_1},{ y_0\in \RR^{n_2}}$, integer $n\geq
			0$ and $\Delta_2\in (0,\hat\Delta_2]$,
			{\begin{align*}
					\sup_{m\geq0}\mathbb{E}\big|Y^{x,y_0}_{n,m}-y^{x,y_0}_{n}(m\Delta_2)\big|^{2}\leq C(1+ |x|^2)\Delta_2.
		\end{align*}}}
	\end{lem}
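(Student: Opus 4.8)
The plan is to compare the continuous-time interpolation $\bar Y^{x,y_0}_n(t)$ of the EM scheme with the auxiliary diffusion $y^{x,y_0}_n(t)$, both driven by the common Brownian motion $W^2_n$, and then restrict to the grid $t=m\Delta_2$. Abbreviating $Y_n(s)=Y^{x,y_0}_n(s)$, $\bar Y_n(s)=\bar Y^{x,y_0}_n(s)$, $y_n(s)=y^{x,y_0}_n(s)$, set $e(t):=\bar Y_n(t)-y_n(t)$. Comparing the recursion \eqref{ee13} with \eqref{a3.30} gives $\bar Y_n(m\Delta_2)=Y^{x,y_0}_{n,m}$, so $e(0)=0$ and $e(m\Delta_2)=Y^{x,y_0}_{n,m}-y^{x,y_0}_n(m\Delta_2)$; hence it suffices to bound $\sup_{t\geq0}\mathbb{E}|e(t)|^2$. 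Subtracting \eqref{e16} (in integral form) from \eqref{a3.30} and applying the It\^o formula to $|e(t)|^2$ (the martingale part having zero mean) yields
\begin{align*}
\mathbb{E}|e(t)|^2=\mathbb{E}\int_0^t\Big[2e(s)^T\big(f(x,Y_n(s))-f(x,y_n(s))\big)+\big|g(x,Y_n(s))-g(x,y_n(s))\big|^2\Big]\mathrm{d}s.
\end{align*}

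The crucial step is to split each increment through the intermediate argument $\bar Y_n(s)$, writing $f(x,Y_n)-f(x,y_n)=[f(x,\bar Y_n)-f(x,y_n)]+[f(x,Y_n)-f(x,\bar Y_n)]$ and likewise for $g$. The matched pieces $f(x,\bar Y_n)-f(x,y_n)$ and $g(x,\bar Y_n)-g(x,y_n)$ have arguments $\bar Y_n$ and $y_n$ whose difference is exactly $e(s)$, so the dissipativity ({\bf F2}) applies and gives $2e(s)^T[f(x,\bar Y_n)-f(x,y_n)]+|g(x,\bar Y_n)-g(x,y_n)|^2\leq-\beta|e(s)|^2$. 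Every remaining cross term carries a factor $Y_n(s)-\bar Y_n(s)$, which I would bound using the global Lipschitz property ({\bf F1}) together with Young's inequality, choosing the weights so that the induced $|e(s)|^2$-contributions total at most $\tfrac{\beta}{2}|e(s)|^2$. This leads to
\begin{align*}
\frac{\mathrm{d}}{\mathrm{d}t}\mathbb{E}|e(t)|^2\leq-\frac{\beta}{2}\mathbb{E}|e(t)|^2+C\,\mathbb{E}\big|Y_n(t)-\bar Y_n(t)\big|^2.
\end{align*}

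To close, I would invoke the interpolation estimate of Lemma \ref{Lb.3.10} with $k=2$, namely $\sup_{t\geq0}\mathbb{E}|Y_n(t)-\bar Y_n(t)|^2\leq C(1+|x|^2)\Delta_2$ for $\Delta_2\in(0,\hat\Delta_2]$ (the fixed $y_0$ being absorbed into $C$), and apply the variation-of-constants bound. Since the homogeneous part decays like $e^{-\beta t/2}$, the forcing integrates to a time-uniform estimate:
\begin{align*}
\mathbb{E}|e(t)|^2\leq C(1+|x|^2)\Delta_2\int_0^t e^{-\frac{\beta}{2}(t-s)}\,\mathrm{d}s\leq\frac{2C}{\beta}(1+|x|^2)\Delta_2,\qquad t\geq0,
\end{align*}
and evaluating at $t=m\Delta_2$ and taking the supremum over $m$ gives the assertion.

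The main obstacle is the decomposition in the second paragraph: ({\bf F2}) is a joint one-sided condition on $f$ and $g$ and only contracts the matched difference $e(s)=\bar Y_n-y_n$, not the difference $Y_n-y_n$ that the scheme actually produces. Routing both increments through $\bar Y_n(s)$ and balancing the Young weights so that the gain $-\beta|e|^2$ is not cancelled by the cross terms is precisely what preserves the dissipativity; without it one would only obtain a Gr\"onwall factor growing in $t$, losing the time-uniform rate that the statement requires.
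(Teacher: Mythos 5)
Your proposal is correct and follows essentially the same route as the paper's proof: both compare $\bar{Y}^{x,y_0}_{n}(t)$ with $y^{x,y_0}_{n}(t)$ via the It\^o formula, route the increments through $\bar{Y}^{x,y_0}_{n}(s)$ so that $({\bf F2})$ contracts the matched difference while $({\bf F1})$ and Young's inequality absorb the cross terms carrying $Y^{x,y_0}_{n}(s)-\bar{Y}^{x,y_0}_{n}(s)$, and then close with the interpolation bound of Lemma \ref{Lb.3.10}. The only cosmetic difference is that the paper applies the integrating factor $e^{\beta t/4}$ inside the It\^o computation, whereas you derive the differential inequality first and apply variation of constants afterwards; these are the same argument.
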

	\begin{proof}\textbf {Proof.}
		In view of (\ref{e16}) and (\ref{a3.30}), define $\bar{v}^{x,y_0}_{n}(t):=\bar{Y}_{n}^{x,y_0}(t)-y^{x,y_0}_{n}(t)$ described by
		\begin{align*}
			\!\mathrm{d}\bar{v}^{x,y_0}_{n}(t)=\Big(f(x,Y^{x,y_0}_{n}(t))\!-\!f(x,y^{x,y_0}_{n}(t))\Big)\mathrm{d}t+\Big(g(x,Y^{x,y_0}_{n}(t))-g(x,y^{x,y_0}_{n}(t))\Big)\mathrm{d}W^{2}_{n}(t).\notag
		\end{align*}
		Using the $\mathrm{It\hat{o}}$ formula one arrives at
		\begin{align}\label{c3.38}
			\mathbb{E}\Big(e^{\frac{\beta t}{4} }|\bar{v}^{x,y_0}_{n}(t)|^{2}\Big)
			\leq& \mathbb{E}\int_{0}^{t}\bigg[\frac{\beta}{4} e^{\frac{\beta s}{4} }\big|\bar{v}^{x,y_0}_{n}(s)\big|^{2}+e^{\frac{\beta s}{4} }\Big(2(\bar{v}^{x,y_0}_{n}(s))^{T}\big[f(x,Y^{x,y_0}_{n}(s))\notag
			\\&-f(x,y^{x,y_0}_{n}(s))\big]+\big|g(x,Y^{x,y_0}_{n}(s))-g(x,y^{x,y_0}_{n}(s))\big|^{2}\Big)\Bigg]\mathrm{d}s.
		\end{align}
		Invoking $({\bf F1})$, $({\bf F2})$ and the Young inequality yields that
		\begin{align*}
			&2(\bar{v}^{x,y_0}_{n}(s))^{T}\big[f(x,Y^{x,y_0}_{n}(s))-f(x,y^{x,y_0}_{n}(s))\big]+\big|g(x,Y^{x,y_0}_{n}(s))-g(x,y^{x,y_0}_{n}(s))\big|^{2}\notag
			\\ \leq& 2(\bar{v}^{x,y_0}_{n}(s))^{T}\big[f(x,\bar{Y}^{x,y_0}_{n}(s))-f(x,y^{x,y_0}_{n}(s))\big]+\big|g(x,\bar{Y}^{x,y_0}_{n}(s))-g(x,y^{x,y_0}_{n}(s))\big|^{2}\notag
			\\ &~~~+ 2(\bar{v}^{x,y_0}_{n}(s))^{T}\big[f(x,Y^{x,y_0}_{n}(s))-f(x,\bar{Y}^{x,y_0}_{n}(s))\big]+\big|g(x,Y^{x,y_0}_{n}(s))-g(x,\bar{Y}^{x,y_0}_{n}(s))\big|^{2}\notag
			\\&~~~+2\big|g(x,\bar{Y}^{x,y_0}_{n}(s))-g(x,y^{x,y_0}_{n}(s))\big|\big|g(x,Y^{x,y_0}_{n}(s))-g(x,\bar{Y}^{x,y_0}_{n}(s))\big|\notag
			\\ \leq& -\beta |\bar{v}^{x,y_0}_{n}(s)|^2+C|\bar{v}^{x,y_0}_{n}(s)||Y^{x,y_0}_{n}(s)-\bar{Y}^{x,y_0}_{n}(s)|+C|Y^{x,y_0}_{n}(s)-\bar{Y}^{x,y_0}_{n}(s)|^2\notag
			\\ \leq& -\frac{\beta}{2} |\bar{v}^{x,y_0}_{n}(s)|^2+C|Y^{x,y_0}_{n}(s)-\bar{Y}^{x,y_0}_{n}(s)|^2.
		\end{align*}
		Then inserting the above inequality into (\ref{c3.38}) and using $({\bf F1})$ and $({\bf F3})$, we derive from the result of lemma \ref{Lb.3.10} that
		\begin{align*}
			e^{\frac{\beta t}{4}}\mathbb{E}|\bar{v}^{x,y_0}_{n}(t)|^{2}&\leq C\int_{0}^{t}e^{\frac{\beta s}{4} }\mathbb{E}|Y^{x,y_0}_{n}(s)-\bar{Y}^{x,y_0}_{n}(s)|^2\mathrm{d}s
			\leq   C(1+|x|^2)\Delta_2e^{\frac{\beta t}{4}},
		\end{align*}
		which yields the desired result.
	\end{proof}
	\par By  virtue of Lemma \ref{Lb.6}, we obtain the  convergence rate between numerical invariant measure $\mu^{x,\Delta_2}$ and the underlying invariant measure  $\mu^{x}$ in  $\mathbb{W}_2$-distance.
	\begin{lem}\label{cL3.10}
		{\rm Under $({\bf F1})$-$({\bf F3})$ {with some $k\geq2$}, for any fixed $x\in \mathbb{R}^{n_1}$ and $\Delta_2\in (0,\bar\Delta_2]$,
			\begin{align*}
				\mathbb{W}_2(\mu^{x},\mu^{x,\Delta_2})\leq C(1+|x|)\Delta_2^{\frac{1}{2}}.
		\end{align*}}
	\end{lem}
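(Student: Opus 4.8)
The plan is to exploit the triangle inequality for the Wasserstein distance together with the exponential ergodicity of both the exact frozen process and its Euler--Maruyama discretisation, inserting the finite-time strong error estimate of Lemma \ref{Lb.6} as the bridge between the two. Fix $x\in\RR^{n_1}$ and an arbitrary reference point $y_0\in\RR^{n_2}$. For each integer $m\geq0$ I would decompose
$$\mathbb{W}_{2}(\mu^{x},\mu^{x,\Delta_2})\leq \mathbb{W}_{2}\big(\mu^{x},\delta_{y_0}\mathbb{P}^{x}_{m\Delta_2}\big)+\mathbb{W}_{2}\big(\delta_{y_0}\mathbb{P}^{x}_{m\Delta_2},\delta_{y_0}\mathbb{P}^{x,\Delta_2}_{m\Delta_2}\big)+\mathbb{W}_{2}\big(\delta_{y_0}\mathbb{P}^{x,\Delta_2}_{m\Delta_2},\mu^{x,\Delta_2}\big),$$
where $\delta_{y_0}\mathbb{P}^{x}_{m\Delta_2}$ is the law of $y^{x,y_0}_{n}(m\Delta_2)$ and $\delta_{y_0}\mathbb{P}^{x,\Delta_2}_{m\Delta_2}$ is the law of $Y^{x,y_0}_{n,m}$.

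The first term is handled by the contraction estimate established in the proof of Lemma \ref{Lcyp2.1} (itself a consequence of Lemma \ref{LN3.7}): since $\mu^{x}$ is invariant, $\mathbb{W}_{2}(\mu^{x},\delta_{y_0}\mathbb{P}^{x}_{m\Delta_2})=\mathbb{W}_{2}(\mu^{x}\mathbb{P}^{x}_{m\Delta_2},\delta_{y_0}\mathbb{P}^{x}_{m\Delta_2})\leq Ce^{-\beta m\Delta_2/2}\mathbb{W}_{2}(\mu^{x},\delta_{y_0})$, and the moment bound \eqref{cyp2.4} controls $\mathbb{W}_{2}(\mu^{x},\delta_{y_0})$ by $C(1+|x|+|y_0|)$, so this term tends to $0$ as $m\to\infty$. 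The third term is treated symmetrically: using invariance of $\mu^{x,\Delta_2}$ and the numerical contraction of Lemma \ref{CL3.9} (which yields a geometric factor $e^{-\beta m\Delta_2/4}$ on the Wasserstein scale after taking square roots), together with the numerical moment bound of Lemma \ref{CL3.7}, this term also vanishes as $m\to\infty$.

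The crux is the middle term. Because $y^{x,y_0}_{n}(\cdot)$ and $Y^{x,y_0}_{n,m}$ are driven by the \emph{same} Brownian motion $W^{2}_{n}$ and start from the same $y_0$, they furnish an admissible coupling of $\delta_{y_0}\mathbb{P}^{x}_{m\Delta_2}$ and $\delta_{y_0}\mathbb{P}^{x,\Delta_2}_{m\Delta_2}$; hence $\mathbb{W}_{2}(\delta_{y_0}\mathbb{P}^{x}_{m\Delta_2},\delta_{y_0}\mathbb{P}^{x,\Delta_2}_{m\Delta_2})\leq(\mathbb{E}|Y^{x,y_0}_{n,m}-y^{x,y_0}_{n}(m\Delta_2)|^{2})^{1/2}$. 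Crucially, Lemma \ref{Lb.6} bounds this by $(C(1+|x|^2)\Delta_2)^{1/2}=C(1+|x|)\Delta_2^{1/2}$ \emph{uniformly in} $m$. I would then let $m\to\infty$: the first and third terms disappear while the uniform middle bound survives, yielding $\mathbb{W}_{2}(\mu^{x},\mu^{x,\Delta_2})\leq C(1+|x|)\Delta_2^{1/2}$. The main obstacle to watch is precisely this uniformity --- a naive Gronwall-type finite-time error estimate would carry a factor growing in $m$ and would be useless here, so the argument rests essentially on the time-uniform strong error of Lemma \ref{Lb.6}, which in turn relies on the dissipativity supplied by $({\bf F2})$--$({\bf F3})$.
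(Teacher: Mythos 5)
Your proposal is correct and follows essentially the same route as the paper: a triangle inequality through $\delta_{y_0}\mathbb{P}^{x}_{m\Delta_2}$ and $\delta_{y_0}\mathbb{P}^{x,\Delta_2}_{m\Delta_2}$, with the two outer terms killed as $m\to\infty$ by the ergodicity/contraction arguments already contained in the proofs of Lemmas \ref{Lcyp2.1} and \ref{CL3.7}, and the middle term bounded uniformly in $m$ by the synchronous coupling together with the time-uniform strong error of Lemma \ref{Lb.6}. The only cosmetic difference is that the paper fixes the reference point $y_0=0$, whereas you keep it arbitrary; your emphasis on the uniformity in $m$ of Lemma \ref{Lb.6} is exactly the point the paper's argument rests on.
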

	\begin{proof}\textbf{Proof.}
		From the proofs of Lemmas \ref{Lcyp2.1} and \ref{CL3.7}, we know that
		\begin{align*}
			\lim_{m\rightarrow\infty}\mathbb{W}_{2}(\delta_0\mathbb{P}^{x}_{m\Delta_2},\mu^{x})=0
		\end{align*}
		and
		\begin{align*}
			\lim_{m\rightarrow\infty}\mathbb{W}_{2}(\delta_0\mathbb{P}^{x,\Delta_2}_{m\Delta_2},\mu^{x,\Delta_2})=0.
		\end{align*}
		The above inequalities, together with  Lemma \ref{Lb.6}, imply that
		\begin{align}
			\mathbb{W}_2(\mu^{x},\mu^{x,\Delta_2})&\leq \lim_{m\rightarrow\infty}\mathbb{W}_{2}(\mu^{x},\delta_0\mathbb{P}^{x}_{m\Delta_2}) +\lim_{m\rightarrow\infty}\mathbb{W}_{2}(\delta_0\mathbb{P}^{x}_{m\Delta_2},\delta_{0}\mathbb{P}^{x,\Delta_2}_{m\Delta_2})\notag
\\&~~~+\lim_{m\rightarrow\infty}\mathbb{W}_{2}(\delta_0\mathbb{P}^{x,\Delta_2}_{m\Delta_2},\mu^{x,\Delta_2})\notag
			\\&\leq{ \lim_{m\rightarrow\infty}\big(\mathbb{E}|y^{x,0}_{n}(m\Delta_2)-Y^{x,0}_{n,m}|^2\big)^{\frac{1}{2}}\leq C(1+|x|)\Delta^{\frac{1}{2}}}.\notag
		\end{align}
	The proof is complete.
	\end{proof}
\par Now we turn to analyze the property of the estimator $B_{M}(x,Y^{x,y_0}_{n})$.
\begin{lem}\label{L4.1}
If $({\bf S4})$, $({\bf F1})$ and $({\bf F3})$ hold, then for any $0\leq p\leq k/\theta_4$, there exists a constant $C_{y_0,p,K}$ such that
\begin{align*}
\mathbb{E}\Big|B_{M}\Big(T_{\Delta_1}(x),Y^{T_{\Delta_1}(x),y_0}_{n}\Big)\Big|^{p}\leq C_{y_0,p,K}\Delta_{1}^{-\frac{p}{2}}(1+|x|^{p}),
\end{align*}
where  $K\geq ~1+{\varphi(|x_0|)}$ is defined in \eqref{eqN3.2}.
\end{lem}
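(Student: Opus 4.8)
The plan is to dominate the estimator pathwise by the truncated growth bound \eqref{cyp3.3}, and then take the $p$th moment using the uniform moment estimate of \lemref{Lb.3.10}. First I would expand the estimator as
\[
B_{M}\big(T_{\Delta_1}(x),Y^{T_{\Delta_1}(x),y_0}_{n}\big)=\frac{1}{M}\sum_{m=1}^{M}b\big(T_{\Delta_1}(x),Y^{T_{\Delta_1}(x),y_0}_{n,m}\big),
\]
and insert \eqref{cyp3.3} term by term to obtain the pathwise majorant
\[
\Big|B_{M}\big(T_{\Delta_1}(x),Y^{T_{\Delta_1}(x),y_0}_{n}\big)\Big|\leq C\Delta_1^{-\frac12}\big(1+|T_{\Delta_1}(x)|\big)+\frac{K_3}{M}\sum_{m=1}^{M}\big|Y^{T_{\Delta_1}(x),y_0}_{n,m}\big|^{\theta_4}.
\]
The first summand is deterministic; since $|T_{\Delta_1}(x)|\leq|x|$ and $(1+|x|)^p\leq C(1+|x|^p)$, its $p$th power is already controlled by $C\Delta_1^{-p/2}(1+|x|^p)$. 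Thus the whole problem reduces to bounding the $p$th moment of the time average $\frac{1}{M}\sum_{m=1}^{M}|Y^{T_{\Delta_1}(x),y_0}_{n,m}|^{\theta_4}$.

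For this reduced term I would treat $p\geq1$ first. Applying the convex Jensen inequality to the average over $m$ and then taking expectations gives
\[
\mathbb{E}\Big(\frac{1}{M}\sum_{m=1}^{M}\big|Y^{T_{\Delta_1}(x),y_0}_{n,m}\big|^{\theta_4}\Big)^{p}\leq\frac{1}{M}\sum_{m=1}^{M}\mathbb{E}\big|Y^{T_{\Delta_1}(x),y_0}_{n,m}\big|^{\theta_4 p}\leq\sup_{m\geq0}\mathbb{E}\big|Y^{T_{\Delta_1}(x),y_0}_{n,m}\big|^{\theta_4 p}.
\]
Because $0\leq\theta_4 p\leq k$, \lemref{Lb.3.10} together with the Lyapunov inequality (to pass from the $k$th moment down to the $\theta_4 p$th) yields
\[
\sup_{m\geq0}\mathbb{E}\big|Y^{T_{\Delta_1}(x),y_0}_{n,m}\big|^{\theta_4 p}\leq C\big(1+|y_0|^{\theta_4 p}+|T_{\Delta_1}(x)|^{\theta_4 p}\big),
\]
with the frozen parameter $T_{\Delta_1}(x)$ playing the role of the spatial variable in that lemma.

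The crux is the last term $|T_{\Delta_1}(x)|^{\theta_4 p}$, where the design of the truncation mapping must be exploited, and I expect this to be the main obstacle. Since $\theta_4\geq1$, I would split the exponent as $\theta_4 p=p+(\theta_4-1)p$, bounding one factor by $|T_{\Delta_1}(x)|\leq|x|$ and the other by the truncation radius $|T_{\Delta_1}(x)|\leq\varphi^{-1}\big(K\Delta_1^{-1/2}\big)\leq C_K\Delta_1^{-\frac{1}{2(\theta_3\vee\theta_4-1)}}$, which follows from $\varphi^{-1}(v)=(v-1)^{1/(\theta_3\vee\theta_4-1)}$. This gives
\[
|T_{\Delta_1}(x)|^{\theta_4 p}\leq|x|^{p}\big[\varphi^{-1}(K\Delta_1^{-1/2})\big]^{(\theta_4-1)p}\leq C_K\,\Delta_1^{-\frac{(\theta_4-1)p}{2(\theta_3\vee\theta_4-1)}}|x|^{p}.
\]
The key observation is that $\theta_4\leq\theta_3\vee\theta_4$, so $(\theta_4-1)/(\theta_3\vee\theta_4-1)\leq1$, and since $\Delta_1\leq1$ the $\Delta_1$-exponent is dominated by $-p/2$; hence $|T_{\Delta_1}(x)|^{\theta_4 p}\leq C_K\Delta_1^{-p/2}|x|^{p}$. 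This is precisely the balance engineered into $\varphi$: the truncation radius is chosen so that the super-linear contribution $|y|^{\theta_4}$ is exactly compatible with the target rate $\Delta_1^{-p/2}$.

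Collecting the two summands via $(a+b)^p\leq2^{p-1}(a^p+b^p)$ and absorbing the constant $1+|y_0|^{\theta_4 p}$ into $\Delta_1^{-p/2}\geq1$ then proves the claim for $p\geq1$. Finally, for $0\leq p<1$ I would descend from the case $p=1$ using the concave Jensen inequality,
\[
\mathbb{E}\Big|B_{M}\big(T_{\Delta_1}(x),Y^{T_{\Delta_1}(x),y_0}_{n}\big)\Big|^{p}\leq\Big(\mathbb{E}\Big|B_{M}\big(T_{\Delta_1}(x),Y^{T_{\Delta_1}(x),y_0}_{n}\big)\Big|\Big)^{p}\leq\big(C\Delta_1^{-\frac12}(1+|x|)\big)^{p}\leq C\Delta_1^{-\frac{p}{2}}(1+|x|^{p}),
\]
which covers the full range $0\leq p\leq k/\theta_4$ and gives the constant $C_{y_0,p,K}$ as required.
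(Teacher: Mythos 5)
Your proof is correct and follows essentially the same route as the paper's: bound the estimator through \eqref{cyp3.3}, control the time-averaged $\theta_4$-moments via Lemma~\ref{Lb.3.10} with the Lyapunov/H\"older step, and exploit the truncation design through $(\theta_4-1)/(\theta_3\vee\theta_4-1)\leq 1$ so that $\big[\varphi^{-1}(K\Delta_1^{-1/2})\big]^{(\theta_4-1)p}\leq C_K\Delta_1^{-p/2}$. The only difference is cosmetic (pathwise majorant first versus termwise moments first), plus your explicit treatment of $0\leq p<1$ by concave Jensen from the $p=1$ case, which the paper's chain of inequalities (valid as written only for $p\geq1$) leaves implicit.
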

\begin{proof}\noindent\textbf {Proof.}
Under $({\bf S4})$, it follows from \eqref{cyp3.3} and \eqref{3.9} that 
		\begin{align}\label{eqqN4.8} \mathbb{E}\Big|B_{M}\Big(T_{\Delta_1}(x),Y^{T_{\Delta_1}(x),y_0}_{n}\Big)\Big|^{p}=&\mathbb{E}\Big|\frac{1}{M}\sum_{m=1}^{M}b\big(T_{\Delta_1}(x),Y^{T_{\Delta_1}(x),y_0}_{n,m}\big)\Big|^{p} \nn\
\\ \leq & \frac{1}{M}\sum_{m=1}^{M}\mathbb{E}\Big|b\big(T_{\Delta_1}(x),Y^{T_{\Delta_1}(x),y_0}_{n,m}\big)\Big|^{p}\notag
			\\ \leq  & \frac{1}{M}\sum_{m=1}^{M}\mathbb{E}\Big[\Big(C\Delta_1^{-\frac{1}{2}}(1+|T_{\Delta_1}(x)|)
			+K_3\Big|Y^{T_{\Delta_1}(x),y_0}_{n,m}\Big|^{\theta_4}\Big)\Big]^{p}\notag
			\\ \leq & C_{p}\Delta_1^{-\frac{p}{2}}\big(1+|T_{\Delta_1}(x)|\big)^{p}+\frac{C_{p}}{M}\sum_{m=1}^{M}\mathbb{E}\big|Y^{T_{\Delta_1}(x),y_0}_{n, m}\big|^{ p\theta_4}.
		\end{align}
		 Under $({\bf F1})$ and $({\bf F3})$, using the H\"older inequality  and Lemma \ref{Lb.3.10} yields that for any  $0\leq p\leq k/\theta_4$,
		\begin{align*}
			\mathbb{E}\big|Y^{T_{\Delta_1}(x),y_0}_{n, m}\big|^{p\theta_4}\leq \Big(\mathbb{E}\big|Y^{T_{\Delta_1}(x),y_0}_{n, m}\big|^{k}\Big)^{\frac{p\theta_4}{k}} \leq C\big(1+|y_0|^{p\theta_4}+|T_{\Delta_1}(x)|^{p\theta_4}\big) \leq C_{y_0,p}\big(1+|T_{\Delta_1}(x)|^{p\theta_4}\big).
		\end{align*}
Since $|T_{\Delta_1}(x)|\leq |x|\wedge\varphi^{-1}(K\Delta_1^{-\frac{1}{2}}), ~\forall \Delta\in (0,1], \forall x\in \RR^{n_1}$,
one derives that
		\begin{align}\label{eqNN4.9}
				\mathbb{E}\big|Y^{T_{\Delta_1}(x),y_0}_{n, m}\big|^{p\theta_4}  
&\leq  C_{y_0,p}\big(1+|\varphi^{-1}(K\Delta^{-\frac{1}{2}})|^{p(\theta_4-1)}|T_{\Delta_1}(x)|^{p}\big)\nn\
\\&
\leq C_{y_0,p,K}\Delta_1^{-\frac{p}{2}}(1+|T_{\Delta_1}(x)|^{p})\leq C_{y_0,p,K}\Delta_1^{-\frac{p}{2}}(1+|x|^{p}).
		\end{align}
		Inserting \eqref{eqNN4.9} into \eqref{eqqN4.8} implies that
		\begin{align*} \mathbb{E}\Big|B_{M}\Big(T_{\Delta_1}(x),Y^{T_{\Delta_1}(x),y_0}_{n}\Big)\Big|^{p}&\leq C_{y_0,p,K}\Delta_1^{-\frac{p}{2}}\big(1+|x|^{p}\big).
		\end{align*}
The proof is complete.
\end{proof}
	
	The error between  $\bar{b}(x)$ and  $B_{M}(x,Y^{x,y_0}_{n})$ is the key  to obtain  the convergence of the  numerical solution of the MTEM scheme. By introducing an  auxiliary function as below
	\begin{align}\label{cyp3.35}
		\bar{b}^{\Delta_2}(x)=\int_{\mathbb{R}^{n_2}}b(x,y)\mu^{x,\Delta_2}(\mathrm{d}y),~~x\in \RR^{d},
	\end{align}
	we use $|\bar{b}(x)-\bar{b}^{\Delta_2}(x)|^2$ and $\E |\bar{b}^{\Delta_2}(x)-B_{M}(x,Y^{x,y_0}_{n})|^2$ to estimate $\E|\bar{b}(x)-B_{M}(x,Y^{x,y_0}_{n})|^2$. In fact,	under $({\bf S4})$ and ({\bf F1})-({\bf F3}) with $k\geq \theta_4$,  by virtue of Lemma \ref{CL3.7}, for any fixed $x\in \mathbb{R}^{n_1}$ and $\Delta_2\in (0,\bar\Delta_2]$,
	\begin{align}\label{f13}
		|\bar{b}^{\Delta_2}(x)|\leq\int_{\mathbb{R}^{n_2}}|b(x,y)|\mu^{x,\Delta_2}(\mathrm{d}y)&\leq K_3\int_{\mathbb{R}^{n_2}}(1+|x|^{\theta_3}+|y|^{\theta_4})\mu^{x,\Delta_2}(\mathrm{d}y)\notag
\\&\leq C(1+|x|^{\theta_3\vee\theta_4})< \infty.
	\end{align}
	Thus, $\bar{b}^{\Delta_2}(x)$ is well-posed  under $({\bf S4})$ and ({\bf F1})-({\bf F3}) with $k\geq \theta_4$. Next,  we estimate  $|\bar{b}(x)-\bar{b}^{\Delta_2}(x)|^2$ and $\E |\bar{b}^{\Delta_2}(x)-B_{M}(x,Y^{x,y_0}_{n})|^2$, respectively.
	\begin{lem}\label{L3.13}
		{\rm Under $({\bf S2})$, $({\bf S4})$ and $({\bf F1})$-$(\bf {F3})$ with $k\geq{ 2\vee2\theta_2\vee\theta_4}$, for any $x\in \mathbb{R}^{n_1}$ and $\Delta_2\in (0,\bar\Delta_2]$,
			\begin{align*}
				|\bar{b}(x)-\bar{b}^{\Delta_2}(x)|\leq C(1+|x|^{\theta_2+1})\Delta_2^{\frac{1}{2}}.
		\end{align*}}
	\end{lem}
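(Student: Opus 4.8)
The plan is to rewrite the difference $\bar{b}(x)-\bar{b}^{\Delta_2}(x)$ as a single integral against a coupling of the two invariant measures, and then combine the local Lipschitz property $({\bf S2})$ of $b(x,\cdot)$ with the Wasserstein rate of Lemma \ref{cL3.10}. First I would fix $x\in\RR^{n_1}$ and an arbitrary coupling $\pi\in\mathcal{C}(\mu^{x},\mu^{x,\Delta_2})$, and use the definitions \eqref{f11} and \eqref{cyp3.35} to write
$$
\bar{b}(x)-\bar{b}^{\Delta_2}(x)=\int_{\RR^{n_2}\times\RR^{n_2}}\big(b(x,y_1)-b(x,y_2)\big)\,\pi(\mathrm{d}y_1,\mathrm{d}y_2).
$$
Passing the absolute value inside and applying $({\bf S2})$ bounds the integrand by $K_1|y_1-y_2|\big(1+|x|^{\theta_2}+|y_1|^{\theta_2}+|y_2|^{\theta_2}\big)$.

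Next I would split this product by the Cauchy--Schwarz inequality into a distance factor $\big(\int|y_1-y_2|^2\,\mathrm{d}\pi\big)^{1/2}$ and a growth factor $\big(\int(1+|x|^{\theta_2}+|y_1|^{\theta_2}+|y_2|^{\theta_2})^2\,\mathrm{d}\pi\big)^{1/2}$. The crucial observation is that the growth factor depends only on the marginals $\mu^{x}$ and $\mu^{x,\Delta_2}$, not on the particular coupling $\pi$. Expanding the square and invoking the moment bounds $\int_{\RR^{n_2}}|y|^{2\theta_2}\mu^{x}(\mathrm{d}y)\le C(1+|x|^{2\theta_2})$ from Lemma \ref{Lcyp2.1} and $\int_{\RR^{n_2}}|y|^{2\theta_2}\mu^{x,\Delta_2}(\mathrm{d}y)\le C(1+|x|^{2\theta_2})$ from Lemma \ref{CL3.7}---both admissible since $k\ge 2\theta_2$---this factor is bounded by $C(1+|x|^{\theta_2})$ uniformly over all $\pi$.

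Because the growth factor is uniform in the coupling, I can then take the infimum over $\pi\in\mathcal{C}(\mu^{x},\mu^{x,\Delta_2})$ on the remaining distance factor alone, which produces exactly the $\mathbb{W}_2$-distance and yields
$$
|\bar{b}(x)-\bar{b}^{\Delta_2}(x)|\le C(1+|x|^{\theta_2})\,\mathbb{W}_2(\mu^{x},\mu^{x,\Delta_2}).
$$
Finally I would insert the convergence rate of Lemma \ref{cL3.10}, namely $\mathbb{W}_2(\mu^{x},\mu^{x,\Delta_2})\le C(1+|x|)\Delta_2^{1/2}$, and collect the polynomial factors via $(1+|x|^{\theta_2})(1+|x|)\le C(1+|x|^{\theta_2+1})$ to reach the asserted bound.

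The only delicate point is the decoupling of the infimum in the third step: one must check that the polynomial growth factor is genuinely independent of the coupling, so that minimizing over $\pi$ acts solely on the $L^2$-distance term and reproduces $\mathbb{W}_2(\mu^{x},\mu^{x,\Delta_2})$. This is the same mechanism already used in the proof of Lemma \ref{L3.3}, with the Wasserstein bound between distinct frozen parameters replaced by the bound between the exact and discretized invariant measures; everything else is a routine application of Cauchy--Schwarz and the moment estimates established above.
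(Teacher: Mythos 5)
Your proposal is correct and follows essentially the same route as the paper's own proof: rewrite the difference as an integral against an arbitrary coupling $\pi\in\mathcal{C}(\mu^{x},\mu^{x,\Delta_2})$, apply $({\bf S2})$ and the H\"older inequality, note that the polynomial growth factor reduces to marginal integrals controlled by Lemmas \ref{Lcyp2.1} and \ref{CL3.7}, and then pass to the infimum over couplings to invoke the Wasserstein rate of Lemma \ref{cL3.10}. The ``delicate point'' you flag is exactly what the paper handles with its ``since $\pi$ is arbitrary'' step, so no gap remains.
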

	\begin{proof}\textbf {Proof.}
		Under $({\bf S4})$, $({\bf F1})$-$({\bf F3})$ with $k\geq {2\vee\theta_4}$, in view of \eqref{f11} and \eqref{cyp3.35}, using $({\bf S2})$ and the H\"older inequality yields that
		\begin{align}
			|\bar{b}(x)-\bar{b}^{\Delta_2}(x)|&=\Big|\int_{\mathbb{R}^{n_2}\times \mathbb{R}^{n_2}} \big(b(x,y_1)-b(x,y_2)\big)\pi(\mathrm{d}y_1,\mathrm{d}y_2)\Big|\notag
			\\&\leq \int_{\mathbb{R}^{n_2}\times \mathbb{R}^{n_2}} \big|b(x,y_1)-b(x,y_2)\big|\pi(\mathrm{d}y_1,\mathrm{d}y_2)\notag
			\\&\leq  C\Big(\int_{\mathbb{R}^{n_2}\times \mathbb{R}^{n_2}}|y_1-y_2|^2\pi(\mathrm{d}y_1,\mathrm{d}y_2)\Big)^{\frac{1}{2}}\notag
			\\&~~~\times \Big(\int_{\mathbb{R}^{n_2}\times \mathbb{R}^{n_2}}(1+|x|^{2\theta_2}+|y_1|^{2\theta_2}+|y_2|^{2\theta_2})\pi(\mathrm{d}y_2,\mathrm{d}y_2)\Big)^{\frac{1}{2}},\notag
		\end{align}
		where $\pi\in \mathcal{C}(\mu^{x},\mu^{x,\Delta_2})$ is arbitrary. Thus,  we derive that
		\begin{align*}
			|\bar{b}(x)-\bar{b}^{\Delta_2}(x)|&\leq C\mathbb{W}_{2}(\mu^{x},\mu^{x,\Delta_2})
			\times\Big(1+|x|^{2\theta_2}+\int_{\mathbb{R}^{n_2}}|y_1|^{2\theta_2}\mu^{x}(\mathrm{d}y_1)\notag
\\&~~~+\int_{\mathbb{R}^{n_2}}|y_2|^{2\theta_2}\mu^{x,\Delta_2}(\mathrm{d}y_2)\Big)^{\frac{1}{2}}.
		\end{align*}
		Then due to $({\bf F1})$-$({\bf F3})$ with $k\geq {2\vee2\theta_2}$, applying Lemmas \ref{Lcyp2.1}, \ref{CL3.7} and \ref{cL3.10} implies that
		\begin{align*}
			|\bar{b}(x)-\bar{b}^{\Delta_2}(x)|&\leq C(1+|x|^{\theta_2+1})\Delta_2^{\frac{1}{2}}.
		\end{align*}
		The proof is complete.
	\end{proof}
	
	Before estimate  $\mathbb{E}|\bar{b}^{\Delta_2}(x)-B_{M}(x,Y^{x,y_0}_{n})|^2$,  we  prepare a useful result.
	\begin{lem}\label{L3.14}
		{\rm Under $({\bf S2})$, $({\bf S4})$  and  $({\bf F1})$-$({\bf F3})$ with $k\geq {2\vee2\theta_2\vee(\theta_2+1)\vee\theta_4}$, for any $x\in \mathbb{R}^{n_1}$, {$y\in \mathbb{R}^{n_2}$}, $\Delta_2\in (0,\bar\Delta_2]$ and integers $n\geq 0$, $M\geq1$,
			\begin{align*}
				|\bar{b}^{\Delta_2}(x)-\mathbb{E}b(x,Y^{x,y}_{n,m}) |\leq C(1+|x|^{\theta_2+1}+|y|^{\theta_2+1})e^{\frac{-\beta m \Delta_2}{4}}.
		\end{align*}}
	\end{lem}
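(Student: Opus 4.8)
The plan is to exploit the invariance of the numerical invariant measure $\mu^{x,\Delta_2}$ together with the contraction estimate of \lemref{CL3.9}, by rewriting the stationary average $\bar{b}^{\Delta_2}(x)$ as an $m$-step average of the EM chain started from $\mu^{x,\Delta_2}$. By \lemref{CL3.7} the measure $\mu^{x,\Delta_2}$ is invariant for the discrete Markov semigroup of $\{Y^{x,z}_{n,m}\}_{m\geq0}$, and since $b(x,\cdot)$ has at most polynomial growth by $({\bf S4})$ while $\mu^{x,\Delta_2}$ has finite moments, I can write, for every $m$,
\[
\bar{b}^{\Delta_2}(x)=\int_{\mathbb{R}^{n_2}}b(x,z)\mu^{x,\Delta_2}(\mathrm{d}z)=\int_{\mathbb{R}^{n_2}}\mathbb{E}b(x,Y^{x,z}_{n,m})\mu^{x,\Delta_2}(\mathrm{d}z).
\]
Subtracting $\mathbb{E}b(x,Y^{x,y}_{n,m})$ and using $\int_{\mathbb{R}^{n_2}}\mu^{x,\Delta_2}(\mathrm{d}z)=1$ then yields
\[
\bar{b}^{\Delta_2}(x)-\mathbb{E}b(x,Y^{x,y}_{n,m})=\int_{\mathbb{R}^{n_2}}\Big(\mathbb{E}b(x,Y^{x,z}_{n,m})-\mathbb{E}b(x,Y^{x,y}_{n,m})\Big)\mu^{x,\Delta_2}(\mathrm{d}z).
\]

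Next I would bound the integrand. Applying $({\bf S2})$ together with the Cauchy--Schwarz inequality gives
\begin{align*}
\big|\mathbb{E}b(x,Y^{x,z}_{n,m})-\mathbb{E}b(x,Y^{x,y}_{n,m})\big|
&\leq K_1\big(\mathbb{E}|Y^{x,z}_{n,m}-Y^{x,y}_{n,m}|^2\big)^{\frac12}\\
&\quad\times\Big(\mathbb{E}\big(1+|x|^{\theta_2}+|Y^{x,z}_{n,m}|^{\theta_2}+|Y^{x,y}_{n,m}|^{\theta_2}\big)^2\Big)^{\frac12}.
\end{align*}
The first factor is controlled by the contraction of \lemref{CL3.9}, which gives $(\mathbb{E}|Y^{x,z}_{n,m}-Y^{x,y}_{n,m}|^2)^{1/2}\leq|z-y|e^{-\beta m\Delta_2/4}$. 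For the second factor I would invoke the uniform moment bound of \lemref{Lb.3.10}, applied through H\"older's inequality with $k\geq2\theta_2$, so that $\mathbb{E}|Y^{x,z}_{n,m}|^{2\theta_2}\leq C(1+|x|^{2\theta_2}+|z|^{2\theta_2})$ and similarly for $Y^{x,y}_{n,m}$; hence the second factor is at most $C(1+|x|^{\theta_2}+|y|^{\theta_2}+|z|^{\theta_2})$. Combining, the integrand is dominated by $C|z-y|(1+|x|^{\theta_2}+|y|^{\theta_2}+|z|^{\theta_2})e^{-\beta m\Delta_2/4}$.

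Finally I would integrate this bound against $\mu^{x,\Delta_2}(\mathrm{d}z)$. Using $|z-y|\leq|z|+|y|$ and expanding, each resulting term carries a moment $\int_{\mathbb{R}^{n_2}}|z|^{j}\mu^{x,\Delta_2}(\mathrm{d}z)$ with $j\leq\theta_2+1$, which by \lemref{CL3.7} (with $k\geq\theta_2+1$) is at most $C(1+|x|^{\theta_2+1})$. A few applications of Young's inequality, to absorb the cross terms such as $|y||x|^{\theta_2}$ and $|y|^{\theta_2}|x|$ into $|x|^{\theta_2+1}+|y|^{\theta_2+1}$, then collapse the whole integral to $C(1+|x|^{\theta_2+1}+|y|^{\theta_2+1})$, which is the claimed estimate. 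The one step requiring genuine care is the representation in the first paragraph: justifying that the stationary average may be rewritten as the $m$-step average started from $\mu^{x,\Delta_2}$ rests on the invariance from \lemref{CL3.7} together with the polynomial-growth and finite-moment integrability needed to apply it to the unbounded test function $b(x,\cdot)$; the remaining steps are routine moment bookkeeping.
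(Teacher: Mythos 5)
Your proposal is correct and follows essentially the same route as the paper's proof: the same representation $\bar{b}^{\Delta_2}(x)=\int_{\mathbb{R}^{n_2}}\mathbb{E}b(x,Y^{x,z}_{n,m})\,\mu^{x,\Delta_2}(\mathrm{d}z)$ via invariance (which the paper justifies exactly as you indicate, by truncating with $I_{\{|z|\leq N\}}$ and applying dominated convergence), followed by $({\bf S2})$ with Cauchy--Schwarz, the contraction of Lemma \ref{CL3.9} for the first factor, the moment bounds of Lemmas \ref{Lb.3.10} and \ref{CL3.7} for the second, and the same final absorption of $|y-z|(1+|x|^{\theta_2}+|y|^{\theta_2}+|z|^{\theta_2})$ into $(1+|x|^{\theta_2+1}+|y|^{\theta_2+1}+|z|^{\theta_2+1})$ before integrating against $\mu^{x,\Delta_2}$.
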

	\begin{proof}\textbf {Proof.}
		Under $({\bf S4})$ and $({\bf F1})$-$({\bf F3})$ with $k\geq {2\vee\theta_4}$,
		according to \eqref{cyp3.35} and the invariance of invariant probability measure $\mu^{x,\Delta_2}$, we have
		\begin{align}\label{cyp3.43}
			\bar{b}^{\Delta_2}(x)&=\lim_{N\rightarrow\infty}\int_{\mathbb{R}^{n_2}}b(x,z)I_{\{|z|\leq N\}}\mu^{x,\Delta_2}(\mathrm{d}z)\notag
			\\&\leq \lim_{N\rightarrow\infty}\int_{\mathbb{R}^{n_2}}\mathbb{E}\Big(b(x,Y^{x,z}_{n,m})I_{\{|Y^{x,z}_{n,m}|\leq N\}}\Big)\mu^{x,\Delta_2}(\mathrm{d}z).
		\end{align}
Note that $\lim\limits_{N\rightarrow\infty}b(x,Y^{x,z}_{n,m})I_{\{|Y^{x,z}_{n,m}|\leq N\}}=b(x,Y^{x,z}_{n,m}),~\mathrm{a.s.}$ for any  $z\in \mathbb{R}^{n_2}$. In addition, by $({\bf S4})$ and $({\bf F1})$-$({\bf F3})$ with $k\geq{2\vee\theta_4}$, using  Lemma \ref{CL3.7} yields that
		\begin{align*}
			\int_{\mathbb{R}^{n_2}}\mathbb{E}|b(x,Y^{x,z}_{n,m})|\mu^{x,\Delta_2}(\mathrm{d}z)&\leq C\Big(1+|x|^{\theta_3}+\int_{\mathbb{R}^{n_2}}\mathbb{E}|Y^{x,z}_{n,m}|^{\theta_4}\mu^{x,\Delta_2}(\mathrm{d}z)\Big)
			\\ &\leq C\Big(1+|x|^{\theta_3\vee \theta_4}+\int_{\mathbb{R}^{n_2}}|z|^{\theta_4} \mu^{x,\Delta_2}(\mathrm{d}z)\Big)
			\\ &\leq C(1+|x|^{\theta_3\vee\theta_4})<\infty.
		\end{align*}
		Then applying the dominated convergence theorem for \eqref{cyp3.43} we derive that
		\begin{align*}
			\bar{b}^{\Delta_2}(x)=\int_{\mathbb{R}^{n_2}}\mathbb{E}b(x,Y^{x,z}_{n,m})\mu^{x,\Delta_2}(\mathrm{d}z).
		\end{align*}
		As a result,  we have
		\begin{align}
			| \bar{b}^{\Delta_2}(x)-\mathbb{E}b(x,Y^{x,y}_{n,m})|&= \Big|\mathbb{E}b(x,Y^{x,y}_{n,m})-\int_{\mathbb{R}^{n_2}}\mathbb{E}b(x,Y^{x,z}_{n,m})\mu^{x,\Delta_2}(\mathrm{d}z)\Big|\notag
			\\&\leq \int_{\mathbb{R}^{n_2}}\mathbb{E}\big|b(x,Y^{x,y}_{n,m})-b(x,Y^{x,z}_{n,m})\big|\mu^{x,\Delta_2}(\mathrm{d}z).\notag
		\end{align}
		Further using  $({\bf S2})$  and the H\"older inequality gives that
		\begin{align*}
			&| \bar{b}^{\Delta_2}(x)-\mathbb{E}b(x,Y^{x,y}_{n,m})| 
			\\ \leq&K_1\int_{\mathbb{R}^{n_2}}\mathbb{E}\Big(|Y^{x,y}_{n,m}-Y^{x,z}_{n,m}|(1+|x|^{\theta_2}+|Y^{x,y}_{n,m}|^{\theta_2}+|Y^{x,z}_{n,m}|^{\theta_2})\Big)\mu^{x,\Delta_2}(\mathrm{d}z)
			\\ \leq&  C\int_{\mathbb{R}^{n_2}}\Big[\big(\mathbb{E}|Y^{x,y}_{n,m}-Y^{x,z}_{n,m}|^2\big)^{\frac{1}{2}}\big(\mathbb{E}(1+|x|^{2\theta_2}+|Y^{x,y}_{n,m}|^{2\theta_2}+|Y^{x,z}_{n,m}|^{2\theta_2})\big)^{\frac{1}{2}}\Big]\mu^{x,\Delta_2}(\mathrm{d}z).
		\end{align*}
		Under $({\bf F1})$-$({\bf F3})$ with $k\geq {2\vee2\theta_2\vee(\theta_2+1)}$, utilizing Lemmas \ref{Lb.3.10}-\ref{CL3.7} one gets
		\begin{align*}
			| \bar{b}^{\Delta_2}(x)-\mathbb{E}b(x,Y^{x,y}_{n,m})|&\leq Ce^{\frac{-\beta m \Delta_2}{4}}\int_{\mathbb{R}^{n_2}}|y-z|(1+|x|^{\theta_2}+|y|^{\theta_2}+|z|^{\theta_2})\mu^{x,\Delta_2}(\mathrm{d}z)\notag
			\\&\leq Ce^{\frac{-\beta m \Delta_2}{4}}\int_{\mathbb{R}^{n_2}}(1+|x|^{\theta_2+1}+|y|^{\theta_2+1}+|z|^{\theta_2+1})\mu^{x,\Delta_2}(\mathrm{d}z)\notag
			\\&\leq Ce^{\frac{-\beta m \Delta_2}{4}}(1+|x|^{\theta_2+1}+|y|^{\theta_2+1}).
		\end{align*}
		The proof is complete.
	\end{proof}
	\begin{lem}\label{L3.15}
		{\rm Under $({\bf S2})$, $({\bf S4})$ and $({\bf F1})$-$({\bf F3})$ with $k\geq 2\theta_2\vee 2\theta_4\vee(\theta_2+\theta_4+1)$, for any $x\in \mathbb{R}^{n_1}$, {$y_0\in \RR^{n_2}$}, $\Delta_2\in (0,\bar\Delta_2]$ and integers $n\geq 0$, $M\geq1$,
			\begin{align*}
				{\mathbb{E}|\bar{b}^{\Delta_2}(x)-B_{M}(x,Y^{x,y_0}_{n})|^2\leq C(1+|x|^{2\theta_3\vee2\theta_4\vee(\theta_2 +\theta_3\vee\theta_4+1)}+|y_0|^{2\theta_4\vee(\theta_2 +\theta_3\vee\theta_4+1)}) \frac{1}{M\Delta_{2}}}.
		\end{align*}}
	\end{lem}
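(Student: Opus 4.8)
The plan is to treat $\bar b^{\Delta_2}(x)-B_M(x,Y^{x,y_0}_n)$ as the fluctuation of an ergodic time average around its stationary mean and to exploit the exponential decorrelation of the EM chain \eqref{ee13} quantified in Lemma \ref{L3.14}. Setting $\xi_m:=b(x,Y^{x,y_0}_{n,m})-\bar b^{\Delta_2}(x)\in\RR^{n_1}$, by \eqref{3.9} and \eqref{cyp3.35} one has $\bar b^{\Delta_2}(x)-B_M(x,Y^{x,y_0}_n)=-\tfrac1M\sum_{m=1}^M\xi_m$, so that
\begin{align*}
\mathbb{E}\big|\bar b^{\Delta_2}(x)-B_M(x,Y^{x,y_0}_n)\big|^2=\frac{1}{M^2}\sum_{m=1}^M\sum_{l=1}^M\mathbb{E}\big\langle\xi_m,\xi_l\big\rangle.
\end{align*}
I would split this double sum into the diagonal part $m=l$ and the off-diagonal part $m\neq l$, and estimate the two contributions separately.

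First I would dispose of the diagonal terms. Using $({\bf S4})$ together with the bound $|\bar b^{\Delta_2}(x)|\le C(1+|x|^{\theta_3\vee\theta_4})$ from \eqref{f13}, one gets $|\xi_m|\le C(1+|x|^{\theta_3\vee\theta_4}+|Y^{x,y_0}_{n,m}|^{\theta_4})$, hence $\mathbb{E}|\xi_m|^2\le C(1+|x|^{2\theta_3\vee2\theta_4}+\mathbb{E}|Y^{x,y_0}_{n,m}|^{2\theta_4})$. Invoking the uniform moment bound of Lemma \ref{Lb.3.10} (legitimate since $k\ge 2\theta_4$) yields $\mathbb{E}|\xi_m|^2\le C(1+|x|^{2\theta_3\vee2\theta_4}+|y_0|^{2\theta_4})$, so the diagonal contributes at most $\tfrac{C}{M}(1+|x|^{2\theta_3\vee2\theta_4}+|y_0|^{2\theta_4})$; this is the source of the exponents $2\theta_3$ and $2\theta_4$ in the final estimate.

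For the off-diagonal terms with $m<l$ (the case $m>l$ being symmetric), the crux is the Markov property of \eqref{ee13}. Conditioning on $\mathcal F^2_{n,m}$ and using time-homogeneity gives $\mathbb{E}[\,b(x,Y^{x,y_0}_{n,l})\mid\mathcal F^2_{n,m}]=\mathbb{E}\,b(x,Y^{x,z}_{n,l-m})\big|_{z=Y^{x,y_0}_{n,m}}$, whence $\mathbb{E}[\xi_l\mid\mathcal F^2_{n,m}]=\big(\mathbb{E}\,b(x,Y^{x,z}_{n,l-m})-\bar b^{\Delta_2}(x)\big)\big|_{z=Y^{x,y_0}_{n,m}}$. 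Applying Lemma \ref{L3.14} with the lag $l-m$ bounds this by $C(1+|x|^{\theta_2+1}+|Y^{x,y_0}_{n,m}|^{\theta_2+1})e^{-\beta(l-m)\Delta_2/4}$, so that
\begin{align*}
\big|\mathbb{E}\langle\xi_m,\xi_l\rangle\big|\le C\,e^{-\beta(l-m)\Delta_2/4}\,\mathbb{E}\Big[|\xi_m|\big(1+|x|^{\theta_2+1}+|Y^{x,y_0}_{n,m}|^{\theta_2+1}\big)\Big].
\end{align*}
Multiplying $|\xi_m|\le C(1+|x|^{\theta_3\vee\theta_4}+|Y^{x,y_0}_{n,m}|^{\theta_4})$ against the polynomial factor and expanding, the highest moment of $Y^{x,y_0}_{n,m}$ that appears is of order $\theta_2+\theta_4+1$; Lemma \ref{Lb.3.10} (using $k\ge\theta_2+\theta_4+1$) then gives the uniform bound $C(1+|x|^{\theta_2+\theta_3\vee\theta_4+1}+|y_0|^{\theta_2+\theta_4+1})$.

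Finally I would sum the geometric factors. Since $\Delta_2\le\bar\Delta_2\le1$, one has $1-e^{-\beta\Delta_2/4}\ge (\beta/4)e^{-\beta\bar\Delta_2/4}\,\Delta_2$, hence $\sum_{m\neq l}e^{-\beta|l-m|\Delta_2/4}\le 2M\sum_{j\ge1}e^{-\beta j\Delta_2/4}\le CM/\Delta_2$. Combining this with the diagonal estimate and absorbing $\tfrac1M\le\tfrac1{M\Delta_2}$ (valid because $\Delta_2\le1$) produces the claimed bound, with $x$-exponent $2\theta_3\vee2\theta_4\vee(\theta_2+\theta_3\vee\theta_4+1)$ and $y_0$-exponent $2\theta_4\vee(\theta_2+\theta_3\vee\theta_4+1)$, the latter dominating $\theta_2+\theta_4+1$. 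The main obstacle is the off-diagonal decorrelation step: one must correctly reduce the conditional expectation to Lemma \ref{L3.14} at lag $l-m$ via the Markov property, and then control the random polynomial factor $|Y^{x,y_0}_{n,m}|^{\theta_2+1}$ multiplying $\xi_m$ without exceeding the available moment order $k$ — this is precisely what the hypothesis $k\ge 2\theta_2\vee2\theta_4\vee(\theta_2+\theta_4+1)$ secures (the $2\theta_2$ being inherited from the prerequisite Lemma \ref{L3.14}). The geometric summation is what converts the uniform-in-lag exponential decay into the crucial $1/\Delta_2$ amplification.
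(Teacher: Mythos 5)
Your proposal is correct and follows essentially the same route as the paper's proof: the same diagonal/off-diagonal split of $\frac{1}{M^2}\sum_{m,l}\mathbb{E}\langle\xi_m,\xi_l\rangle$, the same pointwise bound from $({\bf S4})$ and \eqref{f13}, the same conditioning step reducing the off-diagonal covariance to Lemma \ref{L3.14} at lag $|l-m|$ (the paper phrases this via independence of the increments after time $l\Delta_2$, citing Shiryaev, rather than the Markov property, but it is the same mechanism), moment control via Lemma \ref{Lb.3.10}, and the same geometric summation producing the $1/(M\Delta_2)$ factor with the final absorption $1/M\leq 1/(M\Delta_2)$. The only minor discrepancy is bookkeeping: the cross term $|x|^{\theta_3\vee\theta_4}\,\mathbb{E}|Y^{x,y_0}_{n,m}|^{\theta_2+1}$, once split by Young's inequality, yields $y_0$-exponent $\theta_2+\theta_3\vee\theta_4+1$ (as in the paper's estimate \eqref{cyp3.53}) rather than your $\theta_2+\theta_4+1$, but this is exactly the exponent claimed in the lemma, so nothing is lost.
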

	\begin{proof}\textbf {Proof.}
		In light of \eqref{3.9}, we derive that
		for any $x\in \RR^{n_1}$,
		\begin{align}\label{cyp3.44}
			\mathbb{E}\Big| \bar{b}^{\Delta_2}(x)-B_{M}(x, Y^{x,y_0}_{n})\Big|^{2}
			&=\frac{1}{M^{2}}\sum_{m, l=1}^{M}\mathbb{E} U_{m,l}
			=\frac{1}{M^{2}}\sum_{m=1}^{M}\mathbb{E} U_{m,m}+ \frac{2}{M^{2}}\sum_{l=1}^{M}\sum_{m=l+1}^{M}\mathbb{E}U_{m,l},
		\end{align}
		where
		\begin{align*}
			U_{m,l}=\Big( \bar{b}^{\Delta_2}(x)-b\big(x,Y^{x,y_0}_{n,m}\big)\Big)
			\Big(\bar{b}^{\Delta_2}(x)-b\big(x,Y^{x,y_0}_{n,l}\big)\Big).
		\end{align*}
		By $({\bf S4})$, $({\bf F1})$ and $({\bf F3})$ with $k\geq2\theta_4$, invoking   Lemma \ref{Lb.3.10} and the $\mathrm{H\ddot{o}lder}$ inequality,  one obtains that
		\begin{align*}
			\mathbb{E}\big|b(x,Y^{x,y_0}_{n,m})\big|^2  &\leq C \mathbb{E}\Big( 1+|x|^{2\theta_3}+|Y^{x,y_0}_{n,m}|^{2\theta_4}  \Big)\leq C(1+|x|^{2\theta_3}) +C\Big(\mathbb{E}|Y^{x,y_0}_{n,m}|^{k} \Big)^{\frac{2\theta_4}{k}}\notag
			\\& \leq    {C(1+|x|^{2(\theta_3\vee\theta_4)}+|y_0|^{2\theta_4}}).
		\end{align*}
		Then using the triangle inequality  along with the above inequality and \eqref{f13},  for any $m, l\geq 1$, we yield that for any $x\in \mathbb{R}^{n_1}$,
		\begin{align}\label{cyp3.48}
			\mathbb{E}|U_{m,l}|&\leq  \mathbb{E}\big|b(x,Y^{x,y_0}_{n,m})\big|^2+\mathbb{E}\big|b(x,Y^{x,y_0}_{n,l})\big|^2+2\mathbb{E}|\bar{b}^{\Delta_2}(x)|^2 \notag
			\\&\leq {C(1+|x|^{2(\theta_3\vee\theta_4)}+|y_0|^{2\theta_4})}<\infty,
		\end{align}
		which implies that $|U_{m,l}|$ is integrable with respect to $\mathbb{P}$. To compute  precisely, let $\mathcal{G}^2_{n,l}$ denote the $\sigma$-algebra generated  by  $$\Big\{ W^{2}_n(s)- W^{2}_n( l\Delta_{2}), s\geq l\Delta_{2} \Big\}$$
and $\mathcal{F}^{2}_{n,l}$ denote the $\sigma$-algebra generated by $\{W^{2}_{n}(s), 0\leq s\leq l\Delta_2\}$.
		Note that $\mathcal{F}^2_{n,l}$ and $\mathcal{G}^2_{n,l}$ are mutually independent.
		Since $Y^{x,y_0}_{n,l}$ is $\mathcal{F}^2_{n,l}$-measurable and 
		independent of $\mathcal{G}^2_{n,l}$, using the result of {\cite[p.221]{MR1368405}}, we derive that for any $x\in \mathbb{R}^{n_1}$ and  $1\leq l<m\leq M$,
		\begin{align}\label{c3.43}
			\mathbb{E}U_{m,l}
			=&\mathbb{E}\Big[\big(\bar{b}^{\Delta_2}(x)-b (x,Y^{x,y_0}_{n,l} )\big)
			\times
			\mathbb{E}\Big(\big(\bar{b}^{\Delta_2}(x)-b (x,Y^{x,y_0}_{n,m} )
			\big)\Big|\mathcal{F}^{2}_{n,l}\Big)
			\Big]\notag \\
			\leq&\mathbb{E}\bigg[\big|\bar{b}^{\Delta_2}(x)-b (x,Y^{x,y_0}_{n,l} )\big|
			\times  \Big|\bar{b}^{\Delta_2}(x)-\mathbb{E} b \big (x,Y^{x,z}_{n,m-l} \big)\Big|_{z=Y^{x,y_0}_{n,l}}  \bigg].
		\end{align}
		For any $x\in \mathbb{R}^{n_{1}}$ and $y\in \mathbb{R}^{n_{2}}$, it follows from  $({\bf S4})$ and \eqref{f13} that
		\begin{align}\label{4.21}
			|\bar{b}^{\Delta_2}(x)-b(x,y)|&=|\bar{b}^{\Delta_2}(x)|+|b(x,y)|\leq  C(1+|x|^{\theta_3\vee\theta_4}+|y|^{\theta_4}).
		\end{align}
		Owing to $({\bf S2})$, $({\bf S4})$ and $({\bf F1})$-$({\bf F3})$  with $k\geq {2\vee2\theta_2\vee(\theta_2+1)\vee\theta_4}$,
		using Lemma \ref{L3.14} derives that
		\begin{align*}
			\Big|\bar{b}^{\Delta_2}(x)-\mathbb{E}b\big(x,Y^{x,z}_{n,m-l})\Big| \leq Ce^{-\frac{\beta(m-l)\Delta_{2}}{4}} \big(1+|x|^{\theta_2+1}+|z|^{ \theta_2+1 }\big).
		\end{align*}
		Using \eqref{4.21} and substituting the above inequality  into (\ref{c3.43}) lead to that for any $x\in \mathbb{R}^{n_1}$ and $1\leq l<m\leq M$,
		\begin{align*}
			\mathbb{E}U_{m,l}&\leq  C e^{-\frac{\beta(m-l)\Delta_{2}}{4}}\E\Big[\big( 1+|x|^{\theta_3\vee\theta_4}+|Y^{x,y_0}_{n,l}|^{\theta_4}\big)\notag
			\\&~~~~~~~~~~~~~~~~~~~~~~~\times \big(1+|x|^{\theta_2+1}+|Y^{x,y_0}_{n,l}|^{\theta_2+1 }\big)\Big]\notag
			\\&\leq C e^{-\frac{\beta(m-l)\Delta_{2}}{4}}\E\Big[\big( 1+|x|^{\theta_2+\theta_3\vee\theta_4+1}+(1+|x|^{\theta_3\vee\theta_4})|Y^{x,y_0}_{n,l}|^{\theta_2+1 }\notag
			\\&~~~+(1+|x|^{\theta_2+1})|Y^{x,y_0}_{n,l}|^{\theta_4}
			+|Y^{x,y_0}_{n,l}|^{\theta_2+\theta_4+1 }\big)\Big].
		\end{align*}
		Due to $k\geq \theta_2 + \theta_4+1$, using  Lemma \ref{Lb.3.10}  we deduce that for any $1\leq l<m\leq M$,
		\begin{align}\label{cyp3.53}
			\mathbb{E}U_{m,l} \leq Ce^{-\frac{\beta(m-l)\Delta_{2}}{4}}\big(1+|x|^{\theta_2 +\theta_3\vee\theta_4+1}+{|y_0|^{\theta_2 +\theta_3\vee\theta_4+1}}\big).
		\end{align}
		Hence, inserting \eqref{cyp3.48}  with $m=l$ and \eqref{cyp3.53} into \eqref{cyp3.44} yields that
		\begin{align*}
			&\mathbb{E}\Big|\bar{b}^{\Delta_2}(x)-B_{M}(x, Y^{x,y_0}_{n})\Big|^{2}
			\leq \frac{C(1+|x|^{2(\theta_3\vee\theta_4)}+|y_0|^{2\theta_4})}{M } \notag
			\\&~~~~~+ \frac{C(1+|x|^{\theta_2 +\theta_3\vee\theta_4+1}+|y_0|^{\theta_2 +\theta_3\vee\theta_4+1})}{M^{2}}\sum_{l=1}^{M}\sum_{m=l+1}^{M}e^{-\frac{\beta(m-l)\Delta_{2}}{4}}\notag
			\\ \leq & \frac{C(1+|x|^{2(\theta_3\vee\theta_4)}+|y_0|^{2\theta_4})}{M } +\frac{C(1+|x|^{\theta_2 +\theta_3\vee\theta_4+1}+|y_0|^{\theta_2 +\theta_3\vee\theta_4+1})}{M( e^{ \beta \Delta_2/4}-1)}\notag
			\\ \leq& C\big(1+|x|^{2\theta_3\vee2\theta_4\vee(\theta_2 +\theta_3\vee\theta_4+1)}+|y_0|^{2\theta_3\vee2\theta_4\vee(\theta_2 +\theta_3\vee\theta_4+1)}\big)\Big( \frac{1}{M}+\frac{1}{M\Delta_{2}}\Big)\notag
			\\ \leq&  C\big(1+|x|^{2\theta_3\vee2\theta_4\vee(\theta_2 +\theta_3\vee\theta_4+1)}+|y_0|^{2\theta_4\vee(\theta_2 +\theta_3\vee\theta_4+1)}\big)\frac{1}{M\Delta_{2}},
		\end{align*}
		where the second to last inequality used the fact $e^{x}-1\geq x, \forall x\geq0$ and the last inequality used the fact  $1/M\leq 1/M\Delta, \forall \Delta_2\in (0,1]).$
		The proof is complete.
	\end{proof}
	
	Combining Lemmas \ref{L3.13} and \ref{L3.15},  we obtain the estimate of  $\mathbb{E}|\bar{b}(x)-B_{M}(x,Y^{x}_{n})|^2$ directly.
	\begin{lem}\label{Lb.7}
		{\rm Under $({\bf S2})$, $({\bf S4})$ and $({\bf F1})$-$({\bf F3})$ with $k\geq 2\theta_2\vee 2\theta_4\vee(\theta_2+\theta_4+1)$, for any $x\in \mathbb{R}^{n_1}$, $y_0\in \RR^{n_2}$, $\Delta_2\in (0,\bar\Delta_2]$ and integers $n\geq 0$, $M\geq1$,
			\begin{align*}				\mathbb{E}\Big|\bar{b}(x)-B_{M}(x,Y^{x,y_0}_{n})\Big|^{2}\leq C\big(1+|x|^{2\theta_3\vee2\theta_4\vee(\theta_2+\theta_3\vee\theta_4+1)}+|y_0|^{2\theta_4\vee(\theta_2+\theta_3\vee\theta_4+1)}\big)\Big(\Delta_{2}+\frac{1}{M\Delta_{2}}\Big).
		\end{align*}}
	\end{lem}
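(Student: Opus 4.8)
The plan is to obtain this as an immediate consequence of the two preceding estimates, using the auxiliary averaged coefficient $\bar{b}^{\Delta_2}(x)$ from \eqref{cyp3.35} as a bridge between the true averaged drift $\bar{b}(x)$ and its time-average estimator $B_{M}(x,Y^{x,y_0}_{n})$. First I would split, for fixed $x\in\RR^{n_1}$ and $y_0\in\RR^{n_2}$,
\begin{align*}
\bar{b}(x)-B_{M}(x,Y^{x,y_0}_{n})=\big(\bar{b}(x)-\bar{b}^{\Delta_2}(x)\big)+\big(\bar{b}^{\Delta_2}(x)-B_{M}(x,Y^{x,y_0}_{n})\big),
\end{align*}
and, after taking expectations and invoking the elementary bound $|a+b|^2\le 2|a|^2+2|b|^2$, reduce the claim to
\begin{align*}
\mathbb{E}\big|\bar{b}(x)-B_{M}(x,Y^{x,y_0}_{n})\big|^2\le 2\big|\bar{b}(x)-\bar{b}^{\Delta_2}(x)\big|^2+2\,\mathbb{E}\big|\bar{b}^{\Delta_2}(x)-B_{M}(x,Y^{x,y_0}_{n})\big|^2.
\end{align*}

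Second, I would bound the two resulting terms by the estimates already in hand. The deterministic term is controlled by Lemma \ref{L3.13}: squaring its $\Delta_2^{1/2}$ rate yields a contribution of order $C(1+|x|^{2(\theta_2+1)})\Delta_2$, which is where the $\Delta_2$ summand of the final error factor originates (and, through Lemma \ref{L3.13}, where the $\mathbb{W}_2$-rate between $\mu^{x}$ and $\mu^{x,\Delta_2}$ from Lemma \ref{cL3.10} enters). The random term is controlled by Lemma \ref{L3.15}, contributing $C\big(1+|x|^{2\theta_3\vee2\theta_4\vee(\theta_2+\theta_3\vee\theta_4+1)}+|y_0|^{2\theta_4\vee(\theta_2+\theta_3\vee\theta_4+1)}\big)\frac{1}{M\Delta_2}$, which is where the $\frac{1}{M\Delta_2}$ summand comes from, driven by the exponential decorrelation of the micro-chain behind that lemma.

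Finally, I would collect the two bounds. Since the hypothesis $k\ge 2\theta_2\vee2\theta_4\vee(\theta_2+\theta_4+1)$ assumed here subsumes the moment requirements of both Lemma \ref{L3.13} and Lemma \ref{L3.15}, both estimates are simultaneously available; one then absorbs the constants into $C$, dominates the lower-degree polynomial coefficient $1+|x|^{2(\theta_2+1)}$ from the first term by the combined power $1+|x|^{2\theta_3\vee2\theta_4\vee(\theta_2+\theta_3\vee\theta_4+1)}$ from the second, and retains the two small factors additively as $\Delta_2+\frac{1}{M\Delta_2}$, giving exactly the displayed bound. In truth there is no genuine obstacle at this step: all the analytic work—the synchronous-coupling ergodicity, the numerical invariant-measure rate of Lemma \ref{cL3.10}, and above all the two-time decorrelation estimate underlying Lemma \ref{L3.15}—has already been done. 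The only point requiring care is the bookkeeping that merges the separate polynomial growth exponents in $x$ and $y_0$ into the single maximal powers stated, i.e. checking that the degree $2(\theta_2+1)$ arising from Lemma \ref{L3.13} is dominated by the combined exponent so that it can be folded into the displayed coefficient.
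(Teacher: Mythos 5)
Your proof is correct and is precisely the paper's own argument: the paper derives Lemma \ref{Lb.7} by combining Lemmas \ref{L3.13} and \ref{L3.15} through exactly this splitting via $\bar{b}^{\Delta_2}$ and the elementary inequality $|a+b|^2\le 2|a|^2+2|b|^2$. The only caveat—which you yourself flag as the point requiring care—is that folding the $(1+|x|^{2(\theta_2+1)})\Delta_2$ contribution of Lemma \ref{L3.13} into the displayed coefficient requires $\theta_2+1\le\theta_3\vee\theta_4$ (as holds, e.g., in the setting of Remark \ref{R5.1} where $\theta_4=\theta_2+1$), a point the paper's one-line proof glosses over in exactly the same way.
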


\section{Strong convergence in $p$th moment}\label{s-c5}
	\par With the help of the averaging principle, this section aims to prove the strong convergence between the slow component $x^{\varepsilon}(t)$ of original system \eqref{e1} and the numerical solution $X(t)$ generated by the MTEM scheme.
	\begin{lem}\label{la3.8}
		{\rm If $({\bf S3})$-$({\bf S5})$, $({\bf F1})$ and $({\bf F3})$  hold  with $k\geq2(\theta_4\vee2)$, then for any  $x_0\in \RR^{n_1}$, $y_0\in \RR^{n_2}$, $0< p\leq k/(\theta_4\vee2)$, $T>0$ and $M\geq1$, there exists a constant $C_{x_0,y_0,T,p}$ such that
			\begin{align*}
				\sup_{\Delta_{1}\in(0,1], \Delta_2\in (0,\hat\Delta_2]}\mathbb{E}|\bar{X}(t)|^{p}\leq C_{x_0,y_0,T,p},
			\end{align*}
			and
			\begin{align*}
				\mathbb{E}|\bar{X}(t)-X(t)|^{p}\leq C_{x_0,y_0,T,p}\Delta_1^\frac{p}{2}.
		\end{align*}}
	\end{lem}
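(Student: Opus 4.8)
The plan is to prove both assertions by first establishing a discrete-in-time moment bound at the macro grid points and then transferring it to the continuous interpolation, after which the error estimate follows from a routine one-step analysis. Throughout I work with a two-level filtration: let $\mathcal{H}_n$ be generated by the macro Brownian motion $W^1$ up to $n\Delta_1$ together with the micro motions $W^2_0,\dots,W^2_{n-1}$, and set $\mathcal{G}_n=\mathcal{H}_n\vee\sigma(W^2_n)$. The key structural observation is that both $X_n$ and the estimator $B_M(T_{\Delta_1}(X_n),Y^{T_{\Delta_1}(X_n),y_0}_n)$ are $\mathcal{G}_n$-measurable and remain \emph{frozen} on the whole macro interval $[n\Delta_1,(n+1)\Delta_1)$, so that, conditionally on $\mathcal{G}_n$, the process $\bar X(s)$ from \eqref{a22} solves a constant-coefficient SDE driven only by $W^1$. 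This is what makes all the increment estimates clean.

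For the moment bound I would apply It\^o's formula to $(1+|\bar X(s)|^2)^{p/2}$ (taking $p\ge 2$ first; the case $0<p<2$ follows by Jensen's inequality) on each macro interval and take $\mathbb{E}[\,\cdot\,|\mathcal{G}_n]$. The only dangerous term is the drift $p(1+|\bar X(s)|^2)^{p/2-1}\bar X(s)^T B_M$, which I split as $X_n^T B_M+(\bar X(s)-X_n)^T B_M$. For the first piece I exploit that $X_n$ and $T_{\Delta_1}(X_n)$ are \emph{parallel} with $|T_{\Delta_1}(X_n)|\le|X_n|$, so that $X_n^T b(T_{\Delta_1}(X_n),\cdot)=\tfrac{|X_n|}{|T_{\Delta_1}(X_n)|}\,T_{\Delta_1}(X_n)^T b(T_{\Delta_1}(X_n),\cdot)$; applying $(\mathbf{S5})$ termwise and bounding $\tfrac1M\sum_m|Y^{\cdot}_{n,m}|^2$ in conditional expectation via Lemma~\ref{Lb.3.10} yields $X_n^T B_M\le C_{y_0}(1+|X_n|^2)$, where the cap $\varphi^{-1}(K\Delta_1^{-1/2})\ge1$ keeps the ratio $|X_n|/|T_{\Delta_1}(X_n)|$ below $|X_n|$. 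For the second piece I use that, given $\mathcal{G}_n$, the increment satisfies $|\bar X(s)-X_n|\lesssim\Delta_1|B_M|+|\sigma(X_n)||W^1(s)-W^1(n\Delta_1)|$, whose $L^2(\mathcal{G}_n)$ size is of order $\Delta_1^{1/2}(1+|X_n|)$; multiplied by $|B_M|$ and integrated over the interval, the $\Delta_1^{-1/2}$ growth of $B_M$ from Lemma~\ref{L4.1} is \emph{exactly} cancelled, leaving a contribution of order $\Delta_1(1+|X_n|^2)^{p/2}$ (note that $\Delta_1|B_M|$ is small, so the weight $(1+|\bar X(s)|^2)^{p/2-1}$ has conditional moments governed by $(1+|X_n|^2)^{p/2-1}$ alone, which is $\mathcal{H}_n$-measurable and decouples from the micro randomness). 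Combining with the diffusion terms, controlled by $(\mathbf{S3})$, and then taking $\mathbb{E}[\,\cdot\,|\mathcal{H}_n]$ to average the $Y$-moments and $|B_M|^p$ against the frozen weights through Lemmas~\ref{Lb.3.10} and~\ref{L4.1}, I obtain the one-step inequality $\mathbb{E}[(1+|X_{n+1}|^2)^{p/2}\,|\,\mathcal{H}_n]\le(1+C\Delta_1)(1+|X_n|^2)^{p/2}+C\Delta_1$. Discrete Gronwall gives $\sup_n\mathbb{E}(1+|X_n|^2)^{p/2}\le C_{x_0,y_0,T,p}$, and the same conditional It\^o estimate run up to an arbitrary $t$ inside an interval upgrades this to the stated bound for $\bar X(t)$.

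Granting the moment bound, the error estimate is immediate: for $t\in[n\Delta_1,(n+1)\Delta_1)$ one has $\bar X(t)-X(t)=(t-n\Delta_1)B_M+\int_{n\Delta_1}^t\sigma(X_n)\,\mathrm{d}W^1(s)$. The drift part is bounded in $L^p$ by $\Delta_1^p\,\mathbb{E}|B_M|^p\le C\Delta_1^{p/2}(1+\mathbb{E}|X_n|^p)\le C\Delta_1^{p/2}$ via Lemma~\ref{L4.1}, while the stochastic integral is handled by the Burkholder--Davis--Gundy inequality together with $(\mathbf{S3})$ and the moment bound, again producing $C\Delta_1^{p/2}$; the case $0<p<2$ follows by Jensen. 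The main obstacle lies entirely in the moment bound, namely taming the super-linear, truncation-inflated drift: the crucial mechanism is the cancellation of the $\Delta_1^{-1/2}$ blow-up of $B_M$ against the $\Delta_1^{1/2}$ smallness of the interval increment, made rigorous by freezing $X_n$ and $B_M$ under conditioning on $\mathcal{G}_n$, combined with the parallel-vector argument that lets the one-sided condition $(\mathbf{S5})$ apply despite the mismatch between $X_n$ and its truncation $T_{\Delta_1}(X_n)$.
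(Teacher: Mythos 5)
Your proposal is correct and follows essentially the same route as the paper: the same drift decomposition $\bar X^T B_M = X^T B_M + (\bar X - X)^T B_M$, the same parallel-vector argument that lets $(\mathbf{S5})$ act through the truncation (with the case split according to whether $|X|$ exceeds $\varphi^{-1}(K\Delta_1^{-1/2})$, where $\varphi^{-1}\geq 1$ controls the ratio), the same use of Lemmas~\ref{Lb.3.10} and~\ref{L4.1} via conditioning/freezing, and the same cancellation of the $\Delta_1^{-1/2}$ growth of $B_M$ against the $\Delta_1^{1/2}$-size interval increments, with the error estimate and the case $0<p<2$ handled identically. The only difference is bookkeeping: you run a conditional one-step recursion at the macro grid points and close with a discrete Gronwall (invoking conditional H\"older where the weight correlates with the micro randomness), whereas the paper applies It\^o's formula to the continuous interpolation $\bar X(t)$ over $[0,t]$, takes full expectations using Young's inequality for the same products, and closes with an integral Gronwall on $\sup_{0\le t\le T}\mathbb{E}|\bar X(t)|^{p}$ --- the mathematical substance is identical.
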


\begin{proof}\noindent\textbf {Proof.}
For $2\leq p\leq k/(\theta_4\vee2)$, using the $\mathrm{It\hat{o}}$ formula, we deduce from \eqref{a22} that for any $0\leq t\leq T$,
		\begin{align*}
			|\bar{X} (t)|^{p} =&|x_{0}|^{p}+ p\int_{0}^{t} |\bar{X}(s)|^{p-2}\Big[ \bar{X}^{T}(s){B_{M}\Big(T_{\Delta_1}(X(s)),Y^{T_{\Delta_1}(X(s)),y_0}\Big)}
			\notag
			\\&~~~+\frac{ p-1 }{2}|\sigma(X(s))|^{2}\Big]\mathrm{d}s
			+ p \int_{0}^{t} |\bar{X}(s)|^{p-2}  \bar{X}^{T}(s)\sigma(X(s))\mathrm{d}W^{1}(s)\nn\
\\=&|x_{0}|^{p}+ p\int_{0}^{t} |\bar{X}(s)|^{p-2}\Big[ X^{T}(s)B_{M}\Big(T_{\Delta_1}(X(s)),Y^{T_{\Delta_1}(X(s)),y_0}\Big)\nn\
\\&~~~+(\bar{X}(s)-X(s))^{T}B_{M}\Big(T_{\Delta_1}(X(s)),Y^{T_{\Delta_1}(X(s)),y_0}\Big)
			\notag
			\\&~~~+\frac{ p-1 }{2}|\sigma(X(s))|^{2}\Big]\mathrm{d}s
			+ p \int_{0}^{t} |\bar{X}(s)|^{p-2}  \bar{X}^{T}(s)\sigma(X(s))\mathrm{d}W^{1}(s).
		\end{align*}
 Utilizing ({\bf{S3}}) and the  Young inequality implies that 
 	\begin{align}\label{NN5.1}
			\mathbb{E}|\bar{X}(t)|^{p}  &\leq|x_{0}|^{p}+ C_{p}\int_{0}^{t}  \mathbb{E}|\bar{X}(s)|^{p}\mathrm{d}s+C_{p}\int_{0}^{t}\mathbb{E}|X(s)|^{p}\mathrm{d}s+\mathcal{A}_1+\mathcal{A}_2.
		\end{align}
where 
\begin{align*}
\mathcal{A}_1=p\int_{0}^{t} \mathbb{E}\Big[|\bar{X}(s)|^{p-2} X^{T}(s)B_{M}\Big(T_{\Delta_1}(X(s)),Y^{T_{\Delta_1}(X(s)),y_0}\Big)\Big]\mathrm{d}s
\end{align*}
and
\begin{align*}
\mathcal{A}_2=C_p\int_{0}^{t}\mathbb{E}\Big(|\bar{X}(s)-X(s)|^{\frac{p}{2}}\big|B_{M}(T_{\Delta_1}(X(s)),Y^{T_{\Delta_1}(X(s)),y_0})\big|^{\frac{p}{2}}\Big)\mathrm{d}s.\nn\
\end{align*}
{\color{red}One observes that  for any $s\geq0$,
\begin{align*} 
X(s)=\frac{|X(s)|}{|T_{\Delta_1}(X(s))|}T_{\Delta_1}(X(s)).
\end{align*}
Furthermore, according to \eqref{3.9} it follows that
\begin{align*}
\mathcal{A}_1&=p\int_{0}^{t} \mathbb{E}\Big[|\bar{X}(s)|^{p-2} \frac{|X(s)|}{|T_{\Delta_1}(X(s))|}(T_{\Delta_1}(X(s)))^{T}B_{M}\Big(T_{\Delta_1}(X(s)),Y^{T_{\Delta_1}(X(s)),y_0}\Big)\Big]\mathrm{d}s\nn\
\\&=\frac{p}{M}\sum_{m=1}^{M}\int_{0}^{t} \mathbb{E}\Big[|\bar{X}(s)|^{p-2} \frac{|X(s)|}{|T_{\Delta_1}(X(s))|}(T_{\Delta_1}(X(s)))^{T}b\Big(T_{\Delta_1}(X(s)),Y^{T_{\Delta_1}(X(s)),y_0}_{m}\Big)\Big]\mathrm{d}s.\nn\
\end{align*} 
To simplify the notation for estimating $\mathcal{A}_1$, for any $u\geq 0$, let {$n_{\Delta_1}(u) =\lfloor u/\Delta_1\rfloor$}, which is the integer part of $u/\Delta_1$. Using ({\bf{S5}}) and the Young inequality yields that
\begin{align}\label{eqNN5.2}
\mathcal{A}_{1}&\leq \frac{p}{M}\sum_{m=1}^{M}\int_{0}^{t} \mathbb{E}\Big[|\bar{X}(s)|^{p-2} \frac{|X(s)|}{|T_{\Delta_1}(X(s))|}\Big(K_4\big(1+|T_{\Delta_1}(X(s))|^2\big) +\lambda\big|Y^{T_{\Delta_1}(X(s)),y_0}_{n_{\Delta_1}(s),m}\big|^2\Big)\Big]\mathrm{d}s.
\end{align}
One observes that
\begin{align}\label{equ1} 
|T_{\Delta_1}(X(s))|=\left\{
\begin{array}{lcl}
|X(s)|,~~~~&
  \omega\in A_{s},\\
 \varphi^{-1}(K\Delta_1^{-\frac{1}{2}}) , ~~~~&\omega\in A^{c}_{s},
  \end{array}\right.\end{align}
  where $A_{s}=\{\omega:|X(s)|\leq \varphi^{-1}(K\Delta_1^{-\frac{1}{2}})\}$.  
Thus, one has
\begin{align}\label{eqNNN5.3}
\mathcal{A}_1&\leq\frac{p}{M}\sum_{m=1}^{M}\int_{0}^{t} \mathbb{E}\Big[|\bar{X}(s)|^{p-2} \Big(K_4\big(1+|T_{\Delta_1}(X(s))|^2\big)+\lambda\big|Y^{T_{\Delta_1}(X(s)),y_0}_{n_{\Delta_1}(s),m}\big|^2\Big)I_{A_{s} }\Big]\mathrm{d}s\nn\
\\&~~~+\frac{p}{M}\sum_{m=1}^{M}\int_{0}^{t} \mathbb{E}\Big[|\bar{X}(s)|^{p-2} \frac{|X(s)|}{\varphi^{-1}(K\Delta_1^{-\frac{1}{2}})}\Big(K_4\big(1+|T_{\Delta_1}(X(s))|^2\big)+\lambda\big|Y^{T_{\Delta_1}(X(s)),y_0}_{n_{\Delta_1}(s),m}\big|^2\Big)I_{A^{c}_{s}}\Big]\mathrm{d}s\nn\
\\&\leq \mathcal{A}_{11}+\mathcal{A}_{12},
\end{align}
where
\begin{align*}
\mathcal{A}_{11}=\frac{p}{M}\sum_{m=1}^{M}\int_{0}^{t} \mathbb{E}\Big[|\bar{X}(s)|^{p-2} \Big(K_4\big(1+|T_{\Delta_1}(X(s))|^2\big)+\lambda\big|Y^{T_{\Delta_1}(X(s)),y_0}_{n_{\Delta_1}(s),m}\big|^2\Big) I_{A_{s}}\Big]\mathrm{d}s
\end{align*}
and 
\begin{align}\label{eqNN5.3}
\mathcal{A}_{12}&=\frac{p}{M}\sum_{m=1}^{M}\int_{0}^{t} \mathbb{E}\Big[|\bar{X}(s)|^{p-2} \frac{|X(s)|}{\varphi^{-1}(K\Delta_1^{-\frac{1}{2}})}\Big(K_4\big(1+|T_{\Delta_1}(X(s))|^2\big) +\lambda\big|Y^{T_{\Delta_1}(X(s)),y_0}_{n_{\Delta_1}(s),m}\big|^2\Big)I_{A^{c}_{s}}\Big]\mathrm{d}s.
\end{align}
Next, we estimate $\mathcal{A}_{11}$ and $\mathcal{A}_{22}$, respectively.
Utilizing the Young inequality and the fact $|T_{\Delta_1}(x)|\leq |x|, \forall x\in \RR^{n_1}$ one derives that
\begin{align} \label{eqNN5.2}
\mathcal{A}_{11} \leq C_{p}\Big(1+\int_{0}^{t}\mathbb{E}|\bar{X}(s)|^{p}\mathrm{d}s+ \int_{0}^{t}\mathbb{E}|X(s)|^{p}\mathrm{d}s\Big)+\frac{C_{p}}{M}\sum_{m=1}^{M}\int_{0}^{t}\mathbb{E}\big|Y^{T_{\Delta_1}(X(s)),y_0}_{n_{\Delta_1}(s),m}\big|^{p}\mathrm{d}s.
\end{align}
Since $2\leq p\leq k$, using the property of condition expectation and Lemma \ref{Lb.3.10} shows that
\begin{align*}
\mathbb{E}\big|Y^{T_{\Delta_1}(X(s)),y_0}_{n_{\Delta_1}(s),m}\big|^{p}&= \mathbb{E}\Big[\mathbb{E}\Big(\big|Y^{T_{\Delta_1}(X(s)),y_0}_{n_{\Delta_1}(s),m}\big|^{p}|
X(s) \Big)\Big]\nn\
\\&\leq
C\big(1+|y_0|^{p}+\mathbb{E}|T_{\Delta_1}(X(s))|^{p}\big)\nn\
\\&\leq C\big(1+|y_0|^{p}+\mathbb{E}|X(s)|^{p}\big).
\end{align*}
Inserting the above inequality into \eqref{eqNN5.2} implies that
\begin{align}\label{eqNN5.5}
\mathcal{A}_{11}=C_{y_0,p}t+C_{p}\int_{0}^{t}\mathbb{E}|\bar{X}(s)|^{p}\mathrm{d}s+C_{y_0,p}\int_{0}^{t}\mathbb{E}|X(s)|^{p}\mathrm{d}s.
\end{align}
Now we start estimating $\mathcal{A}_{12}$. It follows from \eqref{equ1} and \eqref{eqNN5.3} that
\begin{align*}
\mathcal{A}_{12}&\leq K_4\int_{0}^{t} \mathbb{E}\Big[|\bar{X}(s)|^{p-2} |X(s)|\Big(\frac{1}{\varphi^{-1}(K\Delta_{1}^{-\frac{1}{2}})}+|T_{\Delta_1}(X(s))|\Big)\Big]\mathrm{d}s\nn\
\\&~~~+\frac{p\lambda}{M}\sum_{m=1}^{M}\int_{0}^{t} \mathbb{E}\Big(|\bar{X}(s)|^{p-2} \frac{|X(s)|}{\varphi^{-1}(K\Delta_{1}^{-\frac{1}{2}})} \big|Y^{T_{\Delta_1}(X(s)),y_0}_{n_{\Delta_1}(s),m}\big|^2I_{A^{c}_{s}}\Big)\mathrm{d}s.\nn\
\end{align*}
Recalling the definition of  $\varphi^{-1}$, one obtains that
\begin{align}\label{eqp3.2}
		1\leq\varphi^{-1}(2)\leq   \varphi^{-1}(K)\leq \varphi^{-1}(K\Delta_1^{-\frac{1}{2}}),~~~\forall \Delta_1\in (0,1].
\end{align}
This, together with  the Young inequality, implies that
\begin{align*}
\mathcal{A}_{12}&\leq K_4\int_{0}^{t} \mathbb{E}\Big[|\bar{X}(s)|^{p-2} |X(s)|\Big(1+|T_{\Delta_1}(X(s))|\Big)\nn\
\\&~~~+\frac{p\lambda}{M}\sum_{m=1}^{M}\int_{0}^{t} \mathbb{E}\Big[|\bar{X}(s)|^{p-2} \frac{|X(s)|}{\varphi^{-1}(K\Delta_1^{-\frac{1}{2}})} \big|Y^{T_{\Delta_1}(X(s)),y_0}_{n_{\Delta_1}(s),m}\big|^2I_{A^{c}_{s}}\Big)\mathrm{d}s\nn\
\\&\leq \frac{3K_4p}{2}\int_{0}^{t}\mathbb{E}\Big[|\bar{X}(s)|^{p-2}\big(1+|X(s)|^2\big)\Big]\mathrm{d}s\nn\
\\&~~~+\frac{p\lambda}{M}\sum_{m=1}^{M}\int_{0}^{t}\mathbb{E}
\Big[|\bar{X}(s)|^{p-2}\frac{|X(s)|}{\varphi^{-1}(K\Delta_1^{-\frac{1}{2}})}\big|
Y^{T_{\Delta_1}(X(s)),y_0}_{n_{\Delta_1}(s),m}\big|^2I_{A^{c}_{s}}\Big]\mathrm{d}s.
\end{align*}
Furthermore, utilizing the Young inequality and the H\"older inequality yields that
\begin{align}\label{eq5.7}
\mathcal{A}_{12}&\leq C_{p}t+C_{p}\int_{0}^{t}\mathbb{E}|\bar{X}(s)|^{p}\mathrm{d}s+C_{p}\int_{0}^{t}\mathbb{E}|X(s)|^{p}\mathrm{d}s\nn\
\\&~~~+\frac{C_{p}\lambda}{[\varphi^{-1}(K\Delta_1^{-\frac{1}{2}})]^{\frac{p}{2}}}
\frac{1}{M}\sum_{m=1}^{M}\int_{0}^{t}\big(\mathbb{E}|X(s)|^{p}\big)^{\frac{1}{2}}\Big(\mathbb{E}
\big(\big|Y^{T_{\Delta_1}(X(s)),y_0}_{n_{\Delta_1}(s),m}\big|^{2p}I_{A^{c}_{s}}\big)\Big)^{\frac{1}{2}}\mathrm{d}s.
\end{align}
Owing to $2p\leq k$, using the property of condition expectation and Lemma \ref{Lb.3.10} leads to
\begin{align*}
\mathbb{E}\Big(\big|Y^{T_{\Delta_1}(X(s)),y_0}_{n_{\Delta_1}(s),m}\big|^{2p}I_{A^{c}_{s}}\Big)&= \mathbb{E}\Big[\mathbb{E}\big(\big|Y^{T_{\Delta_1}(X (s) ),y_0}_{n_{\Delta_1}(s),m}
\big|^{2p}I_{A^{c}_{s}}\big|X (s) \big)\Big]\nn\ \\ 
&\leq C\Big(1+|y_0|^{2p}+\mathbb{E}\big(|T_{\Delta_1}(X (s) )|^{2p}I_{A^{c}_{s}}\big)\Big).\nn\
\end{align*}
Inserting the above inequality into \eqref{eq5.7} and using the Young inequality and \eqref{equ1} imply  that
\begin{align}\label{eqNN5.8}
\mathcal{A}_{12}&\leq C_{y_0,p}t+C_{p}\int_{0}^{t}\mathbb{E}|\bar{X}(s)|^{p}\mathrm{d}t+C_{y_0,p}\int_{0}^{t}\mathbb{E}|X(s)|^{p}\mathrm{d}t\nn\
\\&~~~+\frac{C_{y_0,p}\lambda}{[\varphi^{-1}(K\Delta_1^{-\frac{1}{2}})]^{\frac{p}{2}}}
\int_{0}^{t}\big(\mathbb{E}|X(s)|^{p}\big)^{\frac{1}{2}}\big(\mathbb{E}\big(|T_{\Delta_1}(X(s))|^{2p}I_{A^{c}_{s}}\big)\big)^{\frac{1}{2}}\mathrm{d}s\nn\
\\&\leq C_{y_0,p}t+C_{p}\int_{0}^{t}\mathbb{E}|\bar{X}(s)|^{p}\mathrm{d}t+C_{y_0,p}\int_{0}^{t}\mathbb{E}|X(s)|^{p}\mathrm{d}t\nn\
\\&~~~+C_{y_0,p}\lambda\int_{0}^{t}\big(\mathbb{E}|X(s)|^{p}\big)^{\frac{1}{2}}
\big(\mathbb{E}|T_{\Delta_1}(X(s))|^{p}\big)^{\frac{1}{2}}\mathrm{d}s\nn\
\\&\leq C_{y_0,p}t+C_{p}\int_{0}^{t}\mathbb{E}|\bar{X}(s)|^{p}\mathrm{d}t+
C_{y_0,p}\int_{0}^{t}\mathbb{E}|X(s)|^{p}\mathrm{d}s.
\end{align}
Inserting  \eqref{eqNN5.5} and \eqref{eqNN5.8} into \eqref{eqNNN5.3} one obtains that
\begin{align}\label{eqNN5.9}
\mathcal{A}_{1}=C_{y_0,p}t+C_{p}\int_{0}^{t}\mathbb{E}|\bar{X}(s)|^{p}\mathrm{d}s+
C_{y_0,p}\int_{0}^{t}\mathbb{E}|X(s)|^{p}\mathrm{d}s.
\end{align}
}
 Then by \eqref{a22}, using the H\"older inequality yields that for any $0\leq s\leq T$,
\begin{align}\label{eqN5.12}
&\mathbb{E}|\bar{X}(s)-X(s)|^{p}=\mathbb{E}|\bar{X}(s)-X_{n_{\Delta_1}(s)}|^{p}\nn\
\\=&C_{p}\Delta_1^{p}\mathbb{E}\big|B_{M}(T_{\Delta_1}(X_{n_{\Delta_1}(s)}),Y^{T_{\Delta_1}(X_{n_{\Delta_1}(s)}),y_0})\big|^{p}+C_p\Delta_1^{\frac{p}{2}}\mathbb{E}|\sigma(X_{n_{\Delta_1}(s)})|^{p}.
\end{align}
Thanks to $2\leq p\leq k/\theta_4$, by virtue of Lemma \ref{L4.1} and  ({\bf{S3}}) one derives that
\begin{align}\label{eqN5.13}
&\mathbb{E}\Big|B_{M}\big(T_{\Delta_1}(X_{n_{\Delta_1}(s)}),Y^{T_{\Delta_1}(X_{n_{\Delta_1}(s)}),y_0}\big)\Big|^{p}=\mathbb{E}\Big[\mathbb{E}\Big|B_{M} \big(T_{\Delta_1}(x),Y^{T_{\Delta_1}(x),y_0}_{n_{\Delta_1}(s)}\big)\Big|^{p}\Big|_{x=X_{n_{\Delta_1}(s)}}\Big]\nn\
\\ \leq& C_{y_0,p,K}\Delta_{1}^{-\frac{p}{2}}(1+\mathbb{E}|X_{n_{\Delta_1}(s)}|^{p})=C_{y_0,p,K}\Delta_{1}^{-\frac{p}{2}}(1+\mathbb{E}|X(s)|^{p}),
\end{align}
and 
\begin{align}\label{eqqN5.4}
\mathbb{E}\big|\sigma(X_{n_{\Delta_1(s)}})\big|^{p}\leq C_{p}\big(1+\mathbb{E}|X_{n_{\Delta_1(s)}}|^{p}\big)=C_{p}\big(1+\mathbb{E}|X(s)|^{p}\big).
\end{align}
Then combining \eqref{eqN5.12}-\eqref{eqqN5.4} implies that
\begin{align}\label{eqNN5.4}
\mathbb{E}|\bar{X}(s)-X(s)|^{p}=C_{y_0,p,K}\Delta_{1}^{\frac{p}{2}}(1+\mathbb{E}|X(s)|^{p}).
\end{align}
Using the H\"older inequality and then employing \eqref{eqN5.13} and \eqref{eqNN5.4} one deduces that
\begin{align}\label{NN5.5}
\mathcal{A}_2\leq& \int_{0}^{t}\mathbb{E}\Big(|\bar{X}(s)-X(s)|^{p}\Big)^{\frac{1}{2}}\Big(\mathbb{E}\big|B_{M}(T_{\Delta_1}(X(s)),Y^{T_{\Delta_1}(X(s)),y_0})\big|^{p}\Big)^{\frac{1}{2}}\mathrm{d}s\nn\
\\=&\int_{0}^{t}\mathbb{E}\Big(|\bar{X}(s)-X(s)|^{p}\Big)^{\frac{1}{2}}\Big(\mathbb{E}\big|B_{M}(T_{\Delta_1}(X_{n_{\Delta}(s))},Y^{T_{\Delta_1}(X_{n_{\Delta_1}(s)}),y_0})\big|^{p}\Big)^{\frac{1}{2}}\mathrm{d}s\nn\
\\\leq& C_{y_0,p,K}\int_{0}^{t}(1+\mathbb{E}|X(s)|^{p})\mathrm{d}s.
\end{align}
Inserting \eqref{eqNN5.9} and \eqref{NN5.5} into \eqref{NN5.1} gives that for any $T>0$
\begin{align}\label{eq5.12}
\sup_{0\leq t\leq T}\mathbb{E}|\bar{X}(t)|^{p}  &\leq|x_{0}|^{p}+ C_{y_0,p}\int_{0}^{T}  \sup_{0\leq r\leq s}\mathbb{E}|\bar{X}(r)|^{p}\mathrm{d}s+C_{y_0,p}\int_{0}^{T}\sup_{0\leq r\leq s}\mathbb{E}|X(r)|^{p}\mathrm{d}s+C_{y_0,p}T\nn\
\\&\leq |x_{0}|^{p}+ C_{y_0,p}\int_{0}^{T}  \sup_{0\leq r\leq s}\mathbb{E}|\bar{X}(r)|^{p}\mathrm{d}s+C_{y_0,p}T.
\end{align}
By virtue of the Gronwall inequality one derives that there exists a constant $C_{x_0,y_0,p,T}$ such that
\begin{align*}
\sup_{0\leq t\leq T}\mathbb{E}|\bar{X}(t)|^{p}\leq  C_{x_0,y_0,p,T}.
\end{align*}
Then the second assertion holds directly by substituting the above result into \eqref{eqNN5.4}. The case $0<p<2$ follows   directly by  using the H\"older inequality. 
\end{proof}

To prove the strong convergence of the MTEM scheme \eqref{e5}, we introduce  an auxiliary TEM numerical scheme
	for the averaged equation \eqref{eq2.4} 
\begin{equation*}
	\begin{cases}
		Z_{0}=x_{0},\\
		Z_{n+1}=Z_{n}+\bar{b}(T_{\Delta_1}(Z_{n}))\Delta_{1}+\sigma(Z_{n})\Delta W^{1}_{n},
	\end{cases}
\end{equation*}
and the  corresponding continuous-time   processes
$$
Z(t) =Z_{n},~~~~~~~ t\in[n\Delta_1,(n+1)\Delta_1),
$$
and~~~~ 
\begin{align}\label{EQN5.9}
\bar{Z}(t) =x_{0}+\int_{0}^{t}\bar{b}(T_{\Delta_1}(Z(s)))\mathrm{d}s
		+\int_{0}^{t}\sigma(Z(s))\mathrm{d}W^{1}(s).
\end{align}
	One observes that $\bar{Z}(n\Delta_1)= {Z}(n\Delta_1)=Z_{n} $. In what follows,  we analyze the strong   errors  $\mathbb{E}\Big(\sup_{0\leq t\leq T}|\bar{x}(t)-\bar{Z}(t)|^{2}\Big)$ and $\mathbb{E}\Big(\sup_{0\leq t\leq T}|\bar{Z}(t)-X(t)|^{2}\Big)$, respectively. To proceed we give the bound   of the $p$th moment of $\bar{Z}(t)$.
	\par  Now we show  that the Khasminskii-like condition is preserved for the modified coefficient $\bar{b}(T_{\Delta_1}(x))$, 
which is used to obtain   the moment bound of an important  auxiliary process $\bar{Z}(t)$ defined  by \eqref{EQN5.9}.
\begin{lem}\label{le3.1}
	{\rm If $({\bf S4})$, $({\bf S5})$ and $({\bf F1})$-$({\bf F3})$ hold with $k\geq{2\vee\theta_4}$, then for any $\Delta_{1}\in(0,1]$,
		\begin{align*}
			x^{T}\bar{b}(T_{\Delta_1}(x))\leq C(1+|x|^{2}),~~~~x\in \mathbb{R}^{n_1}.
	\end{align*}}
\end{lem}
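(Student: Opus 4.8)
The plan is to reduce the truncated estimate to the un-truncated Khasminskii-like bound $x^{T}\bar b(x)\le C(1+|x|^{2})$ already established above, by exploiting that $T_{\Delta_1}(x)$ is always a nonnegative scalar multiple of $x$. First I would dispose of the trivial case $x=\mathbf 0$, for which both sides vanish. For $x\neq\mathbf 0$, set $\rho:=\varphi^{-1}(K\Delta_1^{-\frac12})$, so that by \eqref{eqN3.2} we have $T_{\Delta_1}(x)=(|x|\wedge\rho)\,x/|x|$ and $|T_{\Delta_1}(x)|=|x|\wedge\rho$. In particular $T_{\Delta_1}(x)$ and $x$ are parallel, so that $x=\dfrac{|x|}{|T_{\Delta_1}(x)|}\,T_{\Delta_1}(x)$ with scalar factor $|x|/|T_{\Delta_1}(x)|\ge 1$.

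Using this identity I would write
$$x^{T}\bar b(T_{\Delta_1}(x))=\frac{|x|}{|T_{\Delta_1}(x)|}\,(T_{\Delta_1}(x))^{T}\bar b(T_{\Delta_1}(x)),$$
and then apply the Khasminskii-like bound for $\bar b$ proved above to the argument $T_{\Delta_1}(x)$ to obtain $(T_{\Delta_1}(x))^{T}\bar b(T_{\Delta_1}(x))\le C(1+|T_{\Delta_1}(x)|^{2})$. Since the prefactor $|x|/|T_{\Delta_1}(x)|$ is strictly positive, multiplying this inequality through by it is legitimate regardless of the sign of the left-hand side, yielding $x^{T}\bar b(T_{\Delta_1}(x))\le \dfrac{|x|}{|T_{\Delta_1}(x)|}\,C(1+|T_{\Delta_1}(x)|^{2})$.

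The conclusion then follows by splitting into two cases. If $|x|\le\rho$ the truncation is inactive, $T_{\Delta_1}(x)=x$, and the claim is immediate. If $|x|>\rho$ then $|T_{\Delta_1}(x)|=\rho$ and the prefactor equals $|x|/\rho$, so
$$x^{T}\bar b(T_{\Delta_1}(x))\le \frac{|x|}{\rho}\,C(1+\rho^{2})=C\Big(\frac{|x|}{\rho}+|x|\rho\Big).$$
Here I would invoke $\rho\ge 1$ (which holds because $K\ge 1+\varphi(|x_0|)$ and $\varphi^{-1}$ is increasing, cf.\ \eqref{eqp3.2}) to get $|x|/\rho\le|x|$, and the defining inequality $\rho<|x|$ of this case to get $|x|\rho\le|x|^{2}$; together these give $x^{T}\bar b(T_{\Delta_1}(x))\le C(|x|+|x|^{2})\le C(1+|x|^{2})$.

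The argument is purely algebraic once the parallelism of $x$ and $T_{\Delta_1}(x)$ together with the un-truncated Khasminskii-like bound are in hand. The only point requiring care is the bookkeeping in the truncation-active case, where the rescaling factor $|x|/\rho$ introduced by passing from $x$ to $T_{\Delta_1}(x)$ must be absorbed; this is precisely what the two elementary facts $\rho\ge 1$ and $\rho<|x|$ accomplish, and there is no analytic difficulty beyond this observation.
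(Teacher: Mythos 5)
Your proof is correct and follows essentially the same route as the paper's: split according to whether the truncation is active, use the parallelism identity $x=\big(|x|/|T_{\Delta_1}(x)|\big)T_{\Delta_1}(x)$, apply the un-truncated Khasminskii-type bound at the truncated point (whose norm is at most $\rho$), and absorb the prefactor $|x|/\rho$ using $\rho\geq 1$ and $\rho<|x|$. The only cosmetic differences are that the paper re-derives $(T_{\Delta_1}(x))^{T}\bar{b}(T_{\Delta_1}(x))\leq C(1+|T_{\Delta_1}(x)|^{2})$ inline from $({\bf S5})$ and Lemma \ref{Lcyp2.1} instead of citing the earlier lemma, and your phrase ``both sides vanish'' at $x={\bf 0}$ should say that the left-hand side vanishes while the right-hand side equals $C>0$, so the inequality is still trivial there.
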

\begin{proof}\textbf {Proof.}
	For $x\in \mathbb{R}^{n_{1}}$ with $|x|\leq \varphi^{-1}(K\Delta_1^{-1/2})$, $x=T_{\Delta_1}(x)$. 
	Using $({\bf S5})$ implies that
	\begin{align*}
		x^{T}\bar{b}(T_{\Delta_1}(x))&= x^{T}\int_{\mathbb{R}^{n_{2}}} b(x,y)\mu^{x}(\mathrm{d}y)\notag
		\leq K_4(1+|x|^{2})+ \lambda\int_{\mathbb{R}^{n_{2}}}|y|^{2}\mu^{x}(\mathrm{d}y).
	\end{align*}
	Using the H\"older inequality and Lemma \ref{Lcyp2.1}, one obtains
	\begin{align}\label{f8}
		x^{T}\bar{b}(T_{\Delta_1}(x))\leq C(1+|x|^{2}).
	\end{align}
	For any $x\in \mathbb{R}^{n_{1}}$ with $|x|> \varphi^{-1}\big(K\Delta_1^{-1/2}\big)$, it follows from the definition of $T_{\Delta_1}(x)$ that $x= \Big({|x|}/{\varphi^{-1}(K\Delta_1^{-{1}/{2}})}\Big)T_{\Delta_1}(x)$. This, together with
	\eqref{f8}, implies that
	\begin{align*}	x^{T}\bar{b}(T_{\Delta_1}(x))&=\frac{|x|}{\varphi^{-1}(K\Delta_1^{-\frac{1}{2}})}
		(T_{\Delta_1}(x))^{T}\bar{b}(T_{\Delta_1}(x))\leq \frac{|x|}{\varphi^{-1}(K\Delta_1^{-\frac{1}{2}})}C(1+|T_{\Delta_1}(x)|^{2}).
	\end{align*}
Thus, applying \eqref{eqp3.2} and the Young inequality leads to
\begin{align*}
x^{T}\bar{b}(T_{\Delta_1}(x))&\leq C|x|\Big(\big(\varphi^{-1}(K)\big)^{-1}+|x|\Big)\leq C|x|\Big(\big(\varphi^{-1}(2)\big)^{-1}+|x|\Big)\nn\
\\&\leq C(1+|x|^2).
\end{align*}
This, together with \eqref{f8}, implies the desired result.
\end{proof}
	\begin{lem}\label{L3.2}
		{\rm If $({\bf S3})$-$({\bf S5})$ and $({\bf F1})$-$({\bf F3})$ hold with {$k\geq 2\vee\theta_4$}, then for any  $x_0\in \RR^{n_1}$,  $p>0$ and $T>0$,
			\begin{align*}
				\sup_{\Delta_1\in(0,1]}\mathbb{E}\Big(\sup_{0\leq t\leq T}|\bar{Z}(t)|^{p}\Big)\leq {C_{x_0,T,p}},
			\end{align*}
			and
			\begin{align*}
				\sup_{0\leq t\leq T}\mathbb{E}|\bar{Z}(t)-Z(t)|^2\leq {C_{x_0,T,p}}\Delta_1.
		\end{align*}}
	\end{lem}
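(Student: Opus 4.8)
The plan is to mirror the proof of Lemma~\ref{la3.8}, replacing the random estimator $B_M(\cdot,\cdot)$ by the deterministic modified drift $\bar{b}(T_{\Delta_1}(\cdot))$, which makes the argument somewhat cleaner. I first treat $p\geq 2$ and recover the range $0<p<2$ at the end by H\"older's inequality. The two ingredients that drive everything are the Khasminskii-type bound $x^T\bar{b}(T_{\Delta_1}(x))\le C(1+|x|^2)$ proved just above (the $T_{\Delta_1}$-version of Lemma~\ref{le3.1}) and the growth bound $|\bar{b}(T_{\Delta_1}(x))|\le C\Delta_1^{-1/2}(1+|x|)$ coming from \eqref{c3.12} together with $|T_{\Delta_1}(x)|\le|x|$.

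First I would establish the moment bound. Applying the It\^o formula to $|\bar{Z}(t)|^p$ using \eqref{EQN5.9} shows that $|\bar{Z}(t)|^p$ equals $|x_0|^p$ plus $p\int_0^t|\bar{Z}(s)|^{p-2}[\,\bar{Z}^T(s)\bar{b}(T_{\Delta_1}(Z(s)))+\tfrac{p-1}{2}|\sigma(Z(s))|^2\,]\,ds$ plus a local martingale. The complication is that the drift is frozen at the step value $Z(s)$ rather than at $\bar{Z}(s)$, so I would split $\bar{Z}^T(s)=Z^T(s)+(\bar{Z}(s)-Z(s))^T$. The term $Z^T(s)\bar{b}(T_{\Delta_1}(Z(s)))$ is controlled by the Khasminskii-type bound of Lemma~\ref{le3.1}, while $|\sigma(Z(s))|^2$ is handled by $({\bf S3})$; together with Young's inequality these contribute $C_p\int_0^t(1+\mathbb{E}|\bar{Z}(s)|^p+\mathbb{E}|Z(s)|^p)\,ds$.

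The crucial cross term $p\int_0^t\mathbb{E}[\,|\bar{Z}(s)|^{p-2}(\bar{Z}(s)-Z(s))^T\bar{b}(T_{\Delta_1}(Z(s)))\,]\,ds$ is where the truncation level enters. A one-step computation gives, for $s\in[n\Delta_1,(n+1)\Delta_1)$, that $\bar{Z}(s)-Z(s)=\bar{b}(T_{\Delta_1}(Z_n))(s-n\Delta_1)+\sigma(Z_n)(W^1(s)-W^1(n\Delta_1))$, so by \eqref{c3.12} and $({\bf S3})$ one obtains the analogue of \eqref{eqNN5.4}, namely $\mathbb{E}|\bar{Z}(s)-Z(s)|^p\le C_p\Delta_1^{p/2}(1+\mathbb{E}|Z(s)|^p)$. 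Estimating the cross term by Young's inequality and then Cauchy--Schwarz, the apparent blow-up $\mathbb{E}|\bar{b}(T_{\Delta_1}(Z(s)))|^p\le C\Delta_1^{-p/2}(1+\mathbb{E}|Z(s)|^p)$ is exactly compensated by the factor $\Delta_1^{p/2}$ from $\mathbb{E}|\bar{Z}(s)-Z(s)|^p$, so the powers of $\Delta_1$ cancel and the cross term is bounded by $C_p\int_0^t(1+\mathbb{E}|\bar{Z}(s)|^p+\mathbb{E}|Z(s)|^p)\,ds$ uniformly in $\Delta_1\in(0,1]$. Since $Z(s)=\bar{Z}(n_{\Delta_1}(s)\Delta_1)$, one has $\mathbb{E}|Z(s)|^p\le\sup_{0\le r\le s}\mathbb{E}|\bar{Z}(r)|^p$; taking the supremum over $t\in[0,T]$ inside the expectation, controlling the martingale by the Burkholder--Davis--Gundy inequality together with $({\bf S3})$ and Young (absorbing a term $\tfrac{1}{2}\mathbb{E}\sup_{0\le t\le T}|\bar{Z}(t)|^p$ into the left side), the Gronwall inequality then yields $\mathbb{E}(\sup_{0\le t\le T}|\bar{Z}(t)|^p)\le C_{x_0,T,p}$ uniformly in $\Delta_1$.

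The second assertion follows at once: with $p=2$ the one-step estimate reads $\mathbb{E}|\bar{Z}(s)-Z(s)|^2\le C\Delta_1(1+\mathbb{E}|Z(s)|^2)$, and inserting the moment bound for $\mathbb{E}|Z(s)|^2=\mathbb{E}|\bar{Z}(n_{\Delta_1}(s)\Delta_1)|^2$ gives $\sup_{0\le t\le T}\mathbb{E}|\bar{Z}(t)-Z(t)|^2\le C_{x_0,T}\Delta_1$. I expect the main obstacle to be the cross term and the verification of the $\Delta_1^{\pm1/2}$ cancellation, which is precisely the mechanism by which truncating at level $\varphi^{-1}(K\Delta_1^{-1/2})$ keeps the explicit scheme stable under super-linear growth; extracting the supremum inside the expectation via the Burkholder--Davis--Gundy inequality is the only genuinely new technical point relative to Lemma~\ref{la3.8}.
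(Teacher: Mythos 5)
Your proposal is correct and follows essentially the same route as the paper's proof: It\^o formula on $|\bar{Z}(t)|^p$, the splitting $\bar{Z}^T(s)=Z^T(s)+(\bar{Z}(s)-Z(s))^T$, the Khasminskii-type bound of Lemma \ref{le3.1} for the frozen-drift term, the one-step estimate $\mathbb{E}|\bar{Z}(t)-Z(t)|^p\le C\Delta_1^{p/2}(1+\mathbb{E}|Z(t)|^p)$, the exact cancellation of $\Delta_1^{\pm p/4}$ in the cross term via Cauchy--Schwarz, the Burkholder--Davis--Gundy inequality for the martingale part, and Gronwall. The cancellation mechanism you single out is precisely the paper's display \eqref{cypp3.15}, and your treatment of the second assertion and the case $0<p<2$ also matches the paper.
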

	\begin{proof}\textbf {Proof.}
We begin with dealing with the case $p\geq 2$. Applying the It\^o formula yields that
\begin{align*}
|\bar{Z}(t)|^{p}&=|x_0|^{p}+p\int_{0}^{t} |\bar{Z}(s)|^{p-2}\Big[ \bar{Z}^{T}(s)\bar{b}(T_{\Delta_1}(Z(s)))+\frac{ p-1 }{2}|\sigma(Z(s))|^{2}\Big]\mathrm{d}s\nn\
\\&~~~+ p \int_{0}^{t} |\bar{Z}(s)|^{p-2}  \bar{Z}^{T}(s)\sigma(Z(s))\mathrm{d}W^{1}(s)\nn\
\\&=|x_{0}|^{p}+ p\int_{0}^{t} |\bar{Z}(s)|^{p-2}\Big[Z^{T}(s)\bar{b}(T_{\Delta_1}(Z(s)))+\frac{ p-1 }{2}|\sigma(Z(s))|^{2}\nn\
\\&~~~+(\bar{Z}(s)-Z(s))^{T}\bar{b}(T_{\Delta_1}(Z(s)))\Big]\mathrm{d}s
			+ p \int_{0}^{t} |\bar{Z}(s)|^{p-2}  \bar{Z}^{T}(s)\sigma(Z(s))\mathrm{d}W^{1}(s).
\end{align*}
  Under $({\bf S5})$ and $({\bf F1})$-$({\bf F3})$ with $k\geq 2\vee\theta_4$,  employing the result of Lemma \ref{le3.1}  and the Burkholder-Davis-Gundy inequality \cite[ p.40, Theorem7.2]{MR2380366}, one derives that for $p\geq2$ and $T>0$,
		\begin{align}
			\mathbb{E}\Big(\sup_{0\leq t\leq T}|\bar{Z}(t)|^{p}\Big)&\leq |x_{0}|^{p}+ \frac{pC}{2}\mathbb{E}\int_{0}^{T}\big|\bar{Z}(t)\big|^{p-2}\big(1+|Z(t)|^{2}\big)\mathrm{d}t\notag
			\\&~~~+p\mathbb{E}\int_{0}^{T}|\bar{Z}(t)|^{p-2}|\bar{Z}(t)-Z(t)| |\bar{b}(T_{\Delta_1}(Z(t))|\mathrm{d}t\notag
			\\&~~~+4\sqrt{2}p\mathbb{E}\Big(\int_{0}^{T}\big|\bar{Z}(t)\big|^{2p-2}\big|\sigma(Z(t))|^{2}\mathrm{d}t\Big)^{\frac{1}{2}}.\notag
		\end{align}
		Then by the Young inequality  we obtain that for any $T>0$,
		\begin{align}\label{f1}
			&\mathbb{E}\Big(\sup_{0\leq t\leq T}|\bar{Z}(t)|^{p}\Big)\notag
\\ \leq& |x_0|^{p}+C\int_{0}^{T}\mathbb{E}\Big(\sup_{0\leq s\leq t}|\bar{Z}(s)|^{p}\Big)\mathrm{d}t+C\int_{0}^{T}\mathbb{E}\Big(|\bar{Z}(t)-Z(t)|^{\frac{p}{2}}|\bar{b}(T_{\Delta_1}(Z(t)))|^{\frac{p}{2}}\Big)\mathrm{d}t\notag
			\\&~~~+\frac{1}{2}\mathbb{E}\Big(\sup_{0\leq t\leq T}|\bar{Z}(t)|^{p}\Big)+C\mathbb{E}\Big(\int_{0}^{T}|\sigma(Z(t))|^{2}\mathrm{d}t\Big)^{\frac{p}{2}}.
		\end{align}
		For any $0\leq t\leq T$, due to $({\bf S4})$, $({\bf F1})$-$({\bf F3})$ with $k\geq \theta_4$, (\ref{c3.12}) hold.  Then using \eqref{c3.12} and $({\bf S3})$ yields that
		\begin{align}\label{a3.20}
			&\mathbb{E}\big|\bar{Z}(t)-Z(t)\big|^{p}=\mathbb{E}\big|\bar{Z}(t)-Z_{n_{\Delta_1}(t)}\big|^{p}\notag
			\\ \leq& 2^{p-1}\Big(\mathbb{E}\Big|\int_{n_{\Delta_1}(t)\Delta_1}^{t}\bar{b}(T_{\Delta_1}(Z(s)))\mathrm{d}s\Big|^{p}+\mathbb{E}\Big|\int_{n_{\Delta_1}(t)\Delta_1}^{t}\sigma(Z(s))\mathrm{d}W^{1}(s)\Big|^{p}\Big)\notag
			\\ \leq& 2^{p-1}\Big(\Delta_{1}^{p}\mathbb{E}\big|\bar{b}(T_{\Delta_1}(Z_{n_{\Delta_1}(t)}))\big|^{p}+\Delta_{1}^{\frac{p}{2}}\mathbb{E}|\sigma(Z_{n_{\Delta_1}(t)})|^{p}\Big)\notag
			\\ \leq& C\Delta_{1}^{\frac{p}{2}}\big(1+\mathbb{E}|Z_{n_{\Delta_1}(t)}|^{p}\big)\leq C\Delta_{1}^{\frac{p}{2}}(1+\mathbb{E}|Z(t)|^{p}).
		\end{align}
		Then utilizing  \eqref{c3.12} again and the $\mathrm{H\ddot{o}lder}$ inequality implies that
		\begin{align}\label{cypp3.15}
			\mathbb{E}\Big(|\bar{Z}(t)-Z(t)|^{\frac{p}{2}}|\bar{b}
			(T_{\Delta_1}(Z(t)))|^{\frac{p}{2}}\Big)
			& \leq\big(\mathbb{E}|\bar{Z}(t)-Z(t)|^{p}\big)^{\frac{1}{2}}\big(\mathbb{E}|\bar{b}
			(T_{\Delta_1}(Z(t)))|^{p}\big)^{\frac{1}{2}}\notag
			\\&\leq C\Delta_{1}^{\frac{p}{4}}\big(1+\mathbb{E}|Z(t)|^{p}\big)^{\frac{1}{2}}\Delta_1^{-\frac{p}{4}}\big(1+\mathbb{E}|Z(t)|^{p}\big)^{\frac{1}{2}} \notag
			\\&\leq C + \mathbb{E}\Big(\sup_{0\leq s\leq t}|\bar{Z}(s)|^{p}\Big).
		\end{align}
		Applying $({\bf S3})$ and the H\"older inequality we get
		\begin{align}\label{f9}
			\mathbb{E}\Big(\int_{0}^{T}\big|\sigma(Z(t))|^{2}\mathrm{d}t\Big)^{\frac{p}{2}}
			\leq C_{T,p}+C_{T,p}\int_{0}^{T}\mathbb{E}\Big(\sup_{0\leq s\leq t}|\bar{Z}(s)|^{p}\Big)\mathrm{d}t.
		\end{align}
		Hence, substituting \eqref{cypp3.15} and (\ref{f9}) into \eqref{f1}  yields that
		\begin{align}
			&\mathbb{E}\Big(\sup_{0\leq t\leq T}|\bar{Z}(t)|^{p}\Big)
			\leq  |x_0|^{p}+C_{T,p}+C_{T,p}\int_{0}^{T}\mathbb{E}\Big(\sup_{0\leq s\leq t}|\bar{Z}(s)|^{p}\Big)\mathrm{d}t .\notag
		\end{align}
		An application of the $\mathrm{Gronwall}$ inequality gives that
		\begin{align*}
			\mathbb{E}\Big(\sup_{0\leq t\leq T}|\bar{Z}(t)|^{p}\Big)\leq C_{x_0,T,p}.
		\end{align*}
		Then inserting the above inequality into \eqref{a3.20} implies that the another desired assertion holds. The corresponding results of case $0< p<2$ follows directly from that of the case  $2\leq p$ by the H\"older inequality.  The proof is complete.
	\end{proof}

{\rm To prove the strong convergence between $\bar{x}(t)$ and $\bar{Z}(t)$, as well as between $\bar{Z}(t)$ and $\bar{X}(t)$, for any $R>|x_{0}|$ and $\Delta_1\in (0,1]$, define the stopping times for the processes $\bar{x}(t)$,  $\bar{Z}(t)$ and $\bar{X}(t)$, respectively.}
\begin{align}\label{eqNN3.22}
\tau_{R}=\inf\{t\geq 0:|\bar{x}(t)|\geq R\},
\end{align}
\begin{align}\label{3.22}
				\rho_{\Delta_1,R}:=\inf\{t\geq0: |\bar{Z}(t)|\geq R\},
			\end{align} and
\begin{align}\label{3.47}
\bar{\rho}_{\Delta_1,R}= \inf\{t\geq 0:|\bar{X}(t)|\geq R\}.
\end{align}
\begin{rem}\label{rem5.1}
By virtue of  Lemma \ref{L2} one  deduces that for any $T>0$,
\begin{align*}
R^{p}\mathbb{P}(\tau_{R}\leq T)\leq \mathbb{E}|\bar{x}(T\wedge\tau_{R})|^{p}\leq \mathbb{E}\Big(\sup_{0\leq t\leq T}|\bar{x}(t)|^{p}\Big)\leq C_{x_0,T,p},
\end{align*}
which implies that
\begin{align*}
\mathbb{P}(\tau_{R}\leq T)\leq{ \frac{C_{x_0,T,p}}{R^{p}}},~~~\forall T>0.
\end{align*}
\end{rem}
\begin{rem}\label{rem5.2}
By  an argument similar to that of Remark \ref{rem5.1},  applying Lemma \ref{L3.2} one derives that
			\begin{align*}
			\mathbb{P}(\rho_{\Delta_1,R}\leq T)\leq  \frac{C_{x_0,T,p}}{R^{p}},~~~\forall T>0.
			\end{align*}
\end{rem}

\begin{rem}\label{rem5.4}

By an argument similar to \eqref{eq5.12} we deduce that for any $T>0$ and $R>0$,
\begin{align}
\mathbb{E}|\bar{X}(t\wedge\bar{\rho}_{\Delta_1,R})|^{p}  &\leq|x_{0}|^{p}+ C_{y_0,p}\mathbb{E}\int_{0}^{T\wedge\bar{\rho}_{\Delta_1,R}}  |\bar{X}(s)|^{p}\mathrm{d}s+C_{y_0,p}\mathbb{E}\int_{0}^{T\wedge\bar{\rho}_{\Delta_1,R}}|X(s)|^{p}\mathrm{d}s+C_{y_0,p}T\nn\
\\&\leq |x_{0}|^{p}+ C_{y_0,p}\int_{0}^{T}  \mathbb{E}|\bar{X}(s\wedge\bar{\rho}_{\Delta_1,R})|^{p}\mathrm{d}s+C_{y_0,p}\int_{0}^{T}\mathbb{E}|X(s)|^{p}\mathrm{d}s+C_{y_0,p}T.\nn\
\end{align}
Employing the result of Lemma \ref{la3.8} yields that
\begin{align*}
\mathbb{E}|\bar{X}(t\wedge\bar{\rho}_{\Delta_1,R})|^{p}  &\leq|x_{0}|^{p}+ C_{y_0,p}\int_{0}^{T}  \mathbb{E}|\bar{X}(s\wedge\bar{\rho}_{\Delta_1,R})|^{p}\mathrm{d}s+C_{y_0,p}T.\nn\
\end{align*}
Then applying the Gronwall inequality gives  that there exists a constant $C_{y_0,p,T}$ such that
\begin{align*}
\mathbb{E}|\bar{X}(t\wedge\bar{\rho}_{\Delta_1,R})|^{p}  &\leq C_{x_0,y_0,p,T},
\end{align*}
which implies that
\begin{align*}
\mathbb{P}(\bar{\rho}_{\Delta_1,R}\leq T)\leq \frac{C_{x_0,y_0,T,p}}{R^{p}}.
\end{align*}
\end{rem}

\begin{lem}\label{3.8}
		{\rm If $({\bf S1})$-$({\bf S5})$ and $({\bf F1})$-$({\bf F3})$ hold with $k>{2\vee\theta_1\vee2\theta_2\vee\theta_4}$,  then for any $T>0$,
			\begin{align*}
				\lim_{\Delta_{1}\rightarrow 0}\mathbb{E}\Big(\sup_{0\leq t\leq T}|\bar{x}(t)-\bar{Z}(t)|^{2}\Big)=0.
	\end{align*}}
	\end{lem}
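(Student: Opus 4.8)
The plan is to run the standard truncated Euler--Maruyama convergence argument localized by a stopping time, using the local Lipschitz continuity of $\bar b$ (Lemma \ref{L3.3}) on the localized set and controlling the tails via the uniform moment bounds (Lemmas \ref{L2} and \ref{L3.2}) together with the escape-probability estimates of Remarks \ref{rem5.1} and \ref{rem5.2}. Concretely, fix some $p\in(2,k)$, and for $R>|x_0|$ set $\theta_R:=\tau_R\wedge\rho_{\Delta_1,R}$ with $\tau_R$, $\rho_{\Delta_1,R}$ from \eqref{eqNN3.22} and \eqref{3.22}, and write $e(t):=\bar x(t)-\bar Z(t)$. The first technical point is to restrict to step sizes small enough that $\varphi^{-1}(K\Delta_1^{-1/2})\geq R$; since $\varphi^{-1}$ is increasing and $\varphi^{-1}(K\Delta_1^{-1/2})\to\infty$ as $\Delta_1\to0$ (cf. \eqref{eqp3.2}), this is achievable. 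The key observation is that on $\{s<\theta_R\}$ one has $|\bar Z(s)|<R$ and, because $Z(s)=\bar Z(n_{\Delta_1}(s)\Delta_1)$ with $n_{\Delta_1}(s)\Delta_1\leq s<\rho_{\Delta_1,R}$, also $|Z(s)|<R$; hence for such $\Delta_1$ the truncation is inactive, i.e. $T_{\Delta_1}(Z(s))=Z(s)$ on this set. I would then split
\[
\mathbb{E}\Big(\sup_{0\leq t\leq T}|e(t)|^2\Big)\leq \mathbb{E}\Big(\sup_{0\leq t\leq T}|e(t\wedge\theta_R)|^2\Big)+\mathbb{E}\Big(\sup_{0\leq t\leq T}|e(t)|^2 I_{\{\theta_R\leq T\}}\Big).
\]

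For the localized term, apply the Burkholder--Davis--Gundy inequality to the diffusion part and the Cauchy--Schwarz inequality to the drift part of $e(t\wedge\theta_R)$. Because the truncation is inactive on $\{s<\theta_R\}$, Lemma \ref{L3.3} gives
\[
|\bar b(\bar x(s))-\bar b(T_{\Delta_1}(Z(s)))|=|\bar b(\bar x(s))-\bar b(Z(s))|\leq \bar L_R\big(|e(s)|+|\bar Z(s)-Z(s)|\big),
\]
and, taking $y=0$ in $({\bf S1})$, likewise $|\sigma(\bar x(s))-\sigma(Z(s))|\leq L_R(|e(s)|+|\bar Z(s)-Z(s)|)$. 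Writing $\psi(t):=\mathbb{E}(\sup_{0\leq u\leq t}|e(u\wedge\theta_R)|^2)$ and invoking the one-step bound $\sup_{0\leq s\leq T}\mathbb{E}|\bar Z(s)-Z(s)|^2\leq C_{x_0,T}\Delta_1$ from Lemma \ref{L3.2}, these estimates combine to $\psi(t)\leq C_R\int_0^t\psi(s)\,\mathrm{d}s+C_{R,x_0,T}\Delta_1$, so the Gronwall inequality yields $\psi(T)\leq C_{R,x_0,T}\Delta_1$.

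For the tail term I would use the Young inequality with conjugate exponents $p/2$ and $p/(p-2)$: for every $\epsilon>0$,
\[
\mathbb{E}\Big(\sup_{0\leq t\leq T}|e(t)|^2 I_{\{\theta_R\leq T\}}\Big)\leq \frac{2\epsilon}{p}\,\mathbb{E}\Big(\sup_{0\leq t\leq T}|e(t)|^p\Big)+\frac{p-2}{p\,\epsilon^{2/(p-2)}}\,\mathbb{P}(\theta_R\leq T).
\]
Here $\mathbb{E}(\sup_{0\leq t\leq T}|e(t)|^p)\leq C_{x_0,T,p}$ by Lemmas \ref{L2} and \ref{L3.2}, while $\mathbb{P}(\theta_R\leq T)\leq \mathbb{P}(\tau_R\leq T)+\mathbb{P}(\rho_{\Delta_1,R}\leq T)\leq 2C_{x_0,T,p}R^{-p}$ by Remarks \ref{rem5.1} and \ref{rem5.2}. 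Collecting the three bounds gives, for all sufficiently small $\Delta_1$,
\[
\mathbb{E}\Big(\sup_{0\leq t\leq T}|e(t)|^2\Big)\leq C_{R,x_0,T}\Delta_1+\frac{2\epsilon}{p}C_{x_0,T,p}+\frac{2(p-2)C_{x_0,T,p}}{p\,\epsilon^{2/(p-2)}R^{p}}.
\]

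The conclusion then follows by a three-parameter argument taken in the correct order: given $\eta>0$, first fix $\epsilon$ small to control the middle term, then $R$ large to control the last term, and finally let $\Delta_1\to0$. The main obstacle — and the reason the order of the limits is forced — is that the Gronwall constant $C_{R,x_0,T}$ in the localized estimate grows with $R$ through $\bar L_R$, so $R$ must be frozen before sending $\Delta_1\to0$. The delicate bookkeeping step is verifying that the truncation is genuinely inactive on the localized set, which rests on the growth $\varphi^{-1}(K\Delta_1^{-1/2})\geq R$ together with the grid-point identity $Z(s)=\bar Z(n_{\Delta_1}(s)\Delta_1)$; once this is in place, the argument reduces to the familiar locally Lipschitz Euler--Maruyama estimate.
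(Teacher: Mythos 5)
Your proposal is correct, and its overall architecture coincides with the paper's: the same stopping time $\theta_{\Delta_1,R}=\tau_R\wedge\rho_{\Delta_1,R}$, the same Young-inequality splitting of the error into a localized part and a tail part, the same use of Lemmas \ref{L2} and \ref{L3.2} for the $p$th moment bound, Remarks \ref{rem5.1} and \ref{rem5.2} for $\mathbb{P}(\theta_{\Delta_1,R}\leq T)\leq C R^{-p}$, and the same order of limits ($\delta$ small, then $R$ large, then $\Delta_1\to 0$), including the crucial observation that the truncation is inactive once $\varphi^{-1}(K\Delta_1^{-1/2})\geq R$.

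The one genuine difference is how the localized term $\mathbb{E}\big(\sup_{0\leq t\leq T}|e(t\wedge\theta_{\Delta_1,R})|^{2}\big)$ is bounded. The paper introduces the radially truncated coefficients $\bar{b}_R$, $\sigma_R$, notes via $({\bf S1})$ and Lemma \ref{L3.3} that they are globally Lipschitz with constant depending on $R$, and then cites the classical Euler--Maruyama convergence theorem for globally Lipschitz SDEs to get the bound $C_{T,R}\Delta_1$ for the auxiliary pair $(\bar{u}_R,\bar{U}_R)$, which is identified with $(\bar{x},\bar{Z})$ up to the stopping time. You instead prove the localized bound by hand: Burkholder--Davis--Gundy plus Cauchy--Schwarz on the stopped error equation, the local Lipschitz estimates from Lemma \ref{L3.3} and from $({\bf S1})$ with $y=0$, the one-step error $\sup_{t\le T}\mathbb{E}|\bar{Z}(t)-Z(t)|^2\leq C\Delta_1$ from Lemma \ref{L3.2}, and Gronwall. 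Both yield the same rate $C_{R}\Delta_1$ for the localized part. The paper's route buys brevity by outsourcing the standard bookkeeping to known results (at the cost of constructing the auxiliary processes and verifying the pathwise identification of the stopped solutions, which is itself a nontrivial step); your route is self-contained and avoids the auxiliary SDE entirely, at the cost of redoing the standard EM estimate. One small point worth making explicit in your write-up: before applying Gronwall you should note that $\psi(t):=\mathbb{E}\big(\sup_{0\leq u\leq t}|e(u\wedge\theta_{\Delta_1,R})|^{2}\big)$ is finite, which follows from the moment bounds of Lemmas \ref{L2} and \ref{L3.2}; otherwise the Gronwall argument is vacuous.
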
}
\begin{proof}\noindent\textbf {Proof.}
Let $e(t)=\bar{x}(t)-\bar{Z}(t)$ for any $t\geq 0$. Define the stopping time $$\theta_{\Delta_1,R}=\tau_{R}\wedge\rho_{\Delta_1, R},$$ where $\tau_{R}$ and $\rho_{\Delta_1, R}$ are defined in \eqref{eqNN3.22} and \eqref{3.22}. Employing the young inequality, we derive that for any $p>2$ and $\delta>0$,
\begin{align*}
\mathbb{E}\Big[\sup_{0\leq t\leq T}|e(t)|^2\Big]&=\mathbb{E}\Big[\sup_{0\leq t\leq T}|e(t)|^{2}I_{\theta_{\Delta_1,R}>T}\Big]+\mathbb{E}\Big[\sup_{0\leq t\leq T}|e(t)|^{2}I_{\theta_{\Delta_1,R}\leq T}\Big]\nn\
\\&\leq \mathbb{E}\Big[\sup_{0\leq t\leq T}|e(t\wedge \theta_{\Delta_1,R})|^{2}\Big]+\frac{2\delta}{p}\mathbb{E}\Big[\sup_{0\leq t\leq T}|e(t)|^{p}\Big]+\frac{p-2}{p\delta^{\frac{p-2}{2}}}\mathbb{P}(\theta_{\Delta_1,R}\leq T).\nn\
\end{align*}
Under {\color{red}Assumptions} $({\bf S1})$-$({\bf S3})$, $({\bf S5})$ and $({\bf F1})$-$({\bf F3})$ with $k\geq {2}\vee\theta_{1}\vee2\theta_2\vee\theta_4$, using Lemmas \ref{L2} and \ref{L3.2}, we derive that
\begin{align*}
\mathbb{E}\Big[\sup_{0\leq t\leq T}|e(t)|^{p}\Big]\leq \mathbb{E}\Big[\sup_{0\leq t\leq T}|\bar{x}(t)|^{p}\Big]+\mathbb{E}\Big[\sup_{0\leq t\leq T}|\bar{Z}(t)|^{p}\Big]\leq C_{x_0,T,p}.
\end{align*}
Furthermore, applying Remarks \ref{rem5.1} and \ref{rem5.2} one gets
\begin{align*}
\mathbb{P}(\theta_{\Delta_1,R}\leq T)\leq \mathbb{P}(\tau_{R}\leq T)+\mathbb{P}(\rho_{\Delta_1,R}\leq T)\leq \frac{C_{x_0,T,p}}{R^{p}}.
\end{align*}
Hence, one obtains that
\begin{align}\label{eqN5.26}
\mathbb{E}\Big[\sup_{0\leq t\leq T}|e(t)|^{p}\Big]\leq \mathbb{E}\Big[\sup_{0\leq t\leq T}|e(t\wedge \theta_{\Delta_1,R})|^{2}\Big]+\frac{2C_{x_0,T,p}\delta}{p}+\frac{C_{x_0,T,p}(p-2)}{pR^{p}\delta^{\frac{p-2}{2}}}.
\end{align}
		Fix any constant $R>|x_{0}|$, define the truncated coefficients
		$$\bar{b}_{R}(x)=\bar{b}\Big((|x|\wedge R)\frac{x}{|x|}\Big),~~~\sigma_{R}(x)=\sigma\Big((|x|\wedge R)\frac{x}{|x|}\Big).$$
		Consider the following SDE
		\begin{align}\label{ee9}
			\mathrm{d}\bar{u}_{R}(t)=\bar{b}_{R}(\bar{u}_{R}(t))\mathrm{d}t
			+\sigma_{R}(\bar{u}_{R}(t))\mathrm{d}W^1(t)
		\end{align}
		with initial value $\bar{u}(0)=x_{0}$.  Under $({\bf S1})$, $({\bf S2})$ and $({\bf F1})$-$({\bf F3})$ with  $k\geq\theta_1\vee2\theta_2$, one observes from $({\bf S1})$ and the result of Lemma \ref{L3.3} that both $\bar{b}_{R}(x)$ and $\sigma_{R}(x)$ are global Lipschitz continuous with the Lipschitz constant dependent on $R$. Thus equation (\ref{ee9}) has a unique global solution $\bar u_{R}(t)$ on $t\geq0$. Let $ \bar{U}_{R}(t) $ denote the continuous extension of the EM  numerical solution of  (\ref{ee9}). It is well known \cite{MR1949404,Kloeden} that
		\begin{align}\label{eq5.27}
			\mathbb{E}\Big(\sup_{t\in[0,T]}|\bar{u}_{R}(t)- \bar{U}_{R}(t)|^{2}\Big)\leq C_{T,R}\Delta_{1},~~~\forall~T>0.
		\end{align}
		Furthermore, choose a constant $\bar\Delta_1\in (0,1]$  sufficiently small such that $\varphi^{-1}(K\bar\Delta_1^{-1/2}) \geq R$. One observes that for any $\Delta_1\in (0,\bar\Delta_1]$
		\begin{align}
			\bar{b}_{R}(x)=\bar{b}(x)=\bar{b}(x^*),~~~~\forall ~x\in \mathbb{R}^{n_1}~\hbox{with}~|x|\leq R.\notag
		\end{align}
		Then it is straightforward to see that that for any $t\geq0$
		\begin{align}
			\bar{x}(t\wedge \tau_{R})=\bar{u}_{R}(t\wedge\tau_{R}),~~~~ \bar{Z}(t\wedge\rho_{\Delta_1, R})= \bar{U}_{R}(t\wedge\rho_{ \Delta_1,R}),~~\mathrm{a.s.}.\notag
		\end{align}
Thus, we also have
$$\bar{x}(t\wedge \theta_{\Delta_1,R})=\bar{u}_{R}(t\wedge\theta_{\Delta_1,R}),~~~~ \bar{Z}(t\wedge\theta_{\Delta_1, R})= \bar{U}_{R}(t\wedge\theta_{ \Delta_1,R}),~~\mathrm{a.s.}.$$
This, together with \eqref{eq5.27}, implies that
\begin{align}\label{eqN5.28}
\mathbb{E}\Big[\sup_{0\leq t\leq T}|e(t\wedge \theta_{\Delta_1,R})|^{2}\Big]&=\mathbb{E}\Big[\sup_{0\leq t\leq T}|\bar{u}_{R}(t\wedge \theta_{\Delta_1,R})-\bar{U}_{R}(t\wedge \theta_{\Delta_1,R})|^{2}\Big]\nn\
\\&
=\mathbb{E}\Big(\sup_{t\in[0,T]}|\bar{u}_{R}(t)- \bar{U}_{R}(t)|^{2}\Big)\leq C_{T,R}\Delta_{1},~~~\forall~T>0.
\end{align}
Inserting \eqref{eqN5.28} into \eqref{eqN5.26} leads to
\begin{align*}
\mathbb{E}\Big[\sup_{0\leq t\leq T}|e(t)|^{p}\Big]\leq C_{T,R}\Delta_1+\frac{2C_{x_0,T,p}\delta}{p}+\frac{C_{x_0,T,p}(p-2)}{pR^{p}\delta^{\frac{p-2}{2}}}.
\end{align*}
Now, for any  $\epsilon>0$, choose $\delta>0$  sufficiently small such that $ {C_{x_0,T,p}\delta }/{p}\leq  {\epsilon}/{3}$. For this $\delta$, choose $R>0$ large enough such that
		$
		C_{x_0,T,p}(p-2)/ (pR^{p}\delta^{\frac{2}{p-2}}) \leq {\epsilon}/{3}.
		$
Then for the fixed $\delta>0$ and $R>0$, choose $\hat{\Delta}_1$  sufficiently small such that $C_{x_0,T,p}\hat{\Delta}_1\leq  {\epsilon}/{3}$. Hence, one derives that for the chosen $\delta>0$, $R>0$ and $\Delta_1\in (0,\hat{\Delta}_1]$,
$$\mathbb{E}\Big[\sup_{0\leq t\leq T}|e(t)|^{p}\Big]\leq \epsilon,$$
which implies the desired result.
	\end{proof}
	
	Then we turn to prove the strong convergence  of the   auxiliary process $\bar{Z}(t)$ and the MTEM numerical solution $X(t)$.
	By virtue of  Lemma \ref{la3.8}, we only need to prove the strong convergence of  $\bar{Z}(t)$ and $\bar{X}(t)$.
	
	\begin{lem}\label{Lb.13+}
		{\rm If $({\bf S1})$-$({\bf S5})$ and $({\bf F1})$-$({\bf F3})$ hold with $k\geq \theta_1\vee2(\theta_4\vee2)\vee2\theta_2\vee(\theta_2+\theta_4+1)$, for any $T>0$ and $\Delta_1\in(0,1]$,
			\begin{align*}
					\lim_{\Delta_{2}\rightarrow0}\lim_{M\rightarrow\infty}\mathbb{E}\Big(\sup_{0\leq t\leq T}|\bar{Z}(t)-\bar{X}(t)|^{2}\Big)=0.
		\end{align*}}
	\end{lem}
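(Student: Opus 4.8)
The plan is to mirror the stopping-time argument of Lemma \ref{3.8}, now comparing the two continuous-time interpolations $\bar Z$ and $\bar X$, which are driven by the same Brownian motion $W^1$. First I would set $e(t)=\bar{Z}(t)-\bar{X}(t)$ and introduce the joint stopping time $\theta_{\Delta_1,R}=\rho_{\Delta_1,R}\wedge\bar\rho_{\Delta_1,R}$, where $\rho_{\Delta_1,R}$ and $\bar\rho_{\Delta_1,R}$ are defined in \eqref{3.22} and \eqref{3.47}. For a fixed $p>2$ (admissible because $k$ is large) and any $\delta>0$, the Young inequality yields
$$\mathbb{E}\Big(\sup_{0\le t\le T}|e(t)|^2\Big)\le \mathbb{E}\Big(\sup_{0\le t\le T}|e(t\wedge\theta_{\Delta_1,R})|^2\Big)+\frac{2\delta}{p}\mathbb{E}\Big(\sup_{0\le t\le T}|e(t)|^p\Big)+\frac{p-2}{p\,\delta^{(p-2)/2}}\mathbb{P}(\theta_{\Delta_1,R}\le T).$$
The second term is controlled by the uniform $p$th-moment bounds of Lemmas \ref{L3.2} and \ref{la3.8}, and the third by $\mathbb{P}(\theta_{\Delta_1,R}\le T)\le C_{x_0,y_0,T,p}/R^{p}$ from Remarks \ref{rem5.2} and \ref{rem5.4}; both can be pushed below any prescribed $\epsilon/3$ by first choosing $\delta$ small and then $R$ large. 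The whole task therefore reduces to proving that, for each fixed $R$, the stopped error $\mathbb{E}\big(\sup_{0\le t\le T}|e(t\wedge\theta_{\Delta_1,R})|^2\big)\to 0$ under the iterated limit.

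To estimate the stopped error I would write $e(t\wedge\theta_{\Delta_1,R})$ as a drift integral plus a stochastic integral and split the drift integrand into a locally Lipschitz part $\bar{b}(T_{\Delta_1}(Z(s)))-\bar{b}(T_{\Delta_1}(X(s)))$ and an estimator-error part $\bar{b}(T_{\Delta_1}(X(s)))-B_{M}(T_{\Delta_1}(X(s)),Y^{T_{\Delta_1}(X(s)),y_0})$. On $[0,t\wedge\theta_{\Delta_1,R}]$ the grid values $Z(s)$ and $X(s)$, which coincide with $\bar{Z}$ and $\bar{X}$ at the preceding grid point, are bounded by $R$; since the projection $T_{\Delta_1}$ is $1$-Lipschitz and $\bar{b}$ is locally Lipschitz by Lemma \ref{L3.3}, the first part is dominated by $\bar{L}_R|Z(s)-X(s)|=\bar{L}_R|e(n_{\Delta_1}(s)\Delta_1\wedge\theta_{\Delta_1,R})|$. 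The diffusion difference $\sigma(Z(s))-\sigma(X(s))$ is handled by the local Lipschitz property of $\sigma$ in $({\bf S1})$ (taking $y=0$) together with the Burkholder--Davis--Gundy inequality. Both contributions then take the Gronwall-ready form $C_R\int_0^t\mathbb{E}\big(\sup_{0\le r\le s}|e(r\wedge\theta_{\Delta_1,R})|^2\big)\,\mathrm{d}s$.

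The decisive step is the estimator-error part. Using the Cauchy--Schwarz inequality in the time integral and then conditioning on $X(s)$ — here I would invoke the identity \eqref{cyp4.7} together with the independence of the micro Brownian motion $W^2_{n_{\Delta_1}(s)}$ from the history that generates $X(s)$, so that Lemma \ref{Lb.7} applies with the frozen slow value $x=X(s)$ — I expect
$$\mathbb{E}\Big(\sup_{0\le t\le T}\Big|\int_0^{t\wedge\theta_{\Delta_1,R}}\!\!\big[\bar{b}(T_{\Delta_1}(X(s)))-B_{M}(\cdots)\big]\mathrm{d}s\Big|^2\Big)\le C_R\,T^2\Big(\Delta_2+\frac{1}{M\Delta_2}\Big),$$
where the prefactor is finite and $R$-dependent because $|T_{\Delta_1}(X(s))|\le R$ on the stopped interval neutralizes the polynomial growth in the bound of Lemma \ref{Lb.7}. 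Collecting the three pieces and applying the Gronwall inequality gives $\mathbb{E}\big(\sup_{0\le t\le T}|e(t\wedge\theta_{\Delta_1,R})|^2\big)\le C_R(\Delta_2+\tfrac{1}{M\Delta_2})$. Finally I would let $M\to\infty$, so that $\tfrac{1}{M\Delta_2}\to0$, and then $\Delta_2\to0$, so that $C_R\Delta_2\to0$; combined with the earlier choices of $\delta$ and $R$ this yields $\limsup_{\Delta_2\to0}\limsup_{M\to\infty}\mathbb{E}(\sup_{0\le t\le T}|e(t)|^2)\le\epsilon$ for arbitrary $\epsilon$. The main obstacle is precisely this estimator-error term: one must legitimately condition out the independent micro-solver randomness to bring Lemma \ref{Lb.7} to bear, and then recognize that it is the \emph{iterated} limit (not a joint one) that renders the $\tfrac{1}{M\Delta_2}$ factor harmless.
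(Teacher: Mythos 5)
Your proposal follows essentially the same route as the paper's proof: the same combined stopping time $\rho_{\Delta_1,R}\wedge\bar{\rho}_{\Delta_1,R}$, the same Young-inequality splitting controlled by the moment bounds of Lemmas \ref{la3.8} and \ref{L3.2} and the tail estimates of Remarks \ref{rem5.2} and \ref{rem5.4}, the same decomposition of the drift into a locally Lipschitz part (Lemma \ref{L3.3}) plus an estimator-error part handled by conditioning on the macro state so that Lemma \ref{Lb.7} applies, then Gronwall and the observation that the iterated limit (first $M\rightarrow\infty$, then $\Delta_2\rightarrow0$) neutralizes the $1/(M\Delta_2)$ term. The only deviation is cosmetic and in fact slightly cleaner: you keep the truncation $T_{\Delta_1}$ in place and use its non-expansiveness together with $|T_{\Delta_1}(x)|\leq|x|\leq R$, whereas the paper asserts $T_{\Delta_1}(Z(s))=Z(s)$ and $T_{\Delta_1}(X(s))=X(s)$ on the stopped interval, an identity that implicitly needs $R\leq \varphi^{-1}(K\Delta_1^{-1/2})$.
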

	\begin{proof}\textbf {Proof.}
		Define  
\begin{align}\label{NN5.28}
\bar{e}(t)=\bar{Z}(t)-\bar{X}(t)
\end{align} 
for any $t\geq 0$ and
		$
		\beta_{\Delta_1,R}= \rho_{\Delta_1,R}\wedge\bar{\rho}_{\Delta_1,R}
		$ for any $R>0$,
		where $\rho_{\Delta_1,R}$ and $\bar{\rho}_{\Delta_1,R}$ are given by (\ref{3.22}) and (\ref{3.47}), respectively. Due to $k>2(\theta_4\vee2)$, let  $2 < p\leq k/(\theta_4\vee2)$. For any $\delta>0$, using the $\mathrm{Young}$ inequality yields that for any  $2 < p\leq k/(\theta_4\vee2)$,
		\begin{align*}
			&\mathbb{E}\Big(\sup_{0\leq t\leq T}|\bar{e}(t)|^{2}\Big)=\mathbb{E}\Big(\sup_{0\leq t\leq T}|\bar{e}(t)|^{2}I_{\{\beta_{\Delta_1,R}>T\}}\Big)+\mathbb{E}\Big(\sup_{0\leq t\leq T}|\bar{e}(t)|^{2}I_{\{\beta_{\Delta_1,R}\leq T\}}\Big)\notag
			\\ \leq&\mathbb{E}\Big(\sup_{0\leq t\leq T}|\bar{e}(t)|^{2}I_{\{\beta_{\Delta_1,R}>T\}}\Big)+\frac{2\delta}{p}\mathbb{E}\Big(\sup_{0\leq t\leq T}|\bar{e}(t)|^{p}\Big)+\frac{p-2}{p\delta^{\frac{2}{p-2}}}\mathbb{P}(\beta_{\Delta_1,R}\leq T).
		\end{align*}
		Owing to $({\bf S3})$-$({\bf S5})$ and $({\bf F1})$-$({\bf F3})$ with $k> 2(\theta_4\vee2)$, it follows from the results of Lemmas  \ref{la3.8} and \ref{L3.2} that for any $2<p\leq k/(\theta_4\vee2)$
		\begin{align}
			\mathbb{E}\Big(\sup_{0\leq t\leq T}|\bar{e}(t)|^{p}\Big)\leq 2^{p-1}\mathbb{E}\Big(\sup_{0\leq t\leq T}|\bar{Z}(t)|^{p}\Big)+2^{p-1}\mathbb{E}\Big(\sup_{0\leq t\leq T}|\bar{X}(t)|^{p}\Big)\leq C_{x_0,y_0,T,p}.\notag
		\end{align}
		Furthermore,  Remarks \ref{rem5.2} and \ref{rem5.4} imply that
		\begin{align*}
			\mathbb{P}(\beta_{\Delta_1,R}\leq T)\leq \mathbb{P}(\rho_{\Delta_1,R}\leq T)+\mathbb{P}(\bar{\rho}_{\Delta_1,R}\leq T)\leq \frac{C_{x_0,y_0,T,p}}{R^{p}}.
		\end{align*}
		Consequently, we have
		\begin{align*}
			\mathbb{E}\Big(\sup_{0\leq t\leq T}|\bar{e}(t)|^{2}\Big)\leq \mathbb{E}\Big(\sup_{0\leq t\leq T}|\bar{e}(t)|^{2}I_{\{\beta_{\Delta_1,R}>T\}}\Big)+\frac{C_{x_0,y_0,T,p}\delta }{p}+\frac{ C_{x_0,y_0,T,p}(p-2)}{p\delta^{\frac{2}{p-2}}R^{p}}.
		\end{align*}
		Now, for any  $\epsilon>0$, choose $\delta>0$ sufficiently  small such that $ {C_{x_0,y_0,T,p}\delta }/{p}\leq  {\epsilon}/{3}$.  Then for this $\delta$, choose $R>0$ large enough such that
		$
		C_{x_0,y_0,T,p}(p-2)/ (p\delta^{\frac{2}{p-2}}R^{p}) \leq {\epsilon}/{3}.
		$
		Hence, for the desired assertion it is sufficient to prove
		\begin{align}
			\label{f7}
			\mathbb{E}\Big(\sup_{0\leq t\leq T}|e(t)|^{2}I_{\{\beta_{\Delta_1,R}>T\}}\Big)\leq \frac{\epsilon}{3}.
		\end{align}
		From (\ref{a22}) and (\ref{EQN5.9}) one derives that for any $0\leq t\leq T$,
		\begin{align*}
			\bar{e}(t\wedge\beta_{\Delta_1, R})
			=& \int_{0}^{t\wedge\beta_{\Delta_1,R}}\Big(\bar{b}(T_{\Delta_1}(Z(s)))-B_{M}\Big(T_{\Delta_1}(X(s)),Y^{T_{\Delta_1}(X(s)),y_0}\Big)\Big)\mathrm{d}s\notag
			\\&~~~+\int_{0}^{t\wedge\beta_{\Delta_1,R}}\big(\sigma(Z(s))-\sigma(X(s))\big)\mathrm{d}W^{1}(s).\notag
		\end{align*}
		Recalling the definition of the stopping time $\beta_{\Delta_1,R}$, it is straightforward to see that
		\begin{align*}
			T_{\Delta_1}(Z(s))=Z(s), ~~~T_{\Delta_1}(X(s))=X(s),~~~\forall s\in [0, t\wedge\beta_{\Delta_1,R}].
		\end{align*}
		Then we have
		\begin{align*}
			\bar{e}(t\wedge\beta_{\Delta_1,R})&=\int_{0}^{t\wedge\beta_{\Delta_1,R}}\Big(\bar{b}(Z(s))-B_{M}\Big(X(s),Y^{X(s),y_0}\Big)\Big)\mathrm{d}s\notag
			\\&~~~+\int_{0}^{t\wedge\beta_{\Delta_1,R}}\big(\sigma(Z(s))-\sigma(X(s))\big)\mathrm{d}W^{1}(s).
		\end{align*}
		Using the $\mathrm{H\ddot{o}lder}$ inequality, the Burkholder-Davis-Gundy inequality \cite[p.40, Theorem 7.2]{MR2380366} and the triangle inequality, we arrive at
		\begin{align}\label{f29}
			\mathbb{E}\Big(\sup_{0\leq t\leq T}|\bar{e}(t\wedge\beta_{\Delta_1,R})|^{2}\Big)
			\leq& 2T\int_{0}^{T}\mathbb{E}\Big(\Big|\bar{b}(Z(s))-B_{M}\Big(X(s),Y^{X(s),y_0}\Big)\Big|^{2}I_{\{s\leq\beta_{\Delta_1,R}\}}\Big)\mathrm{d}s\notag
			\\&~~~+8\int_{0}^{T}\mathbb{E}\big|\sigma(Z(s\wedge\beta_{\Delta_1,R}))-\sigma(X(s\wedge\beta_{\Delta_1,R}))\big|^{2}\mathrm{d}s\notag
			\\ \leq& 4T\int_{0}^{T}\!\mathbb{E}\Big(\Big| \bar{b}(X(s))-B_{M}\Big(X(s),Y^{X(s),y_0}\Big)\Big|^{2}I_{\{s\leq \beta_{\Delta_1,R}\}}\Big)\mathrm{d}s\notag
			\\&~~~+4T\!\!\int_{0}^{T}\!\mathbb{E}\big|\bar{b}(Z(s\wedge\beta_{\Delta_1,R}))-\bar{b}(X(s\wedge\beta_{\Delta_1,R}))\big|^{2}\mathrm{d}s\notag
			\\&~~~+8\int_{0}^{T}\mathbb{E}\big|\sigma(Z(s\wedge\beta_{\Delta_1,R}))-\sigma(X(s\wedge\beta_{\Delta_1,R}))\big|^{2}
			\mathrm{d}s.
		\end{align}
		For any $0\leq s\leq T$, one provides that for any $\omega\in \{\omega\in \Omega:  s\leq\beta_{\Delta_1,R}\}$, $|X(s)|\leq R$. Using this fact and \eqref{cyp4.7} implies that
		\begin{align*}
			&\mathbb{E}\Big(\Big|\bar{b}(X(s))-B_{M}\Big(X(s),Y^{X(s),y_0}\Big)  \Big|^{2}I_{\{s\leq\beta_{\Delta_1,R}\}}\Big)\notag
			\\ \leq& \mathbb{E}\Big(\Big|\bar{b}(X(s))-B_{M}\Big(X(s),Y^{X(s),y_0}\Big) \Big|^{2}I_{\{|X(s)|\leq R\}}\Big)\notag
			\\ =& \mathbb{E}\Big(\Big|\bar{b}(X_{n_{\Delta_1}(s)})-B_{M}\Big(X_{n_{\Delta_1}(s)},Y^{X_{n_{\Delta_1}(s)},y_0}_{n_{\Delta_1}(s)}\Big) \Big|^{2}I_{\{|X_{n_{\Delta_1}(s)}|\leq R\}}\Big)\notag
			\\ = & \mathbb{E}\Bigg(\mathbb{E}\Big[\Big(\Big| \bar{b}(X_{n_{\Delta_1}(s)})-B_{M}\Big(X_{n_{\Delta_1}(s)},Y^{X_{n_{\Delta_1}(s),y_0}}_{n_{\Delta_1}(s)}\Big)\Big|^{2}I_{\{|X_{n_{\Delta_1}(s)}|\leq R\}}\Big)\Big|X_{n_{\Delta_1}(s)}\Big]\Bigg)\notag
			\\ = & \mathbb{E}\Big(\mathbb{E}\Big| \bar{b}(x)-B_{M}\Big(x,Y^{x,y_0}_{n_{\Delta_1}(s)}\Big)\Big|^{2}_{x=X_{n_{\Delta_1}(s)}}I_{\{|X_{n_{\Delta_1}(s)}|\leq R\}}\Big).\notag
		\end{align*}
		By $({\bf S2})$, $({\bf S4})$ and $({\bf F1})$-$({\bf F3})$ with  $k\geq 2\theta_2\vee 2\theta_4\vee(\theta_2+\theta_4+1)$, it follows from the result of  Lemma \ref{Lb.7} that
		\begin{align}\label{f2+}
			&\mathbb{E}\Big(\Big|\bar{b}(X(s))-B_{M}\Big(X(s),Y^{X(s),y_0}\Big) \Big|^{2}I_{\{s\leq\beta_{\Delta_1,R}\}}\Big)\notag
			\\ =& C\mathbb{E}\Big[\big(1+|X_{n_{\Delta_1}(s)}|^{2\theta_3\vee2\theta_4\vee(\theta_2+\theta_3\vee\theta_3+1)}+|y_0|^{2\theta_4\vee(\theta_2+\theta_3\vee\theta_4+1)}\big)I_{\{|X_{n_{\Delta_1}(s)}|\leq R\}}\Big]\Big(\Delta_{2}+\frac{1}{M\Delta_{2}}\Big)\notag
			\\ \leq &{C_{y_0, R}}\Big(\Delta_{2}+\frac{1}{M\Delta_{2}}\Big).
		\end{align}
		Under $({\bf S1})$, $({\bf S2})$, $({\bf S4})$ and $({\bf F1})$-$({\bf F3})$ with $k\geq 2\vee\theta_1\vee2\theta_2\vee\theta_4$, applying Lemma \ref{L3.3} yields  that
		\begin{align}\label{f12}
			&\mathbb{E}\big|\bar{b}(Z(s\wedge\beta_{\Delta_1,}))-\bar{b}(X(s\wedge\beta_{\Delta_1,R}))\big|^{2}\vee\mathbb{E}\big| \sigma(Z(s\wedge\beta_{\Delta_1,R}))-\sigma(X(s\wedge\beta_{\Delta_1,R}))\big|^{2}\notag
			\\ \leq &(\bar {L}_R^2\vee L^2_{R})\mathbb{E}|\bar{e}(s\wedge\beta_{\Delta_1,R})|^{2}
			\leq(\bar {L}_R^2\vee L^2_{R})\mathbb{E}\Big(\sup_{0\leq r\leq s}|\bar{e}(s\wedge\beta_{\Delta_1,R})|^{2}\Big) .
		\end{align}
		Inserting \eqref{f2+} and \eqref{f12} into \eqref{f29} one  derives that
		\begin{align*}
			\mathbb{E}\Big(\sup_{0\leq t\leq T}|\bar{e}(t\wedge\beta_{\Delta_1,R})|^{2}\Big)
			&\leq 4T^2 C_{y_0, R}\Big(\Delta_{2}+\frac{1}{M\Delta_{2}}\Big)\notag
			\\&~~~+(8+4T) (\bar {L}_R^2\vee L^2_{R})
			\int_{0}^{T}\mathbb{E}\Big(\sup_{0\leq r\leq s}|\bar{e}(s\wedge\beta_{\Delta_1,R})|^{2}\Big)\mathrm{d}s.
		\end{align*}
		An application of the $\mathrm{Gronwall}$ inequality implies  that
		\begin{equation}
			\begin{aligned}
				\mathbb{E}\Big(\sup_{0\leq t\leq T}|\bar{e}(t\wedge\beta_{\Delta_1,R})|^{2}\Big)
				\leq  C_{y_0,T,R}\Big(\Delta_{2}+\frac{1}{M\Delta_{2}}\Big).\notag
			\end{aligned}
		\end{equation}
For the  given  $R$, choose  $\hat{\Delta}_{2}$  sufficiently small such that $C_{y_0,T,R}\hat{\Delta}_{2}\leq\epsilon/6$. For any fixed $\Delta_{2}\in (0,\hat{\Delta}_2]$, choose a constant $ M^*_{\Delta_2}\geq 1/\Delta^2_2$. Then for any $M\geq M^*_{\Delta_2}$ one has $1/(M\Delta_2)\leq \Delta_2$. Thus,
$ C_{y_0,T,R}(\Delta_2+1/(M\Delta_2))\leq 2C_{y_0,T,R} \Delta_2 \leq 2C_{y_0,T,R} \hat{\Delta}_2\leq \epsilon/3.$  Therefore, it follows  that
		\begin{align*}
			\mathbb{E}\Big(\sup_{0\leq t\leq T}|\bar{e}(t)|^{2}I_{\{\beta_{\Delta_1,R}>T\}}\Big)&\leq
			\mathbb{E}\Big(\sup_{0\leq t\leq T}|\bar{e}(t\wedge\beta_{\Delta_1,R})|^{2}\Big) \leq \frac{\epsilon}{3},
		\end{align*}
 which implies the desired result.
	\end{proof}
	
	Obviously, combining  the second result of Lemma \ref{la3.8}, Lemmas \ref{3.8} and \ref{Lb.13+} provides the strong convergence between $ \bar{x}(t)$ and $X(t)$.
\begin{thm}\label{Lb.14} 
{\rm If $({\bf S1})$-$({\bf S5})$ and $({\bf F1})$-$({\bf F3})$ hold with $k\geq \theta_1\vee2(\theta_4\vee2)\vee2\theta_2\vee(\theta_2+\theta_4+1)$, then  for any $x_0\in \RR^{n_1}$, $y_0\in \RR^{n_2}$, $0<p<k/(\theta_4\vee2)$ and $T>0$
					\begin{align}\label{f3}
							\lim_{(\Delta_1, \Delta_2)\rightarrow(0,0)}\lim_{M\rightarrow\infty}\mathbb{E}\big|\bar{x}(t)-{X}(t)\big|^{p}=0.
				\end{align}}
			\end{thm}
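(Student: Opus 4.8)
The plan is to establish \eqref{f3} by a three-term triangle decomposition through the auxiliary processes $\bar{Z}(t)$ and $\bar{X}(t)$ already analysed. Writing $\bar{x}(t)-X(t)=(\bar{x}(t)-\bar{Z}(t))+(\bar{Z}(t)-\bar{X}(t))+(\bar{X}(t)-X(t))$ and applying the elementary inequality $|a+b+c|^{p}\leq C_{p}(|a|^{p}+|b|^{p}+|c|^{p})$ (valid for every $p>0$), I obtain
\[
\mathbb{E}|\bar{x}(t)-X(t)|^{p}\leq C_{p}\Big(\mathbb{E}|\bar{x}(t)-\bar{Z}(t)|^{p}+\mathbb{E}|\bar{Z}(t)-\bar{X}(t)|^{p}+\mathbb{E}|\bar{X}(t)-X(t)|^{p}\Big).
\]
The third term is controlled directly by the second assertion of Lemma \ref{la3.8}, which gives $\mathbb{E}|\bar{X}(t)-X(t)|^{p}\leq C_{x_0,y_0,T,p}\Delta_1^{p/2}$ and hence tends to $0$ as $\Delta_1\rightarrow0$ for every admissible $p$; note this term is independent of $M$, so its inner $M\rightarrow\infty$ limit is trivial. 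It therefore only remains to upgrade the $L^{2}$ statements of Lemmas \ref{3.8} and \ref{Lb.13+} to $L^{p}$ convergence of the first two terms.

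For the subcritical range $0<p\leq2$ I would simply invoke the Lyapunov (H\"older) inequality $\mathbb{E}|\xi|^{p}\leq(\mathbb{E}|\xi|^{2})^{p/2}$ with $\xi=\bar{x}(t)-\bar{Z}(t)$ and $\xi=\bar{Z}(t)-\bar{X}(t)$; the right-hand sides are dominated by $\big(\mathbb{E}\sup_{0\leq t\leq T}|\cdot|^{2}\big)^{p/2}$, which vanish by Lemmas \ref{3.8} and \ref{Lb.13+} respectively. For the supercritical range $2<p<k/(\theta_4\vee2)$ I would use interpolation: fix an exponent $q$ with $p<q\leq k/(\theta_4\vee2)$ and apply
\[
\mathbb{E}|\xi|^{p}\leq\big(\mathbb{E}|\xi|^{2}\big)^{\frac{q-p}{q-2}}\big(\mathbb{E}|\xi|^{q}\big)^{\frac{p-2}{q-2}}.
\]
The uniform $q$th-moment bounds $\sup_{0\leq t\leq T}\mathbb{E}|\bar{x}(t)|^{q}$, $\sup_{0\leq t\leq T}\mathbb{E}|\bar{Z}(t)|^{q}$ and $\sup_{0\leq t\leq T}\mathbb{E}|\bar{X}(t)|^{q}$ follow from Lemmas \ref{L2}, \ref{L3.2} and \ref{la3.8}, so $\mathbb{E}|\xi|^{q}$ stays bounded while $\mathbb{E}|\xi|^{2}\rightarrow0$; since $\frac{q-p}{q-2}>0$, the product tends to $0$.

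Finally I would assemble the limits in the prescribed order $\lim_{(\Delta_1,\Delta_2)\to(0,0)}\lim_{M\to\infty}$. The first and third terms depend only on $\Delta_1$ and vanish as $\Delta_1\rightarrow0$ by Lemmas \ref{3.8} and \ref{la3.8}; the second term is precisely controlled by Lemma \ref{Lb.13+}, whose nested limit $\lim_{\Delta_2\to0}\lim_{M\to\infty}$ matches the one in the statement. Combining the three contributions yields \eqref{f3}. The only genuinely delicate point is the supercritical range $2<p<k/(\theta_4\vee2)$, where the convergence lemmas provide only second-moment control: the interpolation step works exactly because the hypothesis on $k$ leaves a strictly positive gap between $p$ and the maximal available moment order $k/(\theta_4\vee2)$, guaranteeing a nontrivial interpolation exponent $\tfrac{q-p}{q-2}$.
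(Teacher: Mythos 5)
Your proposal is correct and takes essentially the same route as the paper's proof: the same three-term decomposition through $\bar{Z}(t)$ and $\bar{X}(t)$, the $p=2$ case obtained by combining Lemmas \ref{la3.8}, \ref{3.8} and \ref{Lb.13+}, the H\"older inequality for $0<p<2$, and moment interpolation against the bounds of Lemmas \ref{L2}, \ref{L3.2} and \ref{la3.8} for $2<p<k/(\theta_4\vee2)$ (the paper interpolates the whole difference $\bar{x}(t)-X(t)$ after settling $p=2$ rather than interpolating term by term, an immaterial difference). One cosmetic correction: $\bar{X}(t)-X(t)$ is \emph{not} independent of $M$, since both processes are built from the estimator $B_{M}$, but the bound $C_{x_0,y_0,T,p}\Delta_1^{p/2}$ of Lemma \ref{la3.8} is uniform in $M$ and $\Delta_2$, so your nested-limit argument is unaffected.
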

			\begin{proof}\textbf {Proof.}
				For any $T>0$,
				combining  Lemmas \ref{la3.8}, \ref{3.8}  and  \ref{Lb.13+} implies that the desired assertion holds for $p=2$. Obviously, \eqref{f3} holds for $0<p<2$ due to the $\mathrm{H\ddot{o}lder}$ inequality.
				Next,
				we consider the case $2<p<k/(\theta_4\vee2)$. Choose a constant $\bar{q}$ such that  $ p<\bar{q}<k/(\theta_4\vee2)$.
				Utilizing the $\mathrm{H\ddot{o}lder}$ inequality,  $\mathrm{Lemmas}$ \ref{L2} and \ref{la3.8} we derive that
				\begin{align*}
					\mathbb{E}|\bar{x}(t)&- {X}(t)|^{p}=\mathbb{E}\Big(\big|\bar{x}(t)- {X}(t)\big|^{\frac{2(\bar{q}-p)}{\bar{q}-2}}\big|\bar{x}(t)- {X}(t)\big|^{p-\frac{2(\bar{q}-p)}{\bar{q}-2}}\Big)\notag
					\\&\leq \Big(\mathbb{E}\big|\bar{x}(t)- {X}(t)\big|^{2}\Big)^{\frac{\bar{q}-p}{\bar{q}-2}}\Big(\mathbb{E}\big|\bar{x}(t)- {X}(t)\big|^{\bar{q}}\Big)^{\frac{p-2}{\bar{q}-2}}\notag
					\\&\leq C_{x_0,y_0,T,p}\Big(\mathbb{E}\big|\bar{x}(t)- {X}(t)\big|^{2}\Big)^{\frac{\bar{q}-p}{\bar{q}-2}}.
				\end{align*}
This, together with the case of $p=2$, implies the required assertion. 
			\end{proof}

Incorporating the  results of  Lemmas \ref{L1} and Theorem \ref{Lb.14}, using the triangle inequality, we  obtain the strong convergence  between the slow component $x^{\varepsilon}(t)$ and the numerical solution $X(t)$ generated by MTEM scheme.
			\begin{thm}\label{th2}
				{\rm If $({\bf S1})$-$({\bf S5})$ and $({\bf F1})$-$({\bf F3})$  hold with  $k> 4\theta_1\vee2(\theta_2+1)\vee2\theta_3\vee {\color{red}2(\theta_4\vee2)}$, then for any $x_0\in \RR^{n_1}$, $y_0\in \RR^{n_2}$, $0<p<k/(\theta_4\vee2)$ and $T>0$,
					\begin{align}
							\lim_{(\varepsilon, \Delta_{1}, \Delta_2)\rightarrow(0,0,0)}\lim_{M\rightarrow\infty}\mathbb{E}\Big(\sup_{0\leq t\leq T}|x^{\varepsilon}(t)-X(t)|^{p}\Big)=0.\notag
				\end{align}}
			\end{thm}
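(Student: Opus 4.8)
The plan is to use the solution $\bar{x}(t)$ of the averaged equation \eqref{eq2.4} as a bridge between the slow component $x^{\varepsilon}(t)$ and the numerical solution $X(t)$, and to split the total error by the triangle inequality. For $p\ge1$,
$$
\sup_{0\le t\le T}|x^{\varepsilon}(t)-X(t)|^{p}\le 2^{p-1}\sup_{0\le t\le T}|x^{\varepsilon}(t)-\bar{x}(t)|^{p}+2^{p-1}\sup_{0\le t\le T}|\bar{x}(t)-X(t)|^{p},
$$
so that, after taking expectations,
$$
\mathbb{E}\Big(\sup_{0\le t\le T}|x^{\varepsilon}(t)-X(t)|^{p}\Big)\le 2^{p-1}\mathbb{E}\Big(\sup_{0\le t\le T}|x^{\varepsilon}(t)-\bar{x}(t)|^{p}\Big)+2^{p-1}\mathbb{E}\Big(\sup_{0\le t\le T}|\bar{x}(t)-X(t)|^{p}\Big).
$$
Because $\bar{x}(\cdot)$ depends on none of $\varepsilon,\Delta_1,\Delta_2,M$, the two contributions can be driven to zero separately, the first by letting $\varepsilon\to0$ and the second under the nested limit $\lim_{(\Delta_1,\Delta_2)\to(0,0)}\lim_{M\to\infty}$, which is exactly the order displayed in the statement.

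Before invoking the cited results I would check that the hypotheses of \thmref{th2} are strong enough. The requirement $k>4\theta_1\vee2(\theta_2+1)\vee2\theta_3\vee2(\theta_4\vee2)$ implies $k>4\theta_1\vee2(\theta_2+1)\vee2\theta_3\vee2\theta_4$, the hypothesis of \lemref{L1}; since $\theta_4\ge1$ we also have $0<p<k/(\theta_4\vee2)\le k/2<k$, so \lemref{L1} applies and forces $\mathbb{E}(\sup_{0\le t\le T}|x^{\varepsilon}(t)-\bar{x}(t)|^{p})\to0$ as $\varepsilon\to0$. For the second term it suffices that $k\ge\theta_1\vee2(\theta_4\vee2)\vee2\theta_2\vee(\theta_2+\theta_4+1)$; all clauses are immediate except the last, and $k>\theta_2+\theta_4+1$ holds because $\max(2\theta_2+2,2\theta_4)$ is at least the average $\theta_2+\theta_4+1$. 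Thus \thmref{Lb.14} is available for every $0<p<k/(\theta_4\vee2)$.

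The one genuinely delicate point is that \thmref{Lb.14} is phrased pointwise in $t$, while the conclusion demanded here carries a supremum over $[0,T]$. I would recover the uniform estimate from the three-term decomposition underlying \thmref{Lb.14}, namely $\bar{x}-X=(\bar{x}-\bar{Z})+(\bar{Z}-\bar{X})+(\bar{X}-X)$. The first two differences are already controlled in the supremum norm by \lemref{3.8} and \lemref{Lb.13+}, which yield $\mathbb{E}(\sup_{0\le t\le T}|\bar{x}(t)-\bar{X}(t)|^{2})\to0$ in the prescribed order of limits; a H\"older interpolation exactly as in the proof of \thmref{Lb.14}, together with the uniform higher moments supplied by \lemref{L2}, \lemref{L3.2} and \lemref{la3.8}, then promotes this from the exponent $2$ to every $0<p<k/(\theta_4\vee2)$. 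What remains is the piecewise-constant correction $\mathbb{E}(\sup_{0\le t\le T}|\bar{X}(t)-X(t)|^{p})$: since $\bar{X}$ and $X$ coincide at each macro grid point, this is the maximum over the $\lceil T/\Delta_1\rceil$ macro steps of the within-step excursion of $\bar{X}$ from $X_n$, which I would bound on each interval $[n\Delta_1,(n+1)\Delta_1)$ by the Burkholder--Davis--Gundy inequality for the diffusion part and by \lemref{L4.1} for the drift part, using the uniform moment bound of \lemref{la3.8}. This is the step I expect to be the main obstacle, because a crude summation over the grid produces decay only for $p>2$; covering the range $p\le2$ requires exploiting the sub-Gaussian tails of the Brownian increments to gain a bound of order $\Delta_1^{p/2}(\log(1/\Delta_1))^{p/2}$, which still tends to zero. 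Collecting the two vanishing contributions then yields the claim.
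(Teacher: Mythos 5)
Your proposal follows exactly the paper's own route: the paper's entire proof of this theorem is the triangle inequality splitting $x^{\varepsilon}-X$ through $\bar{x}$, citing \lemref{L1} for the averaging part and \thmref{Lb.14} for the numerical part, and nothing more. Your verification that the hypotheses of \thmref{th2} imply those of \lemref{L1} and \thmref{Lb.14} is correct, and is more explicit than anything the paper writes down.

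Where you go beyond the paper is in flagging the sup-versus-pointwise mismatch, and there you have in fact caught a gap in the paper itself: \thmref{Lb.14} asserts only the pointwise statement $\lim\mathbb{E}|\bar{x}(t)-X(t)|^{p}=0$, yet the paper's proof of \thmref{th2} silently writes $\mathbb{E}\big(\sup_{0\le t\le T}|\bar{x}(t)-X(t)|^{p}\big)$ when citing it. Your repair — returning to the decomposition $\bar{x}-X=(\bar{x}-\bar{Z})+(\bar{Z}-\bar{X})+(\bar{X}-X)$, where \lemref{3.8} and \lemref{Lb.13+} already control the first two pieces in supremum norm — is the right way to make the claimed statement rigorous. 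Two caveats on your details. First, the H\"older interpolation from exponent $2$ to $p>2$ in supremum form requires $\mathbb{E}\big(\sup_{0\le t\le T}|\bar{X}(t)|^{\bar q}\big)\le C$, whereas \lemref{la3.8} only provides $\sup_{0\le t\le T}\mathbb{E}|\bar{X}(t)|^{\bar q}\le C$; one must augment its proof with the Burkholder--Davis--Gundy inequality (as is done for $\bar{Z}$ in \lemref{L3.2}) to move the supremum inside the expectation. Second, for the piece $\mathbb{E}\big(\sup_{0\le t\le T}|\bar{X}(t)-X(t)|^{p}\big)$ with $p\le 2$, your sub-Gaussian-tail device controls only the Brownian excursions, not the drift contribution $\max_{n}\Delta_1\big|B_{M}(T_{\Delta_1}(X_n),Y^{T_{\Delta_1}(X_n),y_0})\big|$, whose tails need not be sub-Gaussian (only the moment bound of \lemref{L4.1} is available); the cleaner and complete fix is to prove the estimate for one exponent $q\in\big(2,k/(\theta_4\vee2)\big)$, where your crude summation over the $\lceil T/\Delta_1\rceil$ macro steps gives a bound of order $\Delta_1^{q/2-1}\rightarrow 0$, and then deduce every $0<p\le 2$ by Jensen's inequality, $\mathbb{E}\big(\sup|\cdot|^{p}\big)\le\big(\mathbb{E}\sup|\cdot|^{q}\big)^{p/q}$. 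With these two adjustments your argument is complete, and it is more careful than the paper's own two-line proof.
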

			\begin{proof}\textbf {Proof.}
				For any $0< p<k/(\theta_4\vee2)$, using the triangle inequality, by virtue of Lemmas \ref{L1} and Theorem \ref{Lb.14}, yields that
				 \begin{align}					&\lim_{(\varepsilon,\Delta_{1},\Delta_2)\rightarrow(0,0,0)}\lim_{M\rightarrow\infty}\mathbb{E}\Big(\sup_{0\leq t\leq T}|x^{\varepsilon}(t)-X(t)|^{p}\Big)\notag
						\\ \leq& 2^p\lim_{\varepsilon\rightarrow0}\mathbb{E}|x^{\varepsilon}(t)-\bar{x}(t)|^p
						+2^p \lim_{(\Delta_1,\Delta_2)\rightarrow(0, 0)}\lim_{M\rightarrow\infty} \mathbb{E}\Big(\sup_{0\leq t\leq T}|\bar{x}(t)- {X}(t)|^{p}\Big)=0.\notag
				\end{align}
				The proof is complete.
			\end{proof}
			\section{Strong error bounds}\label{s-c6}
			This section  focuses on  the strong error estimate of the MTEM scheme. To obtain the rates of
			strong convergence we need somewhat stronger conditions compared with the strong convergence alone, which are
			stated as follows.
			\begin{itemize}
				\item[{(\bf S1'})] For any $x_{1}, x_2\in \mathbb{R}^{n_{1}}$  and  $y\in \mathbb{R}^{n_{2}}$, there exist constants $\theta_1\geq1$ and  $K>0$ such that
				\begin{align}
					|b(x_{1},y)-b(x_{2},y)|+|\sigma(x_{1})-\sigma(x_{2})|\leq K|x_{1}-x_{2}|(1+|x_1|^{\theta_1}+|x_2|^{\theta_1}+|y|^{\theta_{1}}).\notag
				\end{align}
				\item[{(\bf S6)}]For any $x_1,x_2 \in \mathbb{R}^{n_1}$ and $y_1,y_2 \in \mathbb{R}^{n_{2}}$, there is a constant $K_{5}>0$ such that
				\begin{align*}
					2(x_1-x_2)^{T}(b(x_1,y_1)-b(x_2,y_2))&+|\sigma(x_1)-\sigma(x_2)|^2
					\leq K_5\big(|x_1-x_2|^2+|y_1-y_2|^2\big).
				\end{align*}
			\end{itemize}
			\begin{remark}\label{R5.1}
				{\rm It follows from $(\text{\bf S1'})$ and $({\bf S2})$ that for any $(x,y)\in \mathbb{R}^{n_1}\times \mathbb{R}^{n_2}$,
					\begin{align}
						|b(x,y)|&\leq |b(x,y)-b(x,0)|+|b(x,0)-b(0,0)|+|b(0,0)|\notag
						\\&\leq  K_{1} |y|(1+|x|^{\theta_2}+|y|^{\theta_{2}})+K|x|\big(1
						+|x|^{\theta_1}\big)+|b(0,0)|\notag
						\\&\leq  C(1+|x|^{(\theta_1\vee\theta_2)+1}+|y|^{\theta_2+1}),\notag
					\end{align}
					namely, combining $(\text{\bf S1'})$ and $({\bf S2})$  leads to $({\bf S4})$ with $\theta_3=\theta_1\vee\theta_2+1$  and $\theta_4=\theta_2+1$.}
			\end{remark}
			\begin{remark}\label{rem4.2}
				{\rm According to  Remark \ref{R5.1},   choose $\varphi(u)=1+u^{\theta_1\vee\theta_2}$ and then $\varphi^{-1}(u)=(u-1)^{\frac{1}{\theta_1\vee\theta_2}},~\forall u\geq 1$.  Then for any $u\geq1$ and $|x|\leq u$,
					\begin{align*}
						|b(x,y)|
						\leq C\varphi(u)(1+|x|)+|y|^{\theta_2+1},~~~\forall y\in \RR^{n_2}.
				\end{align*}}
			\end{remark}
			
			Using the similar techniques to that of Lemma \ref{L3.3}, we derive that the averaged coefficient $\bar{b}$ keeps the property of polynomial growth. To avoid duplication we omit the  proof.
			\begin{lem}\label{Lcyp2.2}
				{\rm If $(\text{\bf S1'})$, $({\bf S2})$ and $({\bf F1})$-$({\bf F3})$  hold with $k\geq {2\vee\theta_1\vee2\theta_2}$, then for any $x_1,x_2\in \mathbb{R}^{n_1}$,  there is a constant $\bar{L}>0$ such that
					\begin{align*}
						\big|\bar{b}(x_1)-\bar{b}(x_2)\big|
						\leq \bar{L}|x_1-x_2|(1+|x_1|^{\theta_1\vee\theta_2}+|x_2|^{\theta_1\vee\theta_2}).
				\end{align*}}
			\end{lem}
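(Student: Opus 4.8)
The plan is to follow the proof of Lemma~\ref{L3.3} almost verbatim, the only substantive change being that the locally bounded constant $L_R$ supplied by $({\bf S1})$ is now replaced by the global polynomial-growth factor $1+|x_1|^{\theta_1}+|x_2|^{\theta_1}+|y|^{\theta_1}$ coming from $(\text{\bf S1'})$. First I would record that, by Remark~\ref{R5.1}, the pair $(\text{\bf S1'})$ and $({\bf S2})$ implies $({\bf S4})$ with $\theta_4=\theta_2+1$; since $k\geq 2\vee\theta_1\vee2\theta_2$ forces $k\geq\theta_4$, the estimate \eqref{eqN2.4} guarantees that $\bar{b}(x)$ is well defined for every $x\in\RR^{n_1}$.

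Next, fixing $x_1,x_2\in\RR^{n_1}$ and an arbitrary coupling $\pi\in\mathcal{C}(\mu^{x_1},\mu^{x_2})$, I would split the difference exactly as in Lemma~\ref{L3.3}:
\begin{align*}
|\bar{b}(x_1)-\bar{b}(x_2)|
&\leq\int_{\RR^{n_2}\times\RR^{n_2}}\big|b(x_1,y_1)-b(x_2,y_1)\big|\,\pi(\mathrm{d}y_1,\mathrm{d}y_2)\\
&\quad+\int_{\RR^{n_2}\times\RR^{n_2}}\big|b(x_2,y_1)-b(x_2,y_2)\big|\,\pi(\mathrm{d}y_1,\mathrm{d}y_2).
\end{align*}
For the first integral I would apply $(\text{\bf S1'})$ and then integrate against the first marginal $\mu^{x_1}$, using the moment bound \eqref{cyp2.4} (valid since $k\geq\theta_1$) to control $\int_{\RR^{n_2}}|y_1|^{\theta_1}\mu^{x_1}(\mathrm{d}y_1)\leq C(1+|x_1|^{\theta_1})$; this bounds the first term by $C|x_1-x_2|(1+|x_1|^{\theta_1}+|x_2|^{\theta_1})$. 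For the second integral I would reproduce the computation of Lemma~\ref{L3.3}: apply $({\bf S2})$, then the H\"older inequality to separate the factor $|y_1-y_2|$ from the polynomial weight, take the infimum over $\pi$ so that $\mathbb{W}_{2}(\mu^{x_1},\mu^{x_2})$ emerges, and bound the remaining $2\theta_2$-th moments by \eqref{cyp2.4} (here $k\geq2\theta_2$ is used). Invoking the Wasserstein estimate \eqref{cyp2.5}, namely $\mathbb{W}_{2}(\mu^{x_1},\mu^{x_2})\leq C|x_1-x_2|$, controls the second term by $C|x_1-x_2|(1+|x_2|^{\theta_2})$.

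Finally, adding the two bounds and using $|x_i|^{\theta_1}+|x_i|^{\theta_2}\leq 2(1+|x_i|^{\theta_1\vee\theta_2})$ would yield the claim with a suitable $\bar{L}>0$. I do not expect any genuine obstacle here; the single point demanding attention is that, unlike in Lemma~\ref{L3.3}, the Lipschitz-type constant is now a polynomial in $|x_1|$ and $|x_2|$ rather than an $R$-localized constant, so one must verify that the moment requirement $k\geq 2\vee\theta_1\vee2\theta_2$ indeed renders every moment integral $\int|y|^{\theta_1}\mu^{x_1}(\mathrm{d}y)$ and $\int|y|^{2\theta_2}\mu^{x_i}(\mathrm{d}y)$ finite and polynomially bounded in $x$. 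This bookkeeping is precisely what produces the stated growth exponent $\theta_1\vee\theta_2$ in the conclusion. \endproof
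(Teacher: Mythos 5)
Your proposal is correct and takes exactly the approach the paper intends: the paper omits this proof, stating only that it follows the techniques of Lemma~\ref{L3.3}, and your argument is precisely that adaptation---replace the localized constant $L_R$ from $({\bf S1})$ by the polynomial factor $1+|x_1|^{\theta_1}+|x_2|^{\theta_1}+|y|^{\theta_1}$ from $(\text{\bf S1'})$, keep the coupling decomposition, and close the estimate with the moment bound \eqref{cyp2.4} and the Wasserstein estimate \eqref{cyp2.5}, exactly as in Lemma~\ref{L3.3}. One cosmetic slip: since $y_1$ has marginal $\mu^{x_1}$, the second term is controlled by $C|x_1-x_2|\big(1+|x_1|^{\theta_2}+|x_2|^{\theta_2}\big)$ rather than $C|x_1-x_2|\big(1+|x_2|^{\theta_2}\big)$, but this is absorbed by your final bound and does not affect the conclusion.
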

			
			\begin{lem}\label{Lcyp2.3}
				{\rm If $(\text{\bf S1'})$, $({\bf S2})$, $(\text{\bf S6})$ and $({\bf F1})$-$(\bf{F3})$ hold with  ${k\geq2\vee(\theta_2+1)}$, then for any $x_1, x_2\in \mathbb{R}^{n_1}$,
					\begin{align*}
						2(x_1-x_2)^{T}\big(\bar{b}(x_1)-\bar{b}(x_2)\big)+|\sigma(x_1)-\sigma(x_2)|^{2}\leq C|x_1-x_2|^{2}.
				\end{align*}}
			\end{lem}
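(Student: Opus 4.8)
The plan is to transfer Assumption $(\text{\bf S6})$---a joint monotonicity/one-sided Lipschitz condition on the pair $(b,\sigma)$---from $(b,\sigma)$ to the averaged pair $(\bar b,\sigma)$ by integrating the pointwise inequality against an arbitrary coupling of the invariant measures $\mu^{x_1}$ and $\mu^{x_2}$. First I would record that under $(\text{\bf S1'})$ and $({\bf S2})$, Remark \ref{R5.1} yields $({\bf S4})$ with $\theta_4=\theta_2+1$, so that $\bar b(x)=\int_{\RR^{n_2}}b(x,y)\mu^{x}(\mathrm{d}y)$ is well defined via \eqref{eqN2.4} whenever $k\ge\theta_2+1$, which is guaranteed by the standing hypothesis $k\ge2\vee(\theta_2+1)$.

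Fix $x_1,x_2\in\RR^{n_1}$ and let $\pi\in\mathcal{C}(\mu^{x_1},\mu^{x_2})$ be an arbitrary coupling. Since $\pi$ has marginals $\mu^{x_1}$ and $\mu^{x_2}$, the definition \eqref{f11} (exactly as in the proof of Lemma \ref{L3.3}) gives the representation
\[
\bar b(x_1)-\bar b(x_2)=\int_{\RR^{n_2}\times\RR^{n_2}}\big(b(x_1,y_1)-b(x_2,y_2)\big)\,\pi(\mathrm{d}y_1,\mathrm{d}y_2),
\]
while $|\sigma(x_1)-\sigma(x_2)|^{2}$ is independent of the $y$-variables and hence equals its own integral against the probability measure $\pi$. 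Combining these two facts, the entire left-hand side of the claimed estimate can be written as a single integral,
\[
2(x_1-x_2)^{T}\big(\bar b(x_1)-\bar b(x_2)\big)+|\sigma(x_1)-\sigma(x_2)|^{2}=\int\Big[2(x_1-x_2)^{T}\big(b(x_1,y_1)-b(x_2,y_2)\big)+|\sigma(x_1)-\sigma(x_2)|^{2}\Big]\pi(\mathrm{d}y_1,\mathrm{d}y_2).
\]

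Now I would apply the pointwise bound $(\text{\bf S6})$ inside the integral to obtain
\[
2(x_1-x_2)^{T}\big(\bar b(x_1)-\bar b(x_2)\big)+|\sigma(x_1)-\sigma(x_2)|^{2}\le K_5|x_1-x_2|^{2}+K_5\int_{\RR^{n_2}\times\RR^{n_2}}|y_1-y_2|^{2}\,\pi(\mathrm{d}y_1,\mathrm{d}y_2).
\]
Because the left-hand side is independent of $\pi$ and this bound holds for every coupling, I may pass to the infimum over $\pi\in\mathcal{C}(\mu^{x_1},\mu^{x_2})$ on the right, which by definition of the Wasserstein distance yields $K_5\,\mathbb{W}_2^2(\mu^{x_1},\mu^{x_2})$. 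Finally, the Lipschitz estimate \eqref{cyp2.5} of Lemma \ref{Lcyp2.1} gives $\mathbb{W}_2(\mu^{x_1},\mu^{x_2})\le C|x_1-x_2|$, so the cross term is also $O(|x_1-x_2|^{2})$ and the two contributions combine into $C|x_1-x_2|^{2}$, as desired.

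This is essentially a one-step coupling computation, so no single step is genuinely difficult; the only point that requires care is that the diffusion contribution $|\sigma(x_1)-\sigma(x_2)|^{2}$ and the drift contribution must be handled jointly under the \emph{same} coupling, rather than separately---otherwise one would be forced to bound $|\sigma(x_1)-\sigma(x_2)|^{2}$ on its own, which $(\text{\bf S6})$ does not provide. Exploiting the arbitrariness of $\pi$ to recognize $\int|y_1-y_2|^{2}\,\mathrm{d}\pi$ as the squared Wasserstein distance, and then invoking Lemma \ref{Lcyp2.1}, is the crux that converts the joint monotonicity of $(b,\sigma)$ into the corresponding property of $(\bar b,\sigma)$.
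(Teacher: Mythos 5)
Your proposal is correct and follows essentially the same route as the paper's proof: represent $\bar b(x_1)-\bar b(x_2)$ via an arbitrary coupling $\pi\in\mathcal{C}(\mu^{x_1},\mu^{x_2})$, apply $(\text{\bf S6})$ pointwise under the integral, exploit the arbitrariness of $\pi$ to obtain $K_5\,\mathbb{W}_2^2(\mu^{x_1},\mu^{x_2})$, and conclude with the estimate \eqref{cyp2.5} of Lemma \ref{Lcyp2.1}. Your remark about handling the drift and diffusion contributions under the same coupling is a nice clarification of why the argument works, but it is the same proof.
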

			\begin{proof}\textbf {Proof.}
				Due to $(\text{\bf S1'})$, $({\bf S2})$ and  $({\bf F1})$-$({\bf F3})$ with $k\geq\theta_2+1$, it follows from the definition of $\bar{b}(x)$ and $(\text{\bf S4'})$ that
				\begin{align*}
					&2(x_1-x_2)^{T}(\bar{b}(x_1)-\bar{b}(x_2))+|\sigma(x_1)-\sigma(x_2)|^{2}\notag
					\\ =&\int_{\mathbb{R}^{n_2}\times \mathbb{R}^{n_2}}\Big[2(x_1-x_2)^{T}\big(b(x_1,y_1)-b(x_2,y_2)\big)+|\sigma(x_1)-\sigma(x_2)|^2\Big]\pi(\mathrm{d}y_1\times\mathrm{d}y_2)\notag
					\\ \leq& K_5|x_1-x_2|^{2}+K_5\int_{\mathbb{R}^{n_1}\times \mathbb{R}^{n_2}}|y_1-y_2|^2\pi(\mathrm{d}y_1,\mathrm{d}y_2),
				\end{align*}
				where $\pi\in \mathcal{C}(\mu^{x_1},\mu^{x_2})$ is arbitrary. Then owing to the  arbitrariness of $\Pi\in \mathcal{C}(\mu^{x_1},\mu^{x_2})$,
				\begin{align*}
					&2(x_1-x_2)^{T}(\bar{b}(x_1)-\bar{b}(x_2))+|\sigma(x_1)-\sigma(x_2)|^{2}\leq K_5|x_1-x_2|^2+K_5 \mathbb{W}^2_{2}(\mu^{x_1},\mu^{x_2}).
				\end{align*}
				Under $({\bf F1})$-$({\bf F3})$, we deduce from \eqref{cyp2.5} that
				\begin{align*}
					2(x_1-x_2)^{T}(\bar{b}(x_1)-\bar{b}(x_2))+|\sigma(x_1)-\sigma(x_2)|^{2}\leq C|x_1-x_2|^2.
				\end{align*}
				The proof is complete.
			\end{proof}
			\par According to Remark \ref{R5.1} and Lemma \ref{Lb.7}, we give the bound of  $\mathbb{E}\big|\bar{b}(x)-B_{M}(x,Y^{x,y_0}_{n})\big|^{2}$.
			\begin{lem}\label{L3.20}
				{\rm If $(\text{\bf S1'})$, $({\bf S2})$ and $({\bf F1})$-$({\bf F3})$ with $k\geq2(\theta_2+1)$ hold, then for any $x\in \mathbb{R}^{n_1}$, $y_0\in \RR^{n_2}$, $\Delta_2\in (0,\bar\Delta_2]$, and integers $n\geq0$, $M\geq1$,
					\begin{align*}
						\mathbb{E}\big|\bar{b}(x)-B_{M}(x,Y^{x,y_0}_{n})\big|^{2}\leq  C\big(1+|x|^{2(\theta_1\vee\theta_2+1)} {+|y_0|^{(\theta_2+\theta_1\vee\theta_2+2)}}\big)\Big( \Delta_{2}+\frac{1}{M\Delta_{2}}\Big).
				\end{align*}}
			\end{lem}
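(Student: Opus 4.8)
The plan is to reduce the claim directly to Lemma \ref{Lb.7}, by first translating the stronger structural hypotheses $(\text{\bf S1'})$ and $({\bf S2})$ into the polynomial growth condition $({\bf S4})$ through Remark \ref{R5.1}, and then carrying out the elementary bookkeeping of the resulting exponents. No fresh analytic estimate is needed: all of the probabilistic work has already been packaged into Lemma \ref{Lb.7}.

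First I would invoke Remark \ref{R5.1}, which shows that $(\text{\bf S1'})$ together with $({\bf S2})$ implies $({\bf S4})$ with the explicit values $\theta_3=\theta_1\vee\theta_2+1$ and $\theta_4=\theta_2+1$. With these identifications one has $\theta_3\vee\theta_4=\theta_1\vee\theta_2+1$, since $\theta_1\vee\theta_2\geq\theta_2$. Next I would verify that the moment-order hypothesis of Lemma \ref{Lb.7}, namely $k\geq 2\theta_2\vee 2\theta_4\vee(\theta_2+\theta_4+1)$, is satisfied under the present assumption. Substituting $\theta_4=\theta_2+1$ gives $2\theta_4=2\theta_2+2$ and $\theta_2+\theta_4+1=2\theta_2+2$, so the required threshold collapses to $2(\theta_2+1)$, which is exactly the standing hypothesis $k\geq 2(\theta_2+1)$. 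Hence Lemma \ref{Lb.7} applies to $b$ verbatim.

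Applying Lemma \ref{Lb.7} then yields the bound in which the factor $(\Delta_2+1/(M\Delta_2))$ is multiplied by $1+|x|^{2\theta_3\vee2\theta_4\vee(\theta_2+\theta_3\vee\theta_4+1)}+|y_0|^{2\theta_4\vee(\theta_2+\theta_3\vee\theta_4+1)}$. The final step is simply to simplify these two maxima. For the $|x|$-power, $2\theta_3=2(\theta_1\vee\theta_2)+2$ dominates both $2\theta_4=2\theta_2+2$ and $\theta_2+\theta_3\vee\theta_4+1=\theta_2+\theta_1\vee\theta_2+2$ (the latter because $\theta_1\vee\theta_2\geq\theta_2$), so the exponent reduces to $2(\theta_1\vee\theta_2+1)$. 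For the $|y_0|$-power, comparing $2\theta_4=2\theta_2+2$ with $\theta_2+\theta_3\vee\theta_4+1=\theta_2+\theta_1\vee\theta_2+2$ and again using $\theta_1\vee\theta_2\geq\theta_2$ shows that the latter is the larger, giving $\theta_2+\theta_1\vee\theta_2+2$. These match the exponents in the statement exactly, which completes the argument.

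The only genuine content beyond citing Lemma \ref{Lb.7} is this exponent arithmetic, so I expect no real obstacle; the main care point is merely to confirm, as above, that the $k$-threshold coming from $\theta_4=\theta_2+1$ indeed simplifies to the stated $2(\theta_2+1)$, so that Lemma \ref{Lb.7} is legitimately available, and that both maxima are resolved correctly using $\theta_1\vee\theta_2\geq\theta_2$.
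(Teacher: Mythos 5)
Your proposal is correct and follows exactly the route the paper intends: the paper gives no separate proof of this lemma, merely noting that it follows from Remark \ref{R5.1} (which yields $({\bf S4})$ with $\theta_3=\theta_1\vee\theta_2+1$, $\theta_4=\theta_2+1$) combined with Lemma \ref{Lb.7}. Your verification that the $k$-threshold of Lemma \ref{Lb.7} collapses to $2(\theta_2+1)$ and your resolution of both exponent maxima via $\theta_1\vee\theta_2\geq\theta_2$ are exactly the bookkeeping the paper leaves implicit, and they are carried out correctly.
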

			\par    By the similar arguments as  the strong convergence   in Section \ref{s-c5}, we  give the error estimates of  $\mathbb{E}|\bar{x}(T)-\bar{Z}(T)|^{2}$ and $\mathbb{E}| \bar{Z}(T)-X(T)|^{2}$, respectively.
			\begin{lem}\label{L5.4}
				{\rm If $(\text{\bf S1'})$, $({\bf S2})$, $({\bf S3})$, $(\text{\bf S4'})$, $({\bf S5})$ and $({\bf F1})$-$({\bf F3})$  hold with $k\geq{2\vee\theta_1\vee 2\theta_2\vee(\theta_2+1)}$, then for any $x_0\in \RR^{n_1}$, $T>0$ and $\Delta_1\in (0,1]$,
					\begin{align*}
						\mathbb{E}|\bar{x}(T)-\bar{Z}(T)|^{2}\leq C_{T,x_0}\Delta_1.
				\end{align*}}
			\end{lem}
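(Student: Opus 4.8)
The plan is to estimate the error $e(t):=\bar{x}(t)-\bar{Z}(t)$ directly through It\^o's formula, exploiting the \emph{global} one-sided Lipschitz and polynomial-Lipschitz properties of the averaged coefficient recorded in Lemmas \ref{Lcyp2.3} and \ref{Lcyp2.2}, together with the one-step bound $\mathbb{E}|\bar{Z}(t)-Z(t)|^{2}\leq C\Delta_1$ from Lemma \ref{L3.2}. Since $e$ solves $\mathrm{d}e(t)=[\bar{b}(\bar{x}(t))-\bar{b}(T_{\Delta_1}(Z(t)))]\mathrm{d}t+[\sigma(\bar{x}(t))-\sigma(Z(t))]\mathrm{d}W^{1}(t)$, applying It\^o's formula to $|e(t)|^{2}$ and taking expectations removes the stochastic integral and leaves $\mathbb{E}|e(t)|^{2}=\mathbb{E}\int_{0}^{t}[\,2e^{T}(\bar{b}(\bar{x})-\bar{b}(T_{\Delta_1}(Z)))+|\sigma(\bar{x})-\sigma(Z)|^{2}\,]\mathrm{d}s$. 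The crucial algebraic step is to keep the diffusion square with its natural coefficient $1$ and against the \emph{step} value $Z$, so that Lemma \ref{Lcyp2.3} can be applied to the pair $(\bar{x},Z)$ (whose diffusion arguments match). A short computation gives $2e^{T}(\bar{b}(\bar{x})-\bar{b}(T_{\Delta_1}(Z)))+|\sigma(\bar{x})-\sigma(Z)|^{2}=[\,2(\bar{x}-Z)^{T}(\bar{b}(\bar{x})-\bar{b}(Z))+|\sigma(\bar{x})-\sigma(Z)|^{2}\,]+J_{1}+J_{2}$, where $J_{1}=2(Z-\bar{Z})^{T}(\bar{b}(\bar{x})-\bar{b}(Z))$ and $J_{2}=2e^{T}(\bar{b}(Z)-\bar{b}(T_{\Delta_1}(Z)))$.

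By Lemma \ref{Lcyp2.3} the bracketed term is bounded by $C|\bar{x}-Z|^{2}\leq 2C|e|^{2}+2C|\bar{Z}-Z|^{2}$, and the second summand integrates to $O(\Delta_1)$ by Lemma \ref{L3.2}; this produces the Gronwall term $C\int_{0}^{t}\mathbb{E}|e(s)|^{2}\mathrm{d}s$ plus $O(\Delta_1)$. The term $J_{2}$ is the truncation defect: it is supported on the event $A_{s}^{c}=\{|Z(s)|>\varphi^{-1}(K\Delta_1^{-1/2})\}$, on which $\bar{b}(Z)\neq\bar{b}(T_{\Delta_1}(Z))$. Using the polynomial growth of $\bar{b}$, the crude estimate \eqref{c3.12}, Young's inequality and H\"older's inequality, together with the Chebyshev bound $\mathbb{P}(A_{s}^{c})\leq C/[\varphi^{-1}(K\Delta_1^{-1/2})]^{r}$ for $r$ as large as the (all-order) moments of $Z$ from Lemma \ref{L3.2} permit, the divergence $\varphi^{-1}(K\Delta_1^{-1/2})\to\infty$ forces $\mathbb{E}\int_{0}^{t}J_{2}\,\mathrm{d}s$ to be of order $\Delta_1$ (indeed smaller), the tiny probability absorbing the $\Delta_1^{-1/2}$ coming from \eqref{c3.12}. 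I expect these two steps to be routine.

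The delicate term is $J_{1}$, where the order $\tfrac12$ is won or lost. Writing $t_{n}=n_{\Delta_1}(s)\Delta_1$ for the grid point below $s$, we have $Z(s)-\bar{Z}(s)=-\int_{t_{n}}^{s}\bar{b}(T_{\Delta_1}(Z(t_n)))\mathrm{d}u-\sigma(Z(t_n))(W^{1}(s)-W^{1}(t_{n}))$. Pairing the drift part against $\bar{b}(\bar{x})-\bar{b}(Z)$ and splitting on $A_{s}$ versus $A_{s}^{c}$ yields $O(\Delta_1)$, because on $A_{s}$ the factor $\int_{t_{n}}^{s}\bar{b}(Z(t_n))\mathrm{d}u$ is genuinely $O(\Delta_1)$ in $L^{2}$, while on $A_{s}^{c}$ the small probability again compensates the $\Delta_1^{-1/2}$ in \eqref{c3.12}. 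The martingale part \emph{cannot} be treated by Cauchy--Schwarz alone, since pairing the $O(\Delta_1^{1/2})$ Brownian increment with the $O(1)$ drift difference would only give $O(\Delta_1^{1/2})$. Instead I will use the filtration: because $Z(t_n)$ and $\bar{b}(Z(s))=\bar{b}(Z(t_n))$ are $\mathcal{F}_{t_{n}}$-measurable while $W^{1}(s)-W^{1}(t_{n})$ is independent of $\mathcal{F}_{t_{n}}$ with zero mean, the contributions of both $\bar{b}(Z(t_n))$ and of the frozen value $\bar{b}(\bar{x}(t_{n}))$ vanish in expectation, so that $\mathbb{E}[(\sigma(Z(t_n))(W^{1}(s)-W^{1}(t_{n})))^{T}\bar{b}(\bar{x}(s))]=\mathbb{E}[(\sigma(Z(t_n))(W^{1}(s)-W^{1}(t_{n})))^{T}(\bar{b}(\bar{x}(s))-\bar{b}(\bar{x}(t_{n})))]$. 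Cauchy--Schwarz now pairs two genuine $O(\Delta_1^{1/2})$ quantities — the Brownian increment (controlled by $({\bf S3})$ and the moment bounds) and the time increment $\bar{b}(\bar{x}(s))-\bar{b}(\bar{x}(t_{n}))$ (controlled by Lemma \ref{Lcyp2.2}, the $L^{p}$ time-regularity of the exact solution $\bar{x}$ via Lemma \ref{L2}, and H\"older's inequality) — delivering $O(\Delta_1)$. This conditional-expectation cancellation is the main obstacle.

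Collecting the three contributions gives $\mathbb{E}|e(t)|^{2}\leq C\int_{0}^{t}\mathbb{E}|e(s)|^{2}\mathrm{d}s+C_{T,x_{0}}\Delta_1$ for $t\in[0,T]$, and the Gronwall inequality then yields $\mathbb{E}|\bar{x}(T)-\bar{Z}(T)|^{2}\leq C_{T,x_{0}}\Delta_1$. Throughout, the finiteness of all moments of $\bar{x}$, $\bar{Z}$ and $Z$ (Lemmas \ref{L2} and \ref{L3.2}) is what legitimises the H\"older splittings, and the hypothesis $k\geq 2\vee\theta_1\vee2\theta_2\vee(\theta_2+1)$ is precisely what Lemmas \ref{Lcyp2.2} and \ref{Lcyp2.3} demand. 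I emphasise that the global one-sided condition of Lemma \ref{Lcyp2.3} is essential: it lets the super-linear diffusion mismatch be absorbed with coefficient exactly $1$, so that no stopping-time localisation (which would introduce an $R$-dependent constant incompatible with the clean rate) is needed.
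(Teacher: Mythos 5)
Your proposal is correct, but it takes a genuinely different route from the paper. The paper first localises with the stopping time $\theta_{\Delta_1}=\tau_{\varphi^{-1}(K\Delta_1^{-1/2})}\wedge\rho_{\Delta_1,\varphi^{-1}(K\Delta_1^{-1/2})}$ and splits $\mathbb{E}|e(T)|^{2}$ by Young's inequality into a stopped part plus tail terms; because the radius grows like $\varphi^{-1}(K\Delta_1^{-1/2})$ and all moments of $\bar{x},\bar{Z}$ are bounded, choosing $p\geq 2(\theta_1\vee\theta_2+1)$ makes the tail contribution itself $O(\Delta_1)$, and inside the stopping time the truncation is the identity. Crucially, the paper then applies the monotonicity Lemma \ref{Lcyp2.3} to the pair $(\bar{x},\bar{Z})$ of \emph{continuous} processes, so the leftover terms are the \emph{squared} one-step differences $|\bar{b}(\bar{Z})-\bar{b}(Z)|^{2}$, $|\sigma(\bar{Z})-\sigma(Z)|^{2}$ and a cross term, each $O(\Delta_1)$ by Lemma \ref{Lcyp2.2}, H\"older and the $L^{4}$ one-step bound from Lemma \ref{L3.2}; no cancellation argument is needed. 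You instead apply Lemma \ref{Lcyp2.3} to $(\bar{x},Z)$ with the step process, which leaves the linear defect $J_{1}=2(Z-\bar{Z})^{T}(\bar{b}(\bar{x})-\bar{b}(Z))$, and you resolve it by the conditional-expectation (martingale-increment) cancellation together with the $L^{p}$ time-regularity of $\bar{x}$; you handle the truncation defect $J_{2}$ by Chebyshev small-probability estimates rather than by stopping. All of these steps go through under the stated moment bounds, so your argument is sound. Two remarks: first, your parenthetical claim that a stopping-time localisation would spoil the clean rate is mistaken — the paper's Gronwall constants come from the \emph{global} Lemmas \ref{Lcyp2.2} and \ref{Lcyp2.3}, not from the radius, and the exit probability is absorbed by high moments, which is exactly how the paper gets the rate. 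Second, the cancellation you flag as the main obstacle is avoidable even within your decomposition: writing $\bar{b}(\bar{x})-\bar{b}(Z)=[\bar{b}(\bar{x})-\bar{b}(\bar{Z})]+[\bar{b}(\bar{Z})-\bar{b}(Z)]$, the first piece is dominated by $|e|$ times a polynomial factor, so H\"older gives $C\Delta_1^{1/2}(\mathbb{E}|e|^{2})^{1/2}\leq \tfrac{C}{2}\Delta_1+\tfrac{C}{2}\mathbb{E}|e|^{2}$, and the second piece pairs two $O(\Delta_1^{1/2})$ quantities; this is in essence what the paper's decomposition accomplishes automatically.
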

			\begin{proof}\textbf {Proof.}
				Let $e(t)=\bar{x}(t)-\bar{Z}(t)$ for any $t\geq 0$.
				Define the stopping time
				$$\theta_{\Delta_1}=\tau_{\varphi^{-1}(K\Delta_1^{-1/2})}\wedge\rho_{\Delta_1,\varphi^{-1}(K\Delta_1^{-1/2})}.$$
				Using the Young inequality for any $p>2$, we derive that for any $T>0$,
				\begin{align}\label{cyp5.6}
					\mathbb{E}|e(T)|^{2}&=\mathbb{E}\big(|e(T)|^{2}I_{\{\theta_{\Delta_1}>T\}}\big)+\mathbb{E}\big(|e(T)|^2I_{\{\theta_{\Delta_1}\leq T\}}\big)\notag
					\\&\leq \mathbb{E}\big(|e(T)|^2I_{\{\theta_{\Delta_1}> T\}}\big)+\frac{2\Delta_1\mathbb{E}|e(T)|^{p}}{p}+\frac{(p-2)\mathbb{P}(\theta_{\Delta_1}\leq T)}{p\Delta_1^{\frac{2}{p-2}}}.
				\end{align}
				Under $(\text{\bf S1'})$, $({\bf S2})$, $({\bf S3})$, $({\bf S5})$ and $({\bf F1})$-$({\bf F3})$ with $k\geq2\vee\theta_1\vee2\theta_2\vee(\theta_2+1)$, it follows from the results of Lemmas \ref{L2} and \ref{L3.2} that
				\begin{align}\label{eqN6.2}
					\mathbb{E}|e(T)|^{p}\leq C\big(\mathbb{E}|\bar{x}(T)|^{p}+\mathbb{E}|\bar{Z}(T)|^{p}\big)\leq C_{x_0,T,p}.
				\end{align}
				Furthermore, by Remarks \ref{rem5.1} and \ref{rem5.2} we deduce that
				\begin{align*}
					\mathbb{P}(\theta_{\Delta_1}\leq T)&\leq \mathbb{P}\big(\tau_{\varphi^{-1}(K\Delta_1^{-1/2})}\leq T\big)+\mathbb{P}\big(\rho_{\Delta_1,\varphi^{-1}(K\Delta_1^{-1/2})}\leq T\big)\notag
					\\&\leq \frac{C_{x_0,T,p}}{\big(\varphi^{-1}(K\Delta_1^{-1/2})\big)^{p}}.
				\end{align*}
 Letting $p\geq 2(\theta_1\vee\theta_2+1)$ and  using the explicit form of $\varphi^{-1} $ given in Remark \ref{rem4.2} yield that
 \begin{align}\label{eqN6.3}
 \frac{(p-2)\mathbb{P}(\theta_{\Delta_1}\leq T)}{p\Delta_1^{\frac{2}{p-2}}}&\leq \frac{C_{x_0,T,p}}{\Delta_1^{\frac{2}{p-2}}\big(\varphi^{-1}(K\Delta_1^{-1/2})\big)^{p}}\leq \frac{C_{x_0,T,p}}{\Delta_1^{\frac{2}{p-2}}(K\Delta_1^{-\frac{1}{2}}-1)^{\frac{p}{\theta_1\vee\theta_2}}}\nn\
 \\&\leq C_{x_0,T,p}\Delta_1.
 \end{align}
Then inserting \eqref{eqN6.2} and \eqref{eqN6.3} into \eqref{cyp5.6} implies that 
				\begin{align*}
					\mathbb{E}|e(T)|^{2}\leq C_{x_0,T,p}\Delta_1+\mathbb{E}|e(T\wedge\theta_{\Delta_1})|^{2}.
				\end{align*}
				Thus for the desired result it is sufficient to prove
				\begin{align*}
					\mathbb{E}|e(T\wedge\theta_{\Delta_1})|^{2}\leq C_{x_0,T,p}\Delta_1.
				\end{align*}
				Recalling
				the definition of the stopping time $\theta_{\Delta_1}$, one observes that  $Z^{*}(t)=Z(t), ~0\leq t\leq T\wedge \theta_{\Delta_1}$. Thus,  using the $\mathrm{It\hat{o}}$ formula for (1.2) and (5.13) and then applying the inequality $(a+b)^2\leq |a|^2+|b|^2+2|a||b|$ yields that
\begin{align*}
					\mathbb{E}|e(T\wedge\theta_{\Delta_1})|^{2}
					=& \mathbb{E}\int_{0}^{T\wedge\theta_{\Delta_1}}\Big[2e^{T}(t)
					\big( \bar{b}(\bar{x}(t))-\bar{b}(Z(t))\big)
					+|\sigma(\bar{x}(t))-\sigma(Z(t))|^{2}\Big]\mathrm{d}t\notag
					\\ \leq& \mathbb{E}\int_{0}^{T\wedge\theta_{\Delta_1}}\Big[2e^{T}(t)\big(\bar{b}(\bar{x}(t))-\bar{b}(\bar{Z}(t))\big)+2e^{T}(t)\big(\bar{b}(\bar{Z}(t))-\bar{b}(Z(t))\big)\Big]\mathrm{d}t\notag
\\&~~~+\mathbb{E}\int_{0}^{T\wedge\theta_{\Delta_1}}|\sigma(\bar{x}(t))-\sigma(\bar{Z}(t))|^2
+|\sigma(\bar{Z}(t))-\sigma(Z(t))|^{2}\nn\
\\&~~~+2|\sigma(\bar{x}(t))-\sigma(\bar{Z}(t))||\sigma(\bar{Z}(t))-\sigma(Z(t))|\mathrm{d}t.\nn\
				\end{align*}
After adjusting the order, one  obtains that
				\begin{align*}
					\mathbb{E}|e(T\wedge\theta_{\Delta_1})|^{2}
					\leq& \mathbb{E}\int_{0}^{T\wedge\theta_{\Delta_1}}\Big[2e^{T}(t)
					\big( \bar{b}(\bar{x}(t))-\bar{b}(\bar{Z}(t))\big)
		+|\sigma(\bar{x}(t))-\sigma(\bar{Z}(t))|^{2}\Big]\mathrm{d}t\notag
					\\&~~~+2\mathbb{E}\int_{0}^{T\wedge\theta_{\Delta_1}}e^{T}(t)
					\big( \bar{b}(\bar{Z}(t))-\bar{b}(Z(t))\big)\mathrm{d}t\notag
\\&~~~+\mathbb{E}\int_{0}^{T\wedge\theta_{\Delta_1}}|\sigma(\bar{Z}(t))-\sigma(Z(t))|^{2}\mathrm{d}t\notag
					\\&~~~+ 2\mathbb{E}\int_{0}^{T\wedge\theta_{\Delta_1}}|\sigma(\bar{x}(t))-\sigma(\bar{Z}(t))||\sigma(\bar{Z}(t))-\sigma(Z(t))|\mathrm{d}t.
				\end{align*}
				Under $({\bf S6})$ and  $({\bf F1})$-$({\bf F3})$ with $k\geq2\vee(\theta_2+1)$, utilizing the Lemma \ref{Lcyp2.3} and the Young inequality we derive that
				\begin{align}\label{cyp5.11}
					\mathbb{E}|e(T\wedge\theta_{\Delta_1})|^2
					\leq C\mathbb{E}\int_{0}^{T\wedge\theta_{\Delta_1}}|e(t)|^{2}\mathrm{d}t+J_1+J_2,
				\end{align}
				where
				\begin{align*}
					&J_1=C\mathbb{E}\int_{0}^{T\wedge\theta_{\Delta_1}}\Big(|\bar{b}(\bar{Z}(t))-\bar{b}(Z(t))|^{2}+|\sigma(\bar{Z}(t))-\sigma(Z(t))|^{2}\Big)\mathrm{d}t,\notag
					\\&J_2=C\mathbb{E}\int_{0}^{T\wedge\theta_{\Delta_1}}|\sigma(\bar{x}(t))-\sigma(\bar{Z}(t))||\sigma(\bar{Z}(t))-\sigma(Z(t))|\mathrm{d}t.
				\end{align*}
				Due to $({\bf S1'})$, $({\bf S2})$, $({\bf S3})$, $({\bf S5})$ and $({\bf  F1})$-$({\bf F3})$ with $k\geq 2\vee\theta_1\vee2\theta_2\vee(\theta_2+1)$, it follows from  the results of Lemmas  \ref{L3.2} and \ref{Lcyp2.2}  that
				\begin{align}\label{cyp5.12}
					J_1&\leq C\int_{0}^{T}\mathbb{E}\Big[|\bar{Z}(t)-Z(t)|^2\big(1+|\bar{Z}(t)|^{2(\theta_1\vee\theta_2)}+|Z(t)|^{2(\theta_1\vee\theta_2)}\big)\Big ]\mathrm{d}t\notag
					\\&\leq C\int_{0}^{T}\Big(\mathbb{E}|\bar{Z}(t)-Z(t)|^4\Big)^{\frac{1}{2}}\Big[\mathbb{E}\big(1+|\bar{Z}(t)|^{4(\theta_1\vee\theta_2)}+|Z(t)|^{4(\theta_1\vee\theta_2)}\big)\Big]^{\frac{1}{2}}\mathrm{d}t\notag
\\&\leq C_{x_0,T,p}\Delta_1.
				\end{align}
				In addition, using the Young inequality and the H\"older inequality yields that
				\begin{align*}
					J_2&\leq C\mathbb{E}\int_{0}^{T\wedge\theta_{\Delta_1}}|e(t)||\bar{Z}(t)-Z(t)|(1+|\bar{x}(t)|^{2\theta_1}+|\bar{Z}(t)|^{2\theta_1}+|Z(t)|^{2\theta_1})\mathrm{d}t\notag
					\\&\leq C\int_{0}^{T}\big(\mathbb{E}|\bar{Z}(t)-Z(t)|^{4}\big)^{\frac{1}{2}}\Big[\mathbb{E}\big(1+|\bar{x}(t)|^{8\theta_1}+|\bar{Z}(t)|^{8\theta_1}+|Z(t)|^{8\theta_1}\big)\Big]^{\frac{1}{2}}\mathrm{d}t\notag
					\\&~~~+C\int_{0}^{T}\mathbb{E}|e(t\wedge\theta_{\Delta_1})|^2\mathrm{d}t.
				\end{align*}
				Similarly to \eqref{cyp5.12}, applying  Lemmas \ref{L2} and \ref{L3.2} we  show that
				\begin{align}\label{cyp5.13}
					J_2&\leq C_{x_0,T,p}\Delta_1+ C\int_{0}^{T}\mathbb{E}|e(t\wedge\theta_{\Delta_1})|^2\mathrm{d}t.
				\end{align}
				Inserting \eqref{cyp5.12} and \eqref{cyp5.13} into \eqref{cyp5.11}  and then using  Gronwall's inequality derive that
				\begin{align}
					\mathbb{E}|e(T\wedge\theta_{\Delta_1})|^{2}\leq C_{x_0,T,p}\Delta_1,\notag
				\end{align}
				which implies the desired result. The proof is complete.
			\end{proof}
			\begin{lem}\label{L5.7}
				If $(\text{\bf S1'})$, $({\bf S2})$,  $({\bf S3})$,  $({\bf S5})$, $(\text{\bf S6})$ and  $({\bf F1})$-$({\bf F3})$ with $k\geq[2(2\theta_1+1)\vee2(\theta_1\vee\theta_2+1)]((\theta_2+1)\vee2)$ hold, then for any $x_0\in \RR^{n_1}$, $y_0\in \RR^{n_2}$, $T>0$, $\Delta_1\in(0,1]$, $\Delta_2\in(0,\bar\Delta_2]$ and $M\geq1$,
				\begin{align*}
					\mathbb{E}|\bar{Z}(T)-\bar{X}(T)|^{2}\leq C_{x_0,y_0,T}\Big( \Delta_1+\Delta_{2}+\frac{1}{M\Delta_{2}}\Big).
				\end{align*}
			\end{lem}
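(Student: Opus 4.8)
The plan is to follow the stopping-time scheme already used in Lemmas \ref{L5.4} and \ref{Lb.13+}, but to replace the $R$-dependent local Lipschitz bound of Lemma \ref{L3.3} with the \emph{global} estimates now available under $(\text{\bf S1'})$ and $(\text{\bf S6})$, namely the polynomial-growth Lipschitz property of $\bar b$ (Lemma \ref{Lcyp2.2}) and the Khasminskii-type monotonicity of the pair $(\bar b,\sigma)$ (Lemma \ref{Lcyp2.3}). I would set $\bar e(t)=\bar Z(t)-\bar X(t)$ as in \eqref{NN5.28} and, writing $r=\varphi^{-1}(K\Delta_1^{-1/2})$, introduce the stopping time $\beta_{\Delta_1}=\rho_{\Delta_1,r}\wedge\bar\rho_{\Delta_1,r}$ with $\rho_{\Delta_1,r}$, $\bar\rho_{\Delta_1,r}$ from \eqref{3.22} and \eqref{3.47}. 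The role of $\beta_{\Delta_1}$ is that on $[0,T\wedge\beta_{\Delta_1}]$ both $|Z(s)|$ and $|X(s)|$ stay below $r$, so that $T_{\Delta_1}(Z(s))=Z(s)$, $T_{\Delta_1}(X(s))=X(s)$ and the monotonicity of the \emph{untruncated} $\bar b$ becomes applicable.

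First I would split $\mathbb{E}|\bar e(T)|^2$ over $\{\beta_{\Delta_1}>T\}$ and $\{\beta_{\Delta_1}\le T\}$. For the second event the Young inequality gives, for $p>2$,
$$\mathbb{E}\big(|\bar e(T)|^2 I_{\{\beta_{\Delta_1}\le T\}}\big)\le \frac{2\Delta_1}{p}\,\mathbb{E}|\bar e(T)|^p+\frac{p-2}{p\,\Delta_1^{2/(p-2)}}\,\mathbb{P}(\beta_{\Delta_1}\le T).$$
Here $\mathbb{E}|\bar e(T)|^p$ is bounded by Lemmas \ref{la3.8} and \ref{L3.2}, while Remarks \ref{rem5.2} and \ref{rem5.4} give $\mathbb{P}(\beta_{\Delta_1}\le T)\le C_{x_0,y_0,T,p}/r^{p}$; choosing $p\ge 2(\theta_1\vee\theta_2+1)$ and inserting the explicit $\varphi^{-1}$ from Remark \ref{rem4.2}, the computation of \eqref{eqN6.3} bounds this tail by $C_{x_0,y_0,T,p}\Delta_1$.

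For the main contribution I would apply the It\^o formula to $|\bar e(t\wedge\beta_{\Delta_1})|^2$ and, on the good set where truncation is inactive, decompose the drift difference as $\bar b(Z)-B_M(X,Y^{X,y_0})=[\bar b(\bar Z)-\bar b(\bar X)]+[\bar b(Z)-\bar b(\bar Z)]+[\bar b(\bar X)-\bar b(X)]+[\bar b(X)-B_M(X,Y^{X,y_0})]$, together with the analogous splitting of $\sigma(Z)-\sigma(X)$. The leading pair $2\bar e^{T}(\bar b(\bar Z)-\bar b(\bar X))+|\sigma(\bar Z)-\sigma(\bar X)|^2$ is absorbed by $C|\bar e|^2$ via Lemma \ref{Lcyp2.3}, producing the Gronwall term. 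The time-discretisation corrections $\bar b(Z)-\bar b(\bar Z)$, $\bar b(\bar X)-\bar b(X)$ and the cross terms of $\sigma$ are treated by the Young and H\"older inequalities, using the polynomial-growth Lipschitz bound of Lemma \ref{Lcyp2.2} against the fourth-order one-step estimates $\mathbb{E}|Z-\bar Z|^4\le C\Delta_1^2$, $\mathbb{E}|X-\bar X|^4\le C\Delta_1^2$ from Lemmas \ref{L3.2} and \ref{la3.8} and the uniform moment bounds of $Z,\bar Z,X,\bar X$; these yield a contribution of order $\Delta_1$, exactly as $J_1,J_2$ were handled in Lemma \ref{L5.4}. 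The estimator term $2\bar e^{T}(\bar b(X)-B_M(X,Y^{X,y_0}))$ is split by Young into $|\bar e|^2$ plus $\mathbb{E}|\bar b(X(s))-B_M(X(s),Y^{X(s),y_0})|^2$, which after conditioning on $X(s)$ as in \eqref{f2+} and invoking Lemma \ref{L3.20} is bounded by $C(1+\mathbb{E}|X(s)|^{2(\theta_1\vee\theta_2+1)}+|y_0|^{\cdots})(\Delta_2+\frac{1}{M\Delta_2})$. A Gronwall argument then gives $\mathbb{E}|\bar e(T\wedge\beta_{\Delta_1})|^2\le C_{x_0,y_0,T}(\Delta_1+\Delta_2+\frac{1}{M\Delta_2})$, and since $\mathbb{E}(|\bar e(T)|^2 I_{\{\beta_{\Delta_1}>T\}})\le \mathbb{E}|\bar e(T\wedge\beta_{\Delta_1})|^2$, combining with the tail estimate finishes the proof.

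The main obstacle, and the reason for the heavy moment hypothesis on $k$, is to keep every constant independent of the truncation radius $r=\varphi^{-1}(K\Delta_1^{-1/2})$, which diverges as $\Delta_1\to0$. In Lemma \ref{Lb.13+} the analogous estimator term was controlled on $\{|X(s)|\le R\}$ for a \emph{fixed} $R$, so its constant $C_{y_0,R}$ was harmless; here I must instead discard the indicator and bound $\mathbb{E}|X(s)|^{2(\theta_1\vee\theta_2+1)}$ directly by the uniform moment bound of Lemma \ref{la3.8}, which requires $k$ large enough that all the polynomial weights appearing in the H\"older splittings (up to order $8\theta_1$ and $4(\theta_1\vee\theta_2)$, scaled by $\theta_4\vee2=(\theta_2+1)\vee2$) remain integrable. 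Verifying that the single choice $k\ge[2(2\theta_1+1)\vee2(\theta_1\vee\theta_2+1)]((\theta_2+1)\vee2)$ simultaneously controls the tail exponent, the two one-step corrections, and the estimator term is the delicate accounting step.
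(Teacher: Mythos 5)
Your proposal is correct and follows essentially the same route as the paper's proof: the same stopping time $\rho_{\Delta_1,\varphi^{-1}(K\Delta_1^{-1/2})}\wedge\bar\rho_{\Delta_1,\varphi^{-1}(K\Delta_1^{-1/2})}$, the same Young-inequality tail estimate with $p\geq 2(\theta_1\vee\theta_2+1)$, the same four-term decomposition of the drift and diffusion differences handled via Lemmas \ref{Lcyp2.2}, \ref{Lcyp2.3} and \ref{L3.20}, and the same conditioning-plus-Gronwall conclusion. Your closing remark about keeping constants independent of the diverging truncation radius (and hence bounding $\mathbb{E}|X(s)|^{2(\theta_1\vee\theta_2+1)}$ by uniform moments rather than a fixed-$R$ indicator) is exactly the point where this proof departs from Lemma \ref{Lb.13+}, and it matches the paper's accounting of the hypothesis on $k$.
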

			\begin{proof}\textbf {Proof.}
				Define  the stopping time
				$$\bar{\theta}_{\Delta_1}= \rho_{\Delta_1, \varphi^{-1}(K\Delta_1^{-1/2})}\wedge\bar{\rho}_{\Delta_1,\varphi^{-1}(K\Delta_1^{-1/2})},$$
				where $\rho_{\Delta_1, \varphi^{-1}(K\Delta_1^{-1/2})}$ and $\bar{\rho}_{\Delta_1,\varphi^{-1}(K\Delta_1^{-1/2})}$ are given by \eqref{3.22} and \eqref{3.47}.
 By $({\bf S1'})$, $({\bf S2})$, $({\bf S3})$, $({\bf S5})$ and $({\bf F1})$-$({\bf F3})$ with $k> 2((\theta_2+1)\vee 2)$, using Lemmas  \ref{la3.8} and \ref{L3.2} as well as the H\"older inequality yields that for any $2< p\leq k/((\theta_2+1)\vee2)$,
				\begin{align}\label{cyp3.69}
					\sup_{\Delta_1\in(0,1]}\mathbb{E}\Big(\sup_{0\leq t\leq T}|\bar{Z}(t)|^{p}\Big)\vee\sup_{\Delta_{1}\in(0,1], \Delta_2\in (0,\hat\Delta_2]}\sup_{t\in[0,T]}\mathbb{E}|\bar{X}(t)|^{p}\leq C_{x_0,y_0, T, p}.
				\end{align}
				Then applying the $\mathrm{Young}$ inequality, for any $\Delta_1\in (0,1]$ and  $2< p\leq k/((\theta_2+1)\vee2)$ one obtains that
				\begin{align}\label{cyp5.16}
					&\mathbb{E}|\bar{e}(T)|^{2}=\mathbb{E}\big(|\bar{e}(T)|^{2}I_{\{\bar{\theta}_{\Delta_1}>T\}}\big)+\mathbb{E}\big(|\bar{e}(T)|^{2}I_{\{\bar{\theta}_{\Delta_1}\leq T\}}\big)\notag
					\\ \leq&\mathbb{E}\big(|\bar{e}(T)|^{2}I_{\{\bar{\theta}_{\Delta_1}>T\}}\big)+\frac{2\Delta_1}{p}\mathbb{E}|\bar{e}(T)|^{p}+\frac{p-2}{p\Delta_1^{\frac{2}{p-2}}}\mathbb{P}(\bar{\theta}_{\Delta_1}\leq T),
				\end{align}
where $\bar{e}(T)$ is defined by \eqref{NN5.28}.
				It follows from \eqref{cyp3.69} that
				\begin{align}\label{NNN6.9}
					\mathbb{E}|\bar{e}(T)|^{p}\leq 2^{p-1}\mathbb{E}|\bar{Z}(T)|^{p}+2^{p-1}\mathbb{E}|\bar{X}(t)|^{p}\leq C_{x_0,y_0,T,p}.
				\end{align}
Then applying Remarks \ref{rem5.2} and \ref{rem5.4}  gives that
				\begin{align*}
					\mathbb{P}(\bar{\theta}_{\Delta_1}\leq T)\leq \mathbb{P}\big(\rho_{\Delta_1,\varphi^{-1}(K\Delta_1^{-1/2}) }\leq T\big)+\mathbb{P}\big(\bar{\rho}_{\Delta_1,\varphi^{-1}(K\Delta_1^{-1/2})}\leq T\big)\leq \frac{C_{x_0,y_0,T,p}}{(\varphi^{-1}(K\Delta_1^{-\frac{1}{2}}))^{p}}.
				\end{align*}
				Due to $k\geq 2((\theta_2+1)\vee2)(\theta_1\vee \theta_2+1)$, one further let $2(\theta_1\vee \theta_2+1)\leq p\leq k/((\theta_2+1)\vee2)$. Then using the explicit form of $\varphi^{-1} $ given in Remark \ref{rem4.2} yields that
 \begin{align*}
 \frac{(p-2)\mathbb{P}(\bar{\theta}_{\Delta_1}\leq T)}{p\Delta_1^{\frac{2}{p-2}}}&\leq \frac{C_{x_0,y_0,T,p}}{\Delta_1^{\frac{2}{p-2}}\big(\varphi^{-1}(K\Delta_1^{-1/2})\big)^{p}}\leq \frac{C_{x_0,y_0,T,p}}{\Delta_1^{\frac{2}{p-2}}(K\Delta_1^{-\frac{1}{2}}-1)^{\frac{p}{\theta_1\vee\theta_2}}}\nn\
 \\&\leq C_{x_0,y_0,T,p}\Delta_1.
 \end{align*}
Then inserting the above inequality into \eqref{cyp5.16} and then applying \eqref{NNN6.9} shows that
				\begin{align*}
					\mathbb{E}|\bar{e}(T)|^{2}&\leq \mathbb{E}\big(|\bar{e}(T)|^{2}I_{\{\bar{\theta}_{\Delta_1}>T\}}\big)+\frac{C_{x_0,y_0,T,p} \Delta_1 }{p}+\frac{ C_{x_0,y_0,T,p}}{p\Delta^{\frac{2}{p-2}}(\varphi^{-1}(K\Delta_1^{-\frac{1}{2}}))^{p}}\notag
					\\&\leq \mathbb{E}\big(|\bar{e}(T)|^{2}I_{\{\bar{\theta}_{\Delta_1}>T\}}\big)+C_{x_0,y_0,T,p}\Delta_1.
				\end{align*}
				Hence, for the desired result it remains to prove that
				\begin{align*}
					\mathbb{E}\big(|\bar{e}(T)|^{2}I_{\{\bar{\theta}_{\Delta_1}>T\}}\big)\leq C_{x_0,y_0,T,p}\Delta_1.
				\end{align*}
				Obviously, $T_{\Delta_1}(X(t))=X(t)$ and $T_{\Delta_1}(Z(t))=Z(t)$ for any $0\leq t\leq T\wedge\bar{\theta}_{\Delta_1}$. Using the $\mathrm{It\hat{o}}$ formula for (\ref{a22}) and (\ref{EQN5.9}), applying the inequality $(a+b+c)^{2}\leq |a|^2+2|b|^2+2|c|^2+2|a||b|+2|a||c|$ gives that
\begin{align*}
&\mathbb{E}|\bar{e}(T\wedge\bar{\theta}_{\Delta_1})|^2
\\= & \mathbb{E}\int_{0}^{T\wedge\bar{\theta}_{\Delta_1}}\Big[2\bar{e}^{T}(t)\Big(\bar{b}(Z(t))-B_{M}\Big(X(t),Y^{X(t),y_0}\Big)\Big)
					+|\sigma(Z(t))-\sigma(X(t))|^{2}\Big]\mathrm{d}t\nn\
\\=&\mathbb{E}\int_{0}^{T\wedge\bar{\theta}_{\Delta_1}}2\bar{e}^{T}(t)\Big(\bar{b}(Z(t))-\bar{b}(\bar{Z}(t))\Big)+2\bar{e}^{T}(t)\Big(\bar{b}(\bar{Z}(t))-\bar{b}(\bar{X}(t))\Big)\nn\
\\&~~~+2\bar{e}^{T}(t)\Big(\bar{b}(\bar{X}(t))-\bar{b}(X(t))\Big)+2\bar{e}^{T}(t)\Big(\bar{b}(X(t))-B_{M}(X(t),Y^{X(t),y_0})\Big)\mathrm{d}t\nn\
\\&~~~+\mathbb{E}\int_{0}^{T\wedge\bar{\theta}_{\Delta_1}}2|\sigma(Z(t))-\sigma(\bar{Z}(t))|^2+ |\sigma(\bar{Z}(t))-\sigma(\bar{X}(t))|^{2}+2|\sigma(\bar{X}(t))-\sigma(X(t))|^2\nn\
\\&~~~+2|\sigma(\bar{Z}(t))-\sigma(\bar{X}(t))|\big(|\sigma(Z(t))-\sigma(\bar{Z}(t))|+|\sigma(\bar{X}(t))-\sigma(X(t))|\big)\mathrm{d}t.
\end{align*}
After adjusting the order, under $({\bf S6})$ and $({\bf F1})$-$({\bf F3})$, by Lemma \ref{Lcyp2.3} and using the Young inequality one derives  that for any $ T > 0$,
				\begin{align}\label{cypp5.17}
					&\mathbb{E}|\bar{e}(T\wedge\bar{\theta}_{\Delta_1})|^2
					\notag
\\=& \mathbb{E}\int_{0}^{T\wedge\bar{\theta}_{\Delta_1}}\Big[2\bar{e}^{T}(t)\Big(\bar{b}(Z(t))-B_{M}\Big(X(t),Y^{X(t),y_0}\Big)\Big)
					+|\sigma(Z(t))-\sigma(X(t))|^{2}\Big]\mathrm{d}t\notag
					\\  \leq & \mathbb{E}\int_{0}^{T\wedge\bar{\theta}_{\Delta_1}}\Big[2\bar{e}^{T}(t)\Big(\bar{b}(\bar{Z}(t))-\bar{b}(\bar{X}(t))\Big)
					+|\sigma(\bar{Z}(t))-\sigma(\bar{X}(t))|^{2}\Big]\mathrm{d}t\notag
					\\&~~~+\int_{0}^{T}C\mathbb{E}|\bar{e}(t\wedge\bar{\theta}_{\Delta_1})|^2\mathrm{d}t+I_1+I_2+I_3+I_4\notag
					\\ \leq & \int_{0}^{T}C\mathbb{E}|\bar{e}(t\wedge\bar{\theta}_{\Delta_1})|^2\mathrm{d}t+I_1+I_2+I_3+I_4,
				\end{align}
			where
				\begin{align*}
					&I_1=\int_{0}^{T}\mathbb{E}\Big|\bar{b}(X(t))-B_{M}\Big(X(t),Y^{X(t),y_0}\Big)\Big|^2\mathrm{d}t,\notag
					\\&I_2=C\int_{0}^{T}\mathbb{E}\big(|\bar{b}(\bar{X}(t))-\bar{b}(X(t))|^2+|\sigma(\bar{X}(t))-\sigma(X(t))|^2\big)\mathrm{d}t,\notag
					\\&I_3=C\int_{0}^{T}\mathbb{E}\big(|\bar{b}(Z(t))-\bar{b}(\bar{Z}(t))|^2+|\sigma(Z(t))-\sigma(\bar{Z}(t))|^2\big)\mathrm{d}t,\notag
					\\&I_4=C\mathbb{E}\int_{0}^{T\wedge\bar{\theta}_{\Delta_1}}|\sigma(\bar{Z}(t))-\sigma(\bar{X}(t))|\big(|\sigma(\bar{X}(t))-\sigma(X(t))|+|\sigma(Z(t))-\sigma(\bar{Z}(t))|\big)\mathrm{d}t.
				\end{align*}
				In addition, owing to $({\bf S1'})$, $({\bf S2})$ and $({\bf F1})$-$({\bf F3})$ with $k\geq 2(\theta_2+1)$, applying \eqref{cyp4.7} and Lemma \ref{L3.20} implies that for any $0\leq t\leq T$,
				\begin{align*}
					&\mathbb{E}\Big|\bar{b}(X(t))-B_{M}\Big(X(t),Y^{X(t),y_0}\Big)\Big|^2\notag
					\\=&\mathbb{E}\Big|\bar{b}(X_{n_{\Delta_1}(t)})-B_{M}\Big(X_{n_{\Delta_1}(t)},Y^{X_{n_{\Delta_1}(t)},y_0}\Big)\Big|^2
					\\ =&\mathbb{E}\Big[\mathbb{E}\Big(\Big|\bar{b}(X_{n_{\Delta_1}(t)})-B_{M}\Big(X_{n_{\Delta_1}(t)},Y^{X_{n_{\Delta_1}(t)},y_0}_{n_{\Delta_1}(t)}\Big)\Big|^2\Big|X_{n_{\Delta_1}(t)}\Big)\Big]\notag
					\\=&\mathbb{E}\Big(\mathbb{E}\Big|\bar{b}(x)-B_{M}\Big(x,Y^{x,y_0}_{n_{\Delta_1}(t)}\Big)\Big|^2\Big|_{x=X_{n_{\Delta_1}(t)}}\Big)\notag
					\\  \leq&  C\Big( \Delta_{2}+\frac{1}{M\Delta_{2}}\Big)\Big(1+\mathbb{E}|X_{n_{\Delta_1}(t)}|^{2(\theta_1\vee\theta_2+1)}+|y_0|^{\theta_2+\theta_1\vee\theta_2+2}\Big).
				\end{align*}
Since $2(\theta_1\vee \theta_2+1)\leq p\leq k/((\theta_2+1)\vee2)$,  utilizing \eqref{cyp3.69}  and the H\"older inequality we deduce that
				\begin{align}\label{cyp5.19}
					I_1&\leq C\Big(\Delta_{2} +\frac{1}{M\Delta_{2}}\Big)\int_{0}^{T}\big(1+|y_0|^{2(\theta_2+1)}+\mathbb{E}|X_{n_{\Delta_1}(t)}|^{2(\theta_2+1)} \big)\mathrm{d}t\notag
					\\&\leq C_{y_0}\Big( \Delta_{2}+\frac{1}{M\Delta_{2}}\Big)\int_{0}^{T}\big(1+\big(\mathbb{E}|X_{n_{\Delta_1}(t)}|^{p}\big)^{\frac{2(\theta_2+1)}{p}} \big)\mathrm{d}t\notag
					\\ &\leq C_{x_0,y_0,T,p}\Big(\Delta_{2}+\frac{1}{M\Delta_{2}}\Big).
				\end{align}
Under $({\bf S1'})$, $({\bf S2})$ and $({\bf F1})$-$({\bf F3})$ with $k\geq 2\vee\theta_1\vee2\theta_2$, by Lemma \ref{Lcyp2.2} and the H\"older inequality  we derive that
\begin{align}
					I_2+I_3 &\leq C\int_{0}^{T}\mathbb{E}\Big(|\bar{X}(t)-X(t)|^{2}\big(1+|X(t)|^{2(\theta_1\vee\theta_2)}+|\bar{X}(t)|^{2(\theta_1\vee\theta_2)}\big)\Big)\mathrm{d}t\notag
					\\&~~~+C\int_{0}^{T}\mathbb{E}\Big(|Z(t)-\bar{Z}(t)|^{2}\big(1+|Z(t)|^{2(\theta_1\vee\theta_2)}+|\bar{Z}(t)|^{2(\theta_1\vee\theta_2)}\big)\Big)\mathrm{d}t\notag
					\\&\leq C\int_{0}^{T}\Big(\mathbb{E}|\bar{X}(t)-X(t)|^{p}\Big)^{\frac{2}{p}}\Big(\mathbb{E}\big(1+|X(t)|^{\frac{2p(\theta_1\vee\theta_2)}{p-2}}+|\bar{X}(t)|^{\frac{2p(\theta_1\vee\theta_2)}{p-2}}\big)\Big)^{\frac{p-2}{p}}\mathrm{d}t\notag
					\\&~~~+C\int_{0}^{T}\Big(\mathbb{E}|Z(t)-\bar{Z}(t)|^{p}\Big)^{\frac{2}{p}}\Big(\mathbb{E}\big(1+|Z(t)|^{\frac{2p(\theta_1\vee\theta_2)}{p-2}}+|\bar{Z}(t)|^{\frac{2p(\theta_1\vee\theta_2)}{p-2}}\big)\Big)^{\frac{p-2}{p}}\mathrm{d}t.\notag
				\end{align}
Thanks to $ 2(\theta_1\vee\theta_2+1) \leq p\leq k/((\theta_2+1)\vee2)$, we have $$2p(\theta_1\vee\theta_2)/(p-2)\leq p\leq k/((\theta_2+1)\vee2).$$
				Then applying Lemmas \ref{la3.8} and \ref{L3.2} and the Young inequality yields that
				\begin{align}\label{cyp5.20}
					I_2+I_3&\leq C\int_{0}^{T}\Big(\mathbb{E}|\bar{X}(t)-X(t)|^{p}\Big)^{\frac{2}{p}}\Big(\mathbb{E}(1+|X(t)|^{p}+|\bar{X}(t)|^{p})\Big)^{\frac{p-2}{p}}\mathrm{d}t\notag
					\\&~~~+C\int_{0}^{T}\Big(\mathbb{E}|Z(t)-\bar{Z}(t)|^{p}\Big)^{\frac{2}{p}}\Big(\mathbb{E}(1+|Z(t)|^{p}+|\bar{Z}(t)|^{p})\Big)^{\frac{p-2}{p}}\mathrm{d}t\notag
					\\&\leq C_{x_0,y_0,T,p}\Delta_1.
				\end{align}
In view of $({\bf S1'})$, together with using the Young inequality and the H\"older inequality,  we also obtain that
\begin{align*}
					I_4&\leq C\mathbb{E}\int_{0}^{T\wedge\bar{\theta}_{\Delta_1}}|\bar{e}(t)|\big(|\bar{X}(t)-X(t)|+|Z(t)-\bar{Z}(t)|\big)\notag
					\\&~~~~~~\times\big(1+|X(t)|^{2\theta_1}+|\bar{X}(t)|^{2\theta_1}+|\bar{Z}(t)|^{2\theta_1}+|Z(t)|^{2\theta_1}\big)\mathrm{d}t
					\\&\leq C\int_{0}^{T}\mathbb{E}|\bar{e}(t\wedge\bar{\theta}_{\Delta_1})|^2\mathrm{d}t+C\int_{0}^{T}\Big[\mathbb{E}\big(|\bar{X}(t)-X(t)|^p+|Z(t)-\bar{Z}(t)|^p\big)\Big]^{\frac{2}{p}}\notag
					\\&~~~\times \Big[\mathbb{E}\big(1+|X(t)|^{\frac{4p\theta_1}{p-2}}+|\bar{X}(t)|^{\frac{4p\theta_1}{p-2}}+|Z(t)|^{\frac{4p\theta_1}{p-2}}+|\bar{Z}(t)|^{\frac{4p\theta_1}{p-2}}\big)\Big]^{\frac{p-2}{p}}\mathrm{d}t.
				\end{align*}
				Similarly, owing to  $k\geq[2(2\theta_1+1)\vee2(\theta_1\vee\theta_2+1)]((\theta_2+1)\vee2)$, one further let $2(2\theta_1+1)\vee(2(\theta_1\vee\theta_2)+1)\leq p\leq k/((\theta_2+1)\vee2)$.  Thus, $4p\theta_1/(p-2)\leq p\leq k/((\theta_2+1)\vee2)$ holds. By means of Lemmas  \ref{la3.8} and \ref{L3.2}  and using the Young inequality one deduces that
				\begin{align}\label{cyp5.21}
					I_4&\leq C\int_{0}^{T}\mathbb{E}|\bar{e}(t\wedge\bar{\theta}_{\Delta_1})|^2\mathrm{d}t+C\int_{0}^{T}\Big[\mathbb{E}\big(|\bar{X}(t)-X(t)|^p+|Z(t)-\bar{Z}(t)|^p\big)\Big]^{\frac{2}{p}}\notag
					\\&~~~\times \Big[\mathbb{E}\big(1+|X(t)|^{p}+|\bar{X}(t)|^{p}+|Z(t)|^{p}+|\bar{Z}(t)|^{p}\big)\Big]^{\frac{p-2}{p}}\mathrm{d}t\notag
					\\&\leq  C\int_{0}^{T}\mathbb{E}|\bar{e}(t\wedge\bar{\theta}_{\Delta_1})|^2\mathrm{d}t+C_{x_0,y_0,T,p}\Delta_1.
				\end{align}
				Then inserting \eqref{cyp5.19}-\eqref{cyp5.21} into \eqref{cypp5.17} implies that
				\begin{align*}
					\mathbb{E}|\bar{e}_{\Delta_1}(T\wedge\beta_{\Delta_1})|^2\leq C\int_{0}^{T}\mathbb{E}|\bar{e}(t\wedge\beta_{\Delta_1})|^2\mathrm{d}t+C_{x_0,y_0,T,p}\Big( \Delta_1+\Delta_{2}+\frac{1}{M\Delta_{2}}\Big).
				\end{align*}
				Using the Gronwall inequality shows that
				\begin{align*}
					\mathbb{E}|\bar{e}_{\Delta_1}(T\wedge\beta_{\Delta_1})|^2\leq C_{x_0,y_0,T,p}\Big( \Delta_1+\Delta_{2}+\frac{1}{M\Delta_{2}}\Big),
				\end{align*}
				which implies the desired result.
			\end{proof}
			\par Combining  Lemmas \ref{la3.8}, \ref{L5.4} and
			\ref{L5.7}, we yield the strong  error estimate  of the  MTEM scheme directly.
			\begin{thm}\label{L5.1}
				{\rm If  $(\text{\bf S1'})$, $({\bf S2})$, $({\bf S3})$, $({\bf S5})$, $(\text{\bf S6})$ and $({\bf F1})$-$({\bf F3})$ hold with $k\geq[2(2\theta_1+1)\vee2(\theta_1\vee\theta_2+1)]((\theta_2+1)\vee2)$, then for any $x_0\in \RR^{n_1}$, $y_0\in \RR^{n_2}$, $T>0$, $\Delta_1\in (0,\bar\Delta_1]$, $\Delta_2\in (0,\bar\Delta_2]$ and $M\geq1$,
					\begin{align*}
						\mathbb{E}|\bar{x}(T)- {X}(T)|^{2}\leq C_{x_0,y_0,T}\Big( \Delta_1+\Delta_{2}+\frac{1}{M\Delta_{2}}\Big).
				\end{align*}}
			\end{thm}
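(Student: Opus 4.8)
The plan is to split the global error at the terminal time $T$ through the two auxiliary continuous-time processes: the truncated EM process $\bar{Z}(t)$ for the averaged equation, defined in \eqref{EQN5.9}, and the continuous interpolation $\bar{X}(t)$ of the MTEM scheme, defined in \eqref{a22}. Writing
\[
\bar{x}(T)-X(T)=\big(\bar{x}(T)-\bar{Z}(T)\big)+\big(\bar{Z}(T)-\bar{X}(T)\big)+\big(\bar{X}(T)-X(T)\big)
\]
and applying the elementary inequality $(a+b+c)^{2}\leq 3(|a|^{2}+|b|^{2}+|c|^{2})$, I would reduce the estimate to controlling the three mean-square errors
\[
\mathbb{E}|\bar{x}(T)-X(T)|^{2}\leq 3\,\mathbb{E}|\bar{x}(T)-\bar{Z}(T)|^{2}+3\,\mathbb{E}|\bar{Z}(T)-\bar{X}(T)|^{2}+3\,\mathbb{E}|\bar{X}(T)-X(T)|^{2}.
\]

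Next I would check that the theorem's hypotheses are strong enough to invoke each of the three error lemmas. By Remark~\ref{R5.1}, $(\text{\bf S1'})$ together with $({\bf S2})$ implies $({\bf S4})$ with $\theta_3=\theta_1\vee\theta_2+1$ and $\theta_4=\theta_2+1$, so every moment threshold phrased in terms of $\theta_4$ becomes a condition on $\theta_1,\theta_2$. Since the prescribed $k\geq[2(2\theta_1+1)\vee2(\theta_1\vee\theta_2+1)]((\theta_2+1)\vee2)$ dominates both the threshold $2(\theta_4\vee2)$ required by Lemma~\ref{la3.8} and the threshold $2\vee\theta_1\vee2\theta_2\vee(\theta_2+1)$ required by Lemma~\ref{L5.4}, while matching exactly the threshold of Lemma~\ref{L5.7}, all three lemmas apply under the single standing assumption of the theorem.

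Finally I would substitute the three bounds: Lemma~\ref{L5.4} yields $\mathbb{E}|\bar{x}(T)-\bar{Z}(T)|^{2}\leq C_{x_0,T}\Delta_1$; Lemma~\ref{L5.7} yields $\mathbb{E}|\bar{Z}(T)-\bar{X}(T)|^{2}\leq C_{x_0,y_0,T}(\Delta_1+\Delta_2+1/(M\Delta_2))$; and the second conclusion of Lemma~\ref{la3.8}, specialised to $p=2$, yields $\mathbb{E}|\bar{X}(T)-X(T)|^{2}\leq C_{x_0,y_0,T}\Delta_1$. Summing these and absorbing the $\Delta_1$ contributions into the common bracket $\Delta_1+\Delta_2+1/(M\Delta_2)$ gives the asserted rate. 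There is no genuine obstacle at this concluding step: all of the analytic difficulty has already been discharged in the preceding lemmas, most critically in Lemma~\ref{L5.7}, whose proof rests on the stopping-time localisation, the uniform $p$th-moment bounds for $\bar{Z}$ and $\bar{X}$, and the sharp estimator error of Lemma~\ref{L3.20}. The only care needed here is the bookkeeping of the moment exponents, to confirm that the theorem's hypotheses simultaneously cover all three invoked lemmas.
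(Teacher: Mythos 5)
Your proposal is correct and is essentially the paper's own proof: the paper obtains Theorem \ref{L5.1} precisely by combining Lemma \ref{la3.8} (for $\mathbb{E}|\bar{X}(T)-X(T)|^{2}\leq C\Delta_1$), Lemma \ref{L5.4} (for $\mathbb{E}|\bar{x}(T)-\bar{Z}(T)|^{2}\leq C\Delta_1$) and Lemma \ref{L5.7} (for $\mathbb{E}|\bar{Z}(T)-\bar{X}(T)|^{2}\leq C(\Delta_1+\Delta_2+1/(M\Delta_2))$) through the same three-term decomposition. Your bookkeeping of the moment exponents (via Remark \ref{R5.1}, so that $\theta_4=\theta_2+1$ and the theorem's hypothesis dominates the thresholds of all three lemmas) is also consistent with the paper.
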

\begin{rem}
The averaging principle offers a crucial simplification of the original SFSDEs, thereby significantly reducing the computational complexity when approximating the slow component of SFSDEs using the MTEM scheme (3.6).
Theorem 6.6 gives the strong error bounds between the exact solution of the averaged equation (1.2) and the numerical solution generated by MTEM  scheme (3.6). These error bounds can be decomposed into two primary components: 
\begin{itemize}
\item[(1)] The first part $\mathcal{O}(\Delta_1)$ accounts for the error caused by the TEM scheme during macro time discretization, assuming that the averaged coefficient $\bar{b}$ is known, as given in Lemma 6.4; 
\item[(2)] The second part $\mathcal{O}(\Delta_2)+\mathcal{O}(1/M\Delta_2)$ encompasses  the error resulting from  using the estimator $B_{M}(x,Y^{x,y_0}_{n})$ to replace $\bar{b}(x)$ in the macro time discretization. This component includes errors caused by the error of EM scheme during the micro time discretization, as well
as the   approximation error of  the ergodic limit about $\mu^{x}$, as present in Lemma 6.3.
\end{itemize}
Furthermore, it is noteworthy that Theorem 6.6 establishes the optimal convergence rates for $\Delta_1$, $\Delta_2$ and $M$.  
\end{rem}

			Based on the result of Theorem 6.6,  the determination of the strong convergence rate of the averaging principle further allows us to ascertain the strong error estimate between the slow component of the original system and the MTEM numerical solution. An important case is presented here to illustrate this.  Let us assume that the slow drift term $b=b_1+b_2$ and satisfies that\\
			$({\bf B1})$
			There exist  constants $C_1>0$, $\alpha>0$ and $\theta\geq 2$ such that for any $x\in \mathbb{R}^{n_1}$,
			\begin{align*}
				x^{T}b_{1}(x)\leq -\alpha|x|^{\theta}+ C_1(1+|x|^{2}).
			\end{align*}
			$({\bf B2})$
			There exists a  constants $L>0$  such that for any $x,x_i\in \RR^{n_1}$ and $y_i\in\mathbb{R}^{n_2}$, $i=1,2$,
			\begin{align*}
				|b_1(x)|\leq L(1+|x|^{\theta-1}),
\end{align*}
and
\begin{align*}
|b_2(x_1,y_1)-b_2(x_2,y_2)|+|\sigma(x_1)-\sigma(x_2)|\leq L(|x_1-x_2|+|y_1-y_2|),
			\end{align*}
where the constant $\theta$ is given in $({\bf B1})$.\\
			$({\bf B3})$
			There exists a constant $K>0$ such that for any $x_1,x_2\in \mathbb{R}^{n_1}$,
			\begin{align*}
				(x_1-x_2)^{T}(b_{1}(x_1)-b_1(x_2))\leq K|x_1-x_2|^2.
			\end{align*}		
			Meanwhile, Assumptions $({\bf F1})$-$({\bf F3})$ are preserved without modification.  Subsequently, the subsequent strong averaging principle can be inferred from  \cite[Theorem 2.2]{MR4374850}.
			\begin{lem}[{{\cite[Theorem 2.2]{MR4374850}}}]
				{\rm Suppose that  $({\bf B1})$-$({\bf B3})$ and $({\bf F1})$-$({\bf F3})$ hold. Then for any $(x_0,y_0)\in \RR^{n_1}\times\RR^{n_2}$ and $T>0$,
					\begin{align*}
						\mathbb{E}\Big(\sup_{t\in[0,T]}|x^{\varepsilon}(t)-\bar{x}(t)|^{2}\Big)\leq C\varepsilon^{\frac{1}{3}}.
				\end{align*}}
			\end{lem}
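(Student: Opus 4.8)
The statement is quoted verbatim from \cite[Theorem 2.2]{MR4374850}, so the plan is not to reprove the strong averaging principle \emph{ab initio} but to check that the hypotheses of the present section fall within the scope of that result and then to invoke it. First I would confront the standing assumptions of the cited theorem with the splitting $b=b_1+b_2$ imposed here. The dissipativity condition $({\bf B1})$, namely $x^{T}b_1(x)\le-\alpha|x|^{\theta}+C_1(1+|x|^2)$, supplies the coercivity that guarantees finite moments of the slow component uniformly in $\varepsilon$; the polynomial growth bound $|b_1(x)|\le L(1+|x|^{\theta-1})$ together with the global Lipschitz continuity of $b_2$ and $\sigma$ in $({\bf B2})$ furnish the growth and local-Lipschitz structure of the full drift $b$; and the one-sided Lipschitz estimate $({\bf B3})$ on $b_1$ promotes $b=b_1+b_2$ to a monotone-plus-Lipschitz field, which is precisely the structural hypothesis under which the reference carries out its strong error analysis. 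On the fast side, $({\bf F1})$--$({\bf F3})$ are exactly the ergodicity-type conditions under which the frozen equation \eqref{eq2.5} admits a unique invariant measure enjoying the moment and Wasserstein-Lipschitz bounds recorded in \lemref{Lcyp2.1}, the exponential contraction in the initial datum of \lemref{LN3.7}, and the Lipschitz dependence on the parameter of \lemref{LN2.3}; these are the properties of the fast component that the cited theorem requires.

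Once the hypotheses are matched, the second step is to transcribe the conclusion. The reference obtains the rate by a Khasminskii time-discretization: one partitions $[0,T]$ into subintervals of length $\delta$, freezes the slow variable on each subinterval, compares $x^{\varepsilon}$ with the resulting auxiliary process driven by the frozen fast dynamics, and bounds the averaging error by combining the uniform moment estimates with the exponential mixing of the fast process on its natural $\varepsilon^{-1}$ time scale. The total discrepancy splits into one contribution that increases with $\delta$ (from the slow drift varying across a block) and one of order $\varepsilon/\delta$ (from the residual non-equilibrium of the fast variable); optimizing this trade-off over $\delta$ produces the squared-norm rate $\varepsilon^{1/3}$, equivalently the $1/6$-order rate in $L^2$ advertised in Section~\ref{Tntr}. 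Since every ingredient of that argument is reproduced in the present setting, the bound $\mathbb{E}\big(\sup_{t\in[0,T]}|x^{\varepsilon}(t)-\bar{x}(t)|^{2}\big)\le C\varepsilon^{1/3}$ follows.

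The main obstacle I anticipate is one of matching formulations rather than of fresh analysis: \cite{MR4374850} is stated for a class of nonlinear SPDEs, so some care is needed to specialize its infinite-dimensional hypotheses to the finite-dimensional system \eqref{e1} and to confirm that the coercivity exponent $\theta\ge2$ in $({\bf B1})$ and the growth exponent $\theta-1$ in $({\bf B2})$ are compatible with the moment orders demanded there. In particular one should verify that no integrability requirement on the coefficients beyond $({\bf F1})$--$({\bf F3})$ is silently invoked, so that the cited rate genuinely transfers; this is the only point at which a nontrivial check, rather than a direct quotation, is involved.
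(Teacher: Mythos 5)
Your proposal matches the paper exactly: the paper offers no internal proof of this lemma, but simply notes that assumptions $({\bf B1})$--$({\bf B3})$ and $({\bf F1})$--$({\bf F3})$ place the system within the scope of \cite[Theorem 2.2]{MR4374850} and quotes the $\varepsilon^{1/3}$ bound, which is precisely your hypothesis-matching-plus-citation strategy. Your additional sketch of the Khasminskii discretization inside the cited reference and the caveat about specializing its SPDE framework to finite dimensions are sensible but not needed beyond what the paper itself does.
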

			\begin{thm}\label{L6.9}
				{\rm Suppose that  $({\bf B1})$-$({\bf B3})$ and $({\bf F1})$-$({\bf F3})$ hold with $k\geq 4(2\theta-1)$. Then for any $T>0$, $\Delta\in (0,1]$, $\Delta_2\in (0,\bar{\Delta}_2]$ and $M>1$,
					$$
					\mathbb{E}|x^{\varepsilon}(T)-X(T)|^{2}\leq C_{T}\left(\varepsilon^{\frac{1}{3}}+\Delta_{1}+\Delta_{2}+\frac{1}{M\Delta_{2}}\right).
					$$}
			\end{thm}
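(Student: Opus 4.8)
The plan is to split the error by the triangle inequality and to control the two resulting pieces with the two convergence results already in hand: the strong averaging principle cited immediately above the theorem (which compares the slow component $x^{\varepsilon}$ with the averaged solution $\bar{x}$) and Theorem~\ref{L5.1} (which compares $\bar{x}$ with the MTEM output $X$). Concretely, for fixed $T>0$ I would start from
\begin{align*}
\mathbb{E}|x^{\varepsilon}(T)-X(T)|^{2}\leq 2\mathbb{E}|x^{\varepsilon}(T)-\bar{x}(T)|^{2}+2\mathbb{E}|\bar{x}(T)-X(T)|^{2},
\end{align*}
and handle the two terms independently. The first term is bounded directly by the cited averaging lemma, which under $({\bf B1})$--$({\bf B3})$ and $({\bf F1})$--$({\bf F3})$ gives $\mathbb{E}\big(\sup_{t\in[0,T]}|x^{\varepsilon}(t)-\bar{x}(t)|^{2}\big)\leq C\varepsilon^{1/3}$, so in particular $\mathbb{E}|x^{\varepsilon}(T)-\bar{x}(T)|^{2}\leq C\varepsilon^{1/3}$.

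For the second term I would invoke Theorem~\ref{L5.1}, whose conclusion is exactly $\mathbb{E}|\bar{x}(T)-X(T)|^{2}\leq C_{x_0,y_0,T}\big(\Delta_1+\Delta_2+1/(M\Delta_2)\big)$. The real work therefore reduces to checking that the structural hypotheses $({\bf B1})$--$({\bf B3})$ of the present theorem imply the assumptions $(\text{\bf S1'})$, $({\bf S2})$, $({\bf S3})$, $({\bf S5})$, $(\text{\bf S6})$ demanded by Theorem~\ref{L5.1}, with compatible exponents. Writing $b=b_1+b_2$, I would verify: $({\bf S3})$ from the global Lipschitz bound on $\sigma$ in $({\bf B2})$; $({\bf S2})$ from the global Lipschitz continuity of $b_2$ in $y$ (since $b_1$ is $y$-independent one may take $\theta_2=1$); $({\bf S5})$ by combining $x^{T}b_1(x)\leq C_1(1+|x|^2)$, obtained from $({\bf B1})$ after discarding $-\alpha|x|^{\theta}$, with $x^{T}b_2(x,y)\leq C(1+|x|^2)+\lambda|y|^2$ from the linear growth of $b_2$ and Young's inequality; and $(\text{\bf S6})$ by adding the one-sided bound $({\bf B3})$ for $b_1$ to a Young-type estimate for the globally Lipschitz $b_2$ and $\sigma$. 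Finally $(\text{\bf S1'})$ follows with $\theta_1=\theta-1$ from the polynomial growth $|b_1(x)|\leq L(1+|x|^{\theta-1})$ together with the Lipschitz bounds on $b_2$ and $\sigma$.

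The exponent matching then closes the argument. With $\theta_1=\theta-1$ and $\theta_2=1$, and using $\theta\geq2$ so that $\theta_1\vee\theta_2=\theta-1$ and $(\theta_2+1)\vee2=2$, the moment threshold in Theorem~\ref{L5.1} becomes
\begin{align*}
\big[2(2\theta_1+1)\vee2(\theta_1\vee\theta_2+1)\big]\big((\theta_2+1)\vee2\big)=\big[(4\theta-2)\vee2\theta\big]\cdot2=4(2\theta-1),
\end{align*}
which coincides exactly with the hypothesis $k\geq4(2\theta-1)$. Hence Theorem~\ref{L5.1} applies, and substituting both estimates into the triangle inequality and absorbing the constants into a single $C_T$ yields
\begin{align*}
\mathbb{E}|x^{\varepsilon}(T)-X(T)|^{2}\leq C_{T}\Big(\varepsilon^{1/3}+\Delta_1+\Delta_2+\frac{1}{M\Delta_2}\Big),
\end{align*}
as claimed.

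The routine parts here are the triangle split and the substitution; the delicate point is the verification step, specifically producing the local-Lipschitz-type estimate $(\text{\bf S1'})$ for $b_1$ with modulus matched to the exponent $\theta_1=\theta-1$, since $({\bf B1})$--$({\bf B3})$ furnish $b_1$ only through a growth bound and a one-sided monotonicity bound. This is the modulus that ultimately feeds Lemmas~\ref{Lcyp2.2}--\ref{Lcyp2.3} inside the proof of Theorem~\ref{L5.1}, so the main obstacle is the careful bookkeeping that confirms $b_1$'s growth/monotonicity structure is consistent with the single moment condition $k\geq4(2\theta-1)$ while still supplying that local-Lipschitz input.
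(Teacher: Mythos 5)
Your overall route is exactly the paper's intended one: the paper gives no written proof of Theorem \ref{L6.9}, presenting it as an immediate consequence, via the triangle inequality, of the averaging rate cited from \cite{MR4374850} and of Theorem \ref{L5.1}; your split, your verifications of $({\bf S2})$, $({\bf S3})$, $({\bf S5})$, $({\bf S6})$, and your exponent bookkeeping (taking $\theta_1=\theta-1$, $\theta_2=1$, so that the threshold in Theorem \ref{L5.1} becomes exactly $4(2\theta-1)$) reconstruct that derivation faithfully.

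The gap is the step you yourself call delicate, and it is not merely delicate --- as written it is false. You claim $(\text{\bf S1'})$ ``follows with $\theta_1=\theta-1$ from the polynomial growth $|b_1(x)|\leq L(1+|x|^{\theta-1})$ together with the Lipschitz bounds on $b_2$ and $\sigma$.'' A growth bound gives only $|b_1(x_1)-b_1(x_2)|\leq L(2+|x_1|^{\theta-1}+|x_2|^{\theta-1})$, with no factor $|x_1-x_2|$, and the one-sided bound $({\bf B3})$ controls the inner product $(x_1-x_2)^{T}(b_1(x_1)-b_1(x_2))$ only from above, never the norm of the difference. Concretely, in dimension $n_1=1$ the function $b_1(x)=-x^{3}-\mathrm{sign}(x)|x|^{1/2}$ satisfies $({\bf B1})$ with $\theta=4$, $({\bf B2})$ and $({\bf B3})$ (both summands are nonincreasing), yet it is not locally Lipschitz at the origin, so $(\text{\bf S1'})$ fails for every choice of $\theta_1$. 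Since $(\text{\bf S1'})$ is a standing hypothesis of Theorem \ref{L5.1} and is used essentially in its proof --- through Remark \ref{R5.1}, Lemmas \ref{Lcyp2.2}, \ref{Lcyp2.3} and \ref{L3.20}, and in the terms $J_1,J_2$ and $I_2$--$I_4$ of Lemmas \ref{L5.4} and \ref{L5.7}, which involve squared differences of $\bar{b}$ that one-sided conditions cannot control --- your invocation of Theorem \ref{L5.1} is not justified under $({\bf B1})$--$({\bf B3})$ alone. Closing the gap requires an additional polynomial local Lipschitz hypothesis on $b_1$, e.g.\ $|b_1(x_1)-b_1(x_2)|\leq L|x_1-x_2|(1+|x_1|^{\theta-1}+|x_2|^{\theta-1})$, which is what your exponent matching implicitly presupposes. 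In fairness, the paper states the theorem without proof and is silent on this very point, so the gap is inherited rather than introduced by you; but your proposal asserts the implication instead of proving it, and as stated it cannot be proved.
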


			\section{Numerical examples}\label{s-c7}
			This section gives two examples and carries out some numerical experiments by the  MTEM scheme to verify the theoretical results.
			\begin{expl}\label{exp5.1}
				{\rm Recall the SFSDE \eqref{ex1}. 
					The exact solution of the averaged equation with initial value $\bar{x}(0)=x_0$ has the  closed form (see, e.g., \cite{MR2795791,Kloeden})
					\begin{align}
						\bar{x}(t)=\frac{x_0\exp(-\frac{3}{2}t+W^{1}(t))}
						{\sqrt{1+2x_0^{2}\int_{0}^{t}\exp(-3s+2W^{1}(s))\mathrm{d}s}}.\notag
					\end{align}
					It can be verified that $(\text{\bf S1'})$, $({\bf S2})$, $({\bf S3})$,  $({\bf S5})$, $(\text{\bf S6})$ and ${\bf (F1)}$-${\bf (F3)}$ hold with $\theta_{1} = 2,  \theta_2=1$ and any $k\geq 2$. According to Remark \ref{rem4.2}, we can choose $\varphi(u)=1+u^2,~\forall~u\geq1$.
					For the  fixed  $\Delta_1, \Delta_2\in (0,1]$ and integer $M\geq1$, define the MTEM scheme for \eqref{ex1}: for any $n\geq0$,
					\begin{equation}\label{f18}
						\begin{cases}
							X_{0}=x_{0}, T_{\Delta_1}(X_{n})=\Big(|X_{n}|\wedge \big(2\Delta_1^{-\frac{1}{2}}-1\big)^{\frac{1}{2}}\Big)\frac{X_{n}}{|X_{n}|},~Y^{T_{\Delta_1}(X_{n}),y_0}_{0}=y_{0}, \\
							Y^{T_{\Delta_1}(X_{n}),y_0}_{m+1}=Y^{T_{\Delta_1}(X_{n}),y_0}_{m}+(T_{\Delta_1}(X_{n})-Y^{T_{\Delta_1}(X_{n}),y_0}_{m})\Delta_{2}+\Delta W^{2}_{n,m},
							~~~m=0,1, \dots,M-1,\\
							B_{M}(T_{\Delta_1}(X_{n}),Y^{T_{\Delta_1}(X_{n}),y_0})=-\big(T_{\Delta_1}(X_{n})\big)^3-\displaystyle{\frac{1}{M}}\sum_{m=1}^{M}
							Y^{T_{\Delta_1}(X_{n}),y_0}_{m},
							\\X_{n+1}=X_{n}+B_{M}(T_{\Delta_1}(X_{n}),Y^{T_{\Delta_1}(X_{n}),y_0})\Delta_{1}+X_{n}\Delta W^{1}_{n},
						\end{cases}\vspace{-1mm}
					\end{equation}
where $\Delta W^{1}_{n}=W^{1}((n+1)\Delta_1)-W^{1}(n\Delta_1)$ and $\Delta W^{2}_{n,m}=W^{2}_{n}((n+1)\Delta_2)-W^{2}_{n}(m\Delta_2)$.
					Figure \ref{figN2} predicts  the numerical solution generated by the MTEM scheme and the exact solution of the averaged equation \eqref{ex2}. Comparing Figure \ref{figN1} and \ref{figN2} one observes  that the truncation device in the MTEM scheme effectively suppresses the explosive divergence phenomenon of  the PI iteration process.  Correcting the grid points by  using the truncation mapping, the MTEM numerical solution rapidly converges to the exact solution of the averaged equation after going through the initial transient oscillation phase.
					\begin{figure}[htp]
                    \begin{center}
						\includegraphics[width=12cm,height=7cm]{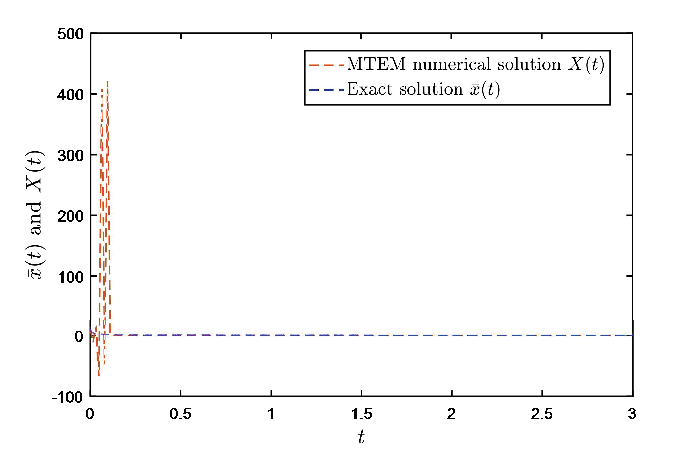}\vspace{-3mm}
						\caption{The sample paths  of the MTEM numerical solution $X(t)$ on $t\in [0,3]$ with $\Delta_1=2^{-6}$, $\Delta_2=2^{-6}$ and $M=2^{18}$. }
						\label{figN2}
					\end{center}\end{figure}
					Owing to Theorem \ref{L1}, one notices that $x^{\varepsilon}(t)$ converges to $\bar{x}(t)$ as $\varepsilon\rightarrow 0$. Next we pay attention to the strong convergence between $\bar{x}(t)$ and the numerical solution $X(t)$  by the MTEM scheme (\ref{f18}) as   $\Delta_1, \Delta_2 \rightarrow0$ and $M\Delta_2\rightarrow\infty$  revealed by Theorem \ref{L5.1}.
					To verify this result, we carry out some numerical experiments by
					the MTEM scheme.  Provided that we want to bound the error by $\mathcal{O}(2^{-q})(q>0)$,  the optimal parameters are derived by Theorem \ref{L5.1} as follows:
					\begin{align*}
						\Delta_1=\mathcal{O}(2^{-q}), ~~\Delta_2=\mathcal{O}(2^{-q}), ~~M=\mathcal{O}(2^{2q}).
					\end{align*}
					In the numerical calculations, using $500$ sample points we compute the sample mean square
					of the  error (SMSE)
					\begin{align*}
						\mathbb{E}|\bar{x}(t)-X(t)|^{2}\approx\frac{1}{500}\sum_{j=1}^{500}|\bar{x}^{(j)}(n\Delta_1)-X^{(j)}_{n}|^2,
					\end{align*}
					where $\bar{x}^{(j)}(n\Delta_1)$ and $X^{(j)}_{n}$ are  sequences of independent copies of $\bar{x}(n\Delta_1)$ and $X_{n}$, respectively. Note that for the fixed $n$ and $j$, $\bar{x}^{(j)}(n\Delta_1)$ and $X^{(j)}_{n}$ are generated
					by a same Brownian motion. Then we carry out numerical experiments by implementing \eqref{f18} using MATLAB.
					In Figure \ref{fig1}, the blue solid line depicts the SMSE for $q=2,3,4,5,6,7$ with $500$ sample points. The red dotted line plots the reference line with
					the slope -1. In addition, we plot 10 groups of sample paths of $\bar{x}(t)$ and $X(t)$ for $t\in [0,5]$ with $(\Delta_1,\Delta_2,M)=(2^{-8},2^{-6},2^{12})$. The Figure \ref{figure2} only depicts four groups of them.
					\begin{figure}[htp]
                    \begin{center}
						\includegraphics[width=10cm,height=5cm]{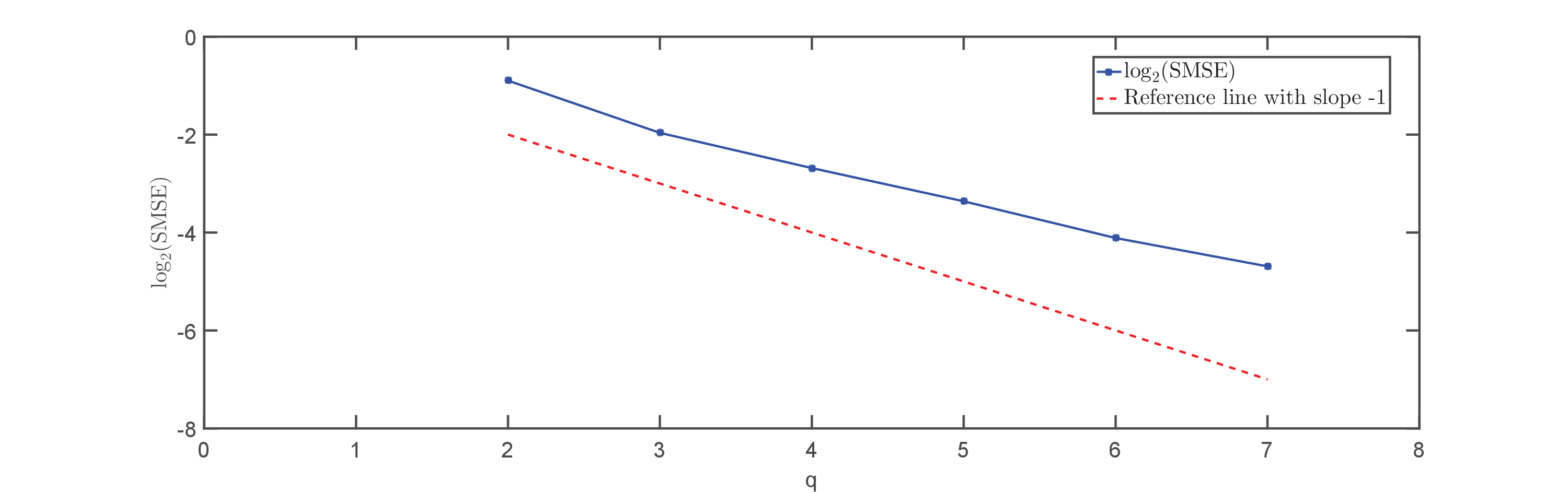}\vspace{-3mm}
						\caption{The SMSE for $q=2,3,4,5,6,7$ with $500$ sample points. The red dashed line is the reference with slope -1.}
						\label{fig1}\vspace{-3mm}
					\end{center}\end{figure}
					\begin{figure}[htp]
                    \begin{center}
						\includegraphics[width=13cm,height=5cm]{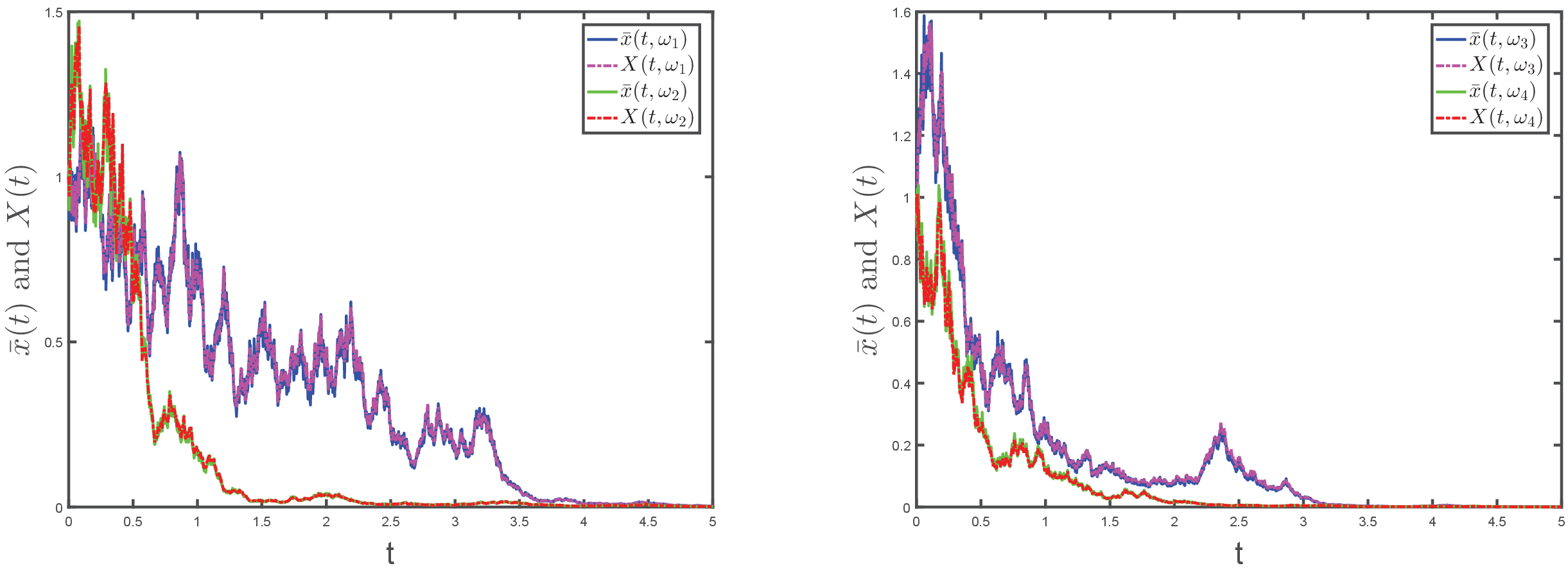}\vspace{-3mm}
						\caption{Four pairs of sample paths of $\bar{x}(t)$  and $X(t)$ for $t\in [0, 5]$ with $(\Delta_1,\Delta_2,M)=(2^{-8},2^{-6},2^{12})$.}
						\label{figure2}\vspace{-3mm}
				\end{center}\end{figure} }
			\end{expl}

\section{Concluding remarks}\label{s-c8}

In this paper, we have developed an explicit numerical scheme tailored for a category of super-linear  SFSDEs  wherein the slow drift coefficient exhibits polynomial growth. An explicit multiscale numerical scheme, termed MTEM, has been proposed through the application of a truncation mechanism. The strong convergence of the numerical solutions yielded by the MTEM scheme has been rigorously established. Furthermore, the convergence rate has been determined under weakly restrictive conditions. The construction of an explicit scheme to approximate the dynamical behaviors of the exact solutions for more generic SFSDEs featuring a super-linear fast component remains an intriguing topic for future investigation. This direction will inform our subsequent research endeavors.

\section*{Acknowledgements}

 The authors would like to thank the associate editor and referees for the
 helpful comments and suggestions.

\end{document}